\newcommand*{\mailto}[1]{\href{mailto:#1}{\nolinkurl{#1}}}
\newcommand{\bbC}{{\mathbb{C}}}
\newcommand{\bbN}{{\mathbb{N}}}
\newcommand{\bbR}{{\mathbb{R}}}
\newcommand{\cA}{{\mathcal A}}
\newcommand{\cB}{{\mathcal B}}
\newcommand{\cH}{{\mathcal H}}
\newcommand{\cI}{{\mathcal I}}
\newcommand{\cN}{{\mathcal N}}
\newcommand{\cP}{{\mathcal P}}
\newcommand{\cS}{{\mathcal S}}
\newcommand{\cZ}{{\mathcal Z}}
\newcommand{\beq}{\begin{equation}}
\newcommand{\enq}{\end{equation}}
\DeclareMathOperator{\rank}{rank}
\DeclareMathOperator{\dom}{dom}
\DeclareMathOperator{\tr}{tr}
\DeclareMathOperator{\diag}{diag}
\renewcommand{\Re}{\text{\rm Re}}
\renewcommand{\Im}{\text{\rm Im}}
\renewcommand{\ln}{\text{\rm ln}}
\newcommand{\loc}{\operatorname{loc}}
\newcommand{\AC}{\text{\textnormal{AC}}}
\newcommand{\SL}{\text{\textnormal{SL}}}
\newcommand{\no}{\notag}
\newcommand{\lb}{\label}
\newcommand{\f}{\frac}
\newcommand{\ol}{\overline}
\newcommand{\wti}{\widetilde}
\newcommand{\bi}{\bibitem}
\let\geq\geqslant
\let\leq\leqslant
\def\theequation{\@arabic\c@equation}
\numberwithin{equation}{section}
\newtheorem{theorem}{Theorem}[section]
\newtheorem{proposition}[theorem]{Proposition}
\newtheorem{lemma}[theorem]{Lemma}
\newtheorem{corollary}[theorem]{Corollary}
\newtheorem{definition}[theorem]{Definition}
\newtheorem{hypothesis}[theorem]{Hypothesis}
\theoremstyle{remark}
\newtheorem{remark}[theorem]{Remark}
\begin{document}

\title[Explicit Krein Resolvent Identities with Applications]{Explicit Krein Resolvent Identities for Singular Sturm--Liouville Operators with Applications to Bessel Operators}

%%%%%%%%%%%%%%%%%%%%%%%%%%%%%%%%%%%%%%%%%%
%%%%%%%%%%%%%%%%%%%%%%%%%%%%%%%%%%%%%%%%%%
% Author Information
%%%%%%%%%%%%%%%%%%%%%%%%%%%%%%%%%%%%%%%%%%
%%%%%%%%%%%%%%%%%%%%%%%%%%%%%%%%%%%%%%%%%%

\author[S.\ B.\ Allan]{S. Blake Allan}
\address{Department of Mathematics, 
Baylor University, One Bear Place \#81850,
Waco, TX 76798-7328, USA}
\email{\mailto{Blake\_Allan1@baylor.edu}}

\author[J.\ H.\ Kim]{Justin Hanbin Kim}
\address{Department of Mathematics, Vanderbilt University, PMB 353510, 2301 Vanderbilt Place, Nashville, TN 37235}
\email{\mailto{Han.bin.kim@vanderbilt.edu}}

\author[G.\ Michajlyszyn]{Gregory Michajlyszyn}
\address{Department of Mathematics, University of Rochester, 500 Joseph C. Wilson Blvd., \#271017, Rochester, NY 14627}
\email{\mailto{gmichajl@u.rochester.edu}}

\author[R. Nichols]{Roger Nichols}
\address{Department of Mathematics, The University of Tennessee at Chattanooga, 
415 EMCS Building, Dept. 6956, 615 McCallie Ave, Chattanooga, TN 37403, USA}
\email{\mailto{Roger-Nichols@utc.edu}}
\urladdr{\url{http://www.utc.edu/faculty/roger-nichols/index.php}}

\author[D.\ Rung]{Don Rung}
\address{Department of Mathematics, Sewanee: The University of the South, 735 University Ave., Sewanee, TN 37375}
\email{\mailto{rungdc0@sewanee.edu}}

%%%%%%%%%%%%%%%%%%%%%%%%%%%%%%%%%%%%%%%%%%
%%%%%%%%%%%%%%%%%%%%%%%%%%%%%%%%%%%%%%%%%%
%%%%%%%%%%%%%%%%%%%%%%%%%%%%%%%%%%%%%%%%%%
%%%%%%%%%%%%%%%%%%%%%%%%%%%%%%%%%%%%%%%%%%

\date{\today}
%\thanks{ \fbox{\bf Any financial support??????}}

%\dedicatory{Dedicated to...}
%\thanks{To appear in {\it Journal Name Here}.}
\subjclass[2010]{Primary 47A10, 47A55; Secondary 47A56, 47B10.}
\keywords{Krein identity, singular Sturm--Liouville operator, Bessel operator, spectral shift function.}

%%%%%%%%%%%%%%%%%%%%%%%%%%%%%%%%%%%%%%%%%%
%%%%%%%%%%%%%%%%%%%%%%%%%%%%%%%%%%%%%%%%%%
\begin{abstract} 
We derive explicit Krein resolvent identities for generally singular Sturm--Liouville operators in terms of boundary condition bases and the Lagrange bracket.  As an application of the resolvent identities obtained, we compute the trace of the resolvent difference of a pair of self-adjoint realizations of the Bessel expression $-d^2/dx^2+(\nu^2-(1/4))x^{-2}$ on $(0,\infty)$ for values of the parameter $\nu\in[0,1)$ and use the resulting trace formula to explicitly determine the spectral shift function for the pair.
\end{abstract}
%%%%%%%%%%%%%%%%%%%%%%%%%%%%%%%%%%%%%%%%%%
%%%%%%%%%%%%%%%%%%%%%%%%%%%%%%%%%%%%%%%%%%

\maketitle

%\newpage 

%{\scriptsize{\tableofcontents}}
%\normalsize

%%%%%%%%%%%%%%%%%%%%%%%%%%%%%%%
%%%%%%%%%%%%%%%%%%%%%%%%%%%%%%%
\section{Introduction} \lb{s1}
%%%%%%%%%%%%%%%%%%%%%%%%%%%%%%%
%%%%%%%%%%%%%%%%%%%%%%%%%%%%%%%

In the classic theory of self-adjoint extensions of a densely defined symmetric operator $S$ with equal and finite deficiency indices, Krein's resolvent identity expresses the difference of the resolvent operators of any two self-adjoint extensions of $S$ in terms of its defect vectors (cf., e.g., \cite[Section 84]{AG81}, \cite[Appendix A]{CGNZ14}, and \cite[Lemma 2.30]{Te14}).  When $S$ is the closed minimal operator generated by a second-order Sturm--Liouville differential expression $\tau$ on an interval $(a,b)\subseteq \bbR$, its deficiency indices are at most equal to two (their precise common value depending upon the number of limit circle endpoints for $\tau$) so Krein's identity expresses the difference of the resolvent operators of two self-adjoint extensions of $S$ as an operator of rank at most equal to two.  Recently, the explicit form of Krein's identity was derived in \cite{CGNZ14} for all self-adjoint extensions in the case where $\tau$ is {\it regular} on $(a,b)$ in terms of the boundary values of the quasiderivatives of a distinguished basis of defect vectors, see \cite[eq.~(3.5)]{CGNZ14}.  This is made possible by the fact that functions in the domain of the maximal Sturm--Liouville operator $S^*$, and their quasiderivatives, possess boundary values at a regular endpoint.  In contrast, when $\tau$ is singular at an endpoint, neither functions in the domain of $S^*$ nor their quasiderivatives necessarily possess boundary values at the singular endpoint.  It is  for this reason that, in lieu of boundary values of the functions themselves, one typically uses the Wronskian (cf., e.g., \cite[Section 5]{EGNT13}), the Lagrange bracket and boundary condition bases/functions (cf., e.g., \cite[Section 6]{Ev05} and \cite[Definition 10.4.3]{Ze05}), or generalized boundary values (cf.~\cite{GLN19}) to parametrize the self-adjoint extensions of $S$.

In this paper, we derive the explicit form of Krein's resolvent identity for singular Sturm--Liouville operators on $(a,b)$ using boundary condition bases and the Lagrange bracket.  As a concrete application of the identities obtained, we consider the Bessel differential expression on $(0,\infty)$ indexed by the parameter $\nu\in [0,1)$.  The Bessel differential expression is singular at both endpoints of $(0,\infty)$ for $\nu\neq 1/2$, and its self-adjoint realizations form a one-parameter family.  Applying the general form of Krein's resolvent identity obtained in Section \ref{s3}, we explicitly compute the difference of the resolvent of the Friedrichs extension and that of any other self-adjoint realization of the Bessel expression.  Using the resulting identity, we then compute the trace of the difference of resolvents, which leads to an explicit expression for the spectral shift function of the pair.

We briefly summarize the contents of each of the remaining sections of this paper.  In Section \ref{s2}, we recall essential facts on self-adjoint extensions of three-term Sturm--Liouville operators on an interval $(a,b)\subseteq \bbR$.  Section \ref{s3} treats in detail the case of one limit circle endpoint.  Assuming that $a$ is the lone limit circle endpoint, we explicitly determine in Theorem \ref{t3.4} the form of Krein's resolvent identity for the difference of the resolvent of any self-adjoint extension of the minimal operator and the resolvent of a fixed reference self-adjoint extension in terms of a fixed boundary condition basis at $a$, the Lagrange bracket, and the Weyl--Titchmarsh solution at $b$.  The difference of resolvents is a rank one operator due to the presence of exactly one limit circle endpoint, so the Krein identity obtained immediately yields an explicit formula for the trace of the corresponding resolvent difference.  Analogously, Section \ref{s4} addresses the case where both endpoints $\{a,b\}$ are limit circle endpoints.  Treating separately the self-adjoint extensions parametrized by separated boundary conditions and those parametrized by coupled boundary conditions, we explicitly determine in Theorems \ref{t4.4}--\ref{t4.7} the form of Krein's resolvent identity for the difference of the resolvent of any self-adjoint extension of the minimal operator and the resolvent of a fixed reference self-adjoint extension in terms of fixed boundary condition bases at $a$ and $b$, the Lagrange bracket, and a distinguished pair of linearly independent solutions of the corresponding Sturm--Liouville differential equation.  The difference of resolvents is generally a rank two operator, but in certain special cases (cf.~Theorems \ref{t4.5} and \ref{t4.7}) the difference is rank one, owing to the fact that the two self-adjoint extensions also extend a symmetric operator which is itself a proper extension of the minimal operator.  At the end of Section \ref{s4}, we explain how the Krein resolvent identities obtained in \cite{CGNZ14} for regular Sturm--Liouville operators may be obtained as special cases of Theorems \ref{t4.4}--\ref{t4.7}.  In Section \ref{s5}, we consider, as an example, the Bessel differential expression (cf., e.g., \cite{AB15}, \cite{AB16}, \cite{BDG11}, \cite{DR18}, \cite{EK07}, \cite{GLN19}, \cite{KLP06}, and the references cited therein),
\begin{equation}
\tau_{\nu}=-\frac{d^2}{dx^2}+\frac{(\nu^2-(1/4))}{x^2},\quad x\in(0,\infty),\, \nu\in [0,1).
\end{equation}
The right endpoint $x=\infty$ is always a singular endpoint, and the left endpoint $x=0$ is a singular endpoint if $\nu\neq 1/2$, as it is regular if $\nu=1/2$.  For $\nu\in[0,1)$, $\tau_{\nu}$ is in the limit circle case at $x=0$ and in the limit point case at $x=\infty$, so the expression $\tau_{\nu}$ falls within the scope of the theory developed in Section \ref{s3}.  Applying the abstract identities developed in Section \ref{s3}, we determine the explicit form of Krein's identity in terms of the parameter $\nu$ and an explicit Weyl--Titchmarsh solution at $\infty$ and use this form to calculate the trace of the difference of the resolvent of the Friedrichs extension and that of any other self-adjoint extension in Propositions \ref{p5.1} and \ref{p5.6}.  The resulting trace formula is then used to determine the spectral shift function corresponding to the Friedrichs extension and any other self-adjoint extension in Propositions \ref{p5.3} and \ref{p5.8}.  As a byproduct, the explicit form of the spectral shift function for the pair yields a characterization of the nonnegative self-adjoint realizations of the Bessel expression and allows one to determine the single simple negative eigenvalue of any self-adjoint realization which is not nonnegative.  For completeness, the basic facts on the spectral shift function relevant to the analysis in Section \ref{s5} are collected in Appendix \ref{sA}.

Finally, we summarize some of the general notation used in this paper.  Let $\cH$ be a separable complex Hilbert space, $\langle\cdot,\cdot\rangle_{\cH}$ the inner product in $\cH$ (linear in the second argument), and $I_{\cH}$ the identity operator in $\cH$.  Next, let $T$ be a linear operator mapping (a subspace of) a Hilbert space into another, with $\dom(T)$ and $\ker(T)$ denoting the domain and kernel (i.e., null space) of $T$.  If $T$ is densely defined, then $T^*$ denotes the Hilbert space adjoint of $T$.  The resolvent set, spectrum, essential spectrum, absolutely continuous spectrum, and point spectrum of a closed linear operator in $\cH$ will be denoted by $\rho(\cdot)$, $\sigma(\cdot)$, $\sigma_{\rm ess}(\cdot)$, $\sigma_{\rm ac}(\cdot)$, and $\sigma_{\rm p}(\cdot)$, respectively.  The $\ell^p$-based trace ideals over $\cH$ will be denoted by $\cB_p (\cH)$, $p\in[1,\infty)$, and $\tr_{\cH}$ denotes the trace functional on $\cB_1(\cH)$.

Throughout, $\chi_S$ denotes the characteristic function of a set $S\subseteq \bbR$.  If $z\in \bbC$, then $\overline{z}$ denotes the complex conjugate of $z$.  To avoid cumbersome notation, for $(a,b)\subseteq \bbR$, $I_{(a,b)}$ and $\langle \,\cdot\,,\,\cdot\,\rangle_{(a,b)}$ denote the identity operator and inner product in the weighted Hilbert space $L^2((a,b);r(x)\, dx)$, respectively, and $\tr_{(a,b)}$ denotes the trace functional on $\cB_1\big(L^2((a,b);r(x)\, dx)\big)$.  In addition, $\SL_2(\bbR)$ denotes the set of all $R\in \bbR^{2\times2}$ with $\det(R)=1$, $I_{2\times2}$ denotes the $2\times 2$ identity matrix in $\bbR^{2\times 2}$, $\AC_{\loc}(a,b)$ denotes the set of locally absolutely continuous complex-valued functions on $(a,b)$, and if $M\subseteq\bbR$ is Lebesgue measurable, then $|M|$ denotes the Lebesgue measure of $M$.  We employ the following convention throughout:  ``$0<\varepsilon\ll1$'' means ``for all $\varepsilon\in (0,\varepsilon_0)$ for some $\varepsilon_0\in (0,\infty)$.''

%%%%%%%%%%%%%%%%%%%%%%%%%%%%%%%
%%%%%%%%%%%%%%%%%%%%%%%%%%%%%%%
\section{Self-Adjoint Extensions of Singular Sturm--Liouville Operators} \lb{s2}
%%%%%%%%%%%%%%%%%%%%%%%%%%%%%%%
%%%%%%%%%%%%%%%%%%%%%%%%%%%%%%%

In this preparatory section, we recall some of the essential facts on self-adjoint extensions of Sturm--Liouville operators, with particular emphasis on the singular case.  The primary motivation for recalling these facts here is to set up much of the notation and conventions to be employed in later sections.  As such, in most cases we only provide statements of the pertinent facts and defer to references for their proofs.  We begin by introducing the following hypothesis, which is assumed throughout this section.

%%%%%%%%%%%
\begin{hypothesis}\lb{h2.1}
Let $-\infty\leq a < b \leq \infty$ be fixed and suppose that $p$, $q$, and $r$ are real-valued and Lebesgue measurable on $(a,b)$ with $p>0$, $r>0$ a.e.~on $(a,b)$ and
\begin{equation}\lb{2.1}
p^{-1},\, q,\, r\in L^1_{\loc}((a,b);dx).
\end{equation}
\end{hypothesis}
%%%%%%%%%%%

Assuming Hypothesis \ref{h2.1}, we define 
\begin{equation}\lb{2.2}
\mathfrak{D}(a,b) = \{f\in \AC_{\loc}(a,b)\,|\, pf'\in \AC_{\loc}(a,b)\},
\end{equation}
and introduce the differential expression $\tau$ by
\begin{equation}\lb{2.3}
\tau f = \frac{1}{r}\big[-(pf')'+qf\big],\quad f\in \mathfrak{D}(a,b),
\end{equation}
where the prime denotes differentiation with respect to the independent variable.  In addition, we define the Lagrange bracket of a pair of functions $f,g\in \mathfrak{D}(a,b)$ by
\begin{equation}\lb{2.4}
[f,g](x) = f(x)\overline{(pg')(x)} - (pf')(x)\overline{g(x)},\quad x\in (a,b).
\end{equation}

The next result is a Pl\"ucker-type identity.  It relates the Lagrange brackets of pairs of functions in $\mathfrak{D}(a,b)$.

%%%%%%%%%%%
\begin{lemma}[{\cite[Lemma 2.5]{EGNT13}}]\lb{l2.2}
Assume Hypothesis \ref{h2.1}.  If $f_1, f_2, f_3, f_4\in\mathfrak{D}(a,b)$, then
\begin{equation}\lb{2.5}
\begin{split}
[f_1,f_2](x)[\ol{f_3},f_4](x) + [f_1,f_3](x)[\ol{f_4},f_2](x) + [f_1,f_4](x)[\ol{f_2},f_3](x)=0,&\\
x\in(a,b).&
\end{split}
\end{equation}
\end{lemma}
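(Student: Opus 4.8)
The plan is to prove the identity \eqref{2.5} by a purely algebraic argument, treating the Lagrange bracket values at a fixed point $x\in(a,b)$ as entries of $2\times 2$ determinants. Fix $x$ and set $u_j=f_j(x)$, $v_j=(pf_j')(x)$ for $j=1,\dots,4$, so that $[f_i,f_j](x)=u_i\overline{v_j}-v_i\overline{u_j}$. The key observation is that $[f_i,f_j](x)$ is exactly the $2\times 2$ determinant $\det\begin{pmatrix} u_i & \overline{u_j}\\ v_i & \overline{v_j}\end{pmatrix}$, i.e.\ the determinant of the matrix whose columns are the vectors $\binom{u_i}{v_i}\in\mathbb{C}^2$ and $\overline{\binom{u_j}{v_j}}\in\mathbb{C}^2$. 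Thus \eqref{2.5} is the assertion
\begin{equation}\lb{plucker-vectors}
\det[\bsu_1\,|\,\ol{\bsu_2}]\,\det[\ol{\bsu_3}\,|\,\bsu_4] + \det[\bsu_1\,|\,\ol{\bsu_3}]\,\det[\ol{\bsu_4}\,|\,\bsu_2] + \det[\bsu_1\,|\,\ol{\bsu_4}]\,\det[\ol{\bsu_2}\,|\,\bsu_3]=0,
\end{equation}
where $\bsu_j=\binom{u_j}{v_j}$ and $[\bsv\,|\,\bsw]$ denotes the matrix with columns $\bsv,\bsw$.

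The cleanest route from here is to recognize \eqref{plucker-vectors} as an instance of the classical Plücker relation in $\mathbb{C}^2$: for any four vectors $\bsw_1,\bsw_2,\bsw_3,\bsw_4\in\mathbb{C}^2$ one has
\begin{equation}\lb{plucker-classical}
\det[\bsw_1\,|\,\bsw_2]\det[\bsw_3\,|\,\bsw_4] - \det[\bsw_1\,|\,\bsw_3]\det[\bsw_2\,|\,\bsw_4] + \det[\bsw_1\,|\,\bsw_4]\det[\bsw_2\,|\,\bsw_3]=0.
\end{equation}
This is a standard identity (it expresses that any $2\times 4$ matrix has vanishing ``second Plücker coordinate relation,'' or equivalently it is the Laplace expansion of a $4\times 4$ determinant with a repeated $2\times 2$ block) and can be verified in one line by direct expansion of the six monomials. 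I would then substitute $\bsw_1=\bsu_1$, $\bsw_2=\ol{\bsu_2}$, $\bsw_3=\ol{\bsu_3}$, $\bsw_4=\ol{\bsu_4}$ into \eqref{plucker-classical} and check that, after reordering columns within each determinant to match the pattern in \eqref{2.5} and tracking the resulting sign changes, the three terms coincide (including signs) with those in \eqref{plucker-vectors}. Since $x\in(a,b)$ was arbitrary, \eqref{2.5} follows for all $x\in(a,b)$.

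The only mild obstacle is bookkeeping: in \eqref{2.5} the second factor of each product has its arguments in a specific order (e.g.\ $[\ol{f_3},f_4]$, $[\ol{f_4},f_2]$, $[\ol{f_2},f_3]$), and since $[g,h]=-[h,g]$ (up to the conjugation placement) one must be careful that the sign conventions in the Lagrange bracket \eqref{2.4} — where the conjugation sits on the second slot — align correctly with the column placements in the determinant reformulation. I would handle this by writing $[\ol{f_i},f_j](x)=\ol{u_i}v_j-\ol{v_i}u_j=\det[\ol{\bsu_i}\,|\,\bsu_j]$ explicitly, so that every bracket appearing in \eqref{2.5} is unambiguously a determinant of two column vectors, and then the matching with \eqref{plucker-classical} is a finite, transparent check. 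Alternatively, if one prefers to avoid invoking \eqref{plucker-classical} as a black box, the entire identity \eqref{2.5} can be proved by brute-force expansion: each of the three products is a sum of four monomials in $u_1,\dots,u_4,\ol{u_1},\dots,\ol{u_4},v_1,\dots,v_4,\ol{v_1},\dots,\ol{v_4}$, giving twelve terms total, which cancel in pairs; this is routine and I would relegate it to a one-sentence remark rather than carrying it out in full.
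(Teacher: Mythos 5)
The paper offers no proof of this lemma—it is quoted verbatim from \cite[Lemma 2.5]{EGNT13}—so the only question is whether your argument stands on its own. Your strategy (recast each bracket at a fixed $x$ as a $2\times 2$ determinant and invoke the Pl\"ucker relation for four vectors in $\bbC^2$) is the right one, but the conjugation bookkeeping, which you yourself flagged as the one delicate point, is carried out incorrectly, and as a result your displayed reformulation of \eqref{2.5} is false.

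Concretely, with $u_j=f_j(x)$ and $v_j=(pf_j')(x)$, the definition \eqref{2.4} applied to the pair $(\ol{f_i},f_j)$ gives $[\ol{f_i},f_j](x)=\ol{f_i}(x)\,\overline{(pf_j')(x)}-(p\,\ol{f_i}{}')(x)\,\overline{f_j(x)}=\overline{u_i}\,\overline{v_j}-\overline{v_i}\,\overline{u_j}$, \emph{not} $\overline{u_i}\,v_j-\overline{v_i}\,u_j$ as you wrote: conjugating the first slot does not remove the conjugation that the bracket places on the second slot. Hence $[\ol{f_i},f_j](x)=\det\bigl[\,\ol{\bsu_i}\,\big|\,\ol{\bsu_j}\,\bigr]$, and the correct determinantal form of \eqref{2.5} is
\begin{equation*}
\det\bigl[\bsu_1\big|\ol{\bsu_2}\bigr]\det\bigl[\ol{\bsu_3}\big|\ol{\bsu_4}\bigr]+\det\bigl[\bsu_1\big|\ol{\bsu_3}\bigr]\det\bigl[\ol{\bsu_4}\big|\ol{\bsu_2}\bigr]+\det\bigl[\bsu_1\big|\ol{\bsu_4}\bigr]\det\bigl[\ol{\bsu_2}\big|\ol{\bsu_3}\bigr]=0,
\end{equation*}
which is \emph{exactly} the cyclic Pl\"ucker relation for the four vectors $\bsu_1,\ol{\bsu_2},\ol{\bsu_3},\ol{\bsu_4}$—no column reordering or sign chasing is needed at all. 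Your version instead mixes the seven vectors $\bsu_1,\bsu_2,\bsu_3,\bsu_4,\ol{\bsu_2},\ol{\bsu_3},\ol{\bsu_4}$, is not an instance of the four-vector Pl\"ucker relation, and is in fact false: for $(u_1,v_1)=(1,0)$, $(u_2,v_2)=(0,1)$, $(u_3,v_3)=(1,i)$, $(u_4,v_4)=(i,1)$ its left-hand side equals $-2$, whereas the left-hand side of \eqref{2.5} for the same data is $0$. Once the formula for $[\ol{f_i},f_j]$ is corrected, both of your routes—the Pl\"ucker relation and the twelve-monomial cancellation—go through verbatim.
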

%%%%%%%%%%%

Next, we recall the identities of Lagrange and Green, which relate the Lagrange bracket to the differential expression $\tau$ (cf., e.g., \cite[eq.~(2.6) \&  Lemma 2.3]{EGNT13}).

%%%%%%%%%%%
\begin{lemma}[Lagrange's identity \& Green's formula]\lb{l2.3}
If $f,g\in \mathfrak{D}(a,b)$, then
\begin{equation}\lb{2.6}
\overline{g}\tau f - f\overline{\tau g} = \frac{1}{r}\frac{d}{dx}[f,g],
\end{equation}
and, consequently, for any $\alpha,\beta\in (a,b)$,
\begin{equation}\lb{2.7}
\int_{\alpha}^{\beta} \Big(\overline{g(x)}(\tau f)(x) - f(x) \overline{(\tau g)(x)}\Big) r(x)\, dx = [f,g](\beta) - [f,g](\alpha).
\end{equation}
\end{lemma}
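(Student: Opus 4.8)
The plan is to establish the pointwise identity \eqref{2.6} by a direct differentiation and then deduce the integral formula \eqref{2.7} by invoking the fundamental theorem of calculus. First I would expand $\frac{d}{dx}[f,g]$ using the definition \eqref{2.4} and the product rule, which is legitimate since $f$, $pf'$, $g$, $pg'\in\AC_{\loc}(a,b)$ by \eqref{2.2}; this gives
\[
\frac{d}{dx}[f,g] = f'\,\overline{(pg')} + f\,\overline{(pg')'} - (pf')'\,\overline{g} - (pf')\,\overline{g'}.
\]
Because $p$ is real-valued, one has $\overline{(pg')} = p\,\overline{g'}$, so the first and last terms cancel and $\frac{d}{dx}[f,g] = f\,\overline{(pg')'} - (pf')'\,\overline{g}$. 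On the other hand, expanding $r\big(\overline{g}\,\tau f - f\,\overline{\tau g}\big)$ via \eqref{2.3} and using that $q$ is real-valued, the terms containing $q$ cancel, leaving $r\big(\overline{g}\,\tau f - f\,\overline{\tau g}\big) = -\overline{g}\,(pf')' + f\,\overline{(pg')'}$, which agrees with the previous expression. This proves \eqref{2.6}.

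For \eqref{2.7}, I would first observe that $[f,g]$ is a bilinear combination of the functions $f$, $pf'$, $g$, $pg'$, each of which is locally absolutely continuous and hence (being continuous) locally bounded on $(a,b)$; therefore $[f,g]\in\AC_{\loc}(a,b)$. Its derivative, defined a.e., equals $r\big(\overline{g}\,\tau f - f\,\overline{\tau g}\big)$ by \eqref{2.6}, and this belongs to $L^1_{\loc}((a,b);dx)$ since $r\,\tau f = -(pf')' + qf$ with $(pf')'\in L^1_{\loc}$ and $qf\in L^1_{\loc}$ (as $q\in L^1_{\loc}$ by \eqref{2.1} and $f$ is locally bounded), and similarly for $g$. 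Hence the fundamental theorem of calculus applies on any $[\alpha,\beta]\subset(a,b)$, giving $[f,g](\beta) - [f,g](\alpha) = \int_{\alpha}^{\beta}\frac{d}{dx}[f,g](x)\,dx$, which is precisely \eqref{2.7} once the identity \eqref{2.6} is substituted into the integrand.

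The computation is entirely routine; the only point that deserves explicit mention is the verification that $[f,g]\in\AC_{\loc}(a,b)$, so that the fundamental theorem of calculus is applicable — this rests on the standard fact that a product of locally absolutely continuous functions on an interval is again locally absolutely continuous. I therefore anticipate no genuine obstacle: the content of the lemma is really the bookkeeping around the reality of $p$ and $q$, which produces the two cancellations that make \eqref{2.6} collapse to the stated form.
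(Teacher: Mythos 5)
Your proof is correct. The paper itself states this lemma without proof, simply recalling it from \cite[eq.~(2.6) \& Lemma 2.3]{EGNT13}, and your direct computation --- expanding $\frac{d}{dx}[f,g]$ via the product rule, using the reality of $p$ and $q$ to produce the two cancellations, and then applying the fundamental theorem of calculus to the locally absolutely continuous function $[f,g]$ --- is precisely the standard argument underlying the cited result.
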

%%%%%%%%%%%

Following \cite[Section 3]{EGNT13}, we now introduce the maximal and minimal operators associated to $\tau$.  The maximal operator associated to $\tau$ is denoted $T_{\max}$ and is defined by
\begin{align}
T_{\max}f=\tau f,\quad f\in \dom(T_{\max}) = \{g\in \mathfrak{D}(a,b)\,|\, g,\tau g\in L^2((a,b);r(x)\, dx)\}.\lb{2.8}
\end{align}
The operator $T_{\max}$ is densely defined, and its adjoint is the (closed) minimal operator, $T_{\min}$:
\begin{equation}\lb{2.9}
T_{\min} := (T_{\max})^*.
\end{equation}
In turn, the minimal operator is densely defined and its adjoint is the maximal operator:
\begin{equation}\lb{2.10}
(T_{\min})^* = T_{\max}.
\end{equation}

%%%%%%%%%%%
\begin{definition}\lb{d2.4}
A measurable function $f:(a,b)\to \bbC$ lies in $L^2((a,b);r(x)\, dx)$ near $a$ $($resp., $b$$)$ if $\chi_{(a,c)}f\in L^2((a,b);r(x)\, dx)$ $($resp., $\chi_{(c,b)}f\in L^2((a,b);r(x)\, dx)$$)$ for each $c\in(a,b)$. 
 Furthermore, $g\in\mathfrak{D}(a,b)$ lies in $\dom(T_{\max})$ near $a$ $($resp., $b$$)$ if $g$ and $\tau g$ both lie in $L^2((a,b);r(x)\, dx)$ near $a$ $($resp., $b$$)$.
\end{definition}
%%%%%%%%%%%

One verifies that $g\in \mathfrak{D}(a,b)$ lies in $\dom(T_{\max})$ near $a$ (resp., $b$) if and only if $\ol{g}$ lies in $\dom(T_{\max})$ near $a$ (resp., $b$).  Moreover, as a consequence of Green's formula, the Lagrange bracket of a pair of functions that lie in $\dom(T_{\max})$ near an endpoint has a finite limiting value at that endpoint.

%%%%%%%%%%%
\begin{lemma}[{\cite[Lemma 3.2]{EGNT13}}]\lb{l2.5}
If $f$ and $g$ lie in $\dom(T_{\max})$ near $a$ $($resp., near $b$$)$, then the limit
\begin{equation}\lb{2.11}
[f,g](a):=\lim_{x\to a^+}[f,g](x)\quad \text{$($resp., $[f,g](b):=\lim_{x\to b^-}[f,g](x)$$)$}
\end{equation}
exists and is finite.
\end{lemma}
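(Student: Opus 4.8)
The plan is to prove Lemma~\ref{l2.5} as a direct consequence of Green's formula (Lemma~\ref{l2.3}), by showing that the Lagrange bracket $[f,g](x)$ satisfies the Cauchy criterion as $x\to a^+$ (the case $x\to b^-$ being entirely symmetric). First I would observe that, by definition, saying $f$ and $g$ lie in $\dom(T_{\max})$ near $a$ means precisely that $f,\tau f,g,\tau g$ all lie in $L^2((a,b);r(x)\,dx)$ near $a$; in particular $\chi_{(a,c)}f$, $\chi_{(a,c)}\tau f$, $\chi_{(a,c)}\overline{g}$, $\chi_{(a,c)}\overline{\tau g}$ are all in $L^2((a,b);r(x)\,dx)$ for each fixed $c\in(a,b)$, and hence the product $\overline{g}\,\tau f - f\,\overline{\tau g}$ is in $L^1((a,c);r(x)\,dx)$ by Cauchy--Schwarz. (Here I use that $\overline{g}$ lies in $\dom(T_{\max})$ near $a$ iff $g$ does, as noted in the paragraph preceding the lemma.)

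Next, fix any $c\in(a,b)$. For $a<\alpha<\beta\leq c$, Green's formula \eqref{2.7} gives
\begin{equation*}
[f,g](\beta) - [f,g](\alpha) = \int_{\alpha}^{\beta} \Big(\overline{g(x)}(\tau f)(x) - f(x)\overline{(\tau g)(x)}\Big) r(x)\, dx.
\end{equation*}
Since the integrand is in $L^1((a,c);r(x)\,dx)$, the absolute continuity of the Lebesgue integral implies that for every $\e>0$ there is a $\delta>0$ such that $\int_{E} \big|\overline{g}\,\tau f - f\,\overline{\tau g}\big|\, r\, dx < \e$ whenever $E\subseteq(a,c)$ is measurable with $|E|<\delta$ — or, more simply, such that $\int_{a}^{a+\delta}\big|\overline{g}\,\tau f - f\,\overline{\tau g}\big|\, r\, dx < \e$. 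Taking $E=(\alpha,\beta)\subseteq(a,a+\delta)$ yields $\big|[f,g](\beta)-[f,g](\alpha)\big|<\e$ for all $a<\alpha<\beta<a+\delta$. This is the Cauchy criterion, so $\lim_{x\to a^+}[f,g](x)$ exists; finiteness is automatic since $[f,g]$ is a complex-valued function on $(a,b)$ and the limit of a Cauchy net in $\bbC$ is finite.

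Finally, I would record the symmetric argument at $b$: for $c\in(a,b)$ fixed and $c\leq\alpha<\beta<b$, the same identity \eqref{2.7} together with integrability of $\overline{g}\,\tau f - f\,\overline{\tau g}$ on $(c,b)$ near $b$ gives the Cauchy criterion as $x\to b^-$, establishing existence and finiteness of $[f,g](b)$.

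I do not expect any serious obstacle here; the only point requiring a little care is justifying that the integrand $\overline{g}\,\tau f - f\,\overline{\tau g}$ is genuinely $L^1$ near the endpoint — this rests on the observation that ``lying in $L^2$ near $a$'' for all four functions is exactly what is needed for Cauchy--Schwarz on $(a,c)$ to apply — and, secondarily, invoking the dominated-convergence/absolute-continuity of the integral to pass from integrability to the Cauchy estimate. Both are standard. The conceptual content is entirely that the Lagrange bracket is an antiderivative (against $r\,dx$) of an $L^1$-near-the-endpoint function, hence extends continuously to the endpoint.
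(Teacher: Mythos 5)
Your proof is correct and follows exactly the route the paper intends: the lemma is imported from \cite{EGNT13} (Lemma 3.2 there) without a proof of its own, and the paragraph preceding it already announces the result as ``a consequence of Green's formula,'' which is precisely your Cauchy-criterion argument via the $L^1$-integrability of $\overline{g}\,\tau f-f\,\overline{\tau g}$ near the endpoint. One cosmetic adjustment: Hypothesis \ref{h2.1} permits $a=-\infty$, so the interval $(a,a+\delta)$ should be read as $(-\infty,M)$ with $M$ tending to $-\infty$; the tail-integral form of your absolute-continuity estimate goes through verbatim.
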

%%%%%%%%%%%

The minimal operator may be characterized directly using the Lagrange bracket as follows:
\begin{align}
&T_{\min}f=\tau f,\lb{2.12}\\
&f\in \dom(T_{\min})=\{g\in \dom(T_{\max})\,|\,[g,h](a)=[g,h](b)=0, \, h\in \dom(T_{\max})\}.\no
\end{align}
It then follows that the minimal operator $T_{\min}$ is a densely defined, closed, symmetric operator in the Hilbert space $L^2((a,b);r(x)\, dx)$.

Recall that if $A$ and $B$ are two linear operators in a Hilbert space $\cH$, then $B$ is said to be an {\it extension} of $A$ (equivalently, $A$ is a {\it restriction} of $B$), denoted $A\subseteq B$, if and only if $\dom(A)\subseteq \dom(B)$ and $Au=Bu$ for all $u\in \dom(A)$.  For the remainder of this section, we will be interested in self-adjoint extensions of the minimal operator $T_{\min}$.  That $T_{\min}$ actually possesses self-adjoint extensions is a consequence of von Neumann's theory of self-adjoint extensions and Weyl's limit point/limit circle classification of endpoints.

Assuming Hypothesis \ref{h2.1}, one can consider for any $z\in \bbC$ the differential equation $\tau y = z y$ on the interval $(a,b)$, that is
\begin{equation}\lb{2.13}
-(py')'+qy = z r y\quad \text{on}\quad (a,b).
\end{equation}
A function $y\in \mathfrak{D}(a,b)$ is said to be a {\it solution} to \eqref{2.13} if $y$ satisfies \eqref{2.13} pointwise a.e.~on $(a,b)$.

%%%%%%%%%%%
\begin{definition}\lb{d2.6}
The differential expression $\tau$ is in the limit circle case at $a$ $($resp., $b$$)$ if for each $z\in \bbC$ all solutions to \eqref{2.13} lie in $L^2((a,b);r(x)\, dx)$ near $a$ $($resp., $b$$)$.  The differential expression $\tau$ is in the limit point case at $a$ $($resp., $b$$)$ if for each $z\in \bbC$, there is some solution to \eqref{2.13} which does not lie in $L^2((a,b);r(x)\, dx)$ near $a$ $($resp., $b$$)$.
\end{definition}
%%%%%%%%%%%

Weyl's alternative states that the classification of an endpoint as limit point or limit circle exhausts all possibilities; that is, $\tau$ is in one of these cases (limit point or limit circle) at each endpoint of $(a,b)$ (cf., e.g., \cite[Lemma 4.1]{EGNT13}).
%%%%%%%%%%%
\begin{theorem}[Weyl's Alternative]\lb{t2.7}
If there exists a $z_0\in \bbC$ such that every solution of $\tau y = z_0 y$ lies in $L^2((a,b);r(x)\, dx)$ near $a$ $($resp., $b$$)$, then $\tau$ is in the limit circle case at $a$ $($resp., $b$$)$.
\end{theorem}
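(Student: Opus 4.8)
The plan is to carry out the classical variation-of-parameters argument; it suffices to treat the endpoint $a$, the case of $b$ being identical. Fix $z_0\in\bbC$ for which every solution of $\tau y=z_0 y$ lies in $L^2((a,b);r(x)\,dx)$ near $a$, and choose a fundamental system $u_1,u_2\in\mathfrak{D}(a,b)$ of $\tau y=z_0 y$ normalized so that the (nonconjugated) Wronskian $W:=u_1\,(pu_2')-(pu_1')\,u_2$ equals $1$ on $(a,b)$; a one-line computation using $\tau u_j=z_0 u_j$ together with $u_j,pu_j'\in\AC_{\loc}(a,b)$ shows that $W$ is constant. By hypothesis $u_1,u_2\in L^2((a,b);r(x)\,dx)$ near $a$. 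Fix $c\in(a,b)$, to be specialized below, and for $J\subseteq(a,c]$ write $\|\,\cdot\,\|_J$ for the norm of $L^2(J;r(x)\,dx)$.

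Next, let $z\in\bbC$ and let $y$ be any solution of $\tau y=z y$. Since $-(py')'+qy-z_0 r y=(z-z_0)\,r\,y$ on $(a,b)$, variation of parameters (in the first-order system formulation appropriate to Hypothesis \ref{h2.1}) yields constants $c_1,c_2\in\bbC$ depending on $y$ and $c$ with
\[
y(x)=c_1 u_1(x)+c_2 u_2(x)+(z-z_0)\int_x^{c}\big[u_1(x)u_2(t)-u_2(x)u_1(t)\big]\,r(t)\,y(t)\,dt,\qquad x\in(a,c).
\]
For $\xi\in(a,c)$ set $M(\xi):=\|y\|_{(\xi,c)}$; since $y$ is continuous on the compact interval $[\xi,c]\subset(a,b)$ and $r\in L^1_{\loc}$, we have $M(\xi)<\infty$, and $M$ is nonincreasing in $\xi$. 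Estimating the two integrals by Cauchy--Schwarz, $\int_x^{c}|u_j(t)|\,|y(t)|\,r(t)\,dt\le \|u_j\|_{(a,c)}\,M(\xi)$ for $x\ge\xi$, squaring the resulting pointwise bound for $|y(x)|$, and integrating $x$ over $(\xi,c)$ against $r\,dx$, one arrives at an inequality of the form
\[
M(\xi)^2\le A+B(c)\,M(\xi)^2,
\]
where $A$ depends only on $|c_1|,|c_2|,|z-z_0|,\|u_1\|_{(a,c)},\|u_2\|_{(a,c)}$ and $B(c)$ is a fixed multiple of $\|u_1\|_{(a,c)}^2+\|u_2\|_{(a,c)}^2$; carrying out this estimate and tracking the constants is the routine part I would not dwell on.

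Because $u_1,u_2\in L^2$ near $a$, one has $\|u_1\|_{(a,c)}^2+\|u_2\|_{(a,c)}^2\to 0$ as $c\to a^+$, so I would fix $c$ close enough to $a$ that $B(c)\le 1/2$. Then the preceding display gives $M(\xi)^2\le 2A$ for every $\xi\in(a,c)$, and letting $\xi\to a^+$ yields $\|y\|_{(a,c)}^2\le 2A<\infty$, i.e., $y\in L^2((a,b);r(x)\,dx)$ near $a$. Since $z\in\bbC$ and the solution $y$ were arbitrary, this is precisely the limit circle case at $a$ in the sense of Definition \ref{d2.6}, and the identical argument at $b$ completes the proof. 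The main point requiring care is that the bootstrap is not circular: to pass from $M(\xi)^2\le A+B(c)M(\xi)^2$ to $M(\xi)^2\le 2A$ one must already know $M(\xi)<\infty$ for each individual $\xi\in(a,c)$, which is exactly local boundedness of the continuous solution $y$ on compact subintervals of $(a,b)$. Verifying the variation-of-parameters representation in the quasi-derivative framework of Hypothesis \ref{h2.1} and the bookkeeping of $A$ and $B(c)$ are otherwise straightforward.
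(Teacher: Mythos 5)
Your argument is correct: it is the classical variation-of-parameters bootstrap (write an arbitrary solution of $\tau y = zy$ as a perturbation of the $z_0$-system $u_1,u_2$, estimate via Cauchy--Schwarz, and shrink $c$ toward $a$ so the $L^2$-norms of $u_1,u_2$ make the self-referential term absorbable), and you correctly identify the one point needing care, namely that $M(\xi)<\infty$ a priori so the absorption is not circular. The paper itself supplies no proof of Theorem \ref{t2.7} --- it recalls the result from \cite[Lemma 4.1]{EGNT13} --- and your proof is essentially the standard one given there, so there is nothing of substance to compare beyond noting that your $B(c)$ is really a multiple of $\|u_1\|_{(a,c)}^2\|u_2\|_{(a,c)}^2$ rather than of the sum of squares, which is immaterial since it still tends to $0$ as $c\downarrow a$.
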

%%%%%%%%%%%
If $z\in \bbC$ and $\tau$ is in the limit point case at an endpoint $c\in \{a,b\}$, then there is at least one solution to \eqref{2.13} which does not lie in $L^2((a,b);r(x)\, dx)$ near $c$.  It is entirely natural to ask whether there is {\it any} nontrivial solution to \eqref{2.13} which lies in  $L^2((a,b);r(x)\, dx)$ near $c$.  A nontrivial solution which  lies in  $L^2((a,b);r(x)\, dx)$ near $c$ is guaranteed to exist if $z$ is a point of {\it regular type} of $T_{\min}$.

%%%%%%%%%%%
\begin{definition}\lb{d2.8}
A point $z\in \bbC$ is a point of regular type of $T_{\min}$ if $T_{\min}-zI_{(a,b)}$ is an injection and $(T_{\min}-zI_{(a,b)})^{-1}$ is bounded.  The set of all points of regular type of $T_{\min}$ is denoted by $\mathrm{r}(T_{\min})$.
\end{definition}
%%%%%%%%%%%

%%%%%%%%%%%
\begin{lemma}[{\cite[Theorem 4.2 \& Corollary 4.3]{EGNT13}}]\lb{l2.9}
Let $c\in \{a,b\}$.  If $z\in \mathrm{r}(T_{\min})$, then there is a nontrivial solution of $\tau u = zu$ which lies in $L^2((a,b);r(x)\, dx)$ near $c$.  Moreover, this solution is unique up to constant multiples if $\tau$ is in the limit point case at $c$.
\end{lemma}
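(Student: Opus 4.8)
The plan is to establish the existence statement by combining the definition of a point of regular type with an $L^2$-near-an-endpoint estimate, and then to deduce uniqueness in the limit point case directly from Weyl's classification. First I would fix $c \in \{a,b\}$ and $z \in \mathrm{r}(T_{\min})$, so that by Definition \ref{d2.8} the operator $(T_{\min} - zI_{(a,b)})^{-1}$ is bounded on the range of $T_{\min} - zI_{(a,b)}$; equivalently, there is a constant $C \in (0,\infty)$ with $\|(T_{\min} - zI_{(a,b)})f\|_{(a,b)} \geq C^{-1}\|f\|_{(a,b)}$ for all $f \in \dom(T_{\min})$. The key idea is to exploit the two-dimensional solution space of $\tau u = zu$ on $(a,b)$: choose $u_1, u_2$ a fundamental system, and suppose for contradiction that \emph{no} nontrivial linear combination of $u_1, u_2$ lies in $L^2((a,b);r(x)\,dx)$ near $c$. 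One then constructs, for each $\varepsilon$, a function $f_\varepsilon \in \dom(T_{\min})$ supported near $c$ by cutting off an appropriate solution (or combination of solutions) with a smooth bump that vanishes near the opposite endpoint and near $c$, so that $f_\varepsilon \in \dom(T_{\min})$; since $\tau u = zu$ on the region where the cutoff is identically one, $(T_{\min} - zI_{(a,b)})f_\varepsilon$ is supported only in the transition region and its norm stays bounded, while $\|f_\varepsilon\|_{(a,b)} \to \infty$ as the support is pushed toward $c$ — contradicting the lower bound. This yields a nontrivial solution in $L^2((a,b);r(x)\,dx)$ near $c$.

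For the uniqueness assertion under the limit point hypothesis at $c$: by Definition \ref{d2.6}, in the limit point case there exists at least one solution that does \emph{not} lie in $L^2((a,b);r(x)\,dx)$ near $c$. If there were two linearly independent solutions lying in $L^2$ near $c$, they would span the entire two-dimensional solution space, forcing \emph{every} solution of $\tau u = zu$ to lie in $L^2((a,b);r(x)\,dx)$ near $c$, which contradicts the limit point condition. Hence the $L^2$-near-$c$ solutions form a subspace of dimension at most one, and combined with the existence part this dimension is exactly one, proving uniqueness up to constant multiples.

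The main obstacle I anticipate is the construction of the cutoff functions $f_\varepsilon$ and the verification that they genuinely lie in $\dom(T_{\min})$ — this requires that $f_\varepsilon$ and $\tau f_\varepsilon$ both be in $L^2$, that $f_\varepsilon$ have bracket $[f_\varepsilon,h](a) = [f_\varepsilon,h](b) = 0$ for all $h \in \dom(T_{\max})$ (which follows from compact support away from both endpoints once one checks the cutoff is genuinely trivial near $c$ and near the far endpoint), and that the resulting family exhibits the claimed norm blow-up. Getting the blow-up quantitatively right is delicate when the relevant solutions oscillate or decay, so one typically argues via a sequence of disjoint or telescoping bumps rather than a single parametrized family, and controls $\|(T_{\min}-zI_{(a,b)})f_\varepsilon\|_{(a,b)}$ using the product rule for $\tau$ applied to a cutoff times a solution. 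Since this is precisely the content of \cite[Theorem 4.2 \& Corollary 4.3]{EGNT13}, I would in practice cite that reference for the technical estimate and present only the structural argument above.
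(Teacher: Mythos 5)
First, a point of reference: the paper offers no proof of Lemma \ref{l2.9} at all; the statement is imported verbatim from \cite[Theorem 4.2 \& Corollary 4.3]{EGNT13}, so the only thing to compare against is the proof in that reference. Your uniqueness argument is correct and is exactly the standard one: the solutions of $\tau u = zu$ lying in $L^2((a,b);r(x)\,dx)$ near $c$ form a linear subspace of the two-dimensional solution space, and if that subspace had dimension two, every solution would lie in $L^2$ near $c$, contradicting the limit point hypothesis via Definition \ref{d2.6}.

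The existence half, however, contains a genuine gap, and it is not one you can close by deferring to the reference, because the reference argues along entirely different lines. In your cutoff construction one has, on the transition region, $(\tau-z)(\chi_\varepsilon u)=-r^{-1}\big[(p\chi_\varepsilon')'u+2p\chi_\varepsilon'u'\big]$. Under your standing assumption that \emph{no} nontrivial solution is $L^2$ near $c$, the quantities $u$ and $pu'$ are typically unbounded as $x\to c$, while $\chi_\varepsilon'$ must be large on a shrinking interval; there is therefore no a priori reason for $\|(T_{\min}-zI_{(a,b)})f_\varepsilon\|_{(a,b)}$ to stay bounded while $\|f_\varepsilon\|_{(a,b)}\to\infty$. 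Making this rigorous amounts to manufacturing a Weyl singular sequence from a non-$L^2$ solution, which requires choosing the transition intervals adaptively (where $\int|u|^2r\,dx$ dominates the pointwise data of $u$ and $pu'$) and is substantially harder than the lemma itself; this estimate is not "the content of" \cite[Theorem 4.2]{EGNT13}, so you would not find it there. The actual proof is shorter and avoids cutoffs and contradiction altogether: fix an interior point $c_0\in(a,b)$ and consider the minimal operator $S$ generated by $\tau|_{(a,c_0)}$ in $L^2((a,c_0);r(x)\,dx)$. Extension by zero maps $\dom(S)$ into $\dom(T_{\min})$, so the lower bound defining $\mathrm{r}(T_{\min})$ passes to $S$ and $z\in\mathrm{r}(S)$. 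The defect number $\dim\ran(S-z)^{\perp}=\dim\ker(S^*-\overline{z})$ is constant on the connected component of $\mathrm{r}(S)$ containing $z$; since $\mathrm{r}(S)$ is open and contains both open half-planes, that component contains nonreal points, where the defect number is at least $1$ because $c_0$ is a regular endpoint for $\tau|_{(a,c_0)}$. Hence there is a nontrivial $L^2((a,c_0))$ solution of $\tau u=\overline{z}u$, and its complex conjugate (the coefficients being real) is a nontrivial solution of $\tau u=zu$ lying in $L^2$ near $a$; the endpoint $b$ is handled symmetrically. I would either reproduce that argument or simply cite the reference as the paper does, rather than present the cutoff sketch.
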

%%%%%%%%%%%

The limit point/limit circle classification of endpoints may be characterized in terms of the Lagrange bracket and functions in $\dom(T_{\max})$.

%%%%%%%%%%%
\begin{lemma}[{\cite[Lemma 4.4]{EGNT13}}]\lb{l2.10}
Assume Hypothesis \ref{h2.1}.  If $c\in \{a,b\}$, then $\tau$ is in the limit point case at $c$ if and only if
\begin{equation}\lb{2.14}
[f,g](c)=0,\quad f,g\in \dom(T_{\max}),
\end{equation}
and $\tau$ is in the limit circle case at $c$ if and only if there exists $f\in \dom(T_{\max})$ such that
\begin{equation}\lb{2.15}
\text{$[f,f](c)=0$ and $[f,g](c)\neq0$ for some $g\in \dom(T_{\max})$}.
\end{equation}
\end{lemma}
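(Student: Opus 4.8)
The plan is to establish the two stated equivalences by leveraging Green's formula (Lemma \ref{l2.3}) together with Weyl's alternative (Theorem \ref{t2.7}) and the structure of $\dom(T_{\max})$ as a finite-rank extension of $\dom(T_{\min})$. I would treat the endpoint $c=a$ throughout; the case $c=b$ is entirely analogous. Fix once and for all a point $z_0\in\bbR$ (so that $z_0\in\mathrm{r}(T_{\min})$, since $T_{\min}$ is symmetric) and let $u_1,u_2$ be a fundamental system of solutions of $\tau y = z_0 y$ on $(a,b)$; these exist and lie in $\mathfrak{D}(a,b)$ by the standard existence/uniqueness theory under Hypothesis \ref{h2.1}. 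By Lemma \ref{l2.5}, whenever a solution lies in $\dom(T_{\max})$ near $a$, the relevant Lagrange brackets have finite limits at $a$.

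For the limit point direction, suppose $\tau$ is in the limit point case at $a$. First I would show that $[f,g](a)=0$ for all $f,g\in\dom(T_{\max})$. The key observation is that one may reduce to the case $f=g$ by polarization: the sesquilinear form $(f,g)\mapsto [f,g](a)$ can be recovered from the "diagonal" values $[h,h](a)$ via the usual four-term (or two-term, if only real scalars matter) identity, using the bilinearity of $(f,pg')\mapsto f\overline{(pg')} - (pf')\overline{g}$ and the continuity of the limit. So it suffices to prove $[f,f](a)=0$ for every $f\in\dom(T_{\max})$. This is a classical Weyl-type fact: if $[f,f](a)\neq 0$ for some $f\in\dom(T_{\max})$, then near $a$ the solution space of $\tau y=z_0 y$ together with $f$ would force two linearly independent solutions to be square-integrable near $a$ — using Green's formula \eqref{2.7} to compute $\int_\alpha^\beta |f|^2\,r\,dx$-type quantities and the constancy of the Wronskian of solutions — contradicting the limit point assumption via Theorem \ref{t2.7}. (Concretely: the limit point case is equivalent to the statement that $T_{\min}$ has deficiency index $0$ "at $a$," i.e., among solutions of $\tau y = z_0 y$ only a one-dimensional subspace is in $L^2$ near $a$; and $[f,f](a)\neq 0$ would exhibit an excess defect space.) Conversely, if \eqref{2.14} holds, then in particular $[f,g](a)=0$ for all solutions $f,g$ of $\tau y = z_0 y$ that happen to lie in $\dom(T_{\max})$ near $a$; if $\tau$ were limit circle at $a$, all of $u_1,u_2$ lie in $L^2$ near $a$, hence (after a standard cutoff to make them globally $L^2$ and in $\dom(T_{\max})$) would have $[u_1,u_2](a)=W(u_1,u_2)$, the constant nonzero Wronskian — a contradiction. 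Thus \eqref{2.14} forces the limit point case.

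For the limit circle direction, suppose $\tau$ is in the limit circle case at $a$. Then both $u_1$ and $u_2$ lie in $L^2((a,b);r\,dx)$ near $a$. Multiplying by a smooth cutoff $\chi$ equal to $1$ near $a$ and $0$ near $b$ produces functions $f_j=\chi u_j\in\dom(T_{\max})$ (they are in $L^2$ with $\tau f_j\in L^2$, since near $a$ they equal $u_j$ and near $b$ they vanish). One computes $[f_1,f_1](a)=[u_1,u_1](a)$. Now I would choose the fundamental system so that $u_1$ is real-valued near $a$ — always possible since $z_0$ is real, so $\overline{u_j}$ solves the same equation — whence $[u_1,u_1](a) = u_1\overline{(pu_1')} - (pu_1')\overline{u_1}\big|_{x\to a^+} = 0$ by reality. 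On the other hand $[u_1,u_2](a) = \pm W(u_1,u_2)\neq 0$ since $u_1,u_2$ are linearly independent and the Wronskian of solutions is a nonzero constant; by Lemma \ref{l2.5} this limiting value is attained and is the same nonzero constant. Setting $f=f_1$ and $g=f_2$ gives \eqref{2.15}. Conversely, if some $f\in\dom(T_{\max})$ satisfies \eqref{2.15}, then $[f,g](a)\neq 0$ for some $g$ immediately rules out the limit point case by the equivalence already proved, so by Weyl's alternative $\tau$ is in the limit circle case at $a$.

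The main obstacle I anticipate is making rigorous the "excess defect space" argument in the limit point direction — i.e., showing cleanly that $[f,f](a)\neq 0$ for some $f\in\dom(T_{\max})$ actually produces a second linearly independent $L^2$-near-$a$ solution of $\tau y = z_0 y$. The clean route is to invoke the known dimension count: $\dim\big(\dom(T_{\max})/\dom(T_{\min})\big)$ near $a$ equals the number of linearly independent solutions of $\tau y=z_0 y$ in $L^2$ near $a$ (this is essentially Lemma \ref{l2.9} and the deficiency-index theory), which is $1$ in the limit point case and $2$ in the limit circle case; and the form $[\,\cdot\,,\,\cdot\,](a)$ on $\dom(T_{\max})$ descends to a nondegenerate symplectic-type form on that quotient. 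Nondegeneracy of that form on a one-dimensional-modulo-$T_{\min}$ space is impossible for an alternating-type form, forcing $[f,f](a)\equiv 0$ in the limit point case. Since this machinery is developed in \cite{EGNT13}, for the present exposition I would cite it rather than reprove it, keeping the proof to the cutoff constructions and the Wronskian/Plücker bookkeeping above.
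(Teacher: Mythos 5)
First, a point of comparison: the paper offers no proof of this lemma at all --- it is quoted verbatim from \cite[Lemma 4.4]{EGNT13} --- so there is no internal argument to measure yours against. Judged on its own, your proposal handles the limit circle direction essentially correctly (modulo replacing the smooth cutoff $\chi u_j$ by the Naimark patching lemma, since $p(\chi u_j)'$ need not be locally absolutely continuous when $p$ is merely measurable with $p^{-1}\in L^1_{\loc}$), and both converse implications follow formally once the forward implications are in place.

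The genuine gap is the forward limit point direction: that $\tau$ in the limit point case at $a$ forces $[f,f](a)=0$ for every $f\in\dom(T_{\max})$. Your polarization reduction to the diagonal is valid, but the step that is supposed to derive a contradiction from $[f,f](a)\neq 0$ is never actually carried out, and the fallback justification fails on several counts. The bracket $[\,\cdot\,,\,\cdot\,](a)$ is sesquilinear and anti-Hermitian, $[g,f](a)=-\overline{[f,g](a)}$, not alternating: a nondegenerate anti-Hermitian form exists in any dimension (e.g.\ $B(x,y)=ix\overline{y}$ on $\bbC$ has $B(x,x)=i|x|^2\neq0$), so ``nondegeneracy on an odd-dimensional quotient is impossible'' does not apply; moreover the quotient localized at $a$ has dimension $0$ (limit point) or $2$ (limit circle), never $1$, so the bookkeeping is off. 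The supporting claims are also false as stated: for real $z_0$ the limit point case at $a$ does not guarantee a one-dimensional space of solutions of $\tau y=z_0y$ in $L^2$ near $a$ (take $-y''=y$ on $(0,\infty)$ at the endpoint $\infty$: no solution is square integrable there), and a real $z_0$ need not be a point of regular type of the symmetric operator $T_{\min}$. The standard repair is structural rather than spectral: restrict to $(a,c)$ with $c\in(a,b)$, so that $c$ is a regular endpoint and the corresponding minimal operator has deficiency indices $(1,1)$ in the limit point case; then $\dom\big(T_{\max}^{(a,c)}\big)=\dom\big(T_{\min}^{(a,c)}\big)\dotplus\mathrm{span}\{w_1,w_2\}$ with $w_1,w_2$ chosen by patching at the regular endpoint $c$ to vanish near $a$, whence $[f,g](a)=[f_0,g_0](a)=0$ directly from the characterization \eqref{2.12} of the minimal domain. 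Since this decomposition is precisely the content of \cite[Lemma 4.4]{EGNT13} and its surrounding machinery, ``citing that machinery,'' as you propose, amounts to citing the lemma you are meant to prove.
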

%%%%%%%%%%%

The significance of Weyl's limit point/limit circle classification is that it provides a means for completely characterizing the deficiency indices of $T_{\min}$.  Recall that if $S$ is a densely defined symmetric operator in a Hilbert space $\cH$, then the {\it deficiency indices} of $S$ are defined by
\begin{equation}\lb{2.16}
d_{\pm}(S) := \dim(\ker(S^*\mp i I_{\cH})).
\end{equation}
By von Neumann's theory of self-adjoint extensions (cf., e.g., \cite[Section X.1]{RS75}), $S$ possesses self-adjoint extensions if and only if $d_+(S)=d_-(S)$.  In the case of $T_{\min}$, the deficiency indices are always equal and they assume one of only three possible values, depending upon the number of limit circle endpoints, as the following theorem shows.

%%%%%%%%%%%
\begin{theorem}[{\cite[Theorem 4.6]{EGNT13}}]\lb{t2.11}
If Hypothesis \ref{h2.1} is satisfied, then $d_+(T_{\min})=d_-(T_{\min})$ and
\begin{equation}\lb{2.17}
d_{\pm}(T_{\min})=
\begin{cases}
0, & \text{if $\tau$ is limit circle at no endpoint of $(a,b)$},\\
1, & \text{if $\tau$ is limit circle at exactly one endpoint of $(a,b)$},\\
2, & \text{if $\tau$ is limit circle at both endpoints of $(a,b)$}.
\end{cases}
\end{equation}
In particular, $T_{\min}$ possesses self-adjoint extensions.
\end{theorem}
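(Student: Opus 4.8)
The plan is to establish Theorem \ref{t2.11} in three steps: reducing the problem to a dimension count on the solution space via a regularity argument, counting $L^2$-solutions endpoint-by-endpoint using Weyl's alternative, and then assembling the local contributions into the global deficiency index. First I would fix $z = i$ (and separately $z = -i$), noting that since $i \in \mathrm{r}(T_{\min})$ (because $T_{\min}$ is symmetric, so $\|(T_{\min} - iI_{(a,b)})f\|^2 = \|(T_{\min})f\|^2 + \|f\|^2 \geq \|f\|^2$ gives injectivity with bounded inverse), Lemma \ref{l2.9} applies at each endpoint. The key structural fact I would invoke is that $\ker(T_{\max} - iI_{(a,b)})$ consists precisely of the solutions $y \in \mathfrak{D}(a,b)$ of $\tau y = iy$ that additionally lie in $L^2((a,b);r(x)\,dx)$ globally on $(a,b)$; since $p^{-1}, q, r \in L^1_{\loc}$, every solution is automatically in $L^2$ near any interior point and on any compact subinterval, so global square-integrability is equivalent to being $L^2$ near $a$ \emph{and} near $b$ simultaneously.

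Next I would carry out the local count. The solution space of $\tau y = iy$ on $(a,b)$ is two-dimensional (this is the standard existence/uniqueness theory for Sturm--Liouville equations under Hypothesis \ref{h2.1}). For each endpoint $c \in \{a,b\}$, let $n_c \in \{1, 2\}$ denote the dimension of the subspace of solutions lying in $L^2$ near $c$. By Lemma \ref{l2.9}, $n_c \geq 1$ always. By Definition \ref{d2.6} together with Weyl's alternative (Theorem \ref{t2.7}), $n_c = 2$ exactly when $\tau$ is in the limit circle case at $c$, and $n_c = 1$ exactly when $\tau$ is in the limit point case at $c$. Then $d_+(T_{\min}) = \dim\!\big(\ker(T_{\max} - iI_{(a,b)})\big)$ equals the dimension of the intersection of the two subspaces (of dimensions $n_a$ and $n_b$) inside the two-dimensional total solution space. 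A dimension-counting argument in a $2$-dimensional vector space gives: if both $n_a = n_b = 2$, the intersection is the whole space, so $d_+ = 2$; if exactly one equals $2$ and the other equals $1$, the intersection is the one-dimensional subspace, so $d_+ = 1$; if $n_a = n_b = 1$, then in the limit point/limit point case one must show the intersection is trivial, i.e.\ the $L^2$-near-$a$ solution and the $L^2$-near-$b$ solution are linearly independent --- otherwise a single solution would be globally $L^2$, hence in $\dom(T_{\max})$, and one checks via Lemma \ref{l2.10} (the limit point characterization $[f,g](c) = 0$ for all $f,g \in \dom(T_{\max})$) combined with Green's formula (Lemma \ref{l2.3}) that this forces a contradiction with $z = i$ being non-real. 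The same count with $z = -i$ yields $d_-(T_{\min})$, and since the limit point/limit circle classification is independent of $z$, one gets $d_+(T_{\min}) = d_-(T_{\min})$ and formula \eqref{2.17}.

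The main obstacle is the limit point/limit point case: showing that the (unique up to scalars) $L^2$-near-$a$ solution $u_a$ and the $L^2$-near-$b$ solution $u_b$ of $\tau y = iy$ cannot be proportional, so that $d_\pm = 0$. The clean way to see this is: if $u_a = c\, u_b$ for some constant, then this common solution $u$ is in $L^2$ globally and satisfies $\tau u = iu$, hence $u \in \ker(T_{\max} - iI_{(a,b)})$ with $u \neq 0$; but then $\langle u, u \rangle_{(a,b)} \, (i - \bar i) = \int_a^b (\bar u \, \tau u - u \, \overline{\tau u})\, r\, dx = [u,u](b) - [u,u](a)$ by Green's formula \eqref{2.7}, and both bracket limits vanish by the limit point characterization \eqref{2.14}; this forces $2i\|u\|^2_{(a,b)} = 0$, so $u = 0$, a contradiction. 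Once this is in place, the remaining cases are immediate from the dimension count, and the final assertion that $T_{\min}$ possesses self-adjoint extensions follows from $d_+ = d_-$ via von Neumann's theorem. I would also remark that the references \cite[Theorem 4.6]{EGNT13} and the cited lemmas can be invoked to shorten the write-up, but the argument above is self-contained given the results already recalled in this section.
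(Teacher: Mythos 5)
Your proof is correct, and it is essentially the standard argument underlying the cited result: the paper itself offers no proof of Theorem \ref{t2.11}, quoting it directly from \cite[Theorem 4.6]{EGNT13}, and your reduction of $d_{\pm}(T_{\min})=\dim\ker(T_{\max}\mp iI_{(a,b)})$ to a dimension count of $L^2$-near-$a$ and $L^2$-near-$b$ solution subspaces inside the two-dimensional solution space is exactly how that reference proceeds. The one step that genuinely needs an argument---triviality of the intersection in the limit point/limit point case---is handled correctly by your Green's formula computation $2i\|u\|_{(a,b)}^2=[u,u](b)-[u,u](a)=0$ using \eqref{2.14}.
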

%%%%%%%%%%%

By Theorem \ref{t2.11}, the minimal operator $T_{\min}$ has self-adjoint extensions.  If $T$ is a self-adjoint extension of $T_{\min}$, then the relation $T_{\min}\subseteq T$ and \eqref{2.9} imply
\begin{equation}\lb{2.18}
T_{\min}\subseteq T \subseteq T_{\max}.
\end{equation}
Hence, $T$ is a self-adjoint extension of $T_{\min}$ if and only if $T$ is a self-adjoint restriction of $T_{\max}$.

%%%%%%%%%%%
\begin{remark}\lb{r2.12}
If $T$ is a self-adjoint extension of $T_{\min}$ then $ \rho(T)\subset \mathrm{r}(T_{\min})$.  In particular, by Lemma \ref{l2.9}, if $c\in \{a,b\}$ and $z\in \rho(T)$, then there is a nontrivial solution of $\tau u = zu$ which lies in $L^2((a,b);r(x)\, dx)$ near $c$.  This solution is unique up to constant multiples if $c$ is a limit point endpoint.\hfill $\diamond$
\end{remark}
%%%%%%%%%%%

Next, we recall the notion of what it means for two self-adjoint extensions of a symmetric operator $S$ to be {\it relatively prime}.

%%%%%%%%%%%
\begin{definition}\lb{d2.13}
If $T$ and $T'$ are self-adjoint extensions of a symmetric operator $S$, then the {\it maximal common part} of $T$ and $T'$ is the operator $C_{T,T'}$ defined by
\begin{equation}\lb{2.19}
C_{T,T'}u = Tu,\quad u\in \dom(C_{T,T'})=\{f\in \dom(T)\cap\dom(T')\,|\, Tf=T'f\}.
\end{equation}
Moreover, $T$ and $T'$ are said to be relatively prime with respect to $S$ if $C_{T,T'}=S$.
\end{definition}
%%%%%%%%%%%

Since a self-adjoint extension $T$ of $T_{\min}$ is also a self-adjoint restriction of $T_{\max}$, to characterize the self-adjoint extension $T$, it suffices to characterize the domain of $T$ (the action of $T$ being that of $T_{\max}$).  The domain of a self-adjoint extension can be characterized in terms of the Lagrange bracket and {\it boundary condition bases}.

%%%%%%%%%%%
\begin{definition}[{\cite[Definition 10.4.3]{Ze05}}]\lb{d2.14}
A pair of real-valued functions $\{\phi,\psi\}$ on $(a,b)$ is called a boundary condition basis at $a$ $($resp., $b$$)$ if $\phi,\psi\in \dom(T_{\max})$ and $[\psi,\phi](a)=1$ $($resp., $[\psi,\phi](b)=1$$)$.  %If $\{\phi,\psi\}$ is simultaneously a boundary condition basis at $a$ and $b$, then $\{\phi,\psi\}$ is called a boundary condition basis on $(a,b)$.
\end{definition}
%%%%%%%%%%%

Lemma \ref{l2.10} (in particular, \eqref{2.14}) shows that a boundary condition basis cannot exist at a limit point endpoint.  However, a boundary condition basis always exists at a limit circle endpoint.
%%%%%%%%%%%
\begin{lemma}\lb{l2.15}
Assume Hypothesis \ref{h2.1}.  If $c\in \{a,b\}$ and $\tau$ is in the limit circle case at $c$, then there exists a boundary condition basis $\{\phi_c,\psi_c\}$ at $c$.
\end{lemma}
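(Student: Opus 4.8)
The plan is to construct the boundary condition basis explicitly from the characterization of limit circle endpoints given in Lemma~\ref{l2.10}. Suppose $\tau$ is in the limit circle case at $c\in\{a,b\}$. By the ``limit circle'' half of Lemma~\ref{l2.10} (condition \eqref{2.15}), there exist $f,g\in\dom(T_{\max})$ with $[f,f](c)=0$ and $[f,g](c)\neq 0$. The first issue is that $f$ and $g$ need not be real-valued, whereas Definition~\ref{d2.14} demands a \emph{real-valued} pair. I would address this by passing to real and imaginary parts: since $p$, $q$, $r$ are real-valued, $\dom(T_{\max})$ is closed under complex conjugation (as noted in the excerpt right after Definition~\ref{d2.4}), hence under taking $\Re$ and $\Im$. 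Expanding $[f,g](c)=[\Re f+i\Im f,\,\Re g+i\Im g](c)$ by sesquilinearity of the Lagrange bracket and using that $[u,v](c)$ is real for real-valued $u,v$ (immediate from \eqref{2.4}), one sees that at least one of the four real brackets $[\Re f,\Re g](c)$, $[\Re f,\Im g](c)$, $[\Im f,\Re g](c)$, $[\Im f,\Im g](c)$ is nonzero. Thus there exist real-valued $u,v\in\dom(T_{\max})$ with $[u,v](c)\neq 0$.

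Next I would arrange the normalization $[v,u](c)=1$. Set $\mu:=[u,v](c)\in\bbR\setminus\{0\}$; then $[v,u](c)=-\overline{\mu}=-\mu$ by the conjugate-antisymmetry of the bracket (again from \eqref{2.4}). Replacing $u$ by $-\mu^{-1}u$ (still real-valued, still in $\dom(T_{\max})$, since $\dom(T_{\max})$ is a linear space) yields $[v,u](c)=-\mu^{-1}[v,u'](c)\cdot\mu=1$ after relabeling — more carefully, with $\widetilde u:=\mu^{-1}u$ one gets $[v,\widetilde u](c)=\mu^{-1}[v,u](c)=\mu^{-1}(-\mu)=-1$, so instead take $\widetilde u:=-\mu^{-1}u$, giving $[v,\widetilde u](c)=1$. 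Then the pair $\{\phi_c,\psi_c\}:=\{\widetilde u,v\}$ satisfies $\phi_c,\psi_c\in\dom(T_{\max})$, both real-valued, and $[\psi_c,\phi_c](c)=[v,\widetilde u](c)=1$, which is exactly the requirement of Definition~\ref{d2.14}. (The condition $[\phi_c,\phi_c](c)=0$ from \eqref{2.15} is automatic for any real-valued $\phi_c$, since $[\phi_c,\phi_c](c)=\phi_c(pf_c')-\ldots$ evaluates to a real number equal to its own negative; it is not even required by Definition~\ref{d2.14}, so I would not dwell on it.)

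I expect the only genuine subtlety — the ``main obstacle,'' such as it is — to be the reduction to real-valued functions, i.e.\ verifying carefully that invoking $\Re$ and $\Im$ preserves membership in $\dom(T_{\max})$ and that the expansion of $[f,g](c)$ by sesquilinearity forces one of the real component brackets to be nonzero. Everything else is bookkeeping: that $\dom(T_{\max})$ is a vector space closed under conjugation is already recorded in Section~\ref{s2}, that $[f,g](c)$ exists as a finite limit follows from Lemma~\ref{l2.5} (both $f,g$ lie in $\dom(T_{\max})$, hence in $\dom(T_{\max})$ near $c$), and the rescaling step is a one-line linear-algebra normalization. I would write the proof in three short paragraphs following exactly the structure above: (i) produce real-valued $u,v$ with $[u,v](c)\neq0$ from Lemma~\ref{l2.10}; (ii) rescale to get the normalization $[v,u](c)=1$; (iii) conclude.
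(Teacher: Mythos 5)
Your proposal is correct and follows essentially the same route as the paper: invoke the limit circle characterization of Lemma \ref{l2.10}, pass to real and imaginary parts (using that $\dom(T_{\max})$ is a vector space closed under conjugation) to obtain real-valued $u,v\in\dom(T_{\max})$ with $[u,v](c)\neq 0$, and then rescale one of them to achieve the normalization $[\psi_c,\phi_c](c)=1$. The only cosmetic differences are that the paper decomposes $f$ and $g$ sequentially rather than expanding all four real brackets at once, and it rescales $\wti f$ (assigning it to $\psi_c$) rather than the first argument; both normalizations are equivalent.
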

%%%%%%%%%%%
\begin{proof}
Let $c\in \{a,b\}$ and suppose $\tau$ is in the limit circle case at $c$.  By Lemma \ref{l2.10}, there exists $f\in \dom(T_{\max})$ such that \eqref{2.15} holds.  Writing $f=\Re(f)+i\Im(f)$ and applying linearity of the Lagrange bracket, one infers that either $[\Re(f),g](a)\neq 0$ or $[\Im(f),g](a)\neq 0$.  Taking $\wti f = \Re(f)$ or $\wti f = \Im(f)$ accordingly, one obtains a real-valued function $\wti f\in \dom(T_{\max})$ with $[\wti f,g](a)\neq 0$.  Similarly, decomposing $g$ into its real and imaginary parts, one obtains a real-valued function $\wti g\in \dom(T_{\max})$ such that $[\wti f,\wti g](a)\neq 0$.  Taking $\phi_c = \wti g$ and $\psi_c = \{[\wti f,\wti g](a)\}^{-1}\wti f$, one infers that $\{\phi_c,\psi_c\}$ is a boundary condition basis at $c$.
\end{proof}
%%%%%%%%%%%

The next lemma provides a characterization of $T_{\min}$ in terms of boundary condition bases.

%%%%%%%%%%%
\begin{lemma}\lb{l2.16}
Assume Hypothesis \ref{h2.1}.  The following statements $(i)$ and $(ii)$ hold.\\
$(i)$  If $\tau$ is in the limit circle case at $a$, $\{\phi_a,\psi_a\}$ is a boundary condition basis at $a$, and $\tau$ in the limit point case at $b$, then
\begin{equation}\lb{2.20}
\dom(T_{\min}) = \{g\in \dom(T_{\max})\,|\, [g,\phi_a](a)=[g,\psi_a](a)=0\}.
\end{equation}
An analogous statement holds if $\tau$ is in the limit point case at $a$ and in the limit circle case at $b$.\\
$(ii)$  If $\tau$ is in the limit circle case at $a$ and $b$ and $\{\phi_c,\psi_c\}$ is a boundary condition basis at the endpoint $c\in\{a,b\}$, then
\begin{equation}\lb{2.21}
\dom(T_{\min}) = \{g\in \dom(T_{\max})\,|\, [g,\phi_a](a)=[g,\psi_a](a)=[g,\phi_b](b)=[g,\psi_b](b)=0\}.
\end{equation}
\end{lemma}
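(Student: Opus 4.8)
The plan is to derive both parts from the intrinsic description of $\dom(T_{\min})$ in \eqref{2.12}, together with one auxiliary identity which I would isolate first: at a limit circle endpoint $c$ with boundary condition basis $\{\phi_c,\psi_c\}$, the Lagrange bracket of any two $g,h\in\dom(T_{\max})$ ought to satisfy
\begin{equation}\label{eq:l216aux}
[g,h](c) = [g,\psi_c](c)\,\overline{[h,\phi_c](c)} - [g,\phi_c](c)\,\overline{[h,\psi_c](c)}.
\end{equation}
Granting \eqref{eq:l216aux}, the inclusion ``$\subseteq$'' in \eqref{2.20} and \eqref{2.21} is immediate from \eqref{2.12}: for $g\in\dom(T_{\min})$ one has $[g,h](a)=[g,h](b)=0$ for all $h\in\dom(T_{\max})$, and since $\phi_a,\psi_a$ (and, in part $(ii)$, also $\phi_b,\psi_b$) lie in $\dom(T_{\max})$ by Definition \ref{d2.14}, all of the displayed bracket conditions follow by specializing $h$.

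To prove \eqref{eq:l216aux} I would apply the Pl\"ucker-type identity of Lemma \ref{l2.2} with $(f_1,f_2,f_3,f_4)=(g,h,\phi_c,\psi_c)$; since $\phi_c$ and $\psi_c$ are real-valued, \eqref{2.5} collapses to
\begin{equation}\label{eq:l216pl}
[g,h](x)\,[\phi_c,\psi_c](x) + [g,\phi_c](x)\,[\psi_c,h](x) + [g,\psi_c](x)\,[\overline{h},\phi_c](x) = 0, \qquad x\in(a,b).
\end{equation}
Because $g$, $h$, $\overline{h}$ (using the remark following Definition \ref{d2.4}), $\phi_c$, and $\psi_c$ all lie in $\dom(T_{\max})$ near $c$, each bracket in \eqref{eq:l216pl} has a finite limit as $x\to c$ by Lemma \ref{l2.5}, so one may pass to the limit. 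The remaining work is algebraic bookkeeping: the antisymmetry relation $[u,v]=-\overline{[v,u]}$ (valid for all $u,v\in\mathfrak{D}(a,b)$), combined with the normalization $[\psi_c,\phi_c](c)=1$, gives $[\phi_c,\psi_c](c)=-1$; and real-valuedness of $\phi_c,\psi_c$ gives $[\psi_c,h](c)=-\overline{[h,\psi_c](c)}$ and $[\overline{h},\phi_c](c)=\overline{[h,\phi_c](c)}$. Substituting these into the limit of \eqref{eq:l216pl} and solving for $[g,h](c)$ produces \eqref{eq:l216aux}.

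Finally, for ``$\supseteq$'': given $g\in\dom(T_{\max})$ satisfying the bracket conditions on the right-hand side of \eqref{2.20} (resp.\ \eqref{2.21}), applying \eqref{eq:l216aux} at each limit circle endpoint $c$, where $[g,\phi_c](c)=[g,\psi_c](c)=0$, yields $[g,h](c)=0$ for every $h\in\dom(T_{\max})$; at a limit point endpoint---which arises only at $b$ in part $(i)$---Lemma \ref{l2.10}, namely \eqref{2.14}, already forces $[g,h](b)=0$ for all such $h$. Hence $[g,h](a)=[g,h](b)=0$ for every $h\in\dom(T_{\max})$, and \eqref{2.12} gives $g\in\dom(T_{\min})$; the ``analogous statement'' in part $(i)$ follows by interchanging the roles of $a$ and $b$. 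I do not expect a serious obstacle here, since the lemma is essentially a repackaging of the Pl\"ucker identity together with \eqref{2.12}; the one point requiring care is the conjugate-linear bookkeeping in reducing \eqref{eq:l216pl} to \eqref{eq:l216aux}, where one must track that \eqref{2.4} is conjugate-linear in its second argument and exploit that the boundary condition basis functions are real-valued.
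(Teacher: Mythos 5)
Your proposal is correct and follows essentially the same route as the paper: both directions rest on the characterization \eqref{2.12} of $\dom(T_{\min})$, the Pl\"ucker-type identity of Lemma \ref{l2.2} applied with $(f_1,f_2,f_3,f_4)=(g,h,\phi_c,\psi_c)$ at the limit circle endpoint(s), the normalization $[\phi_c,\psi_c](c)=-1$, and Lemma \ref{l2.10} at the limit point endpoint. The only cosmetic difference is that you isolate the general expansion \eqref{eq:l216aux} of $[g,h](c)$ in terms of the boundary condition basis, whereas the paper applies the Pl\"ucker identity directly and lets the two extra terms vanish on the spot.
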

%%%%%%%%%%%
\begin{proof}
We provide a proof of $(i)$; the proof of $(ii)$ is similar.  Suppose $\tau$ is in the limit circle case at $a$ with $\{\phi_a,\psi_a\}$ a boundary condition basis at $a$, and suppose $\tau$ is in the limit point case at $b$.  Denote the set on the right-hand side in \eqref{2.20} by $\cA$, and let $g\in \cA$, so that
\begin{equation}\lb{2.22}
[g,\phi_a](a)=[g,\psi_a](a)=0.
\end{equation}
If $h\in \dom(T_{\max})$, then an application of Lemma \ref{2.2} with the choices $f_1=g$, $f_2=h$, $f_3=\phi_a$, and $f_4=\psi_a$ yields
\begin{align}
0&=[g,h](a)[\phi_a,\psi_a](a)+[g,\phi_a](a)[\psi_a,h](a)+[g,\psi_a](a)[\ol{h},\phi_a](a)\no\\
&= -[g,h](a).
	\lb{2.23}
\end{align}
Therefore, $[g,h](a)=0$.  In addition, since $\tau$ is in the limit point case at $b$, Lemma \ref{l2.10} implies $[g,h](b)=0$.  Since $h\in \dom(T_{\max})$ was arbitrary, it follows from \eqref{2.12} that $g\in \dom(T_{\min})$.  Hence, $\cA\subseteq \dom(T_{\min})$.

Conversely, if $g\in \dom(T_{\min})$, then $[g,h](a)=0$ for all $h\in \dom(T_{\max})$ by \eqref{2.12}.  Separately choosing $h=\phi_a$ and $h=\psi_a$, one concludes that $g\in \cA$.  Hence, $\dom(T_{\min})\subseteq\cA$.  Having shown the two set inclusions, \eqref{2.20} follows.
\end{proof}
%%%%%%%%%%%

Next, we recall several theorems on the parametrization of the self-adjoint extensions of $T_{\min}$.  The precise form of the self-adjoint extensions depends on the limit point/limit circle classification of $\tau$ at each of the endpoints $\{a,b\}$.  One of the primary reasons for stating the parametrizations here is to introduce notation to be used in later sections.  To slightly shorten the statement of theorems and to make assumptions clear, we introduce the following basic hypothesis.

%%%%%%%%%%%
\begin{hypothesis}\lb{h2.17}
In addition to Hypothesis \ref{h2.1}, let $T_{\max}$ and $T_{\min}$ denote the maximal and minimal operators defined by \eqref{2.8} and \eqref{2.9} $($equivalently, \eqref{2.12}$)$, respectively.
\end{hypothesis}
%%%%%%%%%%%

To begin with, if $\tau$ is in the limit point case at both $a$ and $b$, then $T_{\min}$ is a self-adjoint operator.

%%%%%%%%%%%
\begin{theorem}[{\cite[Theorem 5.2]{EGNT13}}]\lb{t2.18}
Assume Hypothesis \ref{h2.17}.  If $\tau$ is in the limit point case at both $a$ and $b$, then $T_{\min}=T_{\max}$.  That is, $T_{\min}$ is self-adjoint and, therefore, possesses no proper self-adjoint extensions.
\end{theorem}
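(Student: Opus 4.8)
The plan is to combine the Lagrange-bracket characterization of the minimal operator in \eqref{2.12} with the limit point criterion of Lemma \ref{l2.10}. First I would record that the inclusion $T_{\min}\subseteq T_{\max}$ holds unconditionally, being a direct consequence of \eqref{2.9} and \eqref{2.10}. Since both operators act on their domains as the differential expression $\tau$, the theorem therefore reduces to establishing the reverse inclusion at the level of domains, namely $\dom(T_{\max})\subseteq \dom(T_{\min})$.

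Next, I would invoke the limit point hypothesis at each endpoint. By Lemma \ref{l2.10} (specifically the limit point direction, equation \eqref{2.14}), the assumption that $\tau$ is in the limit point case at $a$ and at $b$ gives $[f,g](a)=[f,g](b)=0$ for all $f,g\in\dom(T_{\max})$. In particular, an arbitrary $g\in\dom(T_{\max})$ satisfies $[g,h](a)=[g,h](b)=0$ for every $h\in\dom(T_{\max})$, which is precisely the defining condition for membership in $\dom(T_{\min})$ as given in \eqref{2.12}. Hence $g\in\dom(T_{\min})$, so $\dom(T_{\max})\subseteq\dom(T_{\min})$, and combined with the opposite inclusion this yields $T_{\min}=T_{\max}$.

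It then remains to deduce the self-adjointness statement and the absence of proper self-adjoint extensions. Self-adjointness is immediate: $T_{\min}$ is densely defined and symmetric, and $(T_{\min})^{*}=T_{\max}=T_{\min}$ by \eqref{2.10} together with the equality just proved. For the final assertion, if $T$ were a self-adjoint extension of $T_{\min}$, then \eqref{2.18} would force $T_{\min}\subseteq T\subseteq T_{\max}=T_{\min}$, whence $T=T_{\min}$.

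I do not anticipate any genuine obstacle: the statement is essentially a corollary of the two preceding lemmas, and the only points requiring a modicum of care are invoking the correct direction of the limit point/limit circle dichotomy in Lemma \ref{l2.10} and observing explicitly that $T_{\min}$ and $T_{\max}$ agree in their \emph{action}, not merely in their domains, so that the domain equality upgrades to operator equality.
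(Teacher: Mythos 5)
Your proof is correct. The paper states Theorem \ref{t2.18} without proof, deferring to \cite[Theorem 5.2]{EGNT13}, and your argument is exactly the standard one: the limit point characterization \eqref{2.14} of Lemma \ref{l2.10} at both endpoints makes the defining conditions of $\dom(T_{\min})$ in \eqref{2.12} vacuous, so $\dom(T_{\max})\subseteq\dom(T_{\min})$, and self-adjointness then follows from $(T_{\min})^*=T_{\max}=T_{\min}$.
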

%%%%%%%%%%%

In the case of exactly one limit circle endpoint, all self-adjoint extensions of $T_{\min}$ are characterized by a separated boundary condition using a boundary condition basis at the limit circle endpoint.

%%%%%%%%%%%
\begin{theorem}[{\cite[Theorem 6.2]{EGNT13}}]\lb{t2.19}
Assume Hypothesis \ref{h2.17} and let $c\in \{a,b\}$.  Suppose $\tau$ is in the limit circle case at $c$, $\{\phi_c,\psi_c\}$ is a boundary condition basis at $c$, and that $\tau$ is in the limit point case at the other endpoint.  If $\theta\in [0,\pi)$, then the operator $T_{\theta}$ defined by
\begin{equation}\lb{2.24}
\begin{split}
&T_{\theta}f = T_{\max}f,\\
&f\in \dom(T_{\theta}) = \{g\in \dom(T_{\max})\,|\, \cos(\theta)[g,\phi_c](c)+\sin(\theta)[g,\psi_c](c)=0\},
\end{split}
\end{equation}
is a self-adjoint extension of $T_{\min}$.   Conversely, if $T$ is a self-adjoint extension of $T_{\min}$, then $T=T_{\theta}$ for some $\theta\in [0,\pi)$.
\end{theorem}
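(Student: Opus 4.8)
The plan is to transport the entire question into linear algebra on $\bbC^{2}$ by means of the boundary map attached to the basis $\{\phi_{c},\psi_{c}\}$, and then to feed in the deficiency index computation of Theorem~\ref{t2.11}. Without loss of generality I assume $c=a$; the case $c=b$ is handled identically after interchanging the endpoints. Since $\tau$ is in the limit point case at $b$, Lemma~\ref{l2.10} gives $[f,g](b)=0$ for all $f,g\in\dom(T_{\max})$, so Green's formula \eqref{2.7} shows that any operator $T$ with $T_{\min}\subseteq T\subseteq T_{\max}$ is symmetric if and only if $[f,g](a)=0$ for all $f,g\in\dom(T)$. By \eqref{2.18} this already reduces the theorem to identifying which subspaces of $\dom(T_{\max})$ lying between $\dom(T_{\min})$ and $\dom(T_{\max})$ give self-adjoint restrictions of $T_{\max}$, and to showing that these are exactly the $\dom(T_{\theta})$ of \eqref{2.24}.

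Next I would encode the boundary data. Define the linear map $\Lambda\colon\dom(T_{\max})\to\bbC^{2}$ by $\Lambda(h)=\big([h,\phi_{a}](a),\,[h,\psi_{a}](a)\big)$. Because $\phi_{a},\psi_{a}\in\dom(T_{\max})$ and $[\psi_{a},\phi_{a}](a)=1$, one computes $\Lambda(\psi_{a})=(1,0)$ and $\Lambda(\phi_{a})=(0,-1)$, so $\Lambda$ is surjective, while $\ker(\Lambda)=\dom(T_{\min})$ by Lemma~\ref{l2.16}$(i)$; hence $\Lambda$ induces an isomorphism $\dom(T_{\max})/\dom(T_{\min})\cong\bbC^{2}$. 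Applying the Pl\"ucker identity (Lemma~\ref{l2.2}) with $f_{1}=f$, $f_{2}=g$, $f_{3}=\phi_{a}$, $f_{4}=\psi_{a}$, and using that $\phi_{a},\psi_{a}$ are real-valued (so that, at $a$, $[\overline{\phi_{a}},\psi_{a}]=[\phi_{a},\psi_{a}]$, $[\overline{\psi_{a}},g]=-\overline{[g,\psi_{a}]}$, and $[\overline{g},\phi_{a}]=\overline{[g,\phi_{a}]}$) together with $[\phi_{a},\psi_{a}](a)=-[\psi_{a},\phi_{a}](a)=-1$, one obtains the clean formula $[f,g](a)=\omega\big(\Lambda f,\Lambda g\big)$, where $\omega(x,y)=x_{2}\overline{y_{1}}-x_{1}\overline{y_{2}}$ is a nondegenerate skew-Hermitian form on $\bbC^{2}$.

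It then remains to read off the answer. By the first paragraph, a restriction $T_{V}$ of $T_{\max}$ with domain $\Lambda^{-1}(V)$ is symmetric iff $V\subseteq\bbC^{2}$ is $\omega$-isotropic, and by von Neumann's theory together with Theorem~\ref{t2.11} such a $T_{V}$ is self-adjoint iff in addition $\dim(V)=\dim\big(\dom(T_{V})/\dom(T_{\min})\big)=d_{\pm}(T_{\min})=1$. Since $\omega$ is nondegenerate, any isotropic subspace of $\bbC^{2}$ has dimension at most $1$, and a line $\bbC v$ is isotropic iff $\omega(v,v)=2i\,\Im\big(v_{2}\overline{v_{1}}\big)=0$, i.e.\ iff $v$ is a complex multiple of a nonzero real vector. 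Finally, every line through a nonzero real vector of $\bbC^{2}$ is the kernel of a functional $v\mapsto\cos(\theta)v_{1}+\sin(\theta)v_{2}$ for one and only one $\theta\in[0,\pi)$ (the orthogonal complement in $\bbR^{2}$ of the real vector is spanned by a unit vector, unique up to sign), and $\Lambda^{-1}$ of that kernel is exactly $\dom(T_{\theta})$ as defined in \eqref{2.24}. This yields simultaneously that each $T_{\theta}$ is self-adjoint and that every self-adjoint extension equals $T_{\theta}$ for a unique $\theta\in[0,\pi)$.

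The main obstacle is the middle step: extracting from the Pl\"ucker identity \eqref{2.5} the precise algebraic shape of the boundary form, i.e.\ keeping careful track of the complex conjugations in the Lagrange bracket \eqref{2.4} and exploiting both $[\psi_{a},\phi_{a}](a)=1$ and the reality of $\phi_{a},\psi_{a}$ to collapse it to the nondegenerate form $\omega$. Once $\omega$ is in hand, the remaining assertions---isotropic subspaces are at most one-dimensional, one-dimensional ones are lines through real vectors, and these are in bijection with $\theta\in[0,\pi)$---are elementary, and the passage ``$\omega$-isotropic of dimension $d_{\pm}$ $\Longleftrightarrow$ self-adjoint'' is the standard finite-deficiency-index bookkeeping already invoked in the excerpt (cf.\ \cite[Section X.1]{RS75}).
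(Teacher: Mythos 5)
Your argument is correct. Note, however, that the paper does not prove Theorem \ref{t2.19} at all: it is quoted from \cite[Theorem 6.2]{EGNT13}, and the only in-paper content is Remark \ref{r2.21}, which translates the parametrization of \cite{EGNT13} (given in terms of patched functions $w_1,w_2$ and, for two limit circle endpoints, the general matrix conditions $AJA^*=BJB^*$) into the language of boundary condition bases. So there is no in-paper proof to match; what you have supplied is an independent, self-contained proof in the boundary-triplet/symplectic style. The two key points you need both check out: the map $\Lambda(h)=([h,\phi_a](a),[h,\psi_a](a))$ is surjective with kernel $\dom(T_{\min})$ (by $\Lambda(\psi_a)=(1,0)$, $\Lambda(\phi_a)=(0,-1)$ and Lemma \ref{l2.16}\,$(i)$), and the Pl\"ucker identity \eqref{2.5} with $f_3=\phi_a$, $f_4=\psi_a$, combined with reality of $\phi_a,\psi_a$ and $[\psi_a,\phi_a](a)=1$, collapses to $[f,g](a)=[f,\psi_a](a)\overline{[g,\phi_a](a)}-[f,\phi_a](a)\overline{[g,\psi_a](a)}=\omega(\Lambda f,\Lambda g)$, exactly as you claim. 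From there the classification of isotropic lines of $\omega$ as complex lines through nonzero real vectors, and their bijection with $\theta\in[0,\pi)$, is elementary; the identification ``self-adjoint $\Leftrightarrow$ isotropic of dimension $d_\pm=1$'' can be justified either by the von Neumann bookkeeping you cite or, slightly more directly, by noting that $\dom(T_V^*)=\Lambda^{-1}(V^{\omega})$ (Green's formula plus limit point at $b$) and that a one-dimensional isotropic $V$ satisfies $V=V^{\omega}$ by nondegeneracy. Your route has the advantage of being coordinate-free and of delivering uniqueness of $\theta$ for free; the route of \cite{EGNT13} trades this for explicit solution-based formulas that are reused later for Green's functions. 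One cosmetic point: when you invoke the von Neumann classification you should remark that $T_V$ is closed (it is a finite-dimensional graph extension of the closed operator $T_{\min}$), since the classification applies to closed symmetric extensions.
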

%%%%%%%%%%%

If $\tau$ is in the limit circle case at both $a$ and $b$, then one must impose boundary conditions at both endpoints to obtain a self-adjoint extension.  In this case, self-adjoint boundary conditions are categorized into two classes: {\it separated} boundary conditions (cf.~\eqref{2.25} below) and {\it coupled} boundary conditions (cf.~\eqref{2.26} below).

%%%%%%%%%%%
\begin{theorem}[{\cite[Theorem 6.4]{EGNT13}}, {\cite[Section 10.4.5]{Ze05}}]\lb{t2.20}
Assume Hypothesis \ref{h2.17}.  Suppose that $\tau$ is in the limit circle case at both $a$ and $b$, and let $\{\phi_a,\psi_a\}$ and $\{\phi_b,\psi_b\}$ denote boundary condition bases at $a$ and $b$, respectively.  Then the following statements $(i)$--\,$(iii)$ hold.\\[1mm]
$(i)$  If $\alpha,\beta\in [0,\pi)$, then the operator $T_{\alpha,\beta}$ defined by
\begin{align}
&T_{\alpha,\beta}f = T_{\max}f,\lb{2.25}\\
&f\in \dom(T_{\alpha,\beta}) = \bigg\{g\in \dom(T_{\max})\,\bigg|\,
\begin{aligned}
\cos(\alpha)[g,\phi_a](a)+\sin(\alpha)[g,\psi_a](a)&=0,\no\\
\cos(\beta)[g,\phi_b](b)+\sin(\beta)[g,\psi_b](b)&=0
\end{aligned}
\bigg\},\no
\end{align}
is a self-adjoint extension of $T_{\min}$.\\[1mm]
$(ii)$  If $\eta\in [0,\pi)$ and $R\in \SL_{2}(\bbR)$, then the operator $T_{R,\eta}$ defined by
\begin{align}
&T_{R,\eta}f = T_{\max}f,\lb{2.26}\\
&f\in \dom(T_{R,\eta}) = \bigg\{g\in \dom(T_{\max})\,\bigg|\,
\begin{pmatrix}
[g,\phi_b](b)\\
[g,\psi_b](b)
\end{pmatrix}
= e^{i\eta}R
\begin{pmatrix}
[g,\phi_a](a)\\
[g,\psi_a](a)
\end{pmatrix}
\bigg\},\no
\end{align}
is a self-adjoint extension of $T_{\min}$.\\[1mm]
$(iii)$  If $T$ is a self-adjoint extension of $T_{\min}$, then $T=T_{\alpha,\beta}$ for some $\alpha,\beta\in[0,\pi)$ or $T=T_{R,\eta}$ for some $\eta\in [0,\pi)$ and some $R\in \SL_2(\bbR)$.
\end{theorem}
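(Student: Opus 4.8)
The strategy is to reduce the entire statement to the classification of certain subspaces of $\bbC^4$, using the ``boundary coordinate map''
\[
\Gamma\colon\dom(T_{\max})\longrightarrow\bbC^4,\qquad \Gamma(g)=\big([g,\phi_a](a),\,[g,\psi_a](a),\,[g,\phi_b](b),\,[g,\psi_b](b)\big),
\]
which is well defined by Lemma \ref{l2.5}. By Lemma \ref{l2.16}(ii) one has $\ker(\Gamma)=\dom(T_{\min})$; and since $d_\pm(T_{\min})=2$ by Theorem \ref{t2.11}, von Neumann's decomposition of $\dom(T_{\min}^*)=\dom(T_{\max})$ (recall $T_{\min}$ is closed with $(T_{\min})^*=T_{\max}$) gives $\dim\big(\dom(T_{\max})/\dom(T_{\min})\big)=d_++d_-=4$. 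Hence $\Gamma$ descends to an isomorphism $\dom(T_{\max})/\dom(T_{\min})\to\bbC^4$, so $\Gamma$ is onto and $\dom(T)=\Gamma^{-1}(\cV_T)$ for $\cV_T:=\Gamma(\dom(T))$. Consequently $T\mapsto\cV_T$ is an inclusion-preserving bijection from the restrictions $T$ of $T_{\max}$ with $T_{\min}\subseteq T$ onto the subspaces of $\bbC^4$, and it remains to pick out the self-adjoint ones.

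First I would record Green's ``boundary form'' $\langle f,g\rangle_\partial:=[f,g](b)-[f,g](a)$ for $f,g\in\dom(T_{\max})$, which is finite by Lemma \ref{l2.5}, descends to $\dom(T_{\max})/\dom(T_{\min})$ by \eqref{2.12}, and, by Green's formula (Lemma \ref{l2.3}), satisfies $\dom(T^*)=\{g\in\dom(T_{\max}):\langle h,g\rangle_\partial=0\text{ for all }h\in\dom(T)\}$ for every such $T$. Applying the Pl\"ucker identity (Lemma \ref{l2.2}) at $x=a$ with $(f_1,f_2,f_3,f_4)=(f,g,\phi_a,\psi_a)$, using $[\psi_a,\phi_a](a)=1$ and the reality of $\phi_a,\psi_a$ (which give $[\psi_a,g](a)=-\overline{[g,\psi_a](a)}$ and $[\overline g,\phi_a](a)=\overline{[g,\phi_a](a)}$), one obtains $[f,g](a)=[f,\psi_a](a)\overline{[g,\phi_a](a)}-[f,\phi_a](a)\overline{[g,\psi_a](a)}$, and likewise at $b$; writing $\Gamma(f)=(\bm{f}_a,\bm{f}_b)$, $\Gamma(g)=(\bm{g}_a,\bm{g}_b)$ with components in $\bbC^2$, this reads
\[
\langle f,g\rangle_\partial=\omega(\bm{f}_b,\bm{g}_b)-\omega(\bm{f}_a,\bm{g}_a),\qquad \omega\big((u_1,u_2),(v_1,v_2)\big):=u_2\overline{v_1}-u_1\overline{v_2}.
\]
Thus the boundary form corresponds, via $\Gamma$, to the nondegenerate skew-Hermitian form on $\bbC^4=\bbC^2_a\oplus\bbC^2_b$ carried by $\cJ=\diag(-J,J)$ with $J=\left(\begin{smallmatrix}0&-1\\1&0\end{smallmatrix}\right)$. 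Because $\cJ$ is nondegenerate, $T$ is symmetric iff $\cV_T$ is $\cJ$-isotropic, and, since $\Gamma^{-1}(\cV_T^{\perp})=\dom(T^*)$ with $\dim\cV_T^{\perp}=4-\dim\cV_T$, $T$ is self-adjoint iff moreover $\dim\cV_T=2$ (equivalently $\cV_T=\cV_T^{\perp}$). So the self-adjoint extensions of $T_{\min}$ correspond precisely to the two-dimensional $\cJ$-isotropic subspaces $\cV\subseteq\bbC^2_a\oplus\bbC^2_b$.

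It then remains to match these subspaces with \eqref{2.25} and \eqref{2.26}. For $(i)$ and $(ii)$: a separated condition $\cos\alpha\,[g,\phi_a](a)+\sin\alpha\,[g,\psi_a](a)=0$ restricts $\bm{g}_a$ to the complexification $L_a$ of a real line in $\bbR^2$, and every such complexified line is $\omega$-isotropic (since $\omega$ vanishes on real $1$-planes), so for \eqref{2.25} one gets $\cV=L_a\oplus L_b$, which is $\cJ$-isotropic of dimension $2$; for \eqref{2.26} one gets the graph $\cV=\{(\bm v,B\bm v):\bm v\in\bbC^2\}$ of $B:=e^{i\eta}R$, of dimension $2$, and $\cJ$-isotropy amounts to $B^{\top}J\overline B=J$, which holds since $B^{\top}J\overline B=R^{\top}JR=(\det R)J=J$ using $\overline R=R$ and the $2\times2$ identity $R^{\top}JR=(\det R)J$. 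Conversely, for $(iii)$, let $\cV$ be two-dimensional and $\cJ$-isotropic and consider $\pi_a\colon\cV\to\bbC^2_a$. If $\pi_a$ is not injective, then $\cV\cap(\{0\}\oplus\bbC^2_b)\neq\{0\}$ is $\omega$-isotropic in $\bbC^2_b$, hence a line, and a dimension count together with the isotropy of $\cV$ forces $\cV=L_a\oplus L_b$ with $L_a\subseteq\bbC^2_a$, $L_b\subseteq\bbC^2_b$ each an $\omega$-isotropic line, necessarily the complexification of a real line $\{\cos\theta\,s+\sin\theta\,t=0\}$ with a unique $\theta\in[0,\pi)$; this is a domain of the form \eqref{2.25}. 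If $\pi_a$ is injective, then $\cV$ is the graph of a linear map $B\colon\bbC^2\to\bbC^2$ with $\det B\neq0$ (by what follows), $\cJ$-isotropy is equivalent to $B^{\top}J\overline B=J$, taking determinants gives $\abs{\det B}=1$, and writing $B=e^{i\eta}R$ with $\det R=1$ reduces the relation to $R^{\top}J\overline R=J$; combining this with $R^{\top}JR=(\det R)J=J$ yields $\overline R=R$, i.e.\ $R\in\SL_2(\bbR)$, while the residual sign ambiguity is removed using $-I_{2\times2}\in\SL_2(\bbR)$ to take $\eta\in[0,\pi)$, so that $\cV$ corresponds to a domain of the form \eqref{2.26}.

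I expect the genuinely delicate points to be: first, getting the conjugations and signs correct when passing from the Pl\"ucker identity to the matrix $\cJ$ and to the relation $B^{\top}J\overline B=J$, because a slip there would misidentify the admissible matrices $B$ and corrupt the coupled parametrization; and second, the final normalization, i.e.\ showing precisely that an invertible $B$ with $B^{\top}J\overline B=J$ is a unimodular scalar times a real determinant-one matrix, with the scalar's phase reduced to $[0,\pi)$ in a well-defined manner. The remaining ingredients — symmetry of the listed operators, the dimension bookkeeping for $\Gamma$, and the $\cV_T\leftrightarrow\cV_T^{\perp}$ duality — are routine.
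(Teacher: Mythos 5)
Your proposal is correct, but it takes a genuinely different route from the paper: the paper does not prove Theorem \ref{t2.20} at all, it imports it from \cite[Theorem 6.4]{EGNT13} and \cite[Section 10.4.5]{Ze05} and only supplies the notational dictionary in Remark \ref{r2.21} (realizing the boundary condition bases as restrictions of globally defined functions $w_1,w_2$ via the Naimark patching lemma, and conjugating by $\diag(1,-1)$ to convert the coupled boundary conditions of \cite{EGNT13} into the form \eqref{2.26}). You instead give a self-contained Glazman--Krein--Naimark-type argument: the boundary map $\Gamma$ onto $\bbC^4$ with kernel $\dom(T_{\min})$ (surjectivity from $d_\pm=2$ and von Neumann's decomposition), the identification of the Green/Lagrange boundary form with the nondegenerate skew-Hermitian form $\diag(-J,J)$ via the Pl\"ucker identity, the correspondence between self-adjoint extensions and two-dimensional isotropic subspaces of $\bbC^4$, and finally the dichotomy between product subspaces $L_a\oplus L_b$ (separated conditions, using that every $\omega$-isotropic line in $\bbC^2$ is the complexification of a real line) and graph subspaces (coupled conditions, with the normalization $B=e^{i\eta}R$, $R\in\SL_2(\bbR)$, $\eta\in[0,\pi)$ extracted from $\ol{B}^{\top}JB=J$ and $R^{\top}JR=(\det R)J$). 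I checked the sign and conjugation bookkeeping you flag as delicate, and it is consistent. The one step stated tersely is the claim in part $(iii)$ that non-injectivity of $\pi_a$ forces $\cV=L_a\oplus L_b$; the missing detail is that $N_b:=\cV\cap(\{0\}\oplus\bbC^2_b)$ is an isotropic line $L_b$ with $L_b^{\perp_\omega}=L_b$, so isotropy of $\cV$ against $N_b$ puts the $b$-component of every element of $\cV$ into $L_b$, whence $(w_a,0)\in\cV$ as well -- routine, but worth writing out. What each approach buys: the paper's citation keeps Section \ref{s2} short and defers the extension theory to the literature, while your argument is self-contained, makes the mechanism (self-adjoint extensions $\leftrightarrow$ Lagrangian planes for the boundary form) explicit, and yields Lemma \ref{l2.16} and Theorem \ref{t2.19} as byproducts of the same framework.
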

%%%%%%%%%%%

%%%%%%%%%%%
\begin{remark}\lb{r2.21}
The parametrization in \eqref{2.24} is a restatement (in the language of boundary condition bases and the Lagrange bracket) of \cite[Theorem 6.2]{EGNT13}.  Specifically, \eqref{2.24} is obtained from \cite[eq.~(6.10)]{EGNT13} by choosing ``$w_1$'' and ``$w_2$'' in the notation of \cite[eqs.~(6.1)--(6.4)]{EGNT13} to be $\psi_c$ and $\phi_c$, respectively.  The parametrization in \eqref{2.25} is obtained from \cite[eq.~(6.23)]{EGNT13} by choosing ``$w_1$'' and ``$w_2$'' in \cite[eqs.~(6.1)--(6.4)]{EGNT13} such that
\begin{equation}\lb{2.27}
\text{$w_1\in \dom(T_{\max})$ coincides with $\psi_a$ near $a$ and $\psi_b$ near $b$}
\end{equation}
and
\begin{equation}\lb{2.28}
\text{$w_2\in \dom(T_{\max})$ coincides with $\phi_a$ near $a$ and $\phi_b$ near $b$}.
\end{equation}
These choices are possible by the Naimark patching lemma \cite[Chapter V, Section 17.3, Lemma 2]{Na68}.  Finally, the parametrization in \eqref{2.26} follows from \cite[eq.~(6.24)]{EGNT13} with the same choices \eqref{2.27} and \eqref{2.28} after a minor additional observation.  For fixed $R\in \SL_2(\bbR)$ and fixed $\varphi\in [0,\pi)$, the boundary conditions in \cite[eq.~(6.24)]{EGNT13} with the choices \eqref{2.27} and \eqref{2.28} actually read
\begin{equation}\lb{2.29}
\begin{pmatrix}
[g,\phi_b](b)\\
-[g,\psi_b](b)
\end{pmatrix}= e^{i\varphi}R
\begin{pmatrix}
[g,\phi_a](a)\\
-[g,\psi_a](a)
\end{pmatrix}.
\end{equation}
However, multiplying from the left on both sides of \eqref{2.29} by the $2\times 2$ diagonal matrix $J:=\diag(1,-1)$ and using $J^2=I_{2\times 2}$, the condition in \eqref{2.29} is equivalent to
\begin{equation}
\begin{pmatrix}
[g,\phi_b](b)\\
[g,\psi_b](b)
\end{pmatrix}= e^{i\varphi}JRJ
\begin{pmatrix}
[g,\phi_a](a)\\
[g,\psi_a](a)
\end{pmatrix}.
	\lb{2.30}
\end{equation}
Upon taking $\eta=\varphi$ and $\wti R = JRJ\in \bbR^{2\times 2}$, one infers that $\wti R \in \SL_2(\bbR)$ and \eqref{2.29} is equivalent to
\begin{equation}
\begin{pmatrix}
[g,\phi_b](b)\\
[g,\psi_b](b)
\end{pmatrix}= e^{i\eta}\wti R
\begin{pmatrix}
[g,\phi_a](a)\\
[g,\psi_a](a)
\end{pmatrix}.
	\lb{2.31}
\end{equation}
As a result, \eqref{2.26} encompasses all self-adjoint extensions as characterized by \cite[eq.~(6.24)]{EGNT13} and vice versa.\hfill $\diamond$
\end{remark}
%%%%%%%%%%%

%%%%%%%%%%%%%%%%%%%%%%%%%%%%%%%
%%%%%%%%%%%%%%%%%%%%%%%%%%%%%%%
\section{The Case of Exactly One Limit Circle Endpoint} \lb{s3}
%%%%%%%%%%%%%%%%%%%%%%%%%%%%%%%
%%%%%%%%%%%%%%%%%%%%%%%%%%%%%%%

In this section, we assume that $\tau$ is in the limit circle case at exactly one endpoint.  Fixing $T_0$ (cf.~\eqref{2.24}) as a reference self-adjoint extension of $T_{\min}$, we derive explicit Krein resolvent identities that relate the resolvent of any other self-adjoint extension $T_{\theta}$, $\theta\in (0,\pi)$, of $T_{\min}$ to the resolvent of $T_0$.  The resolvent identity is then used to compute the trace of the difference of the resolvents of $T_{\theta}$ and $T_0$.  We treat in detail the case where $a$ is the lone limit circle endpoint.  Analogous formulas hold if $b$ is the only limit circle endpoint.  We fix some assumptions to begin:

%%%%%%%%%%%
\begin{hypothesis}\lb{h3.1}
Assume, in addition to Hypothesis \ref{h2.17}, that:\\[1mm]
$(i)$ $\tau$ is in the limit point case at $b$ and in the limit circle case at $a$ with $\{\phi_a,\psi_a\}$ a boundary condition basis at $a$.\\[1mm]
$(ii)$  For each $\theta\in [0,\pi)$, $T_{\theta}$ is the self-adjoint extension of $T_{\min}$ defined by \eqref{2.24} with $c=a$.\\[1mm]
$(iii)$  For each $z\in \rho(T_0)$, $u_z$ is the unique solution $($cf.~\cite[Lemma 10.4.8]{Ze05}$)$ to \eqref{2.13} which satisfies
\begin{equation}
[u_z,\phi_a](a) = 0\quad \text{and}\quad [u_z,\psi_a](a)=1.
	\lb{3.1}
\end{equation}
$(iv)$  For each $z\in \rho(T_0)$, $w_z$ is the unique solution $($cf.~Remark \ref{r2.12}$)$ to \eqref{2.13} which satisfies
\begin{equation}
w_z\in L^2((a,b);r(x)\, dx)\quad \text{and}\quad [w_z,\phi_a](a)=1.
	\lb{3.2}
\end{equation}
\end{hypothesis}
%%%%%%%%%%%

Assuming Hypothesis \ref{h3.1}, the functions $u_z$ and $w_z$ are called the {\it regular} and {\it Weyl--Titchmarsh solutions}, respectively, and, since $p$, $q$, and $r$ are real-valued,
\begin{equation}
\overline{u_z} = u_{\overline{z}},\quad \overline{w_z} = w_{\overline{z}},\quad z\in \rho(T_0).
	\lb{3.3}
\end{equation}
In particular, $u_z$ and $w_z$ are real-valued when $z\in \bbR\cap\rho(T_0)$.  By Theorem \ref{t2.11}, the deficiency indices of $T_{\max}$ are $d_{\pm}(T_{\max})=1$.  In fact,
\begin{equation}
\text{$w_z$ spans $\ker(T_{\max}-zI_{(a,b)})$ for each $z\in \rho(T_0)$.}
	\lb{3.4}
\end{equation}

The following lemma characterizes, for fixed $f\in L^2((a,b);r(x)\, dx)$, boundary data of $(T_0-zI_{(a,b)})^{-1}f$, $z\in \rho(T_0)$, in terms of the inner product of $f$ with $w_{\ol{z}}$.

%%%%%%%%%%%
\begin{lemma}\lb{l3.2}
Assume Hypothesis \ref{h3.1}.  If $z\in \rho(T_0)$, then
\begin{equation}
\big[(T_{0}-zI_{(a,b)})^{-1}f,\psi_{a}\big](a) = -\left<w_{\ol{z}},f\right>_{(a,b)},\quad f\in L^2((a,b);r(x)\, dx).
	\lb{3.5}
\end{equation}
\end{lemma}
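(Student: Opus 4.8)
The plan is to exploit Green's formula (Lemma \ref{l2.3}) with the pair $f_1 = (T_0 - zI_{(a,b)})^{-1}f$ and $f_2 = w_{\overline{z}}$, since $w_{\overline{z}}$ is a solution of $\tau w = \overline{z}w$ and therefore $\tau w_{\overline{z}} = \overline{z}\, w_{\overline{z}}$, while by \eqref{3.3} one has $\overline{w_{\overline{z}}} = w_z$. Set $g := (T_0 - zI_{(a,b)})^{-1}f \in \dom(T_0) \subseteq \dom(T_{\max})$, so that $(\tau g)(x) = (T_0 g)(x) = zg(x) + f(x)$ for a.e.\ $x\in(a,b)$. First I would write down Green's formula \eqref{2.7} on a subinterval $[\alpha,\beta]\subset(a,b)$ applied to $g$ and $w_{\overline{z}}$:
\begin{equation*}
\int_{\alpha}^{\beta}\Big(\overline{w_{\overline{z}}(x)}(\tau g)(x) - g(x)\overline{(\tau w_{\overline{z}})(x)}\Big)r(x)\,dx = [g,w_{\overline{z}}](\beta) - [g,w_{\overline{z}}](\alpha).
\end{equation*}
Using $\tau g = zg + f$, $\overline{(\tau w_{\overline{z}})} = \overline{\overline{z}\,w_{\overline{z}}} = z\,\overline{w_{\overline{z}}}$, and $\overline{w_{\overline{z}}} = w_z$, the left-hand integrand collapses to $w_z(x)f(x)\,r(x)$, so the integral equals $\langle \overline{w_z}, f\rangle_{(a,b)} = \langle w_{\overline{z}}, f\rangle_{(a,b)}$ on the relevant subinterval (again using \eqref{3.3}).

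Next I would pass to the limits $\alpha \to a^+$ and $\beta \to b^-$. The right-hand side limits exist by Lemma \ref{l2.5}: both $g$ and $w_{\overline{z}}$ lie in $\dom(T_{\max})$ near each endpoint — $g$ because it lies in $\dom(T_0)\subseteq\dom(T_{\max})$, and $w_{\overline{z}}$ because it is in $L^2((a,b);r(x)\,dx)$ (hence near $a$) by \eqref{3.2} and near $b$ by Remark \ref{r2.12} combined with the limit point hypothesis at $b$ (being a genuine solution, $\tau w_{\overline{z}} = \overline{z} w_{\overline{z}}$ is also in $L^2$ near $b$). Since $\tau$ is limit point at $b$ and both functions lie in $\dom(T_{\max})$ near $b$, Lemma \ref{l2.10} (eq.\ \eqref{2.14}) forces $[g,w_{\overline{z}}](b) = 0$. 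At $a$ I would expand using the Pl\"ucker identity of Lemma \ref{l2.2} with $f_1 = g$, $f_2 = w_{\overline{z}}$, $f_3 = \phi_a$, $f_4 = \psi_a$, which gives
\begin{equation*}
[g,w_{\overline{z}}](a)[\phi_a,\psi_a](a) + [g,\phi_a](a)[\psi_a,w_{\overline{z}}](a) + [g,\psi_a](a)[\overline{w_{\overline{z}}},\phi_a](a) = 0.
\end{equation*}
Here $[\phi_a,\psi_a](a) = -\overline{[\psi_a,\phi_a](a)} = -1$ (the bracket of real-valued functions satisfies $[\phi,\psi] = -\overline{[\psi,\phi]}$, and $[\psi_a,\phi_a](a)=1$ by Definition \ref{d2.14}); also $[g,\phi_a](a) = 0$ since $g\in\dom(T_0) = \dom(T_\theta)$ with $\theta = 0$, whose defining condition \eqref{2.24} with $\cos(0)=1,\sin(0)=0$ reads exactly $[g,\phi_a](a)=0$; and $[\overline{w_{\overline{z}}},\phi_a](a) = [w_z,\phi_a](a)$, which by \eqref{3.2} (with $\overline{z}$ replaced by $z$, consistent with \eqref{3.3}) equals $1$ — here one should note $z\in\rho(T_0)$ iff $\overline{z}\in\rho(T_0)$ since $T_0$ is self-adjoint. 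Hence $-[g,w_{\overline{z}}](a) + [g,\psi_a](a) = 0$, i.e.\ $[g,w_{\overline{z}}](a) = [g,\psi_a](a)$.

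Putting the pieces together, the limiting Green formula reads $\langle w_{\overline{z}},f\rangle_{(a,b)} = [g,w_{\overline{z}}](b) - [g,w_{\overline{z}}](a) = 0 - [g,\psi_a](a)$, which is precisely \eqref{3.5}. The main obstacle I anticipate is the careful bookkeeping at the endpoint $a$ — specifically verifying that each Lagrange bracket appearing in the Pl\"ucker expansion has a well-defined finite limit (so that Lemma \ref{l2.2}, an identity of functions on $(a,b)$, can be evaluated in the limit $x\to a^+$) and correctly tracking conjugates and signs in $[\phi_a,\psi_a](a)$ versus $[\psi_a,\phi_a](a)$ and in the identification $[w_z,\phi_a](a) = 1$. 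Everything else is a routine application of Green's formula together with the limit point/limit circle dichotomy; the $L^2$ membership needed to invoke Lemma \ref{l2.5} at $b$ is the one spot worth stating explicitly rather than leaving implicit.
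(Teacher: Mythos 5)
Your proof is correct, and it takes a genuinely different route from the paper's. The paper works from the explicit Green's function \eqref{3.6}: it writes $(T_0-zI_{(a,b)})^{-1}f$ as an integral against the kernel built from $u_z$ and $w_z$, differentiates that representation, assembles the Lagrange bracket $\big[(T_0-zI_{(a,b)})^{-1}f,\psi_a\big](x)$ by hand, passes to the limit $x\downarrow a$ using $[u_z,\psi_a](a)=1$ and $[w_z,\psi_a](x)\int_a^x u_z f r\,dy\to 0$, and finally normalizes via the Pl\"ucker identity $[w_z,u_{\ol z}](a)=-1$. You instead apply Green's formula \eqref{2.7} to the pair $g=(T_0-zI_{(a,b)})^{-1}f$ and $w_{\ol z}$, observe that the integrand collapses to $w_z f r$ because both functions are (generalized) eigenfunctions of $\tau$ up to the inhomogeneity $f$, kill the boundary term at $b$ with the limit-point criterion \eqref{2.14}, and convert $[g,w_{\ol z}](a)$ into $[g,\psi_a](a)$ with a single Pl\"ucker expansion at $a$. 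Your argument is shorter, avoids the explicit kernel and the interchange of limits with the integrals entirely, and makes the structural reason for the identity transparent (it is just Green's formula plus the boundary conditions defining $T_0$ and $w_z$); the paper's computation has the side benefit of exhibiting the Green's function itself, which it reuses implicitly elsewhere. All the delicate points you flag --- finiteness of the individual brackets at $a$ so that Lemma \ref{l2.2} can be evaluated in the limit, membership of $w_{\ol z}$ in $\dom(T_{\max})$ so that Lemmas \ref{l2.5} and \ref{l2.10} apply at $b$, and the sign $[\phi_a,\psi_a](a)=-1$ --- are handled correctly.
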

%%%%%%%%%%%
\begin{proof}
Let $z\in \rho(T_0)$ and $f\in L^2((a,b);r(x)\, dx)$.  By \cite[Theorem 7.1]{EGNT13} combined with \eqref{3.1} and \eqref{3.2}, $(T_0-zI_{(a,b)})^{-1}$ is an integral operator with kernel (i.e., Green's function) given by
\begin{align}
G_{0,z}(x,y)=\f{1}{[w_{z},u_{\ol{z}}](a)}
\begin{cases}
u_{z}(y)w_{z}(x),\quad & a<y\leq x<b,\\
u_{z}(x)w_{z}(y),\quad & a<x\leq y<b
\end{cases},\quad z\in\rho(T_{0}),
	\lb{3.6}
\end{align}
so that
\begin{equation}
\big[(T_{0}-zI_{(a,b)})^{-1}f\big](x)  = \int_a^b G_{0,z}(x,y)f(y)r(y)\, dy,\quad x\in (a,b).
	\lb{3.7}
\end{equation}
Differentiating throughout \eqref{3.7}, one obtains (where prime denotes differentiation with respect to $x$)
\begin{align}
&\big[(T_{0}-zI_{(a,b)})^{-1}f\big]'(x)  
	\lb{3.8}\\
&\quad =\f{1}{[w_{z},u_{\ol{z}}](a)}\bigg(w_{z}'(x)\int_{a}^{x}u_{z}(y)f(y)r(y)\,dy+w_{z}(x)u_{z}(x)f(x)r(x)\no\\
&\quad\quad+u_{z}'(x)\int_{x}^{b}w_{z}(y)f(y)r(y)\,dy-u_{z}(x)w_{z}(x)f(x)r(x)\bigg)\no\\
&\quad=\f{1}{[w_{z},u_{\ol{z}}](a)}\bigg(w_{z}'(x)\int_{a}^{x}u_{z}(y)f(y)r(y)\,dy+u_{z}'(x)\int_{x}^{b}w_{z}(y)f(y)r(y)\,dy\bigg)\no
\end{align}
for a.e.~$x\in (a,b)$.  Applying \eqref{3.7} and \eqref{3.8}, one obtains
\begin{align}
&[w_{z},u_{\ol{z}}](a)\cdot\big[(T_{0}-zI_{(a,b)})^{-1}f,\psi_{a}\big](a)\no\\
&\quad =\underset{x\downarrow a}\lim\Bigg[\bigg(\int_{a}^{x}u_{z}(y)w_{z}(x)f(y)r(y)\,dy+\int_{x}^{b}u_{z}(x)w_{z}(y)f(y)r(y)\,dy\bigg)(p\psi_{a}')(x)\no\\
&\quad\quad-p(x)\bigg(w_{z}'(x)\int_{a}^{x}u_{z}(y)f(y)r(y)\,dy
+u_{z}'(x)\int_{x}^{b}w_{z}(y)f(y)r(y)\,dy\bigg)\psi_{a}(x)\Bigg]\no\\
&\quad =\underset{x\downarrow a}\lim\Bigg[\Big(w_{z}(x)(p\psi_{a}')(x)-(pw_{z}')(x)\psi_{a}(x)\Big)\int_{a}^{x}u_{z}(y)f(y)r(y)\,dy\no\\
&\quad\quad+\Big(u_{z}(x)(p\psi_{a}')(x)-(pu_{z}')(x)\psi_{a}(x)\Big)\int_{x}^{b}w_{z}(y)f(y)r(y)\,dy\Bigg]\no\\
&\quad=\underset{x\downarrow a}\lim\Bigg[[w_{z},\psi_{a}](x)\int_{a}^{x}u_{z}(y)f(y)r(y)\,dy+[u_{z},\psi_{a}](x)\int_{x}^{b}w_{z}(y)f(y)r(y)\,dy\Bigg]\no\\
&\quad =[u_{z},\psi_{a}](a)\int_{a}^{b}w_{z}(y)f(y)r(y)\,dy\no\\
&\quad =\int_{a}^{b}w_{z}(y)f(y)r(y)\,dy =\left<\ol{w_{z}},f\right>_{(a,b)}=\left<w_{\ol{z}},f\right>_{(a,b)}.
	\lb{3.9}
\end{align}
The limit leading to \eqref{3.9} exists by Lemma \ref{l2.5}.  An application of the Pl\"ucker-type identity \eqref{2.5} with the choices $f_{1}=w_{z}$, $f_{2}=\psi_{a}$, $f_{3}=u_{\ol{z}}$, and $f_{4}=\phi_{a}$ yields $[w_{z},u_{\ol{z}}](a)=-1$, and the claim in \eqref{3.5} follows.
\end{proof}
%%%%%%%%%%%

We recall the following abstract result for the computation of the trace of a rank one operator and provide its short proof for completeness.

%%%%%%%%%%%
\begin{lemma} \lb{l3.3}
Let $\cH$ be a separable Hilbert space, with $f,g\in\cH$, and define the rank one operator $A=\left<f,\,\cdot\,\right>_{\cH}g$ on $\dom(A)=\cH$.  Then $A\in \cB_1(\cH)$ and
\begin{equation}\lb{3.10a}
\tr_{\cH}(A)=\left<f,g\right>_{\cH}.
\end{equation}
\end{lemma}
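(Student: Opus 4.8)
The plan is to verify the two assertions by a direct computation, the statement being a standard fact about rank-one operators that we record for later use.

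First I would establish that $A\in\cB_1(\cH)$. Since $Ah=\langle f,h\rangle_{\cH}\,g$ satisfies $\|Ah\|_{\cH}\le \|f\|_{\cH}\|g\|_{\cH}\|h\|_{\cH}$ and $\ran(A)\subseteq\operatorname{span}\{g\}$ is at most one-dimensional, $A$ is a bounded finite-rank operator and hence lies in $\cB_1(\cH)$. If a self-contained verification of the trace-norm bound is preferred, one checks that $A^{*}=\langle g,\,\cdot\,\rangle_{\cH}f$, so that $A^{*}A=\|g\|_{\cH}^{2}\,\langle f,\,\cdot\,\rangle_{\cH}f$ is a nonnegative rank-one operator whose only nonzero eigenvalue is $\|f\|_{\cH}^{2}\|g\|_{\cH}^{2}$; thus $|A|=(A^{*}A)^{1/2}$ has the single nonzero eigenvalue $\|f\|_{\cH}\|g\|_{\cH}$, and $\|A\|_{\cB_{1}(\cH)}=\|f\|_{\cH}\|g\|_{\cH}<\infty$.

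Next, to compute the trace, I would fix an orthonormal basis $\{e_{n}\}_{n\in\N}$ of $\cH$; since $A\in\cB_{1}(\cH)$ the sum $\tr_{\cH}(A)=\sum_{n}\langle e_{n},Ae_{n}\rangle_{\cH}$ converges absolutely and is independent of the chosen basis. Pulling the scalar $\langle f,e_{n}\rangle_{\cH}$ out of the (linear) second slot of the inner product yields
\begin{equation*}
\tr_{\cH}(A)=\sum_{n}\big\langle e_{n},\langle f,e_{n}\rangle_{\cH}\,g\big\rangle_{\cH}=\sum_{n}\langle f,e_{n}\rangle_{\cH}\,\langle e_{n},g\rangle_{\cH}.
\end{equation*}
Expanding $g=\sum_{n}\langle e_{n},g\rangle_{\cH}\,e_{n}$ in the basis and using linearity of $\langle f,\,\cdot\,\rangle_{\cH}$ in its second argument, the right-hand side is exactly $\langle f,g\rangle_{\cH}$ (i.e., Parseval's identity applied to $g$), which gives \eqref{3.10a}.

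There is essentially no obstacle here; the only points requiring care are bookkeeping with the paper's convention that $\langle\,\cdot\,,\,\cdot\,\rangle_{\cH}$ is linear in the second argument (so that the scalars come out of the correct slots without stray complex conjugates), and noting that the series defining $\tr_{\cH}(A)$ converges absolutely --- this follows from the Cauchy--Schwarz inequality in $\ell^{2}(\N)$ together with $\sum_{n}|\langle f,e_{n}\rangle_{\cH}|^{2}=\|f\|_{\cH}^{2}$ and $\sum_{n}|\langle e_{n},g\rangle_{\cH}|^{2}=\|g\|_{\cH}^{2}$.
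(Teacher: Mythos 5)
Your argument is correct and follows the same route as the paper: finite rank implies trace class, then computing $\tr_{\cH}(A)=\sum_{n}\langle f,e_{n}\rangle_{\cH}\langle e_{n},g\rangle_{\cH}=\langle f,g\rangle_{\cH}$ over an orthonormal basis via Parseval. The extra verifications you include (the singular value of $A$ and absolute convergence of the trace series) are fine but not present in, or needed beyond, the paper's proof.
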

%%%%%%%%%%%
\begin{proof}
Since $A$ is finite rank, $A\in \cB_1(\cH)$.  Fix an orthonormal basis $\{e_{\iota}\}_{\iota\in\cI}$ of $\cH$ (with $\cI\subseteq \bbN$ an appropriate indexing set), and compute
\begin{align}
\tr_{\cH}(A)=\sum\limits_{\iota\in\cI}\left<e_{\iota},\left<f,e_{\iota}\right>_{\cH}g\right>_{\cH}=\sum\limits_{\iota\in\cI}\left<f,e_{\iota}\right>_{\cH}\left<e_{\iota},g\right>_{\cH}=\left<f,g\right>_{\cH}.
	\lb{3.10}
\end{align}
\end{proof}
%%%%%%%%%%%

With these preparations out of the way, we turn to differences of resolvents of the self-adjoint extensions $T_{\theta}$ of $T_{\min}$, fixing $T_{0}$ as a reference extension.  The main result of this section is an explicit Krein-type resolvent identity and a corresponding trace formula for resolvent differences:

%%%%%%%%%%%[{\cite[Theorem A.1 $(iii)$]{CGNZ14}}]
\begin{theorem}	\lb{t3.4}
Assume Hypothesis \ref{h3.1} and suppose $\theta\in (0,\pi)$.  Then $T_0$ and $T_{\theta}$ are relatively prime with respect to $T_{\min}$.  Moreover, for each $z\in \rho(T_0)\cap\rho(T_{\theta})$, the scalar
\begin{align}
k_{\theta}(z)=\cot(\theta)+[w_{z},\psi_{a}](a)
	\lb{3.11}
\end{align}
is nonzero and the following operator equality holds:
\begin{align}
(T_{\theta}-zI_{(a,b)})^{-1}-(T_{0}-zI_{(a,b)})^{-1} = k_{\theta}(z)^{-1}\left<w_{\ol{z}},\,\cdot\,\right>_{(a,b)}w_{z}.
	\lb{3.12}
\end{align}
In particular, for each $z\in \rho(T_0)\cap\rho(T_{\theta})$,
\begin{equation}
\big[(T_{\theta}-zI_{(a,b)})^{-1}-(T_{0}-zI_{(a,b)})^{-1}\big] \in \cB_1\big(L^2((a,b);r(x)\, dx)\big)
	\lb{3.13}
\end{equation}
and
\begin{align}
\tr_{(a,b)}\left((T_{\theta}-zI_{(a,b)})^{-1}-(T_{0}-zI_{(a,b)})^{-1}\right)=\f{\left<w_{\ol{z}},w_{z}\right>_{(a,b)}}{\cot(\theta)+[w_{z},\psi_{a}](a)}.
	\lb{3.14}
\end{align}
\end{theorem}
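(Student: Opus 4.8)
The plan is to settle relative primeness by directly computing the maximal common part $C_{T_0,T_{\theta}}$, then to recognize the resolvent difference as a rank one operator with range in $\ker(T_{\max}-zI_{(a,b)})=\mathrm{span}\{w_z\}$, to pin down the scalar coefficient via the boundary conditions together with Lemma \ref{l3.2}, and finally to extract \eqref{3.13}--\eqref{3.14} from Lemma \ref{l3.3}. For the first point: since both $T_0$ and $T_{\theta}$ act as $T_{\max}$, every $f\in\dom(T_0)\cap\dom(T_{\theta})$ automatically satisfies $T_0f=T_{\theta}f$, so $\dom(C_{T_0,T_{\theta}})=\dom(T_0)\cap\dom(T_{\theta})$. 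Using \eqref{2.24} with $c=a$, membership in $\dom(T_0)$ gives $[g,\phi_a](a)=0$, and substituting this into the boundary condition defining $T_{\theta}$ leaves $\sin(\theta)[g,\psi_a](a)=0$; as $\theta\in(0,\pi)$ implies $\sin(\theta)\neq0$, we also get $[g,\psi_a](a)=0$. Hence $\dom(C_{T_0,T_{\theta}})=\{g\in\dom(T_{\max})\,|\,[g,\phi_a](a)=[g,\psi_a](a)=0\}$, which by Lemma \ref{l2.16}$(i)$ is exactly $\dom(T_{\min})$; since $C_{T_0,T_{\theta}}$ acts as $\tau$ on this set, $C_{T_0,T_{\theta}}=T_{\min}$, i.e.\ $T_0$ and $T_{\theta}$ are relatively prime with respect to $T_{\min}$.

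Now fix $z\in\rho(T_0)\cap\rho(T_{\theta})$ (so also $\ol z\in\rho(T_0)$ by self-adjointness, and $w_{\ol z}$ is defined) and $f\in L^2((a,b);r(x)\,dx)$, and set $F_0=(T_0-zI_{(a,b)})^{-1}f$ and $F_{\theta}=(T_{\theta}-zI_{(a,b)})^{-1}f$. Both lie in $\dom(T_{\max})$ and satisfy $(T_{\max}-zI_{(a,b)})F_0=(T_{\max}-zI_{(a,b)})F_{\theta}=f$, so $F_{\theta}-F_0\in\ker(T_{\max}-zI_{(a,b)})$; by \eqref{3.4} this space is spanned by $w_z$, which is nonzero since $[w_z,\phi_a](a)=1$, so $F_{\theta}-F_0=c_z(f)\,w_z$ for a unique scalar $c_z(f)$. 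To identify $c_z(f)$ I would evaluate Lagrange brackets at $a$. Pairing with $\phi_a$ and using $[F_0,\phi_a](a)=0$ (as $F_0\in\dom(T_0)$) together with $[w_z,\phi_a](a)=1$ gives $[F_{\theta},\phi_a](a)=c_z(f)$, and then the boundary condition defining $T_{\theta}$ forces $[F_{\theta},\psi_a](a)=-\cot(\theta)\,c_z(f)$. Pairing instead with $\psi_a$ and invoking Lemma \ref{l3.2}, which gives $[F_0,\psi_a](a)=-\langle w_{\ol z},f\rangle_{(a,b)}$, yields $[F_{\theta},\psi_a](a)=c_z(f)[w_z,\psi_a](a)-\langle w_{\ol z},f\rangle_{(a,b)}$. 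Equating the two expressions for $[F_{\theta},\psi_a](a)$ produces $c_z(f)\bigl(\cot(\theta)+[w_z,\psi_a](a)\bigr)=\langle w_{\ol z},f\rangle_{(a,b)}$, that is, $c_z(f)\,k_{\theta}(z)=\langle w_{\ol z},f\rangle_{(a,b)}$.

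Because $(T_{\theta}-zI_{(a,b)})^{-1}$ is everywhere defined, this identity holds for every $f$; if $k_{\theta}(z)$ vanished it would force $\langle w_{\ol z},f\rangle_{(a,b)}=0$ for all $f\in L^2((a,b);r(x)\,dx)$, hence $w_{\ol z}=0$, contradicting $[w_{\ol z},\phi_a](a)=1$. So $k_{\theta}(z)\neq0$ and $c_z(f)=k_{\theta}(z)^{-1}\langle w_{\ol z},f\rangle_{(a,b)}$, which is \eqref{3.12}; the difference of resolvents is thus the rank one operator $k_{\theta}(z)^{-1}\langle w_{\ol z},\,\cdot\,\rangle_{(a,b)}w_z$, giving \eqref{3.13}, and Lemma \ref{l3.3} with $f$ replaced by $w_{\ol z}$ and $g$ by $k_{\theta}(z)^{-1}w_z$ (the scalar exiting the inner product unconjugated since it sits in the second argument) gives \eqref{3.14}. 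The only genuinely delicate step is the nonvanishing of $k_{\theta}(z)$, and this is precisely where one uses $z\in\rho(T_{\theta})$ and not merely $z\in\rho(T_0)$; everything else is routine bookkeeping with the Lagrange bracket.
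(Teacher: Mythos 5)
Your argument is correct, and although it uses the same ingredients as the paper (Lemma \ref{l3.2} for the boundary data of $(T_0-zI_{(a,b)})^{-1}f$, Lemma \ref{l3.3} for the trace, Lemma \ref{l2.16} for relative primeness, and the one-dimensionality of $\ker(T_{\max}-zI_{(a,b)})$ from \eqref{3.4}), the logical architecture of the central step is genuinely different. The paper proceeds by \emph{verification}: it first proves $k_{\theta}(z)\neq 0$ by a separate contradiction argument (if $k_{\theta}(z)=0$, then $w_z$ satisfies the $\theta$-boundary condition at $a$, so $z$ would be an eigenvalue of $T_{\theta}$), then defines the candidate operator $F_{\theta}(z)$ equal to the right-hand side of \eqref{3.12} plus $(T_0-zI_{(a,b)})^{-1}$ and checks $(T_{\theta}-zI_{(a,b)})F_{\theta}(z)=I_{(a,b)}$ by confirming the boundary condition. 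You instead \emph{derive} the formula: the difference of the two resolvents applied to $f$ is annihilated by $T_{\max}-zI_{(a,b)}$, hence is a multiple $c_z(f)w_z$, and comparing the two evaluations of $[F_{\theta},\psi_a](a)$ forces $c_z(f)\,k_{\theta}(z)=\langle w_{\ol z},f\rangle_{(a,b)}$. This buys you two things: the Krein formula is explained rather than merely checked, and the nonvanishing of $k_{\theta}(z)$ falls out of the derived identity for free (vanishing would force $\langle w_{\ol z},f\rangle_{(a,b)}=0$ for all $f$, hence $w_{\ol z}=0$, contradicting $[w_{\ol z},\phi_a](a)=1$) instead of requiring the eigenvalue argument. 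The one point your route needs up front --- that $w_{\ol z}$ is defined, i.e.\ $\ol z\in\rho(T_0)$ --- is immediate from self-adjointness of $T_0$, as you note. The treatment of relative primeness and the passage from \eqref{3.12} to \eqref{3.13}--\eqref{3.14} via Lemma \ref{l3.3} coincide with the paper's.
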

%%%%%%%%%%%
\begin{proof}
Let $\theta\in (0,\pi)$.  To prove that $T_0$ and $T_{\theta}$ are relatively prime with respect to $T_{\min}$, it suffices to prove
\begin{equation}
\dom(T_0)\cap\dom(T_{\theta}) \subseteq \dom(T_{\min}).
	\lb{3.15}
\end{equation}
To this end, let $g\in\dom(T_0)\cap\dom(T_{\theta})$.  By \eqref{2.24},
\begin{align}
\cos(\theta)[g,\phi_{a}](a)+\sin(\theta)[g,\psi_{a}](a)=0\ \hbox{ and }\ [g,\phi_{a}](a)=0.
	\lb{3.16}
\end{align}
However, \eqref{3.16} implies $[g,\psi_{a}](a)=0$ since $\sin(\theta)\neq0$ for $\theta\in(0,\pi)$.  By Lemma \ref{l2.16} $(i)$, $g\in\dom(T_{\min})$.  This completes the proof that $T_0$ and $T_{\theta}$ are relatively prime with respect to $T_{\min}$.

Let $z\in \rho(T_0)\cap\rho(T_{\theta})$ be fixed.  Suppose, by way of contradiction, that $k_{\theta}(z)=0$.  By Hypothesis \ref{h3.1} $(iv)$, $w_{z}\in\dom(T_{\max})$.  However, $k_{\theta}(z)=0$ implies
\begin{align}
0=\sin(\theta)k_{\theta}(z)=\cos(\theta)+\sin(\theta)[w_{z},\psi_{a}](a),
	\lb{3.17}
\end{align}
in which case, by \eqref{3.2}, $w_z\in\dom(T_{\theta})$, as well.  Now, $w_z\in\ker(T_{\max}-zI_{(a,b)})\backslash\{0\}$ implies $T_{\theta}w_z=zw_z$ so that $z\in\sigma(T_{\theta})$.  This is a contradiction to the assumption $z\in \rho(T_{\theta})$.  Therefore, $k_{\theta}(z)\neq 0$.

To prove \eqref{3.12}, define the operator
\begin{equation}
\begin{split}
F_{\theta}(z)=(T_{0}-zI_{(a,b)})^{-1}+k_{\theta}(z)^{-1}\left<w_{\ol{z}},\,\cdot\,\right>_{(a,b)}w_{z},&\\ \dom(F_{\theta}(z))=L^2((a,b);r(x)\, dx).&
	\lb{3.18}
\end{split}
\end{equation}
It suffices to show that
\begin{align}
(T_{\theta}-zI_{(a,b)})F_{\theta}(z)=I_{(a,b)};
	\lb{3.19}
\end{align}
that is, it suffices to show that for every $f\in L^2((a,b);r(x)\, dx)$,
\begin{equation}
F_{\theta}(z)f\in \dom(T_{\theta})
	\lb{3.20}
\end{equation}
and
\begin{equation}
(T_{\theta}-zI_{(a,b)})(F_{\theta}(z)f)=f.
	\lb{3.21}
\end{equation}
To this end, let $f\in L^2((a,b);r(x)\, dx)$.  It is clear from the definition of $F_{\theta}(z)$ that
\begin{equation}
F_{\theta}(z)f\in \dom(T_{\max}),
	\lb{3.22}
\end{equation}
so the proof of \eqref{3.20} reduces to showing that $F_{\theta}(z)f$ satisfies the boundary condition in \eqref{2.24} with $c=a$; that is,
\begin{equation}
\cos(\theta)[F_{\theta}(z)f,\phi_a](a)+\sin(\theta)[F_{\theta}(z)f,\psi_a](a)=0.
	\lb{3.23}
\end{equation}
One computes
\begin{align}
[F_{\theta}(z)f,\phi_{a}](a)&=\big[(T_{0}-zI_{(a,b)})^{-1}f,\phi_{a}\big](a)+k_{\theta}(z)^{-1}\left<w_{\ol{z}},f\right>_{(a,b)}[w_{z},\phi_{a}](a)\no\\
&=k_{\theta}(z)^{-1}\left<w_{\ol{z}},f\right>_{(a,b)}
	\lb{3.24}
\end{align}
by definition of $w_{z}$ and the fact that $(T_{0}-zI_{(a,b)})^{-1}f\in\dom(T_{0})$.  In addition,
\begin{align}
&[F_{\theta}(z)f,\psi_{a}](a)\no\\
&\quad =\big[(T_{0}-zI_{(a,b)})^{-1}f,\psi_{a}\big](a)+k_{\theta}(z)^{-1}\left<w_{\ol{z}},f\right>_{(a,b)}[w_{z},\psi_{a}](a).
	\lb{3.25}
\end{align}
An application of Lemma \ref{l3.2} in the first term on the right-hand side in \eqref{3.25} yields
\begin{align}
[F_{\theta}(z)f,\psi_{a}](a)=-\left<w_{\ol{z}},f\right>_{(a,b)}+k_{\theta}(z)^{-1}\left<w_{\ol{z}},f\right>_{(a,b)}[w_{z},\psi_{a}](a).
	\lb{3.26}
\end{align}
Finally, to verify \eqref{3.23}, one uses \eqref{3.24} and \eqref{3.26} as follows:
\begin{align}
&\cos(\theta)[F_{\theta}(z)f,\phi_{a}](a)+\sin(\theta)[F_{\theta}(z)f,\psi_{a}](a)\lb{3.27}\\
&\quad=\big\{\cos(\theta)k_{\theta}(z)^{-1}-\sin(\theta)+\sin(\theta)k_{\theta}(z)^{-1}[w_{z},\psi_{a}](a)\big\}\left<w_{\ol{z}},f\right>_{(a,b)}\no\\
&\quad=\big\{\cos(\theta)-\sin(\theta)k_{\theta}(z)+\sin(\theta)[w_{z},\psi_{a}](a)\big\}k_{\theta}(z)^{-1}\left<w_{\ol{z}},f\right>_{(a,b)}\no\\
&\quad=\big\{\cos(\theta)-\sin(\theta)\big(\cot(\theta)+[w_{z},\psi_{a}](a)\big)+\sin(\theta)[w_{z},\psi_{a}](a)\big\}\no\\
&\qquad\times k_{\theta}(z)^{-1}\left<w_{\ol{z}},f\right>_{(a,b)}\no\\
&\quad=0.\no
\end{align}
The proof of \eqref{3.21} combines \eqref{3.4} and the fact that $T_{\max}$ is an extension of both $T_{\theta}$ and $T_0$:
\begin{align}
&(T_{\theta}-zI_{(a,b)})F_{\theta}(z)f\lb{3.28}\\
&\quad=(T_{\theta}-zI_{(a,b)})\big[(T_{0}-zI_{(a,b)})^{-1}f+k_{\theta}(z)^{-1}\left<w_{\ol{z}},f\right>_{(a,b)}w_{z}\big]\no\\
&\quad=(T_{\max}-zI_{(a,b)})\big[(T_{0}-zI_{(a,b)})^{-1}f+k_{\theta}(z)^{-1}\left<w_{\ol{z}},f\right>_{(a,b)}w_{z}\big]\no\\
&\quad=(T_{\max}-zI_{(a,b)})(T_{0}-zI_{(a,b)})^{-1}f+k_{\theta}(z)^{-1}\left<w_{\ol{z}},f\right>_{(a,b)}(T_{\max}-zI_{(a,b)})w_{z}\no\\
&\quad=(T_{0}-zI_{(a,b)})(T_{0}-zI_{(a,b)})^{-1}f=I_{(a,b)}f=f.\no
\end{align}
The right-hand side of \eqref{3.12} is a rank one (hence, trace class) operator, so \eqref{3.13} follows.  Finally, applying Lemma \ref{l3.3}, \eqref{3.12}, and linearity of the trace functional, one computes
\begin{align}
\tr_{(a,b)}\left((T_{\theta}-zI_{(a,b)})^{-1}-(T_{0}-zI_{(a,b)})^{-1}\right)&=k_{\theta}(z)^{-1}\tr_{(a,b)}\big(\left<w_{\ol{z}},\,\cdot\,\right>_{(a,b)}w_{z}\big)\no\\
&=\f{\left<w_{\ol{z}},w_{z}\right>_{(a,b)}}{\cot(\theta)+[w_{z},\psi_{a}](a)}.
	\lb{3.29}
\end{align}
\end{proof}
%%%%%%%%%%%

%%%%%%%%%%%
\begin{remark}\lb{r3.5}
The identity in \eqref{3.12} yields a similar identity that relates the resolvents of any two self-adjoint extensions $T_{\theta_j}$, $\theta_j\in [0,\pi)$, $j\in \{1,2\}$.  For $\theta_{1},\theta_{2}\in[0,\pi)$ and $z\in\rho(T_{\theta_{1}})\cap\rho(T_{\theta_{2}})\cap \rho(T_0)$, the difference
\begin{equation}
(T_{\theta_{1}}-zI_{(a,b)})^{-1}-(T_{\theta_{2}}-zI_{(a,b)})^{-1}
	\lb{3.30}
\end{equation}
can be completely characterized by 
\begin{equation}
(T_{\theta_{j}}-zI_{(a,b)})^{-1}-(T_{0}-zI_{(a,b)})^{-1},\quad j\in\{1,2\},
	\lb{3.31}
\end{equation}
by adding and subtracting $(T_{0}-zI_{(a,b)})^{-1}$ and applying \eqref{3.12} to obtain
\begin{align}
&(T_{\theta_{1}}-zI_{(a,b)})^{-1}-(T_{\theta_{2}}-zI_{(a,b)})^{-1}
	\lb{3.32}\\
&\quad=\left[(T_{\theta_{1}}-zI_{(a,b)})^{-1}-(T_{0}-zI_{(a,b)})^{-1}\right]-\left[(T_{\theta_{2}}-zI_{(a,b)})^{-1}-(T_{0}-zI_{(a,b)})^{-1}\right]\no\\
&\quad =\big[k_{\theta_1}(z)^{-1}-k_{\theta_2}(z)^{-1}\big]\left<w_{\ol{z}},\,\cdot\,\right>_{(a,b)}w_{z}.\no
\end{align}
\hfill $\diamond$
\end{remark}
%%%%%%%%%%%

%%%%%%%%%%%%%%%%%%%%%%%%%%%%%%%
%%%%%%%%%%%%%%%%%%%%%%%%%%%%%%%
\section{The Case of Two Limit Circle Endpoints} \lb{s4}
%%%%%%%%%%%%%%%%%%%%%%%%%%%%%%%
%%%%%%%%%%%%%%%%%%%%%%%%%%%%%%%

In this section, we assume that $\tau$ is in the limit circle case at both endpoints of $(a,b)$.  Fixing $T_{0,0}$ (cf.~\eqref{2.25}) as a reference self-adjoint extension of $T_{\min}$, we derive explicit Krein resolvent identities that relate the resolvent of any other self-adjoint extension of $T_{\min}$ to the resolvent of $T_{0,0}$.  We distinguish the two cases of self-adjoint extensions parametrized by separated boundary conditions \eqref{2.25} and those parametrized by coupled boundary conditions \eqref{2.26}.  To set the stage, we introduce the following hypothesis.

%%%%%%%%%%%
\begin{hypothesis}\lb{h4.1}
In addition to Hypothesis \ref{h2.17}, suppose that $\tau$ is in the limit circle case at $a$ and $b$ and:\\[1mm]
$(i)$ Let $\{\phi_a,\psi_a\}$ and $\{\phi_b,\psi_b\}$ be boundary condition bases at $a$ and $b$, respectively.\\[1mm]
$(ii)$ For each $\alpha,\beta \in [0,\pi)$, let $T_{\alpha,\beta}$ denote the self-adjoint extension of $T_{\min}$ defined in \eqref{2.25}.  In particular, $T_{0,0}$ denotes the self-adjoint extension of $T_{\min}$ with domain
\begin{equation}\lb{4.1}
\dom(T_{0,0})=\{g\in\dom(T_{\max})\, |\, [g,\phi_a](a) = [g,\phi_b](b)=0\}.
\end{equation}
$(iii)$  For each $z\in \rho(T_{0,0})$, let $\{u_{z,j}\}_{j=1,2}$ denote solutions to \eqref{2.13} which satisfy the boundary conditions
\begin{equation}\lb{4.2}
\begin{aligned}
[u_{z,1},\phi_a](a)=0, &\quad [u_{z,1},\phi_b](b)=1,\\
[u_{z,2},\phi_a](a)=1, &\quad [u_{z,2},\phi_b](b)=0.
\end{aligned}
\end{equation}
$(iv)$ For each $\eta\in [0,\pi)$ and each $R\in \SL_{2}(\bbR)$, let $T_{R,\eta}$ denote the self-adjoint extension of $T_{\min}$ defined in \eqref{2.26}.\\
\end{hypothesis}
%%%%%%%%%%%

Solutions $u_{z,j}$, $j\in \{1,2\}$, of \eqref{2.13} satisfying \eqref{4.2} exist for $z\in \rho(T_{0,0})$.  To obtain $u_{z,1}$, for example, consider the unique solution $u$ to \eqref{2.13} satisfying the initial conditions
\begin{equation}\lb{4.3}
[u,\phi_a](a)=0\quad \text{and}\quad [u,\psi_a](a)=1.
\end{equation}
Note that the initial value problem for \eqref{2.13} corresponding to \eqref{4.3} has a unique solution by \cite[Lemma 10.4.8]{Ze05}.  One infers that $[u,\phi_b](b)\neq0$; otherwise, $u\in \dom(T_{0,0})$ and $z$ is an eigenvalue of $T_{0,0}$ (however, we have assumed $z\in \rho(T_{0,0})$).  Therefore, one may take $u_{z,1}=\{[u,\phi_b](b)\}^{-1}u$.  The solution $u_{z,2}$ is obtained in an analogous manner.

Assuming Hypothesis \ref{4.1}, the fact that $p$, $q$, and $r$ are real-valued implies
\begin{equation}\lb{4.4}
\ol{u_{z,j}} = u_{\ol{z},j},\quad j\in\{1,2\}.
\end{equation}
Therefore, $u_{z,j}$, $j\in \{1,2\}$, is real-valued when $z\in \bbR\cap\rho(T_{0,0})$.

Since $\tau$ is in the limit circle case at $a$ and $b$ and solutions to \eqref{2.13} are locally absolutely continuous, one infers
\begin{equation}\lb{4.5}
u_{z,j}\in \ker(T_{\max}-zI_{(a,b)})\subset \dom(T_{\max}),\quad j\in \{1,2\},\, z\in \rho(T_{0,0}).
\end{equation}
In particular,
\begin{equation}\lb{4.6}
\text{$\{u_{z,j}\}_{j=1,2}$ is a basis for $\ker(T_{\max}-zI_{(a,b)})$ for each $z\in \rho(T_{0,0})$.}
\end{equation}

The following lemma characterizes, for fixed $f\in L^2((a,b);r(x)\, dx)$, boundary data of $(T_{0,0}-zI_{(a,b)})^{-1}f$, $z\in\rho(T_{0,0})$, in terms of inner products of $f$ with $u_{\ol{z},j}$, $j\in \{1,2\}$.

%%%%%%%%%%%
\begin{lemma}\lb{l4.2}
Assume Hypothesis \ref{h4.1}.  If $z\in \rho(T_{0,0})$, then
\begin{equation}\lb{4.7}
\begin{split}
\big[(T_{0,0}-zI_{(a,b)})^{-1}f,\psi_a\big](a) &= -\langle u_{\ol{z},2},f\rangle_{(a,b)},\\
\big[(T_{0,0}-zI_{(a,b)})^{-1}f,\psi_b\big](b) &=  \langle u_{\ol{z},1},f\rangle_{(a,b)},\quad f\in L^2((a,b);r(x)\, dx).
\end{split}
\end{equation}
\end{lemma}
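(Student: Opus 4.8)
The plan is to imitate the proof of Lemma~\ref{l3.2} almost verbatim, replacing the pair $\{u_z,w_z\}$ there by the pair $\{u_{z,1},u_{z,2}\}$ fixed in \eqref{4.2} and carrying out the endpoint computation at both $a$ and $b$. Since $u_{z,1}$ satisfies the boundary condition $[\,\cdot\,,\phi_a](a)=0$ defining $\dom(T_{0,0})$ at $a$, while $u_{z,2}$ satisfies $[\,\cdot\,,\phi_b](b)=0$ at $b$ (cf.~\eqref{4.1} and \eqref{4.2}), the resolvent $(T_{0,0}-zI_{(a,b)})^{-1}$ is, by \cite[Theorem 7.1]{EGNT13}, the integral operator with kernel
\[
G_{0,0,z}(x,y)=\frac{1}{c_z}
\begin{cases}
u_{z,1}(y)u_{z,2}(x), & a<y\le x<b,\\
u_{z,1}(x)u_{z,2}(y), & a<x\le y<b,
\end{cases}
\]
where $c_z:=[u_{z,2},u_{\ol z,1}](a)$. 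By Lagrange's identity \eqref{2.6} the bracket $[u_{z,2},u_{\ol z,1}](x)$ is independent of $x$; using \eqref{4.4} it equals $u_{z,2}(pu_{z,1}')-(pu_{z,2}')u_{z,1}$, i.e.\ $(-p)$ times the Wronskian of the solutions $u_{z,1},u_{z,2}$, hence is nonzero by \eqref{4.6}, so in particular $c_z=[u_{z,2},u_{\ol z,1}](b)$ as well.

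The first substantive step is to identify $c_z$ with the Lagrange brackets occurring in \eqref{4.7} via two applications of the Pl\"ucker-type identity \eqref{2.5}. Taking $f_1=u_{z,2}$, $f_2=\psi_a$, $f_3=u_{\ol z,1}$, $f_4=\phi_a$ and evaluating at $a$, where $[u_{z,1},\phi_a](a)=0$ and $[u_{z,2},\phi_a](a)=1$ (by \eqref{4.2}) and $[\phi_a,\psi_a](a)=-1$ (by Definition~\ref{d2.14} and antisymmetry of the bracket on real-valued functions), one finds $[u_{z,1},\psi_a](a)=-c_z$. The analogous choice with $\psi_b,\phi_b$ in place of $\psi_a,\phi_a$, now using $[u_{z,1},\phi_b](b)=1$ and $[u_{z,2},\phi_b](b)=0$, gives $[u_{z,2},\psi_b](b)=c_z$.

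Finally, set $F:=(T_{0,0}-zI_{(a,b)})^{-1}f$ and differentiate $F(x)=\int_a^b G_{0,0,z}(x,y)f(y)r(y)\,dy$ exactly as in \eqref{3.8}; the two $\delta$-type boundary terms cancel. Substituting $F$ and $pF'$ into $[F,\psi_a](x)=F(x)(p\psi_a')(x)-(pF')(x)\psi_a(x)$ and regrouping by the two integrals, one arrives, as in \eqref{3.9}, at
\[
\begin{aligned}
c_z\,[F,\psi_a](x)={}&[u_{z,2},\psi_a](x)\int_a^x u_{z,1}(y)f(y)r(y)\,dy\\
&+[u_{z,1},\psi_a](x)\int_x^b u_{z,2}(y)f(y)r(y)\,dy,
\end{aligned}
\]
and likewise with $\psi_a$ replaced by $\psi_b$. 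Letting $x\downarrow a$ — the limit exists by Lemma~\ref{l2.5}, $\int_a^x(\cdot)\to 0$, $\int_x^b(\cdot)\to\int_a^b(\cdot)$, and $[u_{z,1},\psi_a](x)\to-c_z$ — gives $c_z[F,\psi_a](a)=-c_z\int_a^b u_{z,2}(y)f(y)r(y)\,dy$, whence, dividing by $c_z\neq0$ and using $\ol{u_{\ol z,2}}=u_{z,2}$ from \eqref{4.4}, the first line of \eqref{4.7} follows. Letting $x\uparrow b$ in the $\psi_b$-version, where now $\int_x^b(\cdot)\to0$ and $[u_{z,2},\psi_b](x)\to c_z$, gives $[F,\psi_b](b)=\int_a^b u_{z,1}(y)f(y)r(y)\,dy=\langle u_{\ol z,1},f\rangle_{(a,b)}$, which is the second line.

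I expect the only delicate point to be the sign and normalization bookkeeping — identifying $c_z$ simultaneously with $-[u_{z,1},\psi_a](a)$ and with $[u_{z,2},\psi_b](b)$ through the Pl\"ucker identity, and keeping straight that $u_{z,1}$ governs the behavior near $a$ while $u_{z,2}$ governs it near $b$, which is precisely what causes one of the two integrals above to drop out in each of the two limits. The remaining manipulations are the termwise computation and endpoint limit already carried out in the proof of Lemma~\ref{l3.2}.
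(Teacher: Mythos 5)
Your argument is correct and follows essentially the same route as the paper: the Green's function representation of $(T_{0,0}-zI_{(a,b)})^{-1}$, termwise differentiation and formation of the Lagrange bracket, the endpoint limits in which one integral vanishes, and the Pl\"ucker identity \eqref{2.5} to identify the normalization constant with $-[u_{z,1},\psi_a](a)$ and $[u_{z,2},\psi_b](b)$ (the latter appearing in the paper as Remark \ref{r4.3}). The only cosmetic difference is that you carry out both Pl\"ucker identifications and both endpoint limits explicitly, whereas the paper treats the $b$-endpoint as ``entirely analogous.''
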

%%%%%%%%%%%
\begin{proof}
Let $z\in \rho(T_{0,0})$ be fixed.  By hypothesis,
\begin{equation}\lb{4.8}
\text{$u_{z,1}$ satisfies the boundary condition at $a$ appearing in \eqref{4.1},}
\end{equation}
and
\begin{equation}\lb{4.9}
\text{$u_{z,2}$ satisfies the boundary condition at $b$ appearing in \eqref{4.1}.}
\end{equation}
By \cite[Theorem 7.3]{EGNT13}, combined with \eqref{4.8} and \eqref{4.9}, the operator $(T_{0,0}-zI_{(a,b)})^{-1}$ is an integral operator with integral kernel
\begin{equation}\lb{4.10}
G_{0,0,z}(x,y) = \frac{1}{[u_{z,2},u_{\ol{z},1}](b)}
\begin{cases}
u_{z,1}(x)u_{z,2}(y),& a< x\leq y< b,\\
u_{z,1}(y)u_{z,2}(x),& a<y\leq x<b,
\end{cases}
\end{equation}
so that for every $f\in L^2((a,b);r(x)\, dx)$,
\begin{equation}\lb{4.11}
\big[(T_{0,0}-zI_{(a,b)})^{-1}f\big](x) = \int_a^b G_{0,0,z}(x,y)f(y)r(y)\, dy,\quad x\in (a,b).
\end{equation}
Note that by \eqref{2.6},
\begin{equation}\lb{4.12}
\text{$[u_{z,2},u_{\ol{z},1}](x)$ is a constant function of $x\in(a,b)$.}
\end{equation}
For $f\in L^2((a,b);r(x)\, dx)$, one computes
\begin{align}
&\big[(T_{0,0}-zI_{(a,b)})^{-1}f,\psi_a\big](a)\lb{4.13}\\
&\quad = \lim_{x\downarrow a}\big\{\big[(T_{0,0}-zI_{(a,b)})^{-1}f\big](x)p(x)\psi_a'(x) - \psi_a(x)p(x)\big[(T_{0,0}-zI_{(a,b)})^{-1}f\big]'(x)\big\}.\no
\end{align}
To determine $\big[(T_{0,0}-zI_{(a,b)})^{-1}f\big]'$, one applies \eqref{4.10}--\eqref{4.12} as follows
\begin{align}
&[u_{z,2},u_{\ol{z},1}](b)\cdot\big[(T_{0,0}-zI_{(a,b)})^{-1}f\big]'(x)\lb{4.14}\\
%&\quad = \frac{d}{dx} \int_a^b G_{0,0,z}(x,y)f(y)r(y)\, dy\no\\
%&\quad = \frac{d}{dx}\bigg\{\int_a^x u_{z,1}(y)u_{z,2}(x)f(y)r(y)\, dy + \int_x^b u_{z,1}(x)u_{z,2}(y)r(y)\, dy\bigg\}\no\\
&\quad = u_{z,2}'(x)\int_a^x u_{z,1}(y)f(y)r(y)\, dy + u_{z,2}(x)u_{z,1}(x)f(x)r(x)\no\\
&\quad\quad+ u_{z,1}'(x)\int_x^b u_{z,2}(y)f(y)r(y)\, dy - u_{z,1}(x)u_{z,2}(x)f(x)r(x)\no\\
&\quad =  u_{z,2}'(x)\int_a^x u_{z,1}(y)f(y)r(y)\, dy + u_{z,1}'(x)\int_x^b u_{z,2}(y)f(y)r(y)\, dy \no
\end{align}
for a.e.~$x\in (a,b)$.  Therefore, \eqref{4.11}, \eqref{4.12}, and \eqref{4.14} imply
\begin{align}
&\big[(T_{0,0}-zI_{(a,b)})^{-1}f,\psi_a\big](x)\lb{4.15}\\
&\quad = \frac{1}{[u_{z,2},u_{\ol{z},1}](a)}p(x)\psi_a'(x)\bigg\{u_{z,2}(x)\int_a^x u_{z,1}(y)f(y)r(y)\, dy \no\\
&\hspace*{4.7cm}+ u_{z,1}(x)\int_x^b u_{z,2}(y)f(y)r(y)\, dy\bigg\} \no\\
&\qquad - \frac{1}{[u_{z,2},u_{\ol{z},1}](a)} p(x)\psi_a(x)\bigg\{u_{z,2}'(x)\int_a^x u_{z,1}(y)f(y)r(y)\, dy \no\\
&\hspace*{5cm} + u_{z,1}'(x)\int_x^b u_{z,2}(y)f(y)r(y)\, dy\bigg\}\no\\
&\quad= \frac{[u_{z,2},\psi_a](x)}{[u_{z,2},u_{\ol{z},1}](a)}\int_a^x u_{z,1}(y)f(y)r(y)\, dy + \frac{[u_{z,1},\psi_a](x)}{[u_{z,2},u_{\ol{z},1}](a)}\int_x^b u_{z,2}(y)f(y)r(y)\, dy,\no\\
&\hspace*{10.78cm}x\in(a,b).\no
\end{align}
Taking the limit $x\downarrow a$ throughout \eqref{4.15} and applying \eqref{4.4} yields
\begin{equation}\lb{4.16}
\big[(T_{0,0}-zI_{(a,b)})^{-1}f,\psi_a\big](a) =  \frac{[u_{z,1},\psi_a](a)}{[u_{z,2},u_{\ol{z},1}](a)} \langle u_{\ol{z},2},f \rangle_{(a,b)}.
\end{equation}
Next, an application of Lemma \ref{l2.2} with the choices $f_1=u_{z,2}$, $f_2=u_{\ol{z},1}$, $f_3=\psi_a$, and $f_4=\phi_a$ yields
\begin{align}\lb{4.18}
[u_{z,2},u_{\ol{z},1}](a) = -[u_{z,1},\psi_a](a).
\end{align}
Finally, \eqref{4.16} and \eqref{4.18} combine to yield the first identity in \eqref{4.7}.  The second identity in \eqref{4.7} is established in an entirely analogous manner, and we omit further details at this point.
\end{proof}
%%%%%%%%%%%

%%%%%%%%%%%
\begin{remark}\lb{r4.3}
An application of Lemma \ref{l2.2} with the choices $f_1=u_{z,2}$, $f_2=u_{\ol{z},1}$, $f_3=\psi_b$, and $f_4=\phi_b$ yields
\begin{align}\lb{4.20}
[u_{z,2},u_{\ol{z},1}](b) = [u_{z,2},\psi_b](b).
\end{align}
Thus, in light of \eqref{4.12} and \eqref{4.18}, one infers
\begin{equation}\lb{4.21}
-[u_{z,1},\psi_a](a)=[u_{z,2},\psi_b](b).
\end{equation}
${}$ \hfill $\diamond$
\end{remark}
%%%%%%%%%%%

With these preparations in place, we are now ready to state the first set of main results in this section, a Krein resolvent identity for $T_{0,0}$ and $T_{\alpha,\beta}$.  To simplify the statement of theorems, we treat the case when $T_{0,0}$ and $T_{\alpha,\beta}$ are relatively prime separate from the degenerate case when $T_{0,0}$ and $T_{\alpha,\beta}$ have a maximal common part which is a proper extension of $T_{\min}$.

%%%%%%%%%%%
\begin{theorem}\lb{t4.4}
Assume Hypothesis \ref{h4.1}.  If $\alpha,\beta\in(0,\pi)$, then $T_{0,0}$ and $T_{\alpha,\beta}$ are relatively prime with respect to $T_{\min}$.  Moreover, for each $z\in \rho(T_{0,0})\cap\rho(T_{\alpha,\beta})$ the matrix
\begin{align}\lb{4.22}
K_{\alpha,\beta}(z) =
\begin{pmatrix}
\cot(\beta)+[u_{z,1},\psi_b](b) & -[u_{z,1},\psi_a](a)\\[2mm]
[u_{z,2},\psi_b](b) & -\cot(\alpha)-[u_{z,2},\psi_a](a)
\end{pmatrix}
\end{align}
is invertible and
\begin{align}\lb{4.23}
(T_{\alpha,\beta}-zI_{(a,b)})^{-1} = (T_{0,0}-zI_{(a,b)})^{-1} - \sum_{j,k=1}^2\big[K_{\alpha,\beta}(z)^{-1}\big]_{j,k}\langle u_{\ol{z},j},\,\cdot\,\rangle_{(a,b)} u_{z,k}.
\end{align}
\end{theorem}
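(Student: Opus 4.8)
The plan is to mirror the structure of the proof of Theorem~\ref{t3.4}, now working with a $2\times 2$ matrix instead of a scalar. First I would establish that $T_{0,0}$ and $T_{\alpha,\beta}$ are relatively prime by showing $\dom(T_{0,0})\cap\dom(T_{\alpha,\beta})\subseteq\dom(T_{\min})$: if $g$ lies in both domains then $[g,\phi_a](a)=[g,\phi_b](b)=0$ from \eqref{4.1}, while the $T_{\alpha,\beta}$ conditions in \eqref{2.25} together with $\sin(\alpha),\sin(\beta)\neq 0$ (as $\alpha,\beta\in(0,\pi)$) force $[g,\psi_a](a)=[g,\psi_b](b)=0$; then Lemma~\ref{l2.16}$(ii)$ gives $g\in\dom(T_{\min})$.

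Next I would prove invertibility of $K_{\alpha,\beta}(z)$ by contradiction, as in the scalar case. If $K_{\alpha,\beta}(z)$ were singular, there would be a nonzero vector $(c_1,c_2)^{\top}$ in its kernel; I would set $v=c_1 u_{z,1}+c_2 u_{z,2}\in\ker(T_{\max}-zI_{(a,b)})\setminus\{0\}$ (nonzero by \eqref{4.6}) and compute its boundary brackets using \eqref{4.2}, obtaining $[v,\phi_a](a)=c_2$, $[v,\phi_b](b)=c_1$, $[v,\psi_a](a)=c_1[u_{z,1},\psi_a](a)+c_2[u_{z,2},\psi_a](a)$, and similarly at $b$. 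The kernel relations for $K_{\alpha,\beta}(z)$ are exactly what is needed to show, after multiplying through by $\sin(\beta)$ and $\sin(\alpha)$ respectively, that $\cos(\beta)[v,\phi_b](b)+\sin(\beta)[v,\psi_b](b)=0$ and $\cos(\alpha)[v,\phi_a](a)+\sin(\alpha)[v,\psi_a](a)=0$, i.e.\ $v\in\dom(T_{\alpha,\beta})$; but then $T_{\alpha,\beta}v=zv$, contradicting $z\in\rho(T_{\alpha,\beta})$. Here I expect the main bookkeeping obstacle: getting the signs and the precise matching between the two rows of $K_{\alpha,\beta}(z)$ and the two separated boundary conditions to line up correctly, which is why the off-diagonal entries carry the $-[u_{z,1},\psi_a](a)$ and $[u_{z,2},\psi_b](b)$ factors rather than something simpler.

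For the resolvent identity \eqref{4.23}, I would define
\[
F_{\alpha,\beta}(z) = (T_{0,0}-zI_{(a,b)})^{-1} - \sum_{j,k=1}^2 \big[K_{\alpha,\beta}(z)^{-1}\big]_{j,k}\langle u_{\ol{z},j},\,\cdot\,\rangle_{(a,b)}\, u_{z,k},
\]
with domain all of $L^2((a,b);r(x)\,dx)$, and verify $(T_{\alpha,\beta}-zI_{(a,b)})F_{\alpha,\beta}(z)=I_{(a,b)}$. The range condition $F_{\alpha,\beta}(z)f\in\dom(T_{\max})$ is clear since $u_{z,k}\in\dom(T_{\max})$. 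To check $F_{\alpha,\beta}(z)f\in\dom(T_{\alpha,\beta})$, I would compute the four boundary brackets $[F_{\alpha,\beta}(z)f,\phi_a](a)$, $[F_{\alpha,\beta}(z)f,\psi_a](a)$, $[F_{\alpha,\beta}(z)f,\phi_b](b)$, $[F_{\alpha,\beta}(z)f,\psi_b](b)$, using Lemma~\ref{l4.2} for the $\psi$-brackets of the $T_{0,0}$-resolvent term, the facts $[u_{z,1},\phi_a](a)=0$, $[u_{z,2},\phi_a](a)=1$, etc.\ from \eqref{4.2}, and Remark~\ref{r4.3} where convenient. This reduces the separated boundary conditions of $T_{\alpha,\beta}$ to a $2\times 2$ linear system in the two quantities $\langle u_{\ol{z},1},f\rangle_{(a,b)}$ and $\langle u_{\ol{z},2},f\rangle_{(a,b)}$ whose coefficient matrix is precisely $K_{\alpha,\beta}(z)$ (or its transpose — a point to be pinned down carefully), so that multiplying by $K_{\alpha,\beta}(z)^{-1}$ makes both conditions hold. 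Finally, as in \eqref{3.28}, $(T_{\alpha,\beta}-zI_{(a,b)})F_{\alpha,\beta}(z)f=f$ follows from replacing $T_{\alpha,\beta}$ by $T_{\max}$ on range vectors, using $(T_{\max}-zI_{(a,b)})u_{z,k}=0$ by \eqref{4.5}. The hard part is the linear-algebra matching in this last paragraph: ensuring that the indices $j,k$ in the double sum are contracted against the two boundary functionals in exactly the way that produces $K_{\alpha,\beta}(z)$ and not its transpose or a sign-flipped variant, which forces one to track carefully which Lagrange bracket picks out which coefficient.
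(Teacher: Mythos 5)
Your proposal follows essentially the same route as the paper: the same domain-inclusion argument via Lemma \ref{l2.16}$(ii)$ for relative primeness, the same contradiction argument (kernel vector of $K_{\alpha,\beta}(z)$ produces an eigenfunction $c_1u_{z,1}+c_2u_{z,2}$ of $T_{\alpha,\beta}$, where the identity $-[u_{z,1},\psi_a](a)=[u_{z,2},\psi_b](b)$ of Remark \ref{r4.3} is exactly what makes the two kernel relations match the two separated boundary conditions), and the same verification that $F_{\alpha,\beta}(z)$ is a right inverse by computing the four Lagrange brackets with Lemma \ref{l4.2} and \eqref{4.2} and then applying $(T_{\max}-zI_{(a,b)})$. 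The bookkeeping you flag is precisely where the paper spends its effort (expanding $\det(K_{\alpha,\beta}(z))$ and checking the coefficients of $\langle u_{\ol{z},j},f\rangle_{(a,b)}$ vanish), but your plan contains all the needed ingredients.
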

%%%%%%%%%%%
\begin{proof}
Let $\alpha,\beta\in (0,\pi)$.  To prove that $T_{0,0}$ and $T_{\alpha,\beta}$ are relatively prime with respect to $T_{\min}$, it suffices to prove
\begin{equation}\lb{4.24}
\dom(T_{0,0})\cap\dom(T_{\alpha,\beta}) \subseteq \dom(T_{\min}).
\end{equation}
To this end, let $y\in \dom(T_{0,0})\cap\dom(T_{\alpha,\beta})$.  The condition $y\in \dom(T_{0,0})$ implies
\begin{equation}\lb{4.25}
[y,\phi_a](a)=0\quad \text{and}\quad [y,\phi_b](b)=0,
\end{equation}
and the condition $y\in \dom(T_{\alpha,\beta})$ implies
\begin{equation}\lb{4.26}
\begin{split}
\cos(\alpha)[y,\phi_a](a) + \sin(\alpha)[y,\psi_a](a)&=0,\\
\cos(\beta)[y,\phi_b](b) + \sin(\beta)[y,\psi_b](b)&=0.
\end{split}
\end{equation}
The equations in \eqref{4.25} and \eqref{4.26} together imply
\begin{equation}\lb{4.27}
[y,\psi_a](a)=0\quad \text{and}\quad [y,\psi_b](b)=0,
\end{equation}
since $\alpha,\beta\in (0,\pi)$ implies $\sin(\alpha)\neq 0$ and $\sin(\beta)\neq 0$.  The inclusion $y\in \dom(T_{\min})$ follows from \eqref{2.21} in light of \eqref{4.25} and \eqref{4.27}.  Thus, \eqref{4.24} is established.  It remains to prove the invertibility of the matrix \eqref{4.22} and to establish the resolvent identity \eqref{4.23}.

Let $z\in \rho(T_{0,0})\cap\rho(T_{\alpha,\beta})$ be fixed.  Suppose, by way of contradiction, that $K_{\alpha,\beta}(z)$ is a singular matrix.  Then $K_{\alpha,\beta}(z)$ has a non-trivial null space, so there exists $c,d\in \bbC$ with $|c|^2+|d|^2\neq 0$ and
\begin{equation}
K_{\alpha,\beta}(z)
\begin{pmatrix}
c\\
d
\end{pmatrix}
=
\begin{pmatrix}
0\\
0
\end{pmatrix}.
	\lb{4.28}
\end{equation}
Therefore,
\begin{equation}\lb{4.29}
\begin{split}
\big\{\cos(\beta)+\sin(\beta)[u_{z,1},\psi_b](b)\big\}c - \sin(\beta)[u_{z,1},\psi_a](a)d&=0,\\
\sin(\alpha)[u_{z,2},\psi_b](b)c - \big\{\cos(\alpha) + \sin(\alpha)[u_{z,2},\psi_a](a)\big\}d&=0.
\end{split}
\end{equation}
By \eqref{4.21}, the set of equations in \eqref{4.29} can be recast as
\begin{align}
\begin{split}
\big\{\cos(\beta)+\sin(\beta)[u_{z,1},\psi_b](b)\big\}c + \sin(\beta)[u_{z,2},\psi_b](b)d&=0,\\
\sin(\alpha)[u_{z,1},\psi_a](a)c + \big\{\cos(\alpha) + \sin(\alpha)[u_{z,2},\psi_a](a)\big\}d&=0.\lb{4.30}
\end{split}
\end{align}
By the first equation in \eqref{4.30}, the function $cu_{z,1}+du_{z,2}$ satisfies the boundary condition for functions in $\dom(T_{\alpha,\beta})$ at the endpoint $b$.  Indeed, using \eqref{4.2} one computes
\begin{align}
&\cos(\beta)[cu_{z,1}+du_{z,2},\phi_b](b) + \sin(\beta)[cu_{z,1}+du_{z,2},\psi_b](b)\no\\
&\quad =\cos(\beta)[u_{z,1},\phi_b](b)c + \cos(\beta)[u_{z,2},\phi_b](b)d\no\\
&\qquad + \sin(\beta)[u_{z,1},\psi_b](b)c + \sin(\beta)[u_{z,2},\psi_b](b)d\no\\
&\quad = \big\{\cos(\beta) + \sin(\beta)[u_{z,1},\psi_b](b)\big\}c + \sin(\beta)[u_{z,2},\psi_b](b)d\no\\
&\quad = 0,
	\lb{4.31}
\end{align}
where the final equality follows from the first equation in \eqref{4.30}.  On the other hand, employing \eqref{4.2} once more yields
\begin{align}
&\cos(\alpha)[cu_{z,1}+du_{z,2},\phi_a](a) + \sin(\alpha)[cu_{z,1}+du_{z,2},\psi_a](a)\no\\
&\quad=\cos(\alpha)d + \sin(\alpha)[u_{z,1},\psi_a](a)c + \sin(\alpha)[u_{z,2},\psi_a](a)d\no\\
&\quad= \big\{\cos(\alpha) + \sin(\alpha)[u_{z,2},\psi_a](a) \big\}d + \sin(\alpha)[u_{z,1},\psi_a](a)c\no\\
&\quad=0,\lb{4.32}
\end{align}
where the final equality follows from the second equation in \eqref{4.30}.  Therefore, the function $cu_{z,1}+du_{z,2}$ satisfies the boundary condition for functions in $\dom(T_{\alpha,\beta})$ at the endpoint $a$.  Since $cu_{z,1}+du_{z,2}$ belongs to $\dom(T_{\max})$, one concludes that
\begin{equation}
(cu_{z,1}+du_{z,2})\in \dom(T_{\alpha,\beta}).
	\lb{4.33}
\end{equation}
Since $u_{z,1}$ and $u_{z,2}$ are linearly independent and $|c|^2+|d|^2\neq 0$, the linear combination $cu_{z,1}+du_{z,2}$ is not the zero function.  Finally, \eqref{4.5} actually implies that $cu_{z,1}+du_{z,2}$ is an eigenfunction of $T_{\alpha,\beta}$ with eigenvalue $z$, a contradiction to the assumption that $z\in \rho(T_{\alpha,\beta})$.  This concludes the proof that $K_{\alpha,\beta}(z)$ is invertible.

To prove \eqref{4.23}, define the operator
\begin{align}
F_{\alpha,\beta}(z)=(T_{0,0}-zI_{(a,b)})^{-1} - \sum_{j,k=1}^2\big[K_{\alpha,\beta}(z)^{-1}\big]_{j,k}\langle u_{\ol{z},j},\,\cdot\,\rangle_{(a,b)} u_{z,k},&\\
\dom(F_{\alpha,\beta}(z))=L^2((a,b);r(x)\, dx).&\no
	\lb{4.34}
\end{align}
It suffices to show that
\begin{equation}
(T_{\alpha,\beta}-zI_{(a,b)})F_{\alpha,\beta}(z) = I_{(a,b)};
	\lb{4.35}
\end{equation}
that is, it suffices to show that for every $f\in L^2((a,b);r(x)\, dx)$,
\begin{equation}\lb{4.36}
F_{\alpha,\beta}(z)f\in \dom(T_{\alpha,\beta}),
\end{equation}
and
\begin{equation}\lb{4.37}
(T_{\alpha,\beta}-zI_{(a,b)})(F_{\alpha,\beta}(z)f)=f.
\end{equation}
To this end, let $f\in L^2((a,b);r(x)\, dx)$.  It is clear from the definition of $F_{\alpha,\beta}(z)$ that
\begin{equation}
F_{\alpha,\beta}(z)f\in \dom(T_{\max}),
	\lb{4.38}
\end{equation}
so the proof of \eqref{4.36} reduces to showing that $F_{\alpha,\beta}(z)f$ satisfies the boundary conditions in \eqref{2.25}; that is, it suffices to prove:
\begin{align}
\cos(\alpha)[F_{\alpha,\beta}(z)f,\phi_a](a) + \sin(\alpha)[F_{\alpha,\beta}(z)f,\psi_a](a)&=0,\lb{4.39}\\
\cos(\beta)[F_{\alpha,\beta}(z)f,\phi_b](b) + \sin(\beta)[F_{\alpha,\beta}(z)f,\psi_b](b)&=0.\lb{4.40}
\end{align}
To show \eqref{4.39}, one uses $\big[(T_{0,0}-zI_{(a,b)})^{-1}f,\phi_a\big](a)=0$ and \eqref{4.2} to compute
\begin{align}
&[F_{\alpha,\beta}(z)f,\phi_a](a)\no\\
&\quad = \big[(T_{0,0}-zI_{(a,b)})^{-1}f,\phi_a\big](a) - \sum_{j,k=1}^2\big[K_{\alpha,\beta}(z)^{-1}\big]_{j,k}\langle u_{\ol{z},j},f\rangle_{(a,b)}[u_{z,k},\phi_a](a)\no\\
&\quad= -\sum_{j=1}^2 \big[K_{\alpha,\beta}(z)^{-1}\big]_{j,2} \langle u_{\ol{z},j},f\rangle_{(a,b)}\no\\
&\quad= -[\det(K_{\alpha,\beta}(z))]^{-1}[u_{z,1},\psi_a](a)\langle u_{\ol{z},1},f\rangle_{(a,b)} \no\\
&\qquad- [\det(K_{\alpha,\beta}(z))]^{-1}\big\{ \cot(\beta) + [u_{z,1},\psi_b](b) \big\}\langle u_{\ol{z},2},f\rangle_{(a,b)}.\lb{4.42}
\end{align}
Moreover, using Lemma \ref{l4.2} and the explicit form of $K_{\alpha,\beta}(z)^{-1}$ obtained from \eqref{4.22}, one computes
\begin{align}
&[F_{\alpha,\beta}(z)f,\psi_a](a)\no\\
&\quad =\big[(T_{0,0}-zI_{(a,b)})^{-1}f,\psi_a\big](a) - \sum_{j,k=1}^2\big[K_{\alpha,\beta}(z)^{-1} \big]_{j,k}\langle u_{\ol{z},j},f\rangle_{(a,b)} [u_{z,k},\psi_a](a)\no\\
&\quad = -\langle u_{\ol{z},2},f\rangle_{(a,b)}\no\\
&\qquad+ [\det(K_{\alpha,\beta}(z))]^{-1}\big\{\cot(\alpha) + [u_{z,2},\psi_a](a) \big\}\langle u_{\ol{z},1},f\rangle_{(a,b)} [u_{z,1},\psi_a](a)\no\\
&\qquad - [\det(K_{\alpha,\beta}(z))]^{-1}[u_{z,1},\psi_a](a)\langle u_{\ol{z},1},f\rangle_{(a,b)} [u_{z,2},\psi_a](a)\no\\
&\qquad + [\det(K_{\alpha,\beta}(z))]^{-1}[u_{z,2},\psi_b](b)\langle u_{\ol{z},2},f \rangle_{(a,b)} [u_{z,1},\psi_a](a)\no\\
&\qquad - [\det(K_{\alpha,\beta}(z))]^{-1}\big\{\cot(\beta) + [u_{z,1},\psi_b](b) \big\}\langle u_{\ol{z},2},f \rangle_{(a,b)} [u_{z,2},\psi_a](a).
	\lb{4.43}
\end{align}
Therefore, upon combining \eqref{4.42} and \eqref{4.43}, one obtains
\begin{align}
&\det(K_{\alpha,\beta}(z))\big\{\cos(\alpha)[F_{\alpha,\beta}(z)f,\phi_a](a) + \sin(\alpha)[F_{\alpha,\beta}(z)f,\psi_a](a)\big\}\no\\
&\quad= -\langle u_{\ol{z},1},f \rangle_{(a,b)} \big\{ \cos(\alpha)[u_{z,1},\psi_a](a) - \cos(\alpha)[u_{z,1},\psi_a](a)\no\\
&\qquad\quad - \sin(\alpha)[u_{z,2},\psi_a](a)[u_{z,1},\psi_a](a) + \sin(\alpha)[u_{z,1},\psi_a](a)[u_{z,2},\psi_a](a)\big\}\no\\
&\qquad - \langle u_{\ol{z},2},f \rangle_{(a,b)} \big\{ \cos(\alpha)\cot(\beta) + \cos(\alpha)[u_{z,1},\psi_b](b) + \sin(\alpha)\det(K_{\alpha,\beta}(z))\no\\
&\qquad\quad- \sin(\alpha)[u_{z,2},\psi_b](b)[u_{z,1},\psi_a](a) + \sin(\alpha)\cot(\beta)[u_{z,2},\psi_a](a)\no\\
&\qquad\quad+ \sin(\alpha)[u_{z,1},\psi_b](b)[u_{z,2},\psi_a](a)\big\}.
	\lb{4.44}
\end{align}
By inspection, the expression in braces multiplying $\langle u_{\ol{z},1},f \rangle_{(a,b)}$ on the right-hand side of \eqref{4.44} vanishes.  Fully expanding $\det(K_{\alpha,\beta}(z))$ using \eqref{4.22}, one infers that the expression in braces multiplying $\langle u_{\ol{z},2},f \rangle_{(a,b)}$ on the right-hand side of \eqref{4.44} equals
\begin{align}
&\cos(\alpha)\cot(\beta) + \cos(\alpha)[u_{z,1},\psi_b](b) - \cos(\alpha)\cot(\beta) - \sin(\alpha)\cot(\beta)[u_{z,2},\psi_a](a)\no\\
&\quad - \cos(\alpha)[u_{z,1},\psi_b](b) - \sin(\alpha)[u_{z,1},\psi_b](b)[u_{z,2},\psi_a](a)\no\\
&\quad+\sin(\alpha)[u_{z,2},\psi_b](b)[u_{z,1},\psi_a](a)-\sin(\alpha)[u_{z,2},\psi_b](b)[u_{z,1},\psi_a](a)\no\\
&\quad + \sin(\alpha)\cot(\beta)[u_{z,2},\psi_a](a)+\sin(\alpha)[u_{z,1},\psi_b](b)[u_{z,2},\psi_a](a),
	\lb{4.45}
\end{align}
which, by inspection, also vanishes.  Consequently,
\begin{equation}
\det(K_{\alpha,\beta}(z))\big\{ \cos(\alpha)[F_{\alpha,\beta}(z)f,\phi_a](a) + \sin(\alpha)[F_{\alpha,\beta}(z)f,\psi_a](a)\big\}=0,
	\lb{4.46}
\end{equation}
and since $K_{\alpha,\beta}(z)$ is invertible, \eqref{4.39} follows.  To prove \eqref{4.40}, one proceeds in a manner analogous to the proof of \eqref{4.39} and computes
\begin{align}
&[F_{\alpha,\beta}(z)f,\phi_b](b)\no\\
&\quad= \big[(T_{0,0}-zI_{(a,b)})^{-1}f,\phi_b\big](b) - \sum_{j,k=1}^2 \big[K_{\alpha,\beta}(z)^{-1}\big]_{j,k}\langle u_{\ol{z},j},f \rangle_{(a,b)} [u_{z,k},\phi_b](b)\no\\
&\quad= -\sum_{j=1}^2 \big[K_{\alpha,\beta}(z)^{-1}\big]_{j,1}\langle u_{\ol{z},j},f \rangle_{(a,b)}\no\\
&\quad=  [\det(K_{\alpha,\beta}(z))]^{-1}\big\{\cot(\alpha)+[u_{z,2},\psi_a](a)\big\}\langle u_{\ol{z},1},f \rangle_{(a,b)}\no\\
&\qquad+ [\det(K_{\alpha,\beta}(z))]^{-1}[u_{z,2},\psi_b](b)\langle u_{\ol{z},2},f\rangle_{(a,b)}\lb{4.47}
\end{align}
and
\begin{align}
&[F_{\alpha,\beta}(z)f,\psi_b](b)\no\\
&\quad=\big[(T_{0,0}-zI_{(a,b)})^{-1}f,\psi_b\big](b) - \sum_{j,k=1}^2 \big[K_{\alpha,\beta}(z)^{-1}\big]_{j,k}\langle u_{\ol{z},j},f\rangle_{(a,b)} [u_{z,k},\psi_b](b)\no\\
&\quad=\langle u_{\ol{z},1},f\rangle_{(a,b)} \no\\
&\qquad + [\det(K_{\alpha,\beta}(z))]^{-1}\big\{\cot(\alpha) + [u_{z,2},\psi_a](a) \big\}\langle u_{\ol{z},1},f\rangle_{(a,b)} [u_{z,1},\psi_b](b)\no\\
&\qquad - [\det(K_{\alpha,\beta}(z))]^{-1} [u_{z,1},\psi_a](a)\langle u_{\ol{z},1},f\rangle_{(a,b)} [u_{z,2},\psi_b](b)\no\\
&\qquad + [\det(K_{\alpha,\beta}(z))]^{-1}[u_{z,2},\psi_b](b)\langle u_{\ol{z},2},f\rangle_{(a,b)} [u_{z,1},\psi_b](b)\no\\
&\qquad - [\det(K_{\alpha,\beta}(z))]^{-1}\big\{\cot(\beta) + [u_{z,1},\psi_b](b) \big\}\langle u_{\ol{z},2},f\rangle_{(a,b)} [u_{z,2},\psi_b](b).\lb{4.48}
\end{align}
Upon combining \eqref{4.47} and \eqref{4.48}, one infers
\begin{align}
&\det(K_{\alpha,\beta}(z))\big\{ \cos(\beta)[F_{\alpha,\beta}(z)f,\phi_b](b)+\sin(\beta)[F_{\alpha,\beta}(z)f,\psi_b](b)\big\}\no\\
&\quad=-\langle u_{\ol{z},1},f\rangle_{(a,b)}\big\{ -\cos(\beta)\cot(\alpha) - \cos(\beta)[u_{z,2},\psi_a](a)-\sin(\beta)\det(K_{\alpha,\beta}(z))\no\\
&\qquad\quad - \sin(\beta)\cot(\alpha)[u_{z,1},\psi_b](b)-\sin(\beta)[u_{z,2},\psi_a](a)[u_{z,1},\psi_b](b)\no\\
&\qquad \quad+ \sin(\beta)[u_{z,1},\psi_a](a)[u_{z,2},\psi_b](b)\big\}\no\\
&\qquad - \langle u_{\ol{z},2},f\rangle_{(a,b)} \big\{-\cos(\beta)[u_{z,2},\psi_b](b) - \sin(\beta)[u_{z,2},\psi_b](b)[u_{z,1},\psi_b](b)\no\\
&\qquad \quad + \cos(\beta)[u_{z,2},\psi_b](b) + \sin(\beta)[u_{z,1},\psi_b](b)[u_{z,2},\psi_b](b)\big\}.\lb{4.49}
\end{align}
By inspection, the expression in braces multiplying $\langle u_{\ol{z},2},f\rangle_{(a,b)}$ on the right-hand side of \eqref{4.49} vanishes.  By fully expanding $\det(K_{\alpha,\beta}(z))$ using \eqref{4.22}, one infers that the expression in braces multiplying $\langle u_{\ol{z},1},f \rangle_{(a,b)}$ on the right-hand side of \eqref{4.49} equals
\begin{align}
&-\cos(\beta)\cot(\alpha)-\cos(\beta)[u_{z,2},\psi_a](a) + \cos(\beta)\cot(\alpha) + \cos(\beta)[u_{z,2},\psi_a](a)\no\\
&\quad + \sin(\beta)\cot(\alpha)[u_{z,1},\psi_b](b) + \sin(\beta)[u_{z,1},\psi_b](b)[u_{z,2},\psi_a](a)\no\\
&\quad - \sin(\beta)[u_{z,2},\psi_b](b)[u_{z,1},\psi_a](a) - \sin(\beta)\cot(\alpha)[u_{z,1},\psi_b](b)\no\\
&\quad - \sin(\beta)[u_{z,2},\psi_a](a)[u_{z,1},\psi_b](b) + \sin(\beta)[u_{z,1},\psi_a](a)[u_{z,2},\psi_b](b),
	\lb{4.50}
\end{align}
which, by inspection, also vanishes.  Consequently,
\begin{equation}
\det(K_{\alpha,\beta}(z))\big\{ \cos(\beta)[F_{\alpha,\beta}(z)f,\phi_b](b)+\sin(\beta)[F_{\alpha,\beta}(z)f,\psi_b](b)\big\}=0
	\lb{4.51}
\end{equation}
and since $K_{\alpha,\beta}(z)$ is invertible, \eqref{4.40} follows.  This completes the proof of \eqref{4.36}, and it remains to prove \eqref{4.37}.  The proof of \eqref{4.37} is a simple calculation which combines \eqref{4.5} and the fact that $T_{\max}$ is an extension of both $T_{\alpha,\beta}$ and $T_{0,0}$:
\begin{align}
(T_{\alpha,\beta}-zI_{(a,b)})F_{\alpha,\beta}(z)f&= (T_{\max}-zI_{(a,b)})F_{\alpha,\beta}(z)f\lb{4.52}\\
&= (T_{\max}-zI_{(a,b)})(T_{0,0}-zI_{(a,b)})^{-1}f\no\\
&\quad - \sum_{j,k=1}^2\big[K_{\alpha,\beta}(z)^{-1}\big]_{j,k}\langle u_{\ol{z},j},f\rangle_{(a,b)} (T_{\max}-zI_{(a,b)}) u_{z,k}\no\\
&= (T_{0,0}-zI_{(a,b)})(T_{0,0}-zI_{(a,b)})^{-1}f= I_{(a,b)}f= f.\no
\end{align}
\end{proof}
%%%%%%%%%%%

If $\alpha,\beta\in [0,\pi)$ and $\alpha\beta=0$, then $T_{\alpha,\beta}$ and $T_{0,0}$ are no longer relatively prime with respect to $T_{\min}$.  In this case, we obtain:

%%%%%%%%%%%
\begin{theorem}\lb{t4.5}
Assume Hypothesis \ref{h4.1}.  The following statements $(i)$ and $(ii)$ hold.\\[1mm]
$(i)$  If $\beta\in (0,\pi)$, then the maximal common part of $T_{0,0}$ and $T_{0,\beta}$ is the restriction of $T_{\max}$ to the set
\begin{equation}
\cS_1=\{y\in \dom(T_{\max})\,|\, [y,\phi_a](a) = [y,\phi_b](b) = [y,\psi_b](b)=0\}.
	\lb{4.53}
\end{equation}
Moreover, for each $z\in \rho(T_{0,0})\cap\rho(T_{0,\beta})$ the scalar
\begin{equation}\lb{4.54}
K_{0,\beta}(z)=-\cot(\beta)-[u_{z,1},\psi_b](b)
\end{equation}
is nonzero and
\begin{equation}\lb{4.55}
(T_{0,\beta}-zI_{(a,b)})^{-1} = (T_{0,0}-zI_{(a,b)})^{-1} + K_{0,\beta}(z)^{-1}\langle u_{\ol{z},1},\,\cdot\,\rangle_{(a,b)} u_{z,1}.
\end{equation}
$(ii)$  If $\alpha\in (0,\pi)$, then the maximal common part of $T_{0,0}$ and $T_{\alpha,0}$ is the restriction of $T_{\max}$ to the set
\begin{equation}
\cS_2=\{y\in \dom(T_{\max})\,|\, [y,\phi_a](a)=[y,\phi_b](b)=[y,\psi_a](a)=0\}.
	\lb{4.56}
\end{equation}
Moreover, for each $z\in \rho(T_{0,0})\cap\rho(T_{\alpha,0})$ the scalar
\begin{equation}
K_{\alpha,0}(z)=\cot(\alpha)+[u_{z,2},\psi_a](a)
	\lb{4.57}
\end{equation}
is nonzero and
\begin{equation}
(T_{\alpha,0}-zI_{(a,b)})^{-1} = (T_{0,0}-zI_{(a,b)})^{-1} + K_{\alpha,0}(z)^{-1}\langle u_{\ol{z},2},\,\cdot\,\rangle_{(a,b)} u_{z,2}.
	\lb{4.58}
\end{equation}
\end{theorem}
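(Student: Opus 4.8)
The plan is to follow the blueprint of the proof of Theorem \ref{t4.4}, specialized to the rank one situation that arises when exactly one of the two boundary condition parameters vanishes. I prove $(i)$ in detail and indicate the (entirely parallel) modifications for $(ii)$. First I would identify the maximal common part of $T_{0,0}$ and $T_{0,\beta}$: since both operators act as $T_{\max}$, one has $\dom(C_{T_{0,0},T_{0,\beta}}) = \dom(T_{0,0})\cap\dom(T_{0,\beta})$, and a function $y\in\dom(T_{\max})$ lies in this intersection exactly when $[y,\phi_a](a)=[y,\phi_b](b)=0$ and $\cos(\beta)[y,\phi_b](b)+\sin(\beta)[y,\psi_b](b)=0$; because $\sin(\beta)\neq0$ for $\beta\in(0,\pi)$, these are equivalent to $[y,\phi_a](a)=[y,\phi_b](b)=[y,\psi_b](b)=0$, i.e.\ to $y\in\cS_1$ as in \eqref{4.53}.

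Next I would rule out $K_{0,\beta}(z)=0$ by contradiction, exactly as in the second paragraph of the proof of Theorem \ref{t4.4}: if $\cot(\beta)+[u_{z,1},\psi_b](b)=0$, then $\cos(\beta)+\sin(\beta)[u_{z,1},\psi_b](b)=0$, and together with $[u_{z,1},\phi_a](a)=0$ and $[u_{z,1},\phi_b](b)=1$ from \eqref{4.2} this shows that $u_{z,1}$ satisfies both boundary conditions defining $\dom(T_{0,\beta})$; since $u_{z,1}\in\ker(T_{\max}-zI_{(a,b)})\backslash\{0\}$ by \eqref{4.5}, this would make $z$ an eigenvalue of $T_{0,\beta}$, contradicting $z\in\rho(T_{0,\beta})$. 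To prove \eqref{4.55} I would then set $F_{0,\beta}(z)=(T_{0,0}-zI_{(a,b)})^{-1}+K_{0,\beta}(z)^{-1}\langle u_{\ol{z},1},\,\cdot\,\rangle_{(a,b)}u_{z,1}$ on all of $L^2((a,b);r(x)\,dx)$ and check $(T_{0,\beta}-zI_{(a,b)})F_{0,\beta}(z)=I_{(a,b)}$. Since $F_{0,\beta}(z)f\in\dom(T_{\max})$, membership $F_{0,\beta}(z)f\in\dom(T_{0,\beta})$ reduces to two boundary conditions: the one at $a$ is immediate because $[(T_{0,0}-zI_{(a,b)})^{-1}f,\phi_a](a)=0$ and $[u_{z,1},\phi_a](a)=0$, while for the one at $b$ one computes, using $[u_{z,1},\phi_b](b)=1$ and the second identity in Lemma \ref{l4.2}, that $\cos(\beta)[F_{0,\beta}(z)f,\phi_b](b)+\sin(\beta)[F_{0,\beta}(z)f,\psi_b](b)$ equals $K_{0,\beta}(z)^{-1}\langle u_{\ol{z},1},f\rangle_{(a,b)}\big(\cos(\beta)+\sin(\beta)K_{0,\beta}(z)+\sin(\beta)[u_{z,1},\psi_b](b)\big)$, which vanishes by the definition \eqref{4.54} of $K_{0,\beta}(z)$. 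Finally $(T_{0,\beta}-zI_{(a,b)})F_{0,\beta}(z)f=(T_{\max}-zI_{(a,b)})(T_{0,0}-zI_{(a,b)})^{-1}f=f$, since $u_{z,1}\in\ker(T_{\max}-zI_{(a,b)})$ and $T_{\max}$ extends both $T_{0,0}$ and $T_{0,\beta}$, exactly as in \eqref{4.52}.

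Part $(ii)$ is the mirror image of $(i)$ under the interchange of endpoints $a\leftrightarrow b$ and solutions $u_{z,1}\leftrightarrow u_{z,2}$: the intersection $\dom(T_{0,0})\cap\dom(T_{\alpha,0})$ now collapses to $[y,\phi_a](a)=[y,\phi_b](b)=[y,\psi_a](a)=0$, i.e.\ $\cS_2$; if $K_{\alpha,0}(z)=0$ then $u_{z,2}$ becomes an eigenfunction of $T_{\alpha,0}$; and with $F_{\alpha,0}(z)=(T_{0,0}-zI_{(a,b)})^{-1}+K_{\alpha,0}(z)^{-1}\langle u_{\ol{z},2},\,\cdot\,\rangle_{(a,b)}u_{z,2}$ the boundary condition at $b$ is automatic, while the one at $a$ reduces---via the \emph{first} identity in Lemma \ref{l4.2}, which carries the opposite sign---to $\cos(\alpha)-\sin(\alpha)K_{\alpha,0}(z)+\sin(\alpha)[u_{z,2},\psi_a](a)=0$, which holds by definition \eqref{4.57}.

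I do not anticipate a genuine obstacle: because the resolvent difference is rank one rather than rank two, the determinant algebra of Theorem \ref{t4.4} degenerates to a single scalar cancellation, so the argument is actually shorter. The one point demanding care is the sign asymmetry between the two identities in Lemma \ref{l4.2} (minus at $a$, plus at $b$), which is exactly what forces $K_{0,\beta}(z)$ and $K_{\alpha,0}(z)$ to be defined with opposite signs in \eqref{4.54} and \eqref{4.57}; tracking this consistently is the only real bookkeeping hazard.
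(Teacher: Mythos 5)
Your proposal is correct and follows essentially the same route as the paper's proof: the same characterization of $\dom(T_{0,0})\cap\dom(T_{0,\beta})$ via $\sin(\beta)\neq 0$, the same contradiction argument showing $u_{z,1}$ would be an eigenfunction of $T_{0,\beta}$ if $K_{0,\beta}(z)=0$, and the same verification that $F_{0,\beta}(z)$ is a right inverse by checking the two boundary conditions (including the cancellation $\cos(\beta)+\sin(\beta)K_{0,\beta}(z)+\sin(\beta)[u_{z,1},\psi_b](b)=0$, which is exactly the paper's computation). The paper likewise proves only item $(i)$ in detail and declares $(ii)$ analogous, and your sign bookkeeping for $(ii)$ via the first identity of Lemma \ref{l4.2} is accurate.
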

%%%%%%%%%%%
\begin{proof}
We provide the details of the proof of item $(i)$ only.  The proof of item $(ii)$ is entirely analogous.  Let $\beta\in (0,\pi)$ be fixed.  To prove that the maximal common part of $T_{0,0}$ and $T_{0,\beta}$ is the restriction of $T_{\max}$ to the set $\cS_1$, it suffices to show
\begin{equation}
\dom(T_{0,0})\cap\dom(T_{0,\beta}) = \cS_1.
	\lb{4.59}
\end{equation}
To this end, let $y\in \dom(T_{0,0})\cap\dom(T_{0,\beta})$.  Then the fact that $y\in \dom(T_{0,0})$ implies that $y$ satisfies the conditions in \eqref{4.25}, and the fact that $y\in \dom(T_{0,\beta})$ implies
\begin{equation}\lb{4.60}
\cos(\beta)[y,\phi_b](b) + \sin(\beta)[y,\psi_b](b)=0.
\end{equation}
Taken together, the relations in \eqref{4.25} and \eqref{4.60} imply $[y,\psi_b](b)=0$ since $\sin(\beta)\neq 0$ for $\beta \in (0,\pi)$.  Hence, $y\in \cS_1$.  Conversely, if $y\in \cS_1$, then \eqref{4.25} and \eqref{4.60} hold, so $y\in \dom(T_{0,0})\cap \dom(T_{0,\beta})$.  To complete the proof of item $(i)$, let $z\in \rho(T_{0,0})\cap \rho(T_{0,\beta})$ be fixed and let $K_{0,\beta}(z)$ be the scalar defined in \eqref{4.54}.  To prove the claim that $K_{0,\beta}(z)$ is nonzero, suppose on the contrary that $K_{0,\beta}(z)=0$.  We claim that $z$ is then an eigenvalue of $T_{0,\beta}$.  Indeed, $K_{0,\beta}(z)=0$ implies
\begin{equation}
\cos(\beta)[u_{z,1},\phi_b](b) + \sin(\beta)[u_{z,1},\psi_b](b)=0.
	\lb{4.61}
\end{equation}
Since $[u_{z,1},\phi_a](a)=0$ (cf.~\eqref{4.2}) and $u_{z,1}\in \dom(T_{\max})$, it follows that $u_{z,1}\in \dom(T_{0,\beta})$, so that $z$ is an eigenvalue of $T_{0,\beta}$ and $u_{z,1}$ is a corresponding eigenfunction.  This contradicts the assumption that $z\in \rho(T_{0,\beta})$ and completes the proof that $K_{0,\beta}(z)\neq 0$.  It remains to establish the resolvent identity in \eqref{4.55}.  Define
\begin{equation}\lb{4.62}
\begin{split}
F_{0,\beta}(z)=(T_{0,0}-zI_{(a,b)})^{-1} + K_{0,\beta}(z)^{-1}\langle u_{\ol{z},1},\,\cdot\,\rangle_{(a,b)} u_{z,1},&\\
\dom(F_{0,\beta}(z))=L^2((a,b);r(x)\, dx).&
\end{split}
\end{equation}
In order to prove \eqref{4.55}, it suffices to show
\begin{equation}
(T_{0,\beta}-zI_{(a,b)})F_{0,\beta}(z) = I_{(a,b)};
	\lb{4.63}
\end{equation}
that is, it suffices to show that for every $f\in L^2((a,b);r(x)\, dx)$,
\begin{equation}\lb{4.64}
F_{0,\beta}(z)f\in \dom(T_{0,\beta}),
\end{equation}
and
\begin{equation}\lb{4.65}
(T_{0,\beta}-zI_{(a,b)})(F_{0,\beta}(z)f)=f.
\end{equation}
To this end, let $f\in L^2((a,b);r(x)\, dx)$ be arbitrary.  It is clear from the definition of $F_{0,\beta}(z)$ that
\begin{equation}
F_{0,\beta}(z)f\in \dom(T_{\max}),
	\lb{4.66}
\end{equation}
so the proof of \eqref{4.64} reduces to showing that $F_{0,\beta}(z)f$ satisfies the boundary conditions for functions in $\dom(T_{0,\beta})$; that is, it suffices to prove
\begin{equation}\lb{4.67}
\begin{split}
[F_{0,\beta}(z)f,\phi_a](a)=0,\\
\cos(\beta)[F_{0,\beta}(z)f,\phi_b](b) + \sin(\beta)[F_{0,\beta}(z)f,\psi_b](b)=0.
\end{split}
\end{equation}
To check the first boundary condition in \eqref{4.67}, one uses \eqref{4.2} and \eqref{4.27} to compute
\begin{align}
[F_{0,\beta}(z)f,\phi_a](a) &= \big[(T_{0,0}-zI_{(a,b)})^{-1}f,\phi_a\big](a)\no\\
&\quad + K_{0,\beta}(z)^{-1}\langle u_{\ol{z},1},f\rangle_{(a,b)}[u_{z,1},\phi_a](a) = 0.
	\lb{4.68}
\end{align}
To check the second boundary condition in \eqref{4.67}, one computes
\begin{align}
&\cos(\beta)[F_{0,\beta}(z)f,\phi_b](b) + \sin(\beta)[F_{0,\beta}(z)f,\psi_b](b)\no\\
&\quad = \cos(\beta)\big\{ \big[(T_{0,0}-zI_{(a,b)})^{-1}f,\phi_b\big](b) + K_{0,\beta}(z)^{-1}\langle u_{\ol{z},1},f\rangle_{(a,b)} [u_{z,1},\phi_b](b)\big\}\no\\
&\qquad + \sin(\beta)\big\{ \big[(T_{0,0}-zI_{(a,b)})^{-1}f,\psi_b\big](b)+K_{0,\beta}(z)^{-1}\langle u_{\ol{z},1},f\rangle_{(a,b)} [u_{z,1},\psi_b](b)\big\}\no\\
&\quad = \cos(\beta)K_{0,\beta}(z)^{-1}\langle u_{\ol{z},1},f\rangle_{(a,b)}
+\sin(\beta)\langle u_{\ol{z},1},f\rangle_{(a,b)}\no\\
&\qquad+ \sin(\beta)K_{0,\beta}(z)^{-1}\langle u_{\ol{z},1},f\rangle_{(a,b)} [u_{z,1},\psi_b](b)\no\\
&\quad = \langle u_{\ol{z},1},f\rangle_{(a,b)}\big\{\cos(\beta)K_{0,\beta}(z)^{-1}+\sin(\beta)+\sin(\beta)K_{0,\beta}(z)^{-1}[u_{z,1},\psi_b](b)\big\}\no\\
&\quad = \langle u_{\ol{z},1},f\rangle_{(a,b)}\, K_{0,\beta}(z)^{-1}\big\{ \cos(\beta) + \sin(\beta)K_{0,\beta}(z)+\sin(\beta)[u_{z,1},\psi_b](b)\big\}\no\\
&\quad = \langle u_{\ol{z},1},f\rangle_{(a,b)}\, K_{0,\beta}(z)^{-1}\big\{\cos(\beta)-\cos(\beta)-\sin(\beta)[u_{z,1},\psi_b](b)\no\\
&\qquad +\sin(\beta)[u_{z,1},\psi_b](b)\big\}\no\\
&\quad = 0.\lb{4.69}
\end{align}
Note that the second identity in \eqref{4.7} is used to obtain the second equality in \eqref{4.69}.  This proves \eqref{4.64}, and subsequently, the claim in \eqref{4.65} is a result of the following calculation:
\begin{align}
(T_{0,\beta}-zI_{(a,b)})F_{0,\beta}(z)f&= (T_{\max}-zI_{(a,b)})(F_{0,\beta}(z)f)\lb{4.70}\\
&= (T_{\max}-zI_{(a,b)})(T_{0,0}-zI_{(a,b)})^{-1}f\no\\
&\quad+ K_{0,\beta}(z)^{-1}\langle u_{\ol{z},1},f\rangle_{(a,b)} (T_{\max}-zI_{(a,b)})u_{z,1}\no\\
&= (T_{0,0}-zI_{(a,b)})(T_{0,0}-zI_{(a,b)})^{-1}f= I_{(a,b)}f= f.\no
\end{align}
\end{proof}
%%%%%%%%%%%

Now, we derive results analogous to Theorems \ref{t4.4} and \ref{t4.5} for coupled boundary conditions.  Again, we separate the case in which $T_{R,\eta}$ and $T_{0,0}$ are relatively prime with respect to $T_{\min}$ from the rest.  The first is:

%%%%%%%%%%%
\begin{theorem}	\lb{t4.6}
Assume Hypothesis \ref{h4.1}.  If $R_{1,2}\neq 0$, then $T_{0,0}$ and $T_{R,\eta}$ are relatively prime with respect to $T_{\min}$.  Moreover, for each $z\in \rho(T_{0,0})\cap\rho(T_{R,\eta})$ the matrix
\begin{equation}\lb{4.71}
K_{R,\eta}(z) = \begin{pmatrix}
\dfrac{R_{2,2}}{R_{1,2}}-[u_{z,1},\psi_b](b) & -\dfrac{e^{-i\eta}}{R_{1,2}}+[u_{z,1},\psi_a](a)\\[4mm]
-\dfrac{e^{i\eta}}{R_{1,2}}-[u_{z,2},\psi_b](b) & \dfrac{R_{1,1}}{R_{1,2}}+[u_{z,2},\psi_a](a)
\end{pmatrix}
\end{equation}
is invertible and
\begin{equation}\lb{4.72}
(T_{R,\eta}-zI_{(a,b)})^{-1} = (T_{0,0}-zI_{(a,b)})^{-1} + \sum_{j,k=1}^2\big[K_{R,\eta}(z)^{-1}\big]_{j,k}\langle u_{\ol{z},j},\,\cdot\,\rangle_{(a,b)} u_{z,k}.
\end{equation}
\end{theorem}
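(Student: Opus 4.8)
The plan is to follow the proof of Theorem~\ref{t4.4} almost verbatim, with the separated boundary conditions \eqref{2.25} replaced by the coupled boundary condition \eqref{2.26}. There are three things to establish: that $\dom(T_{0,0})\cap\dom(T_{R,\eta})\subseteq\dom(T_{\min})$ (which gives relative primeness, since $T_{0,0}$ and $T_{R,\eta}$ both act as $T_{\max}$); that $K_{R,\eta}(z)$ is invertible for $z\in\rho(T_{0,0})\cap\rho(T_{R,\eta})$; and that the operator
\[
F_{R,\eta}(z)=(T_{0,0}-zI_{(a,b)})^{-1}+\sum_{j,k=1}^{2}\big[K_{R,\eta}(z)^{-1}\big]_{j,k}\langle u_{\ol{z},j},\,\cdot\,\rangle_{(a,b)}\,u_{z,k},
\]
defined on all of $L^2((a,b);r(x)\,dx)$, satisfies $(T_{R,\eta}-zI_{(a,b)})F_{R,\eta}(z)=I_{(a,b)}$; since $z\in\rho(T_{R,\eta})$, the last relation forces $F_{R,\eta}(z)=(T_{R,\eta}-zI_{(a,b)})^{-1}$, which is \eqref{4.72}. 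For the first point, if $y\in\dom(T_{0,0})\cap\dom(T_{R,\eta})$ then $[y,\phi_a](a)=[y,\phi_b](b)=0$ by \eqref{4.1}, so in \eqref{2.26} the entries $R_{1,1}$ and $R_{2,1}$ play no role and the two components read $0=e^{i\eta}R_{1,2}[y,\psi_a](a)$ and $[y,\psi_b](b)=e^{i\eta}R_{2,2}[y,\psi_a](a)$; since $R_{1,2}\neq 0$, the first gives $[y,\psi_a](a)=0$, and then the second gives $[y,\psi_b](b)=0$. All four Lagrange brackets of $y$ vanish, so $y\in\dom(T_{\min})$ by \eqref{2.21}.

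To prove invertibility of $K_{R,\eta}(z)$ for fixed $z\in\rho(T_{0,0})\cap\rho(T_{R,\eta})$, argue by contradiction. If $K_{R,\eta}(z)$ is singular then so is its transpose, so there are $c,d\in\bbC$, not both zero, with $K_{R,\eta}(z)^{\top}(c,d)^{\top}=0$. Put $y=cu_{z,1}+du_{z,2}$; by \eqref{4.5} and \eqref{4.6} this is a nonzero element of $\ker(T_{\max}-zI_{(a,b)})$. From \eqref{4.2} one has $[y,\phi_a](a)=d$ and $[y,\phi_b](b)=c$, while $[y,\psi_a](a)$ and $[y,\psi_b](b)$ are the evident linear combinations of the quantities $[u_{z,j},\psi_a](a)$, $[u_{z,j},\psi_b](b)$ occurring in \eqref{4.71}. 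Inserting these four brackets into \eqref{2.26} and collapsing the terms containing $R$ by means of $\det(R)=1$, one finds that \eqref{2.26} holds for $y$ precisely because $K_{R,\eta}(z)^{\top}(c,d)^{\top}=0$. Hence $y\in\dom(T_{R,\eta})$ and $T_{R,\eta}y=T_{\max}y=zy$, so $z$ is an eigenvalue of $T_{R,\eta}$, contradicting $z\in\rho(T_{R,\eta})$; therefore $K_{R,\eta}(z)$ is invertible.

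For the resolvent identity it suffices to verify, for every $f\in L^2((a,b);r(x)\,dx)$, both $F_{R,\eta}(z)f\in\dom(T_{R,\eta})$ and $(T_{R,\eta}-zI_{(a,b)})(F_{R,\eta}(z)f)=f$. The second is immediate as in \eqref{4.52}: since $F_{R,\eta}(z)f\in\dom(T_{\max})$ and $T_{R,\eta}\subseteq T_{\max}$, one replaces $T_{R,\eta}$ by $T_{\max}$, and then \eqref{4.5} (i.e.\ $(T_{\max}-zI_{(a,b)})u_{z,k}=0$) together with $T_{0,0}\subseteq T_{\max}$ reduces the expression to $f$. For the first, write $\gamma_k=\sum_{j=1}^{2}\big[K_{R,\eta}(z)^{-1}\big]_{j,k}\langle u_{\ol{z},j},f\rangle_{(a,b)}$, so that $F_{R,\eta}(z)f=(T_{0,0}-zI_{(a,b)})^{-1}f+\gamma_1u_{z,1}+\gamma_2u_{z,2}$. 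Using $(T_{0,0}-zI_{(a,b)})^{-1}f\in\dom(T_{0,0})$, \eqref{4.2}, and Lemma~\ref{l4.2}, one gets $[F_{R,\eta}(z)f,\phi_a](a)=\gamma_2$ and $[F_{R,\eta}(z)f,\phi_b](b)=\gamma_1$, while $[F_{R,\eta}(z)f,\psi_a](a)$ and $[F_{R,\eta}(z)f,\psi_b](b)$ equal the same linear combinations of $\gamma_1,\gamma_2$ that appeared in the invertibility step, augmented by the inhomogeneous terms $-\langle u_{\ol{z},2},f\rangle_{(a,b)}$ and $\langle u_{\ol{z},1},f\rangle_{(a,b)}$ coming from Lemma~\ref{l4.2}. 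Substituting these four brackets into \eqref{2.26} and simplifying (again via $\det(R)=1$), the coupled condition for $F_{R,\eta}(z)f$ reduces to the linear identity $K_{R,\eta}(z)^{\top}(\gamma_1,\gamma_2)^{\top}=\big(\langle u_{\ol{z},1},f\rangle_{(a,b)},\langle u_{\ol{z},2},f\rangle_{(a,b)}\big)^{\top}$, which holds automatically because $(\gamma_1,\gamma_2)^{\top}=\big(K_{R,\eta}(z)^{-1}\big)^{\top}\big(\langle u_{\ol{z},1},f\rangle_{(a,b)},\langle u_{\ol{z},2},f\rangle_{(a,b)}\big)^{\top}$ and $K_{R,\eta}(z)^{\top}\big(K_{R,\eta}(z)^{-1}\big)^{\top}=I_{2\times2}$. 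Thus $F_{R,\eta}(z)f\in\dom(T_{R,\eta})$, $(T_{R,\eta}-zI_{(a,b)})F_{R,\eta}(z)=I_{(a,b)}$, and \eqref{4.72} follows.

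The main obstacle is the bookkeeping in this last boundary-condition check: one has to manage the entries of $K_{R,\eta}(z)^{-1}$ explicitly — or, more efficiently, recognize that the whole computation is rigged to collapse to the tautology $K_{R,\eta}(z)^{\top}\big(K_{R,\eta}(z)^{-1}\big)^{\top}=I_{2\times2}$ — and the cancellation that actually makes this work is the use of $\det(R)=1$ to kill the off-diagonal contributions of $R$. A secondary, cosmetic point is that $K_{R,\eta}(z)$, unlike the matrix in Theorem~\ref{t4.4}, is not symmetric, which is why its transpose shows up both in the null-vector construction and in the final identity; the Pl\"ucker-type relation $[u_{z,2},\psi_b](b)=-[u_{z,1},\psi_a](a)$ of \eqref{4.21} can be used to tidy some intermediate expressions but is not needed.
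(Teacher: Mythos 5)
Your proposal is correct, and its overall architecture coincides with the paper's: the same three steps (domain intersection for relative primeness, eigenfunction contradiction for invertibility of $K_{R,\eta}(z)$, verification that $F_{R,\eta}(z)$ is a right inverse of $T_{R,\eta}-zI_{(a,b)}$), with the first step argued identically. Where you genuinely diverge is in how the algebra is organized. The paper proves invertibility by writing the rows of $e^{i\eta}R_{1,2}K_{R,\eta}(z)$ as proportional via a scalar $\alpha$ and building the candidate eigenfunction $g_z=\alpha u_{z,2}-u_{z,1}$, and it verifies the boundary conditions for $F_{R,\eta}(z)f$ by fully expanding the entries of $K_{R,\eta}(z)^{-1}$ against $\det(K_{R,\eta}(z))$ and checking term-by-term cancellation over roughly three pages (eqs.\ \eqref{4.100}--\eqref{4.107}). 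You instead take a null vector $(c,d)^{\top}$ of $K_{R,\eta}(z)^{\top}$ (which, incidentally, is cleaner than the row-proportionality formulation, since it does not tacitly privilege one row), and you observe that the two coupled conditions in \eqref{2.26}, after dividing by $e^{i\eta}R_{1,2}$ and eliminating $R_{2,1}$ via $\det(R)=1$, are literally the two components of $K_{R,\eta}(z)^{\top}(\gamma_1,\gamma_2)^{\top}=\big(\langle u_{\ol{z},1},f\rangle_{(a,b)},\langle u_{\ol{z},2},f\rangle_{(a,b)}\big)^{\top}$ (inhomogeneous case) or $K_{R,\eta}(z)^{\top}(c,d)^{\top}=0$ (homogeneous case). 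I checked the reductions: the first component of \eqref{2.26} is exactly the second row of $K_{R,\eta}(z)^{\top}$ acting on $(\gamma_1,\gamma_2)^{\top}$, and the second component reduces to the first row after substituting the already-established relations and using $R_{1,1}R_{2,2}=1+R_{1,2}R_{2,1}$; the inhomogeneous terms $-\langle u_{\ol{z},2},f\rangle_{(a,b)}$ and $\langle u_{\ol{z},1},f\rangle_{(a,b)}$ from Lemma \ref{l4.2} land in precisely the right places. What your route buys is that the long cancellation becomes the tautology $K_{R,\eta}(z)^{\top}\big(K_{R,\eta}(z)^{-1}\big)^{\top}=I_{2\times 2}$, and it makes transparent that the matrix $K_{R,\eta}(z)$ was defined exactly so that this would happen; what it costs is nothing, beyond requiring the reader to keep track of the transpose, which you correctly flag as unavoidable since $K_{R,\eta}(z)$ is not symmetric.
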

%%%%%%%%%%%
\begin{proof}
Suppose that $R_{1,2}\neq 0$.  In order to show $T_{0,0}$ and $T_{R,\eta}$ are relatively prime with respect to $T_{\min}$, it suffices to show
\begin{equation}\lb{4.73}
\dom(T_{0,0})\cap \dom(T_{R,\eta}) \subseteq \dom(T_{\min}).
\end{equation}
Note that any $y\in \dom(T_{R,\eta})$ satisfies the boundary conditions
\begin{align}
[y,\phi_b](b) &= e^{i\eta}R_{1,1}[y,\phi_a](a)+e^{i\eta}R_{1,2}[y,\psi_a](a),\lb{4.74}\\
[y,\psi_b](b) &= e^{i\eta}R_{2,1}[y,\phi_a](a) + e^{i\eta}R_{2,2}[y,\psi_a](a).\lb{4.75}
\end{align}
Now, to prove \eqref{4.73}, let $y\in \dom(T_{0,0})\cap\dom(T_{R,\eta})$.  Then \eqref{4.25} and \eqref{4.74} imply
\begin{equation}\lb{4.76}
[y,\psi_a](a) = \frac{e^{-i\eta}}{R_{1,2}}\big\{[y,\phi_b](b)-e^{i\eta}R_{1,1}[y,\phi_a](a) \big\} = 0,
\end{equation}
and \eqref{4.25}, \eqref{4.75}, and \eqref{4.76} imply
\begin{equation}\lb{4.77}
[y,\psi_b](b) = e^{i\eta}R_{2,1}[y,\phi_a](a) + e^{i\eta}R_{2,2}[y,\psi_a](a) = 0.
\end{equation}
Therefore,
\begin{equation}\lb{4.78}
[y,\phi_a](a)=[y,\psi_a](a)=[y,\phi_b](b)=[y,\psi_b](b)=0,
\end{equation}
and it follows that $y\in \dom(T_{\min})$.  Thus, the containment in \eqref{4.73} holds.  This concludes the proof that $T_{0,0}$ and $T_{R,\eta}$ are relatively prime with respect to $T_{\min}$.

Next, for $z\in \rho(T_{0,0})\cap \rho(T_{R,\eta})$, we prove that the matrix $K_{R,\eta}(z)$ defined by \eqref{4.71} is invertible.  Suppose, by way of contradiction, that $K_{R,\eta}(z)$ is singular.  Then $e^{i\eta}R_{1,2}K_{R,\eta}(z)$ is a singular matrix, so its rows are linearly dependent: for some $\alpha\in \bbC$,
\begin{align}
e^{i\eta}R_{2,2} - e^{i\eta}R_{1,2}[u_{z,1},\psi_b](b) &= \alpha\big\{-e^{2i\eta}-e^{i\eta}R_{1,2}[u_{z,2},\psi_b](b)\big\},\lb{4.79}\\
-1 + e^{i\eta}R_{1,2}[u_{z,1},\psi_a](a) &= \alpha\big\{e^{i\eta}R_{1,1}+e^{i\eta}R_{1,2}[u_{z,2},\psi_a](a)\big\}.\lb{4.80}
\end{align}
The equality in \eqref{4.80} may be recast as
\begin{equation}\lb{4.81}
-1=\alpha e^{i\eta}R_{1,1} + \big\{\alpha[u_{z,2},\psi_a](a) - [u_{z,1},\psi_a](a)\big\}e^{i\eta} R_{1,2}.
\end{equation}
Define the function
\begin{equation}\lb{4.82}
g_z = \alpha u_{z,2} - u_{z,1},
\end{equation}
so that
\begin{align}
[g_z,\phi_a](a) &= \alpha[u_{z,2},\phi_a](a) - [u_{z,1},\phi_a](a) = \alpha,\lb{4.83}\\
[g_z,\phi_b](b) &= \alpha[u_{z,2},\phi_b](b) - [u_{z,1},\phi_b](b) = -1.\lb{4.84}
\end{align}
Note that by applying \eqref{4.83} and \eqref{4.84}, the identity in \eqref{4.81} may be recast in terms of $g_z$:
\begin{equation}\lb{4.85}
[g_z,\phi_b](b) = e^{i\eta}R_{1,1}[g_z,\phi_a](a) + e^{i\eta}R_{1,2}[g_z,\psi_a](a).
\end{equation}
In addition, by the definition of $g_z$,
\begin{equation}\lb{4.86}
[g_z,\psi_b](b) = \alpha[u_{z,2},\psi_b](b) - [u_{z,1},\psi_b](b).
\end{equation}
Therefore, by \eqref{4.79},
\begin{align}
R_{2,2} - R_{1,2}[u_{z,1},\psi_b](b) = -\alpha e^{i\eta} - \alpha R_{1,2}[u_{z,2},\psi_b](b),\lb{4.87}
\end{align}
which may be rewritten as
\begin{equation}\lb{4.88}
-\alpha[u_{z,2},\psi_b](b) + [u_{z,1},\psi_b](b) = \alpha \frac{e^{i\eta}}{R_{1,2}} + \frac{R_{2,2}}{R_{1,2}}.
\end{equation}
Thus,
\begin{align}
[g_z,\psi_b](b) &= -\alpha \frac{e^{i\eta}}{R_{1,2}} - \frac{R_{2,2}}{R_{1,2}}\no\\
&= -\alpha \frac{e^{i\eta}}{R_{1,2}} - \frac{R_{2,2}}{R_{1,2}} + e^{i\eta}R_{2,1}\alpha - e^{i\eta}R_{2,1}\alpha\no\\
&= e^{i\eta}R_{2,1}\alpha - \alpha\frac{e^{i\eta}}{R_{1,2}} - \frac{R_{2,2}}{R_{1,2}} - \alpha e^{i\eta}\bigg(\frac{-1 + R_{1,1}R_{2,2}}{R_{1,2}} \bigg)\no\\
&= e^{i\eta}R_{2,1}\alpha - \frac{R_{2,2}}{R_{1,2}} - \alpha e^{i\eta} \frac{R_{1,1}R_{2,2}}{R_{1,2}}\no\\
&= e^{i\eta}R_{2,1}\alpha + e^{i\eta}R_{2,2}\bigg(\frac{-1}{e^{i\eta}R_{1,2}} - \alpha \frac{R_{1,1}}{R_{1,2}} \bigg)\no\\
&= e^{i\eta}R_{2,1}[g_z,\phi_a](a) + e^{i\eta}R_{2,2}[g_z,\psi_a](a).\lb{4.89}
\end{align}
To obtain \eqref{4.89}, one uses \eqref{4.81}, which implies
\begin{equation}\lb{4.90}
-1 = \alpha e^{i\eta}R_{1,1} + [g_z,\psi_a](a)e^{i\eta}R_{1,2},
\end{equation}
so that
\begin{align}\lb{4.91}
[g_z,\psi_a](a) &= \frac{e^{-i\eta}}{R_{1,2}}\big(-1-\alpha e^{i\eta}R_{1,1} \big)= \frac{-1}{e^{i\eta}R_{1,2}} - \alpha \frac{R_{1,1}}{R_{1,2}}.
\end{align}
Hence, $g_z\in\dom(T_{\max})$ satisfies $T_{\max}g_z = zg_z$ and
\begin{align}
\begin{pmatrix}
[g_z,\phi_b](b)\\
[g_z,\psi_b](b)
\end{pmatrix}=
e^{i\eta}R
\begin{pmatrix}
[g_z,\phi_a](a)\\
[g_z,\psi_a](a)
\end{pmatrix}.\lb{4.92}
\end{align}
In light of \eqref{4.92}, one infers that $g_z\in \dom(T_{R,\eta})$, and it follows that $z$ is an eigenvalue of $T_{R,\eta}$, which is a contradiction to the assumption that $z\in \rho(T_{R,\eta})$.  Therefore, the matrix $K_{R,\eta}(z)$ must be invertible.

In order to complete the proof, it remains to establish the resolvent identity in \eqref{4.72}.  To prove \eqref{4.72}, let $z\in \rho(T_{0,0})\cap\rho(T_{R,\eta})$, and define the operator
\begin{equation}\lb{4.93}
\begin{split}
F_{R,\eta}(z)=(T_{0,0}-zI_{(a,b)})^{-1} + \sum_{j,k=1}^2\big[K_{R,\eta}(z)^{-1}\big]_{j,k}\langle u_{\ol{z},j},\,\cdot\,\rangle_{(a,b)} u_{z,k},&\\
\dom(F_{R,\eta}(z))=L^2((a,b);r(x)\,dx).&
\end{split}
\end{equation}
It suffices to show that
\begin{equation}\lb{4.94}
(T_{R,\eta}-zI_{(a,b)})F_{R,\eta}(z) = I_{(a,b)};
\end{equation}
that is, it suffices to show that for every $f\in L^2((a,b);r(x)\,dx)$,
\begin{equation}\lb{4.95}
F_{R,\eta}(z)f\in \dom(T_{R,\eta}),
\end{equation}
and
\begin{equation}\lb{4.96}
(T_{R,\eta}-zI_{(a,b)})F_{R,\eta}(z)f=f.
\end{equation}
To this end, let $f\in L^2((a,b);r(x)\,dx)$.  It is clear from the definition of $F_{R,\eta}(z)$ that
\begin{equation}\lb{4.97}
F_{R,\eta}(z)f\in \dom(T_{\max}),
\end{equation}
so the proof of \eqref{4.95} reduces to showing $F_{R,\eta}(z)f$ satisfies the boundary conditions in \eqref{2.21}; that is,
\begin{align}
R_{1,1}[F_{R,\eta}(z)f,\phi_a](a) + R_{1,2}[F_{R,\eta}(z)f,\psi_a](a) - e^{-i\eta}[F_{R,\eta}(z)f,\phi_b](b)&=0,\lb{4.98}\\
R_{2,1}[F_{R,\eta}(z)f,\phi_a](a) + R_{2,2}[F_{R,\eta}(z)f,\psi_a](a) - e^{-i\eta}[F_{R,\eta}(z)f,\psi_b](b)&=0.\lb{4.99}
\end{align}
To begin, one computes the product of the left-hand side of \eqref{4.98} with the factor $R_{1,2}\det(K_{R,\eta}(z))$ as follows:
\begin{align}
&\bigg\{ R_{1,1}\bigg(\big[(T_{0,0}-zI_{(a,b)})^{-1}f,\phi_a\big](a)+ \sum_{j,k=1}^2 \big[K_{R,\eta}(z)^{-1}\big]_{j,k}\langle u_{\ol{z},j},f\rangle_{(a,b)}[u_{z,k},\phi_a](a)\bigg)\no\\
&\qquad +R_{1,2}\bigg(\big[(T_{0,0}-zI_{(a,b)})^{-1}f,\psi_a\big](a) \no\\
&\hspace*{2.2cm}+ \sum_{j,k=1}^2 \big[K_{R,\eta}(z)^{-1}\big]_{j,k} \langle u_{\ol{z},j},f\rangle_{(a,b)} [u_{z,k},\psi_a](a)\bigg)\no\\
&\qquad -e^{-i\eta}\bigg(\big[(T_{0,0}-zI_{(a,b)})^{-1}f,\phi_b\big](b)\no\\
&\hspace*{2.2cm}+ \sum_{j,k=1}^2 \big[K_{R,\eta}(z)^{-1}\big]_{j,k} \langle u_{\ol{z},j},f\rangle_{(a,b)} [u_{z,k},\phi_b](b)\bigg)\bigg\}R_{1,2}\det(K_{R,\eta}(z))\no\\
%&\quad= R_{1,1}\big\{e^{-i\eta}-R_{1,2}[u_{z,1},\psi](a)\big\}\langle u_{\ol{z},1},f\rangle_{(a,b)} + R_{1,1}\big\{R_{2,2}+[u_{z,1},\phi](b)\big\}\langle u_{\ol{z},2},f\rangle_{(a,b)}\no\\
%&\qquad - R_{1,2}^2\det(K_{R,\eta}(z))\langle u_{\ol{z},2},f\rangle_{(a,b)}\no\\
%&\qquad+ R_{1,2}\big\{R_{1,1} + R_{1,2}[u_{z,2},\psi](a) \big\}\langle u_{\ol{z},1},f\rangle_{(a,b)}[u_{z,1},\psi](a)\no\\
%&\qquad + R_{1,2}\big\{e^{-i\eta} - R_{1,2}[u_{z,1},\psi](a) \big\}\langle u_{\ol{z},1},f\rangle_{(a,b)}[u_{z,2},\psi](a)\no\\
%&\qquad + R_{1,2}\big\{e^{i\eta} - R_{1,2}[u_{z,2},\phi](b) \big\}\langle u_{\ol{z},2},f\rangle_{(a,b)}[u_{z,1},\psi](a)\no\\
%&\qquad + R_{1,2}\big\{R_{2,2} + R_{1,2}[u_{z,1},\phi](b) \big\}\langle u_{\ol{z},2},f\rangle_{(a,b)}[u_{z,2},\psi](a)\no\\
%&\qquad - e^{-i\eta}\big\{R_{1,1} + R_{1,2}[u_{z,2},\psi](a) \big\}\langle u_{\ol{z},1},f\rangle_{(a,b)}\no\\
%&\qquad - e^{-i\eta}\big\{e^{i\eta} - R_{1,2}[u_{z,2},\phi](b) \big\}\langle u_{\ol{z},2},f\rangle_{(a,b)}\no\\
&\quad = \Big[ R_{1,1}\big\{e^{-i\eta}-R_{1,2}[u_{z,1},\psi_a](a) \big\}+ R_{1,2}\big\{R_{1,1} + R_{1,2}[u_{z,2},\psi_a](a) \big\}[u_{z,1},\psi_a](a)\no\\
&\qquad\quad  + R_{1,2}\big\{e^{-i\eta} - R_{1,2}[u_{z,1},\psi_a](a) \big\}[u_{z,2},\psi_a](a)\no\\
&\qquad\quad  - e^{-i\eta}\big\{R_{1,1} + R_{1,2}[u_{z,2},\psi_a](a) \big\}\Big]\langle u_{\ol{z},1},f\rangle_{(a,b)}\no\\
&\qquad + \Big[ R_{1,1}\big\{R_{2,2} - R_{1,2} [u_{z,1},\psi_b](b) \big\} - R_{1,2}^2\det(K_{R,\eta}(z))\no\\
&\qquad\qquad + R_{1,2}\big\{e^{i\eta} + R_{1,2}[u_{z,2},\psi_b](b) \big\}[u_{z,1},\psi_a](a) - e^{-i\eta}\big\{e^{i\eta} + R_{1,2}[u_{z,2},\psi_b](b) \big\}\no\\
&\qquad\qquad +R_{1,2}\big\{R_{2,2} - R_{1,2}[u_{z,1},\psi_b](b) \big\}[u_{z,2},\psi_a](a)\Big]\langle u_{\ol{z},2},f\rangle_{(a,b)}.\lb{4.100}
\end{align}
The coefficient in square brackets which multiplies $\langle u_{\ol{z},1},f\rangle_{(a,b)}$ on the right-hand side in \eqref{4.100} equals
\begin{align}
&e^{-i\eta}R_{1,1} - R_{1,1}R_{1,2}[u_{z,1},\psi_a](a) + R_{1,2}R_{1,1}[u_{z,1},\psi_a](a)\no\\
&\quad + R_{1,2}^2[u_{z,2},\psi_a](a)[u_{z,1},\psi_a](a) + e^{-i\eta}R_{1,2}[u_{z,2},\psi_a](a)\no\\
&\quad - R_{1,2}^2[u_{z,1},\psi_a](a)[u_{z,2},\psi_a](a)- e^{-i\eta}R_{1,1} - e^{-i\eta}R_{1,2}[u_{z,2},\psi_a](a),\lb{4.101}
\end{align}
which vanishes by inspection.  Using the definition of $K_{R,\eta}(z)$ in \eqref{4.71} to compute $\det(K_{R,\eta}(z))$, the coefficient in square brackets which multiplies $\langle u_{\ol{z},2},f\rangle_{(a,b)}$ on the right-hand side in \eqref{4.100} equals
\begin{align}
&R_{1,1}R_{2,2} - R_{1,1}R_{1,2}[u_{z,1},\psi_b](b) - R_{2,2}R_{1,1} - R_{1,2}R_{2,2}[u_{z,2},\psi_a](a)\no\\
&\quad + R_{1,2}R_{1,1}[u_{z,1},\psi_b](b) + R_{1,2}^2[u_{z,1},\psi_b](b)[u_{z,2},\psi_a](a) + 1 - R_{1,2}e^{i\eta}[u_{z,1},\psi_a](a)\no\\
&\quad + R_{1,2}e^{-i\eta}[u_{z,2},\psi_b](b) - R_{1,2}^2[u_{z,2},\psi_b](b)[u_{z,1},\psi_a](a) + R_{1,2}e^{i\eta}[u_{z,1},\psi_a](a)\no\\
&\quad + R_{1,2}^2[u_{z,2},\psi_b](b)[u_{z,1},\psi_a](a) + R_{1,2}R_{2,2}[u_{z,2},\psi_a](a)\no\\
&\quad - R_{1,2}^2[u_{z,1},\psi_b](b)[u_{z,2},\psi_a](a) - 1 - e^{-i\eta}R_{1,2}[u_{z,2},\psi_b](b),\lb{4.102}
\end{align}
which also vanishes by inspection.  Therefore,
\begin{align}
&R_{1,2}\det(K_{R,\eta}(z))\big\{R_{1,1}[F_{R,\eta}(z)f,\phi_a](a) + R_{1,2}[F_{R,\eta}(z)f,\psi_a](a)\no\\
&\hspace*{6.1cm}- e^{-i\eta}[F_{R,\eta}(z)f,\psi_b](b)\big\}=0,\lb{4.103}
\end{align}
and since neither $R_{1,2}$ nor $\det(K_{R,\eta}(z))$ is zero, the boundary condition in \eqref{4.98} is satisfied.

Next, we compute the product of the left-hand side of \eqref{4.99} with the factor $R_{1,2}\det(K_{R,\eta}(z))$ as follows:
\begin{align}
&\bigg\{R_{2,1}\bigg(\big[(T_{0,0}-zI_{(a,b)})^{-1}f,\phi_a\big](a) + \sum_{j,k=1}^2\big[K_{R,\eta}(z)^{-1}\big]_{j,k}\langle u_{\ol{z},j},f\rangle_{(a,b)} [u_{z,k},\phi_a](a)\bigg)\no\\
&\qquad + R_{2,2}\bigg(\big[(T_{0,0}-zI_{(a,b)})^{-1}f,\psi_a\big](a)\no\\
&\hspace*{2.2cm}+ \sum_{j,k=1}^2 \big[K_{R,\eta}(z)^{-1}\big]_{j,k}\langle u_{\ol{z},j},f\rangle_{(a,b)} [u_{z,k},\psi_a](a)\bigg)\no\\
&\qquad - e^{-i\eta}\bigg(\big[(T_{0,0}-zI_{(a,b)})^{-1}f,\psi_b\big](b)\no\\
&\hspace*{2.2cm}+ \sum_{j,k=1}^2\big[K_{R,\eta}(z)^{-1}\big]_{j,k} \langle u_{\ol{z},j},f\rangle_{(a,b)}[u_{z,k},\psi_b](b)\bigg)\bigg\}R_{1,2}\det(K_{R,\eta}(z))\no\\
&\quad = \Big[e^{-i\eta}R_{2,1} - R_{2,1}R_{1,2}[u_{z,1},\psi_a](a) + R_{1,1}R_{2,2}[u_{z,1},\psi_a](a)\no\\
&\qquad\quad + R_{2,2}R_{1,2}[u_{z,2},\psi_a](a)[u_{z,1},\psi_a](a) + e^{-i\eta}R_{2,2}[u_{z,2},\psi_a](a)\no\\
&\qquad\quad - R_{2,2}R_{1,2}[u_{z,1},\psi_a](a)[u_{z,2},\psi_a](a) - e^{-i\eta}R_{1,2}\det(K_{R,\eta}(z))\no\\
&\qquad\quad - e^{-i\eta}R_{1,1}[u_{z,1},\psi_b](b) - e^{-i\eta}R_{1,2}[u_{z,2},\psi_a](a)[u_{z,1},\psi_b](b)\no\\
&\qquad\quad - e^{-2i\eta}[u_{z,2},\psi_b](b) + e^{-i\eta}R_{1,2}[u_{z,1},\psi_a](a)[u_{z,2},\psi_b](b)\Big]\langle u_{\ol{z},1},f\rangle_{(a,b)}\no\\
&\qquad + \Big[R_{2,1}R_{2,2} - R_{2,1}R_{1,2}[u_{z,1},\psi_b](b) - R_{2,2}R_{1,2}\det(K_{R,\eta}(z))\no\\
&\qquad \qquad + e^{i\eta}R_{2,2}[u_{z,1},\psi_a](a) + R_{2,2}R_{1,2}[u_{z,2},\psi_b](b)[u_{z,1},\psi_a](a)\no\\
&\qquad \qquad + R_{2,2}^2[u_{z,2},\psi_a](a) - R_{2,2}R_{1,2}[u_{z,1},\psi_b](b)[u_{z,2},\psi_a](a) - [u_{z,1},\psi_b](b)\no\\
&\qquad \qquad - e^{-i\eta}R_{1,2}[u_{z,2},\psi_b](b)[u_{z,1},\psi_b](b) - e^{-i\eta}R_{2,2}[u_{z,2},\psi_b](b)\no\\
&\qquad \qquad + e^{-i\eta}R_{1,2}[u_{z,1},\psi_b](b)[u_{z,2},\psi_b](b)\Big] \langle u_{\ol{z},2},f\rangle_{(a,b)}.\lb{4.104}
\end{align}
The coefficient in square brackets which multiplies $\langle u_{\ol{z},1},f\rangle_{(a,b)}$ on the right-hand side of \eqref{4.104} equals
\begin{align}
&e^{-i\eta}R_{2,1} - R_{2,1}R_{1,2}[u_{z,1},\psi_a](a)\no\\
&\qquad + R_{1,1}R_{2,2}[u_{z,1},\psi_a](a)
+ R_{2,2}R_{1,2}[u_{z,2},\psi_a](a)[u_{z,1},\psi_a](a)\no\\
&\qquad+ e^{-i\eta}R_{2,2}[u_{z,2},\psi_a](a) - R_{2,2}R_{1,2}[u_{z,1},\psi_a](a)[u_{z,2},\psi_a](a)\no\\
&\qquad - e^{-i\eta}\bigg[R_{2,2} - R_{1,2}[u_{z,1},\psi_b](b)\bigg]\bigg[ \frac{R_{1,1}}{R_{1,2}} + [u_{z,2},\psi_a](a)\bigg]\no\\
&\qquad + e^{-i\eta}\bigg[-e^{-i\eta} + R_{1,2}[u_{z,1},\psi_a](a)\bigg]\bigg[-\frac{e^{i\eta}}{R_{1,2}} - [u_{z,2},\psi_b](b)\bigg]\no\\
&\qquad - e^{-i\eta}R_{1,1}[u_{z,1},\psi_b](b) - e^{-i\eta}R_{1,2}[u_{z,2},\psi_a](a)[u_{z,1},\psi_b](b) - e^{-2i\eta}[u_{z,2},\psi_b](b)\no\\
&\qquad + e^{-i\eta}R_{1,2}[u_{z,1},\psi_a](a)[u_{z,2},\psi_b](b)\no\\
&\quad = e^{-i\eta} \bigg(\frac{R_{1,1}R_{2,2}-1}{R_{1,2}}\bigg) + (R_{1,1}R_{2,2}-R_{2,1}R_{1,2})[u_{z,1},\psi_a](a)\no\\
&\qquad + e^{-i\eta}R_{2,2}[u_{z,2},\psi_a](a) - e^{-i\eta}\frac{R_{2,2}R_{1,1}}{R_{1,2}} - e^{-i\eta}R_{2,2}[u_{z,2},\psi_a](a)\no\\
&\qquad + e^{-i\eta}R_{1,1}[u_{z,1},\psi_b](b) + e^{-i\eta}R_{1,2}[u_{z,1},\psi_b](b)[u_{z,2},\psi_a](a) + \frac{e^{-i\eta}}{R_{1,2}}\no\\
&\qquad + e^{-2i\eta}[u_{z,2},\psi_b](b) - [u_{z,1},\psi_a](a) - e^{-i\eta}R_{1,2}[u_{z,1},\psi_a](a)[u_{z,2},\psi_b](b)\no\\
&\qquad - e^{-i\eta}R_{1,1}[u_{z,1},\psi_b](b) - e^{-i\eta}R_{1,2}[u_{z,2},\psi_a](a)[u_{z,1},\psi_b](b) - e^{-2i\eta}[u_{z,2},\psi_b](b)\no\\
&\qquad + e^{-i\eta}R_{1,2}[u_{z,1},\psi_a](a)[u_{z,2},\psi_b](b)\no\\
&\quad = 0.\lb{4.105}
\end{align}
Similarly, the coefficient in square brackets which multiplies $\langle u_{\ol{z},2},f\rangle_{(a,b)}$ on the right-hand side of \eqref{4.104} equals
\begin{align}
%&R_{2,1}R_{2,2} + R_{2,1}R_{1,2}[u_{z,1},\phi](b) - R_{2,2}R_{1,2}\det(K_{R,\eta}(z)) + e^{i\eta}R_{2,2}[u_{z,1},\psi](a)\no\\
%&\qquad - R_{2,2}R_{1,2}[u_{z,2},\phi](b)[u_{z,1},\psi](a) + R_{2,2}^2[u_{z,2},\psi](a)\no\\
%&\qquad + R_{2,2}R_{1,2}[u_{z,1},\phi](b)[u_{z,2},\psi](a) + [u_{z,1},\phi](b) - e^{-i\eta}R_{1,2}[u_{z,2},\phi](b)[u_{z,1},\phi](b)\no\\
%&\qquad +e^{-i\eta}R_{2,2}[u_{z,2},\phi](b) + e^{-i\eta}R_{1,2}[u_{z,1},\phi](b)[u_{z,2},\phi](b)\no\\
%&\quad = R_{2,2}\bigg(\frac{R_{1,1}R_{2,2}-1}{R_{1,2}} \bigg) + (R_{1,1}R_{2,2}-1)[u_{z,1},\phi](b)\no\\
%&\qquad -R_{2,2}\bigg[R_{2,2} + R_{1,2}[u_{z,1},\phi](b)\bigg]\bigg[\frac{R_{1,1}}{R_{1,2}} + [u_{z,2},\psi](a) \bigg]\no\\
%&\qquad + R_{2,2}\bigg[-e^{i\eta} + R_{1,2}[u_{z,2},\phi](b) \bigg]\bigg[-\frac{e^{-i\eta}}{R_{1,2}} + [u_{z,1},\psi](a) \bigg]
%+ e^{i\eta}R_{2,2}[u_{z,1},\psi](a)\no\\
%&\qquad - R_{2,2}R_{1,2}[u_{z,2},\phi](b)[u_{z,1},\psi](a) + R_{2,2}^2[u_{z,2},\psi](a)\no\\
%&\qquad + R_{2,2}R_{1,2}[u_{z,1},\phi](b)[u_{z,2},\psi](a) + [u_{z,1},\phi](b) + e^{-i\eta}R_{2,2}[u_{z,2},\phi](b)\no\\
&R_{2,2}\bigg(\frac{R_{1,1}R_{2,2}-1}{R_{1,2}} \bigg) + (-R_{1,1}R_{2,2} + 1)[u_{z,1},\psi_b](b) - \frac{R_{1,1}R_{2,2}^2}{R_{1,2}}\no\\
&\qquad - R_{2,2}^2[u_{z,2},\psi_a](a) + R_{2,2}R_{1,1}[u_{z,1},\psi_b](b) + R_{2,2}R_{1,2}[u_{z,1},\psi_b](b)[u_{z,2},\psi_a](a)\no\\
&\qquad + \frac{R_{2,2}}{R_{1,2}} - e^{i\eta}R_{2,2}[u_{z,1},\psi_a](a) + e^{-i\eta}R_{2,2}[u_{z,2},\psi_b](b)\no\\
&\qquad - R_{1,2}R_{2,2}[u_{z,2},\psi_b](b)[u_{z,1},\psi_a](a) + e^{i\eta}R_{2,2}[u_{z,1},\psi_a](a)\no\\
&\qquad + R_{2,2}R_{1,2}[u_{z,2},\psi_b](b)[u_{z,1},\psi_a](a) + R_{2,2}^2[u_{z,2},\psi_a](a)\no\\
&\qquad - R_{2,2}R_{1,2}[u_{z,1},\psi_b](b)[u_{z,2},\psi_a](a) - [u_{z,1},\psi_b](b) - e^{-i\eta}R_{2,2}[u_{z,2},\psi_b](b)\no\\
&\quad = 0.\lb{4.106}
\end{align}
As a result of \eqref{4.104}--\eqref{4.106}, one infers that $F_{R,\eta}(z)f$ satisfies
\begin{align}
&R_{1,2}\det(K_{R,\eta}(z))\big\{R_{2,1}[F_{R,\eta}(z)f,\phi_a](a) + R_{2,2}[F_{R,\eta}(z)f,\psi_a](a)\no\\
&\hspace*{6.1cm} - e^{-i\eta}[F_{R,\eta}(z)f,\phi_b](b)\big\}=0,\lb{4.107}
\end{align}
and since neither $R_{1,2}$ nor $\det(K_{R,\eta}(z))$ is zero, the boundary condition in \eqref{4.99} holds.  Now \eqref{4.95} follows from \eqref{4.97}, \eqref{4.98}, and \eqref{4.99}.  It remains to show \eqref{4.96}, but this is a straightforward calculation using the fact that $T_{0,0}$ and $T_{R,\eta}$ are both restrictions of $T_{\max}$:
\begin{align}
(T_{R,\eta}-zI_{(a,b)})F_{R,\eta}(z)f &= (T_{\max}-zI_{(a,b)})F_{R,\eta}(z)f\lb{4.108}\\
&= (T_{\max}-zI_{(a,b)})(T_{0,0}-zI_{(a,b)})^{-1}f \no\\
&\quad + \sum_{j,k=1}^2\big[K_{R,\eta}(z)^{-1}\big]_{j,k}\langle u_{\ol{z},j},f\rangle_{(a,b)} (T_{\max}-zI_{(a,b)}) u_{z,k}\no\\
&= (T_{0,0}-zI_{(a,b)})(T_{0,0}-zI_{(a,b)})^{-1}f= I_{(a,b)}f= f.\no
\end{align}
\end{proof}
%%%%%%%%%%%

If $R_{1,2}=0$, then $T_{R,\eta}$ and $T_{0,0}$ are no longer relatively prime with respect to $T_{\min}$.  In this case, we obtain:
\begin{theorem}	\lb{t4.7}
Assume Hypothesis \ref{h4.1}.  If $R_{1,2}=0$, then the maximal common part of $T_{R,\eta}$ and $T_{0,0}$ is the restriction of $T_{\max}$ to the set
\begin{align}
\cS_{R,\eta}=\{y\in\dom(T_{\max})\,|\,[y,\phi_{a}](a)=[y,\phi_{b}](b)=0,\,[y,\psi_{b}](b)=e^{i\eta}R_{2,2}[y,\psi_{a}](a)\}.
	\lb{4.109}
\end{align}
Moreover, for each $z\in\rho(T_{R,\eta})\cap\rho(T_{0,0})$, the scalar
\begin{align}
k_{R,\eta}(z)=R_{2,1}R_{2,2}+e^{i\eta}R_{2,2}[u_{z,R,\eta},\psi_{a}](a)-[u_{z,R,\eta},\psi_{b}](b)
	\lb{4.110}
\end{align}
is nonzero, and
\begin{align}
(T_{R,\eta}-zI_{(a,b)})^{-1}=(T_{0,0}-zI_{(a,b)})^{-1}+k_{R,\eta}(z)^{-1}\left<u_{\ol{z},R,\eta},\,\cdot\,\right>_{(a,b)}u_{z,R,\eta},
	\lb{4.111}
\end{align}
where
\begin{align}
u_{z,R,\eta}:= e^{-i\eta}R_{2,2}u_{z,2}+u_{z,1}.
	\lb{4.112}
\end{align}
\end{theorem}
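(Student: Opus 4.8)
The plan is to follow the template of the proofs of Theorems \ref{t4.5} and \ref{t4.6}, with $u_{z,R,\eta}$ from \eqref{4.112} playing the role of the single defect vector. First I would record that $R\in\SL_2(\bbR)$ together with $R_{1,2}=0$ forces $R_{1,1}R_{2,2}=1$; in particular $R_{2,2}\neq 0$. Writing the conditions defining $\dom(T_{R,\eta})$ in \eqref{2.26} componentwise with $R_{1,2}=0$ gives $[g,\phi_b](b)=e^{i\eta}R_{1,1}[g,\phi_a](a)$ and $[g,\psi_b](b)=e^{i\eta}R_{2,1}[g,\phi_a](a)+e^{i\eta}R_{2,2}[g,\psi_a](a)$. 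For $y\in\dom(T_{0,0})$ one has $[y,\phi_a](a)=[y,\phi_b](b)=0$, so the first of these is automatic and the second collapses to $[y,\psi_b](b)=e^{i\eta}R_{2,2}[y,\psi_a](a)$; conversely any $y\in\cS_{R,\eta}$ satisfies the defining relations of both $\dom(T_{0,0})$ and $\dom(T_{R,\eta})$. Hence $\dom(T_{0,0})\cap\dom(T_{R,\eta})=\cS_{R,\eta}$, and since $T_{0,0}$ and $T_{R,\eta}$ both act as $T_{\max}$, the maximal common part (cf.\ Definition \ref{d2.13}) is the restriction of $T_{\max}$ to the set $\cS_{R,\eta}$.

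Next I would prove $k_{R,\eta}(z)\neq 0$ for $z\in\rho(T_{0,0})\cap\rho(T_{R,\eta})$ by contradiction. From \eqref{4.2} and \eqref{4.112} one computes $[u_{z,R,\eta},\phi_a](a)=e^{-i\eta}R_{2,2}$ and $[u_{z,R,\eta},\phi_b](b)=1$; using $R_{1,1}R_{2,2}=1$, the vector $u_{z,R,\eta}$ therefore satisfies the first coupled condition automatically, while substituting $[u_{z,R,\eta},\phi_a](a)=e^{-i\eta}R_{2,2}$ into the second coupled condition and comparing with \eqref{4.110} shows that the second coupled condition holds precisely when $k_{R,\eta}(z)=0$. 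Thus $k_{R,\eta}(z)=0$ would give $u_{z,R,\eta}\in\dom(T_{R,\eta})$; since $u_{z,R,\eta}\in\ker(T_{\max}-zI_{(a,b)})\setminus\{0\}$ (the coefficient of $u_{z,1}$ is $1$ and $\{u_{z,j}\}_{j=1,2}$ is a basis of that kernel by \eqref{4.6}), $z$ would be an eigenvalue of $T_{R,\eta}$, contradicting $z\in\rho(T_{R,\eta})$.

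Finally, for \eqref{4.111} I would put $F_{R,\eta}(z)$ equal to the right-hand side of \eqref{4.111}, with domain $L^2((a,b);r(x)\,dx)$, and verify $(T_{R,\eta}-zI_{(a,b)})F_{R,\eta}(z)=I_{(a,b)}$. Since $F_{R,\eta}(z)f\in\dom(T_{\max})$ is clear, it suffices to check that $F_{R,\eta}(z)f$ obeys the two coupled conditions of \eqref{2.26} and then that $(T_{R,\eta}-zI_{(a,b)})F_{R,\eta}(z)f=f$. Using $\big[(T_{0,0}-zI_{(a,b)})^{-1}f,\phi_a\big](a)=\big[(T_{0,0}-zI_{(a,b)})^{-1}f,\phi_b\big](b)=0$ together with the brackets of $u_{z,R,\eta}$ just computed, the first coupled condition again reduces to $R_{1,1}R_{2,2}=1$. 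For the second I would evaluate $\big[(T_{0,0}-zI_{(a,b)})^{-1}f,\psi_a\big](a)$ and $\big[(T_{0,0}-zI_{(a,b)})^{-1}f,\psi_b\big](b)$ via Lemma \ref{l4.2}, use the identity $\langle u_{\ol{z},R,\eta},f\rangle_{(a,b)}=e^{i\eta}R_{2,2}\langle u_{\ol{z},2},f\rangle_{(a,b)}+\langle u_{\ol{z},1},f\rangle_{(a,b)}$ (which follows from \eqref{4.112} and the fact that $\langle\,\cdot\,,\,\cdot\,\rangle_{(a,b)}$ is conjugate-linear in its first slot and $R_{2,2}$ is real), and then invoke the definition \eqref{4.110}: the $k_{R,\eta}(z)^{-1}$-term collects a factor $e^{-i\eta}k_{R,\eta}(z)$ and reproduces exactly the negative of the $(T_{0,0}-zI_{(a,b)})^{-1}f$-contribution, so the second condition holds. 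Once $F_{R,\eta}(z)f\in\dom(T_{R,\eta})$ is established, $(T_{R,\eta}-zI_{(a,b)})F_{R,\eta}(z)f=(T_{\max}-zI_{(a,b)})F_{R,\eta}(z)f=f$ because $u_{z,R,\eta}\in\ker(T_{\max}-zI_{(a,b)})$ and $(T_{\max}-zI_{(a,b)})(T_{0,0}-zI_{(a,b)})^{-1}f=f$; since $z\in\rho(T_{R,\eta})$ this identifies the bounded operator $F_{R,\eta}(z)$ with $(T_{R,\eta}-zI_{(a,b)})^{-1}$, which is \eqref{4.111}.

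I expect the main obstacle to be the bookkeeping in the second boundary condition: one must keep track of the fact that $u_{\ol{z},R,\eta}$ is \emph{not} the complex conjugate of $u_{z,R,\eta}$ (they differ by the replacement $e^{-i\eta}\mapsto e^{i\eta}$), so the real/complex distinction between $R_{2,2}$ and $e^{\pm i\eta}$ must be handled carefully when applying \eqref{4.4} and the (conjugate-)linearity of $\langle\,\cdot\,,\,\cdot\,\rangle_{(a,b)}$ and of the Lagrange bracket. The Pl\"ucker-type relation $[u_{z,2},\psi_b](b)=-[u_{z,1},\psi_a](a)$ from Remark \ref{r4.3} may be used to tidy intermediate expressions, though the computation appears to close using only $R_{1,1}R_{2,2}=1$, Lemma \ref{l4.2}, and \eqref{4.110}.
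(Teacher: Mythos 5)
Your proposal is correct and follows essentially the same route as the paper: both characterize $\dom(T_{0,0})\cap\dom(T_{R,\eta})$ directly from the coupled conditions with $R_{1,2}=0$, both show $k_{R,\eta}(z)=0$ would force $u_{z,R,\eta}\in\dom(T_{R,\eta})$ and hence $z\in\sigma_{\rm p}(T_{R,\eta})$, and both verify \eqref{4.111} by checking that the candidate operator maps into $\dom(T_{R,\eta})$ (using Lemma \ref{l4.2}, the brackets $[u_{z,R,\eta},\phi_a](a)=e^{-i\eta}R_{2,2}$, $[u_{z,R,\eta},\phi_b](b)=1$, and $R_{1,1}R_{2,2}=\det(R)=1$) and is a right inverse of $T_{R,\eta}-zI_{(a,b)}$. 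Your explicit remarks that $R_{2,2}\neq 0$, that $u_{z,R,\eta}\neq 0$, and that $u_{\ol z,R,\eta}\neq\overline{u_{z,R,\eta}}$ are points the paper leaves implicit but are handled correctly.
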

\begin{proof}
Let $R\in \SL_{2}(\bbR)$ with $R_{1,2}=0$.  To prove that the maximal common part of $T_{0,0}$ and $T_{R,\eta}$ is the restriction of $T_{\max}$ to $\cS_{R,\eta}$, it suffices to show
\begin{align}
\dom(T_{0,0})\cap\dom(T_{R,\eta})=\cS_{R,\eta}.
	\lb{4.113}
\end{align}
To this end, suppose $y\in\dom(T_{0,0})\cap\dom(T_{R,\eta})$.  Then the fact that $y\in\dom(T_{0,0})$ implies $y$ satisfies the conditions in \eqref{4.25}, and the fact that $y\in\dom(T_{R,\eta})$ implies
\begin{align}
[y,\psi_{b}](b)=e^{i\eta}R_{2,2}[y,\psi_{a}](a).
	\lb{4.114}
\end{align}
Hence, $y\in\cS_{R,\eta}$.  Conversely, if $y\in\cS_{R,\eta}$, then \eqref{4.25} and \eqref{4.114} hold, so $y\in\dom(T_{0,0})\cap\dom(T_{R,\eta})$.  To complete the proof, let $z\in\rho(T_{0,0})\cap\rho(T_{R,\eta})$ be fixed and let $k_{R,\eta}(z)$ be the scalar defined in \eqref{4.110}.  To prove the claim that $k_{R,\eta}(z)$ is nonzero, suppose on the contrary that $k_{R,\eta}(z)=0$.  We claim that $z$ is then an eigenvalue of $T_{R,\eta}$ and $u_{z,R,\eta}$ defined by \eqref{4.112} is a corresponding eigenfunction. To justify this claim, it is enough to show that $u_{z,R,\eta}\in\dom(T_{R,\eta})$.  In turn, it suffices to show $u_{z,R,\eta}$ satisfies the boundary conditions in \eqref{2.26}.  To this end, one computes
\begin{align}
&e^{i\eta}R_{1,1}[u_{z,R,\eta},\phi_{a}](a)-[u_{z,R,\eta},\phi_{b}](b)\no\\
&\quad=e^{i\eta}R_{1,1}e^{-i\eta}R_{2,2}[u_{z,2},\phi_{a}](a)+e^{i\eta}R_{1,1}[u_{z,1},\phi_{a}](a)-e^{-i\eta}R_{2,2}[u_{z,2},\phi_{b}](b)\no\\
&\quad\quad-[u_{z,1},\phi_{b}](b)\no\\
&\quad=1-1=0,
	\lb{4.115}
\end{align}
using the conditions \eqref{4.2} and $1=\det(R)=R_{1,1}R_{2,2}$.  Moreover, the assumption $k_{R,\eta}(z)=0$ implies
\begin{align}
&e^{i\eta}R_{2,1}[u_{z,R,\eta},\phi_{a}](a)+e^{i\eta}R_{2,2}[u_{z,R,\eta},\psi_{a}](a)-[u_{z,R,\eta},\psi_{b}](b)\no\\
&\quad=R_{2,1}R_{2,2}[u_{z,2},\phi_{a}](a)+e^{i\eta}R_{2,1}[u_{z,1},\phi_{a}](a)+e^{i\eta}R_{2,2}[u_{z,R,\eta},\psi_{a}](a)\no\\
&\quad\quad-[u_{z,R,\eta},\psi_{b}](b)\no\\
&\quad=R_{2,1}R_{2,2}+e^{i\eta}R_{2,2}[u_{z,R,\eta},\psi_{a}](a)-[u_{z,R,\eta},\psi_{b}](b)=k_{R,\eta}(z)=0.
	\lb{4.116}
\end{align}
The identities \eqref{4.115} and \eqref{4.116} imply that $u_{z,R,\eta}\in\dom(T_{R,\eta})$, from which it follows that $z$ is an eigenvalue of $T_{R,\eta}$ with corresponding eigenfunction $u_{z,R,\eta}$.  This is a contradiction to the choice of $z\in \rho(T_{R,\eta})$.  This completes the proof that $k_{R,\eta}(z)\neq 0$.  It remains to establish the resolvent identity in \eqref{4.111}.  Define
\begin{align}
\begin{split}
F_{R,\eta}(z)=(T_{0,0}-zI_{(a,b)})^{-1}+k_{R,\eta}(z)^{-1}\left<u_{\ol{z},R,\eta},\,\cdot\,\right>_{(a,b)}u_{z,R,\eta},&\\
\dom(F_{R,\eta}(z))=L^{2}((a,b);r(x)\,dx).&
\end{split}
	\lb{4.117}
\end{align}
In order to prove \eqref{4.111}, it suffices to show
\begin{align}
(T_{R,\eta}-zI_{(a,b)})F_{R,\eta}(z)=I_{(a,b)};
	\lb{4.118}
\end{align}
that is, it suffices to show that for every $f\in L^{2}((a,b);r(x)\,dx)$,
\begin{align}
F_{R,\eta}(z)f\in\dom(T_{R,\eta}),
	\lb{4.119}
\end{align}
and
\begin{align}
(T_{R,\eta}-zI_{(a,b)})(F_{R,\eta}(z)f)=f.
	\lb{4.120}
\end{align}
To this end, let $f\in L^{2}((a,b);r(x)\,dx)$ be arbitrary.  It is clear from the definition of $F_{R,\eta}(z)$ that
\begin{align}
F_{R,\eta}(z)f\in\dom(T_{\max}),
	\lb{4.121}
\end{align}
so the proof of \eqref{4.119} reduces to showing that $F_{R,\eta}(z)f$ satisfies the boundary conditions for functions in $\dom(T_{R,\eta})$; that is,
\begin{align}
0&=e^{i\eta}R_{1,1}[F_{R,\eta}(z)f,\phi_{a}](a)-[F_{R,\eta}(z)f,\phi_{b}](b),
	\lb{4.122}\\
0&=e^{i\eta}R_{2,1}[F_{R,\eta}(z)f,\phi_{a}](a)+e^{i\eta}R_{2,2}[F_{R,\eta}(z)f,\psi_{a}](a)\no\\
&\quad -[F_{R,\eta}(z)f,\psi_{b}](b).
	\lb{4.123}
\end{align}
To check \eqref{4.122}, one uses \eqref{4.1} and \eqref{4.2} to compute
\begin{align}
&e^{i\eta}R_{1,1}[F_{R,\eta}(z)f,\phi_{a}](a)-[F_{R,\eta}(z)f,\phi_{b}](b)\no\\
&\quad=e^{i\eta}R_{1,1}\big[(T_{0,0}-zI_{(a,b)})^{-1}f,\phi_{a}\big](a)\no\\
&\quad\quad+e^{i\eta}R_{1,1}k_{R,\eta}(z)^{-1}\left<u_{\ol{z},R,\eta},f\right>_{(a,b)}\left\{e^{-i\eta}R_{2,2}[u_{z,2},\phi_{a}](a)+[u_{z,1},\phi_{a}](a)\right\}\no\\
&\quad\quad-\big[(T_{0,0}-zI_{(a,b)})^{-1}f,\phi_{b}\big](b)\no\\
&\quad\quad-k_{R,\eta}(z)^{-1}\left<u_{\ol{z},R,\eta},f\right>_{(a,b)}\left\{e^{-i\eta}R_{2,2}[u_{z,2},\phi_{b}](b)+[u_{z,1},\phi_{b}](b)\right\}\no\\
&\quad=k_{R,\eta}(z)^{-1}\left<u_{\ol{z},R,\eta},f\right>_{(a,b)}\left(R_{1,1}R_{2,2}-1\right)\no\\
&\quad=k_{R,\eta}(z)^{-1}\left<u_{\ol{z},R,\eta},f\right>_{(a,b)}\left(\det(R)-1\right)=0.
	\lb{4.124}
\end{align}
To check \eqref{4.123}, one uses \eqref{4.1}, \eqref{4.2}, and Lemma \ref{l4.2} to compute
\begin{align}
&e^{i\eta}R_{2,1}[F_{R,\eta}(z)f,\phi_{a}](a)+e^{i\eta}R_{2,2}[F_{R,\eta}(z)f,\psi_{a}](a)-[F_{R,\eta}(z)f,\psi_{b}](b)\no\\
&\quad=e^{i\eta}R_{2,1}\big[(T_{0,0}-zI_{(a,b)})^{-1}f,\phi_{a}\big](a)\no\\
&\quad\quad+e^{i\eta}R_{2,1}k_{R,\eta}(z)^{-1}\left<u_{\ol{z},R,\eta},f\right>_{(a,b)}\left\{e^{-i\eta}R_{2,2}[u_{z,2},\phi_{a}](a)+[u_{z,1},\phi_{a}](a)\right\}\no\\
&\quad\quad+e^{i\eta}R_{2,2}\big[(T_{0,0}-zI_{(a,b)})^{-1}f,\psi_{a}\big](a)\no\\
&\quad\quad+e^{i\eta}R_{2,2}k_{R,\eta}(z)^{-1}\left<u_{\ol{z},R,\eta},f\right>_{(a,b)}[u_{z,R,\eta},\psi_{a}](a)\no\\
&\quad\quad-\big[(T_{0,0}-zI_{(a,b)})^{-1}f,\psi_{b}\big](b)-k_{R,\eta}(z)^{-1}\left<u_{\ol{z},R,\eta},f\right>_{(a,b)}[u_{z,R,\eta},\psi_{b}](b)\no\\
&\quad=R_{2,1}R_{2,2}k_{R,\eta}(z)^{-1}\left<u_{\ol{z},R,\eta},f\right>_{(a,b)}-e^{-i\eta}R_{2,2}\left<u_{\ol{z},2},f\right>_{(a,b)}\no\\
&\quad\quad+e^{i\eta}k_{R,\eta}(z)^{-1}\left<u_{\ol{z},R,\eta},f\right>_{(a,b)}[u_{z,R,\eta},\psi_{a}](a)-\left<u_{\ol{z},1},f\right>_{(a,b)}\no\\
&\quad\quad-k_{R,\eta}(z)^{-1}\left<u_{\ol{z},R,\eta},f\right>_{(a,b)}[u_{z,R,\eta},\psi_{b}](b)\no\\
&\quad=\left<u_{\ol{z},R,\eta},f\right>_{(a,b)}k_{R,\eta}(z)^{-1}\left\{R_{2,1}R_{2,2}+e^{i\eta}R_{2,2}[u_{z,R,\eta},\psi_{a}](a)-[u_{z,R,\eta},\psi_{b}](b)\right\}\no\\
&\quad\quad-\left<u_{\ol{z},R,\eta},f\right>_{(a,b)}\no\\
&\quad=\left<u_{\ol{z},R,\eta},f\right>_{(a,b)}-\left<u_{\ol{z},R,\eta},f\right>_{(a,b)}=0.
	\lb{4.125}
\end{align}
This proves \eqref{4.119}, and subsequently, the claim in \eqref{4.120} is a result of the following calculation:
\begin{align}
(T_{R,\eta}-zI_{(a,b)})F_{R,\eta}(z)f&=(T_{\max}-zI_{(a,b)})F_{R,\eta}(z)f\lb{4.126}\\
&=(T_{\max}-zI_{(a,b)})(T_{0,0}-zI_{(a,b)})^{-1}f\no\\
&\quad+k_{R,\eta}(z)^{-1}\left<u_{\ol{z},R,\eta},f\right>_{(a,b)}(T_{\max}-zI_{(a,b)})u_{z,R,\eta}\no\\
&=(T_{0,0}-zI_{(a,b)})(T_{0,0}-zI_{(a,b)})^{-1}f=f=I_{(a,b)}f.\no
\end{align}
\end{proof}
%%%%%%%%%%%

%%%%%%%%%%%
\begin{remark}
Using linearity of the trace functional, the rank one trace formula in \eqref{3.10a}, and \eqref{4.23}, \eqref{4.55}, \eqref{4.58}, \eqref{4.72}, \eqref{4.111}, one may obtain explicit trace formulas which are analogous to \eqref{3.14} for the resolvent differences
\begin{equation}
(T_{\alpha,\beta}-zI_{(a,b)})^{-1} - (T_{0,0}-zI_{(a,b)})^{-1},\quad z\in \rho(T_{\alpha,\beta})\cap\rho(T_{0,0}),\, \alpha,\beta\in[0,\pi),
\end{equation}
and
\begin{equation}
\begin{split}
(T_{R,\eta}-zI_{(a,b)})^{-1} - (T_{0,0}-zI_{(a,b)})^{-1},\quad z\in \rho(T_{R,\eta})\cap\rho(T_{0,0}),&\\
R\in \SL_2(\bbR),\,\eta\in[0,\pi).&
\end{split}
\end{equation}
\hfill $\diamond$
\end{remark}
%%%%%%%%%%%

%%%%%%%%%%%
\begin{remark}
Assume Hypothesis \ref{h2.17}.  Explicit Krein resolvent identities for three-term Sturm--Liouville operators were derived in detail in \cite{CGNZ14} under the additional assumption that $\tau$ is {\it regular} on $(a,b)$.  Recall that $\tau$ is said to be regular on $(a,b)$ if $a$ and $b$ are finite and
\begin{equation}
p^{-1},\, q,\, r\in L^1((a,b);dx).
	\lb{4.127}
\end{equation}
Treating both separated and coupled boundary conditions, the authors of \cite{CGNZ14} derive Krein resolvent identities that relate the resolvent of any self-adjoint extension of $T_{\min}$, with either separated or coupled boundary conditions, to the resolvent of the Dirichlet extension (parametrized by vanishing boundary values) of $T_{\min}$ in the regular case.  Here we briefly comment on how the resolvent identities from \cite{CGNZ14} can be recovered  as special cases of the Krein resolvent identities obtained in Section \ref{s4}.  For simplicity, we consider only those self-adjoint extensions of $T_{\min}$ corresponding to separated boundary conditions which together with the Dirichlet extension are relatively prime with respect to $T_{\min}$.  The other cases may be treated in a similar fashion.  Henceforth, we shall assume that $\tau$ is regular on $(a,b)$.

Recall that in the regular case, if $f\in \dom(T_{\max})$, then $f$ and $pf'$ possess boundary values.  That is, the following limits exist
\begin{equation}
\begin{split}
f(a):=\lim_{x\downarrow a}f(x),\quad f^{[1]}(a) := \lim_{x\downarrow a} (pf')(x),\\
f(b):=\lim_{x\uparrow b}f(x),\quad f^{[1]}(b) := \lim_{x\uparrow b} (pf')(x).
\end{split}
	\lb{4.128}
\end{equation}
The self-adjoint extensions of $T_{\min}$ corresponding to separated boundary conditions are characterized in \cite[eq.~(3.1)]{CGNZ14} as a two-parameter family $\{H_{\theta_a,\theta_b}\}_{\theta_a,\theta_b\in [0,\pi)}$, where for each $\theta_a,\theta_b\in [0,\pi)$,
\begin{align}
&H_{\theta_a,\theta_b}f = T_{\max}f,\lb{4.129}\\
&f\in \dom(H_{\theta_a,\theta_b}) = \Bigg\{g\in \dom(T_{\max})\,\Bigg|\,
\begin{aligned}
\cos(\theta_a)g(a)+\sin(\theta_a)g^{[1]}(a)&=0,\no\\
\cos(\theta_b)g(b)-\sin(\theta_b)g^{[1]}(b)&=0
\end{aligned}
\Bigg\}.\no
\end{align}
Note that $H_{0,0}$ is the Dirichlet extension of $T_{\min}$.  We briefly explain how the Krein resolvent identity obtained in \cite[eq.~(3.13)]{CGNZ14} may be recovered as a special case of Theorem \ref{t4.4}.  For simplicity, we treat only the case $\theta_a,\theta_b\neq 0$.

To recover the Krein resolvent identity from \cite{CGNZ14}, one expresses $H_{\theta_a,\theta_b}$ in terms of the operators in \eqref{2.25} parametrized in terms of boundary condition bases.  Fix a pair of boundary condition bases $\{\phi_c,\psi_c\}$, $c\in \{a,b\}$, by choosing $\phi_a,\phi_b,\psi_a,\psi_b\in \dom(T_{\max})$ such that
\begin{equation}\lb{4.130}
\begin{split}
\phi_a(a) &= 0,\quad\, \phi_a^{[1]}(a)=1,\quad \phi_b(b) = 0,\quad \phi_b^{[1]}(b)=1,\\
\psi_a(a) &=1,\quad \psi_a^{[1]}(a)=0,\quad \psi_b(b)=1,\quad \psi_b^{[1]}(b)=0.
\end{split}
\end{equation}
The relations in \eqref{4.130} imply $[\psi_c,\phi_c](c)=1$, $c\in \{a,b\}$, and
\begin{equation}\lb{4.131}
\begin{split}
[g,\phi_a](a) &= g(a),\quad [g,\psi_a](a) = -g^{[1]}(a),\\
[g,\phi_b](b) &= g(b),\quad\; [g,\psi_b](b) = -g^{[1]}(b),
\end{split}
\quad g\in \dom(T_{\max}).
\end{equation}
With this choice of boundary condition bases, the self-adjoint extensions of $T_{\min}$ given by \eqref{2.25} are
\begin{align}
&T_{\alpha,\beta}f = T_{\max}f,\quad \alpha,\beta\in [0,\pi),\lb{4.132}\\
&f\in \dom(T_{\alpha,\beta}) = \Bigg\{g\in \dom(T_{\max})\,\Bigg|\,
\begin{aligned}
\cos(\alpha)g(a)-\sin(\alpha)g^{[1]}(a)&=0,\no\\
\cos(\beta)g(b)-\sin(\beta)g^{[1]}(b)&=0
\end{aligned}
\Bigg\}.\no
\end{align}
A comparison of \eqref{4.129} with \eqref{4.132} yields
\begin{equation}\lb{4.133}
H_{\theta_a,\theta_b} = T_{\pi-\theta_a,\theta_b},\quad H_{0,\theta_b} = T_{0,\theta_b},\quad  \theta_a\in (0,\pi),\, \theta_b\in [0,\pi).
\end{equation}
In particular, for the Dirichlet extension,
\begin{equation}\lb{4.134}
H_{0,0} = T_{0,0}.
\end{equation}
 The Krein resolvent identities in \cite[Theorem 3.1]{CGNZ14} relate the resolvents of $H_{\theta_a,\theta_b}$ and $H_{0,0}$ and are expressed in terms of the basis $\{u_j(z,\,\cdot\,)\}_{j=1,2}$, $z\in \rho(H_{0,0})$, of $\ker(T_{\max}-zI_{(a,b)})$ specified by the conditions
\begin{equation}\lb{4.135}
\begin{split}
&u_1(z,a)=0, \quad u_1(z,b)=1,  \\
&u_2(z,a)=1, \quad u_2(z,b)=0,
\end{split}
\quad z\in \rho(H_{0,0}).
\end{equation}
Comparing \eqref{4.2}, \eqref{4.5}, \eqref{4.6}, \eqref{4.134}, and \eqref{4.135}, one infers that
\begin{equation}\lb{4.136}
u_{z,j}=u_j(z,\,\cdot\,),\quad j\in \{1,2\},\, z\in \rho(H_{0,0})=\rho(T_{0,0}).
\end{equation}
If $\theta_a,\theta_b\in (0,\pi)$, then according to Theorem \ref{t4.4}, $T_{\pi-\theta_a,\theta_b}$ and $T_{0,0}$ are relatively prime.  For each $z\in \rho(T_{\pi-\theta_a,\theta_b})\cap\rho(T_{0,0})$, the matrix $K_{\pi-\theta_a,\theta_b}(z)$ given by \eqref{4.22} is invertible and the identity in \eqref{4.23} holds.  Using \eqref{4.131} and \eqref{4.136}, one computes
\begin{equation}\lb{4.137}
[u_{z,j},\psi_c](c)=-u_{z,j}^{[1]}(c)=-u_j^{[1]}(z,c),\quad j\in \{1,2\},\, c\in \{a,b\},\, z\in \rho(T_{0,0}).
\end{equation}
Therefore, by \eqref{4.22} and \eqref{4.137}, for $z\in \rho(T_{\pi-\theta_a,\theta_b})\cap\rho(T_{0,0})$,
\begin{align}
K_{\pi-\theta_a,\theta_b}(z) &=
\begin{pmatrix}
\cot(\theta_b)-u_1^{[1]}(z,b) & u_1^{[1]}(z,a)\\[2mm]
-u_2^{[1]}(z,b) & -\cot(\pi-\theta_a)+u_2^{[1]}(z,a)
\end{pmatrix}\no\\
&=
\begin{pmatrix}
\cot(\theta_b)-u_1^{[1]}(z,b) & u_1^{[1]}(z,a)\\[2mm]
-u_2^{[1]}(z,b) & \cot(\theta_a)+u_2^{[1]}(z,a)
\end{pmatrix}.\lb{4.138}
\end{align}
By \eqref{4.23}, \eqref{4.133}, \eqref{4.134}, and \eqref{4.138}, for $z\in \rho(H_{\theta_a,\theta_b})\cap\rho(H_{0,0})$,
\begin{align}
(H_{\theta_a,\theta_b}-zI_{(a,b)})^{-1} &= (T_{\pi-\theta_a,\theta_b}-zI_{(a,b)})^{-1} \no\\
&= (T_{0,0}-zI_{(a,b)})^{-1}\no\\
&\quad  - \sum_{j,k=1}^2\big[K_{\pi-\theta_a,\theta_b}(z)^{-1}\big]_{j,k}\langle u_{\ol{z},j},\,\cdot\,\rangle_{(a,b)} u_{z,k}\no\\
&= (H_{0,0}-zI_{(a,b)})^{-1}\no\\
&\quad  - \sum_{j,k=1}^2\big[K_{\pi-\theta_a,\theta_b}(z)^{-1}\big]_{j,k}\langle u_j(\ol{z},\,\cdot\,),\,\cdot\,\rangle_{(a,b)} u_k(z,\,\cdot\,),\lb{4.139}
\end{align}
which agrees with \cite[eq.~(3.13)]{CGNZ14} after interchanging the indices $j$ and $k$.

The other Krein resolvent identities in \cite[eqs.~(3.16), (3.19), (3.50), (3.53)]{CGNZ14} may be obtained in a similar manner as special cases of Theorems \ref{t4.5}, \ref{t4.6}, and \ref{t4.7}.\hfill $\diamond$
\end{remark}
%%%%%%%%%%%

%%%%%%%%%%%%%%%%%%%%%%%%%%%%%%%
%%%%%%%%%%%%%%%%%%%%%%%%%%%%%%%
\section{Applications to Bessel Operators} \lb{s5}
%%%%%%%%%%%%%%%%%%%%%%%%%%%%%%%
%%%%%%%%%%%%%%%%%%%%%%%%%%%%%%%

The Bessel differential operator has a storied history and has been studied by many authors; see \cite{GLN19} for an extensive list of references in this connection.  In this section, we consider the Bessel operator (in Liouville form) as an application of the results of Section \ref{s3} to a problem with singular endpoints.  Using Theorem \ref{t3.4}, we determine the explicit form of Krein's resolvent identity and use it to calculate the trace of the difference of the resolvents of the Friedrichs extension and any other self-adjoint extension.  The trace formula obtained via \eqref{3.14} is then used to explicitly compute the Krein spectral shift function for the pair of self-adjoint extensions.
\par
For $\nu\in [0,\infty)$ the Bessel differential expression $\tau_{\nu}$ is defined by choosing, in the notation of Hypothesis \ref{h2.1}:
\begin{equation}
\begin{split}
& a=0,\quad b=\infty,\\
&r(x)=p(x)=1,\quad q(x) = (\nu^2-(1/4))x^{-2},\quad x\in (0,\infty),
\end{split}
	\lb{5.1}
\end{equation}
so that the differential expression \eqref{2.3} takes the form
\begin{align}
\tau_{\nu}f=-f''+\f{\nu^{2}-\f{1}{4}}{x^{2}}f,\quad f\in\mathfrak{D}(0,\infty).
	\lb{5.2}
\end{align}
Following \eqref{2.8}, one defines the maximal operator $T_{\max}^{(\nu)}$ associated to $\tau_{\nu}$ in the Hilbert space $L^2((0,\infty);dx)$ by
\begin{equation}
\begin{split}
&T_{\max}^{(\nu)}f=\tau_{\nu}f,\\
&f\in \dom\big(T_{\max}^{(\nu)}\big)=\big\{g\in \mathfrak{D}(0,\infty)\,\big|\,g,\tau_{\nu}g\in L^2((0,\infty);dx)\big\},
\end{split}
	\lb{5.3}
\end{equation}
and, in accordance with \eqref{2.9}, the associated minimal operator $T_{\min}^{(\nu)}$ is defined by
\begin{equation}
T_{\min}^{(\nu)} := \big(T_{\max}^{(\nu)}\big)^*.
	\lb{5.4}
\end{equation}
Recall that $I_{(0,\infty)}$ and $\left<\,\cdot\,,\,\cdot\,\right>_{(0,\infty)}$ denote the identity operator and the inner product in $L^2((0,\infty);dx)$, respectively, and $\tr_{(0,\infty)}$ denotes the trace in $\cB_1\big(L^2((0,\infty);dx)\big)$.

As reported, for example, in \cite[Section 12]{Ev05}, for $\nu \in [0,1)$, $\tau_{\nu}$ is in the limit circle case at $x=0$ and in the limit point case at $x=\infty$.  On the other hand, for $\nu\in [1,\infty)$, $\tau_{\nu}$ is in the limit point case at both $x=0$ and $x=\infty$.  It follows from Theorem \ref{t2.18} that for $\nu\in [1,\infty)$, the minimal operator $T_{\min}^{(\nu)}$ is self-adjoint and, therefore, possesses no proper self-adjoint extensions.  So, it is to $\nu\in[0,1)$ that we restrict our attention.  In this case, the self-adjoint extensions of $T_{\min}^{(\nu)}$ are parametrized as a one-parameter family $\big\{T_{\theta}^{(\nu)}\big\}_{\theta\in[0,\pi)}$.

Our first goal is to explicitly compute the right-hand sides in \eqref{3.11}, \eqref{3.12}, and \eqref{3.14}.  This is carried out below in Propositions \ref{p5.1} and \ref{p5.6}.  The objects of interest ($\phi_0$, $\psi_0$, $k_{\theta}(\,\cdot\,)$, $w_z$, etc.) will all depend on $\nu$.  To clearly indicate this dependence, we append the subscript ``$\nu$'' to relevant quantities ($\phi_{0,\nu}$, $\psi_{0,\nu}$, $k_{\theta,\nu}(\,\cdot\,)$, $w_{z,\nu}$, etc.).  Moreover, as will become apparent, there is a natural bifurcation between $\nu\in(0,1)$ and $\nu=0$, so the two cases are treated in separate subsections.

The fact that $x=0$ is a limit circle endpoint and $x=\infty$ is a limit point endpoint implies that the difference of the resolvents of $T_{\theta}^{(\nu)}$ and $T_0^{(\nu)}$ is rank one, so that
\begin{equation}\lb{5.5a}
\begin{split}
\Big[\big(T_{\theta}^{(\nu)}-zI_{(0,\infty)}\big)^{-1} - \big(T_0^{(\nu)}-zI_{(0,\infty)}\big)^{-1}\Big]\in \cB_1\big(L^2((0,\infty);dx) \big),&\\
z\in \rho\big(T_{\theta}^{(\nu)}\big)\cap\rho\big(T_0^{(\nu)}\big).&
\end{split}
\end{equation}
By \cite[eqs.~(7.15) \& (8.16)]{EK07},
\begin{equation}
\sigma\big(T_0^{(\nu)}\big)=\sigma_{\text{ac}}\big(T_0^{(\nu)}\big)=\sigma_{\text{ess}}\big(T_0^{(\nu)}\big)=[0,\infty),\quad \sigma_{\text{p}}\big(T_0^{(\nu)}\big)=\emptyset,\quad \nu\in[0,1).
	\lb{5.10}
\end{equation}
In particular, $T_0^{(\nu)}$ has no eigenvalues.  Based on abstract principles (cf., e.g., \cite[Theorems 8.12 \& 9.29]{Sc12}), the condition in \eqref{5.5a} implies that $T_{\theta}^{(\nu)}$ and $T_0^{(\nu)}$ have the same essential (resp., absolutely continuous) spectra.  In particular,
\begin{equation}
\sigma_{\text{ac}}\big(T_{\theta}^{(\nu)}\big)=\sigma_{\text{ess}}\big(T_{\theta}^{(\nu)}\big)=[0,\infty),\quad \theta\in (0,\pi),\, \nu\in[0,1).
	\lb{5.34}
\end{equation}
Since the deficiency indices of $T_{\min}^{(\nu)}$ are $(1,1)$, $T_{\theta}^{(\nu)}$ has at most one negative eigenvalue of multiplicity one (cf., e.g., \cite[Chapter IV, Section 14.11, Theorem 16]{Na68}).  Later, we shall determine precisely when $T_{\theta}^{(\nu)}$ possesses a negative eigenvalue and compute it explicitly.

For each $\nu\in [0,1)$ and $\theta\in (0,\pi)$, the resolvent comparability condition \eqref{5.5a} and the fact that $T_{\theta}^{(\nu)}$ and $T_0^{(\nu)}$ are bounded from below ensure the existence of a unique real-valued spectral shift function (cf.~Appendix \ref{sA}),
\begin{equation}
\xi\big(\,\cdot\,; T_{\theta}^{(\nu)},T_0^{(\nu)}\big)\in L^1\left(\bbR;(1+\lambda^2)^{-1}\, d\lambda\right),
	\lb{5.35}
\end{equation}
such that
\begin{equation}
\xi\big(\lambda;T_{\theta}^{(\nu)},T_0^{(\nu)}\big)=0,\quad \lambda<\min\Big[\sigma\big(T_{\theta}^{(\nu)}\big)\cup\sigma\big(T_0^{(\nu)}\big)\Big],
	\lb{5.36}
\end{equation}
and for which the following trace formula holds:
\begin{align}
\begin{split}
&\tr_{(0,\infty)}\Big(\big(T_{\theta}^{(\nu)}-z I_{(0,\infty)}\big)^{-1} - \big(T_0^{(\nu)}-zI_{(0,\infty)}\big)^{-1}\Big)\\
&\quad = -\int_{\bbR}\frac{\xi\big(\lambda; T_{\theta}^{(\nu)},T_0^{(\nu)}\big)}{(\lambda - z)^2}\, d\lambda,\quad z\in \bbC\backslash\bbR.
\end{split}
	\lb{5.37}
\end{align}

Our second goal is to compute the spectral shift function $\xi\big(\,\cdot\,;T_{\theta}^{(\nu)},T_0^{(\nu)}\big)$.  By Lemma \ref{lA.3}, the behavior of the spectral shift function on $(-\infty,0)$ yields information about the presence of negative eigenvalues of $T_{\theta}^{(\nu)}$.  This analysis is carried out below in Propositions \ref{p5.3} and \ref{p5.8}.

Many of the formulas obtained in this section contain nonintegral powers of the complex parameter $z$.  For $z\in \bbC\backslash\{0\}$ and $\beta\in (0,1)$, we define the complex powers $z^{\pm \beta}$ by writing $z$ in polar form
\begin{equation}\lb{5.5}
\text{$z=|z|e^{i\arg(z)}$ with $\arg(z)\in [0,2\pi)$},
\end{equation}
and setting
\begin{equation}
z^{\pm \beta} = |z|^{\pm \beta}e^{\pm i\beta \arg(z)}.
	\lb{5.6}
\end{equation}
In particular, the convention of choosing $\arg(z)\in [0,2\pi)$ ensures $\Im(z^{1/2})\geq 0$.  To be consistent with \eqref{5.5}, we fix a branch of the logarithm by
\begin{equation}\lb{5.7}
\ln(z) = \ln|z| + i\arg(z),\quad \arg(z)\in [0,2\pi),\, z\in \bbC\backslash\{0\}.
\end{equation}

%%%%%%%%%%%%%%%%%%%%%%%%%%%%%%%
\subsection{The case $\nu\in (0,1)$}\lb{s5.1}
%%%%%%%%%%%%%%%%%%%%%%%%%%%%%%%
Let $\nu\in (0,1)$ be fixed throughout this subsection.  Following \cite[eq.~(8.1)]{EK07} (cf.~also \cite[Section 12]{Ev05}), one fixes a boundary condition basis $\{\phi_{0,\nu},\psi_{0,\nu}\}$ at $a=0$ by choosing functions $\phi_{0,\nu},\psi_{0,\nu}\in \dom\big(T_{\max}^{(\nu)}\big)$ which vanish in a neighborhood of $\infty$ and satisfy
\begin{align}
\phi_{0,\nu}(x)=x^{\f{1}{2}+\nu}\quad\text{and}\quad \psi_{0,\nu}(x)=\f{1}{2\nu}x^{\f{1}{2}-\nu},\quad x\in(0,1).
	\lb{5.8}
\end{align}
This is possible by the Naimark patching lemma \cite[Chapter V, Section 17.3, Lemma 2]{Na68}.  The self-adjoint extensions of $T_{\min}^{(\nu)}$ are parametrized according to Theorem \ref{t2.19} as a one-parameter family $\big\{T_{\theta}^{(\nu)}\big\}_{\theta\in[0,\pi)}$, where for $\theta\in [0,\pi)$,
\begin{align}
&T_{\theta}^{(\nu)}f = T_{\max}^{(\nu)}f,
	\lb{5.9}\\
&f\in \dom\big(T_{\theta}^{(\nu)}\big) = \big\{g\in \dom\big(T_{\max}^{(\nu)}\big)\,\big|\, \cos(\theta)[g,\phi_{0,\nu}](0) + \sin(\theta)[g,\psi_{0,\nu}](0)=0\big\}.\no
\end{align}
By \cite[eq.~(8.16)]{EK07} (cf.~also \cite[Proposition 5.3 $(i)$]{AB15}), the operator $T_0^{(\nu)}$ is the Friedrichs extension of $T_{\min}^{(\nu)}$.  Moreover, by \cite[Proposition 5.3 $(ii)$]{AB15}, the extension $T_{\pi/2}^{(\nu)}$ is the Krein--von Neumann extension of $T_{\min}^{(\nu)}$.  For details on the significance of the Friedrichs and Krein--von Neumann extensions, we refer to \cite{AS80}, \cite{AN70}, \cite{AGMST10}, and \cite{Kr47}.

For $z\in \bbC\backslash\{0\}$, a basis of solutions $\{s_{z,\nu},c_{z,\nu}\}$ to the equation
\begin{equation}
\tau_{\nu}y=zy\quad \text{on}\quad (0,\infty)
	\lb{5.11}
\end{equation}
is fixed by setting
\begin{equation}
\begin{split}
s_{z,\nu}(x) &= -2^{\nu}\Gamma(1+\nu)z^{-\nu/2}x^{1/2}J_{\nu}\big(z^{1/2}x\big),\\
c_{z,\nu}(x) &=\frac{\Gamma(1-\nu)}{2^{\nu+1}\nu}z^{\nu/2}x^{1/2}J_{-\nu}\big(z^{1/2}x\big),\quad x\in (0,\infty).
\end{split}
	\lb{5.12}
\end{equation}
Here $\Gamma(\,\cdot\,)$ denotes the gamma function and $J_{\pm\nu}(\,\cdot\,)$ denote the Bessel functions of the first kind with indices $\pm\nu$ (cf., e.g., \cite[Section 9.1]{AS72}):
\begin{equation}
J_{\pm\nu}(\zeta)=\bigg(\frac{\zeta}{2}\bigg)^{\pm\nu} \sum_{k=0}^{\infty} \frac{(-\zeta^2/4)^k}{k!\Gamma(k+1\pm \nu)},\quad \zeta\in \bbC\backslash \{0\}.
	\lb{5.13}
\end{equation}
Using the asymptotics as $x\downarrow 0$ implied by \eqref{5.13}, one verifies that the basis functions $s_{z,\nu}$ and $c_{z,\nu}$ satisfy the generalized boundary conditions (cf. \cite[eqs.~(4.5) \& (4.6)]{AB15} and \cite[Section 8.2]{EK07})
\begin{align}
&[s_{z,\nu},\phi_{0,\nu}](0) = 0,\quad [s_{z,\nu},\psi_{0,\nu}](0)= 1,
	\lb{5.14}\\
&[c_{z,\nu},\phi_{0,\nu}](0) = 1,\quad [c_{z,\nu},\psi_{0,\nu}](0)=0,\quad z\in\bbC\backslash\{0\},
	\lb{5.15}
\end{align}
in analogy with the classical boundary values of sine, cosine, and their derivatives.

If $z\in \rho\big(T_0^{(\nu)}\big)=\bbC\backslash[0,\infty)$, then $s_{z,\nu}$ is the unique solution to \eqref{5.11} which satisfies the conditions in \eqref{5.14}, and a nontrivial solution of \eqref{5.11} which lies in $L^2((0,\infty);dx)$ is given by
\begin{align}
&x^{1/2}H_{\nu}^{(1)}\big(z^{1/2}x\big)
	\lb{5.16}\\
&\quad=i\csc(\nu\pi)\left[e^{-i\nu\pi}x^{1/2}J_{\nu}\big(z^{1/2}x\big)-x^{1/2}J_{-\nu}\big(z^{1/2}x\big)\right]\no\\
&\quad = -\frac{ie^{-i\nu\pi}}{\sin(\nu\pi)2^{\nu}\Gamma(1+\nu)}z^{\nu/2}s_{z,\nu}(x) - \frac{i2^{\nu+1}\nu}{\sin(\nu\pi)\Gamma(1-\nu)}z^{-\nu/2}c_{z,\nu}(x),\quad x\in (0,\infty),\no
\end{align}
where $H_{\nu}^{(1)}(\,\cdot\,)=J_{\nu}(\,\cdot\,)+iY_{\nu}(\,\cdot\,)$ is the Hankel function of the first kind, a combination of the Bessel and Neumann functions (cf.\ \cite[Section 9.1]{AS72}).  That the function in \eqref{5.16} actually lies in $L^2((0,\infty);dx)$ is a consequence of the fact that $\tau_{\nu}$ is in the limit circle case at $x=0$ together with the asymptotic behavior of $H_{\nu}^{(1)}(\,\cdot\,)$ as its (generally complex) argument tends to infinity (cf., e.g., \cite[eq.~9.2.3]{AS72}):
\begin{equation}
H_{\nu}^{(1)}(\zeta)\underset{|\zeta|\to \infty}{\sim}\bigg(\frac{2}{\pi\zeta}\bigg)^{1/2}e^{i\left(\zeta -\frac{1}{2}\nu\pi-\frac{1}{4}\pi\right)}.
	\lb{5.17}
\end{equation}

In order to explicitly compute \eqref{3.11}, \eqref{3.12}, and \eqref{3.14}, the Weyl--Titchmarsh solution $w_{z,\nu}$ corresponding to ``$w_z$'' in Hypothesis \ref{h3.1} $(iv)$ must be determined.  To this end,  one computes
\begin{align}
&\Big[(\,\cdot\,)^{1/2}\,H_{\nu}^{(1)}\big(z^{1/2}\,\cdot\,\big),\phi_{0,\nu}\Big](0)\no\\
&\quad=-\f{ie^{-i\nu\pi}z^{\nu/2}}{2^{\nu}\sin(\nu\pi)\Gamma(1+\nu)}[s_{z,\nu},\phi_{0,\nu}](0)-\f{2^{\nu+1}\nu i}{\sin(\nu\pi)\Gamma(1-\nu)z^{\nu/2}}[c_{z,\nu},\phi_{0,\nu}](0)\no\\
&\quad=-\f{2^{\nu+1}\nu i}{\sin(\nu\pi)\Gamma(1-\nu)z^{\nu/2}},\quad z\in \rho\big(T_0^{(\nu)}\big).
	\lb{5.18}
\end{align}
For the sake of brevity, set
\begin{align}
\varpi_{\nu}(z):=\f{i\Gamma(1-\nu)\sin(\nu\pi)z^{\nu/2}}{2^{\nu+1}\nu},\quad z\in \rho\big(T_0^{(\nu)}\big).
	\lb{5.19}
\end{align}
Then, based on \eqref{5.18}, one infers
\begin{align}
\begin{split}
w_{z,\nu}(x)&=\varpi_{\nu}(z)x^{1/2}\,H_{\nu}^{(1)}\big(z^{1/2}x\big)\\
&=\f{\Gamma(1-\nu)e^{-i\nu\pi}z^{\nu}}{\Gamma(1+\nu)2^{2\nu+1}\nu}s_{z,\nu}(x)+c_{z,\nu}(x),\quad x\in (0,\infty),\, z\in \rho\big(T_0^{(\nu)}\big).
\end{split}
	\lb{5.20}
\end{align}
As a result, Hypothesis \ref{h3.1} holds with the choices
\begin{equation}
\begin{split}
&a=0,\quad b=\infty,\quad \phi_a=\phi_{0,\nu},\quad \psi_a=\psi_{0,\nu},\\
&T_{\theta}=T_{\theta}^{(\nu)},\quad u_z=s_{z,\nu},\quad w_z=w_{z,\nu},
\end{split}
	\lb{5.21}
\end{equation}
and Theorem \ref{t3.4} may be applied to relate the resolvents of $T_{\theta}^{(\nu)}$ and $T_0^{(\nu)}$ for any $\theta\in (0,\pi)$ and compute the trace of the corresponding resolvent difference.

To begin with, \eqref{5.14}, \eqref{5.15}, and \eqref{5.20} imply
\begin{align}
[w_{z,\nu},\psi_{0,\nu}](0)=\f{\Gamma(1-\nu)e^{-i\nu\pi}z^{\nu}}{\Gamma(1+\nu)2^{2\nu+1}\nu},\quad z\in \rho\big(T_0^{(\nu)}\big),
	\lb{5.22}
\end{align}
which yields an explicit expression for the right-hand side of \eqref{3.11}:
\begin{align}
k_{\theta,\nu}(z)&=\cot(\theta)+[w_{z,\nu},\psi_{0,\nu}](0)\no\\
&=\cot(\theta)+\f{\Gamma(1-\nu)e^{-i\nu\pi}z^{\nu}}{\Gamma(1+\nu)2^{2\nu+1}\nu},\quad z\in \rho\big(T_0^{(\nu)}\big)\cap\rho\big(T_{\theta}^{(\nu)}\big),\quad \theta\in (0,\pi).
	\lb{5.23}
\end{align}
With $w_{z,\nu}$ given by \eqref{5.20} and $k_{\theta,\nu}(\,\cdot\,)$ given by \eqref{5.23}, the right-hand side in \eqref{3.12} is completely determined.

To compute the right-hand side in \eqref{3.14}, the inner product $\left< w_{\overline{z},\nu},w_{z,\nu}\right>_{(0,\infty)}$ must be calculated for $z\in \rho\big(T_0^{(\nu)}\big)$.  Using the definition of the inner product and \eqref{3.3}, one computes for $z\in \rho\big(T_0^{(\nu)}\big)$,
\begin{align}
&\left<w_{\ol{z},\nu},w_{z,\nu}\right>_{(0,\infty)}	
	\lb{5.24}\\
&\quad=\varpi_{\nu}^{2}(z)\int_{0}^{\infty}x\big[H_{\nu}^{(1)}\big(z^{1/2}x\big)\big]^{2}\,dx\no\\
&\quad=\varpi_{\nu}^{2}(z)\left[\f{1}{2}x^{2}\left(\big[H_{\nu}^{(1)}\big(z^{1/2}x\big)\big]^{2}-H_{\nu-1}^{(1)}\big(z^{1/2}x\big)H_{\nu+1}^{(1)}\big(z^{1/2}x\big)\right)\right]_{0}^{\infty}.\no
\end{align}
The antiderivative after the final equality in \eqref{5.24} is due to a result of Lommel \cite[p.~135, (11)]{Wat44} for cylinder functions.  To evaluate at the endpoints, one relies on the asymptotic forms \cite[eqs.~9.1.9 \& 9.2.3]{AS72} and the order-reflection formula for Hankel functions \cite[eq.~9.1.6]{AS72}.  For the upper limit in \eqref{5.24}, one obtains
\begin{align}
&\underset{x\to\infty}\lim\left[\f{1}{2}x^{2}\left(\big[H_{\nu}^{(1)}\big(z^{1/2}x\big)\big]^{2}-H_{\nu-1}^{(1)}\big(z^{1/2}x\big)H_{\nu+1}^{(1)}\big(z^{1/2}x\big)\right)\right]\no\\
&\quad=\underset{x\to\infty}\lim\Bigg[\f{1}{2}x^{2}\Bigg(\f{2}{\pi z^{1/2}x}e^{2i\left(z^{1/2}x-\f{1}{2}\nu\pi-\f{1}{4}\pi\right)}\no\\
&\hspace*{1.8cm}-\f{2}{\pi z^{1/2}x}e^{i\left(z^{1/2}x-\f{1}{2}(\nu-1)\pi-\f{1}{4}\pi\right)}e^{i\left(z^{1/2}x-\f{1}{2}(\nu+1)\pi-\f{1}{4}\pi\right)}\Bigg)\Bigg]=0,
	\lb{5.25}
\end{align}
since $z\in \rho\big(T_0^{(\nu)}\big)=\bbC\backslash [0,\infty)$ implies $\Im(z^{1/2})>0$ (cf.~\eqref{5.6}).  In like fashion, at the lower limit:
\begin{align}
&\underset{x\downarrow 0}\lim\Bigg[\f{1}{2}x^{2}\Bigg(-\f{\Gamma^{2}(\nu)}{\pi^{2}\left(\f{1}{2}z^{1/2}x\right)^{2\nu}}
-\bigg(-\f{i\Gamma(1-\nu)e^{i(1-\nu)\pi}}{\pi\left(\f{1}{2}z^{1/2}x\right)^{1-\nu}}\cdot-\f{i\Gamma(1+\nu)}{\pi\left(\f{1}{2}z^{1/2}x\right)^{1+\nu}}\bigg)\Bigg)\Bigg]\no\\
&\quad=\underset{x\downarrow 0}\lim\Bigg[-\f{\Gamma^{2}(\nu)}{2\pi^{2}\left(\f{1}{2}z^{1/2}\right)^{2\nu}x^{2\nu-2}}+\f{2\Gamma(1-\nu)\Gamma(1+\nu)e^{i(1-\nu)\pi}}{\pi^{2}z}\Bigg]\no\\
&\quad=-\f{2\Gamma(1-\nu)\Gamma(1+\nu)e^{-i\nu\pi}}{\pi^{2} z}=-\f{2\nu e^{-i\nu\pi}}{\pi z\sin(\nu\pi)}.
	\lb{5.26}
\end{align}
Finally, substitution of \eqref{5.25} and \eqref{5.26} into \eqref{5.24} yields
\begin{align}
\left<w_{\ol{z},\nu},w_{z,\nu}\right>_{(0,\infty)}&=\varpi_{\nu}^{2}(z)\f{2\nu e^{-i\nu\pi}}{\pi z\sin(\nu\pi)}\no\\
&=-\f{\Gamma^{2}(1-\nu)\sin(\nu\pi)e^{-i\nu\pi}z^{\nu-1}}{2^{2\nu+1}\nu\pi}\no\\
&=-\f{\Gamma(1-\nu)e^{-i\nu\pi}z^{\nu-1}}{\Gamma(\nu)2^{2\nu+1}\nu},
	\lb{5.27}
\end{align}
where the last equality is due to \cite[eq.~6.1.17]{AS72}.  Now \eqref{5.23} and \eqref{5.27} permit one to explicitly compute the right-hand side in \eqref{3.14}.  The results are summarized in:

%%%%%%%%%%%
\begin{proposition}\lb{p5.1}
If $\nu\in (0,1)$, $\theta\in (0,\pi)$, and $z\in \rho\big(T_0^{(\nu)}\big)\cap\rho\big(T_{\theta}^{(\nu)}\big)$, then
\begin{align}
\begin{split}
&\big(T_{\theta}^{(\nu)}-zI_{(0,\infty)}\big)^{-1}-\big(T_0^{(\nu)}-zI_{(0,\infty)}\big)^{-1}\\
&\quad =\left[ \cot(\theta)+\f{\Gamma(1-\nu)e^{-i\nu\pi}}{\Gamma(1+\nu)2^{2\nu+1}\nu}z^{\nu}\right]^{-1}\left<w_{\overline{z},\nu},\,\cdot\,\right>_{(0,\infty)}w_{z,\nu},
\end{split}
	\lb{5.28}
\end{align}
where $w_{z,\nu}$ is defined by \eqref{5.20}.  In particular,
\begin{equation}
\Big[\big(T_{\theta}^{(\nu)}-zI_{(0,\infty)}\big)^{-1}-\big(T_0^{(\nu)}-zI_{(0,\infty)}\big)^{-1}\Big] \in \cB_1\big(L^2((0,\infty);dx)\big),
	\lb{5.29}
\end{equation}
and the following trace formula holds:
\begin{align}
&\tr_{(0,\infty)}\Big(\big(T_{\theta}^{(\nu)}-zI_{(0,\infty)}\big)^{-1}-\big(T_0^{(\nu)}-zI_{(0,\infty)}\big)^{-1}\Big)
	\lb{5.30}\\
&\quad = \left[ \cot(\theta)+\f{\Gamma(1-\nu)e^{-i\nu\pi}}{\Gamma(1+\nu)2^{2\nu+1}\nu}z^{\nu}\right]^{-1}\left<w_{\ol{z},\nu},w_{z,\nu}\right>_{(0,\infty)}\no\\
&\quad =-\left[ \cot(\theta)+\f{\Gamma(1-\nu)e^{-i\nu\pi}}{\Gamma(1+\nu)2^{2\nu+1}\nu}z^{\nu}\right]^{-1}\f{\Gamma(1-\nu)e^{-i\nu\pi}}{\Gamma(\nu)2^{2\nu+1}\nu}z^{\nu-1}\no\\
&\quad=-\f{\Gamma(1-\nu)\nu e^{-i\nu\pi}z^{\nu-1}}{\Gamma(1+\nu)2^{2\nu+1}\nu\cot(\theta)+\Gamma(1-\nu)e^{-i\nu\pi}z^{\nu}}.\no
\end{align}
\end{proposition}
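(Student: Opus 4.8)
The plan is to verify that all the hypotheses of Theorem~\ref{t3.4} are in force in the present Bessel setting and then simply substitute the explicit quantities computed earlier in the subsection. Concretely, the first step is to record that, for $\nu\in(0,1)$, the differential expression $\tau_\nu$ is in the limit circle case at $x=0$ and in the limit point case at $x=\infty$, that $\{\phi_{0,\nu},\psi_{0,\nu}\}$ defined in \eqref{5.8} is a boundary condition basis at $a=0$, that $s_{z,\nu}$ (by \eqref{5.14}) plays the role of the regular solution ``$u_z$'' in Hypothesis~\ref{h3.1}$(iii)$, and that $w_{z,\nu}$ (by \eqref{5.18}--\eqref{5.20} together with $w_{z,\nu}\in L^2((0,\infty);dx)$, which follows from the limit circle property at $0$ and the decay \eqref{5.17} of $H_\nu^{(1)}$) plays the role of the Weyl--Titchmarsh solution ``$w_z$'' in Hypothesis~\ref{h3.1}$(iv)$. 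This is exactly the content of the dictionary \eqref{5.21}, so Hypothesis~\ref{h3.1} holds.

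Once this identification is in place, the resolvent identity \eqref{5.28} and the membership \eqref{5.29} are immediate from \eqref{3.12} and \eqref{3.13}, after one observes that $k_{\theta,\nu}(z)$, the Bessel-specific instance of the scalar $k_\theta(z)$ in \eqref{3.11}, is precisely the bracketed quantity in \eqref{5.28}: this is \eqref{5.23}, which in turn rests on the computation \eqref{5.22} of $[w_{z,\nu},\psi_{0,\nu}](0)$ via \eqref{5.14}, \eqref{5.15}, and the expansion of $w_{z,\nu}$ in the basis $\{s_{z,\nu},c_{z,\nu}\}$. For the trace formula \eqref{5.30} one applies \eqref{3.14}: the first displayed equality is the trace formula of Theorem~\ref{t3.4} with the abstract ingredients replaced by their Bessel analogues; the second equality substitutes the inner product evaluation \eqref{5.27} for $\langle w_{\overline z,\nu},w_{z,\nu}\rangle_{(0,\infty)}$; and the third equality is the elementary algebraic simplification of the resulting product of a reciprocal and a monomial in $z$, together with the identity $\Gamma(1+\nu)=\nu\Gamma(\nu)$ used to rewrite the numerator.

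The only genuinely substantive points are the two explicit computations that feed into the substitution, and these have already been carried out in the subsection: namely the antiderivative of $x\,[H_\nu^{(1)}(z^{1/2}x)]^2$ from Lommel's formula and the endpoint evaluations \eqref{5.25}--\eqref{5.26}, which yield \eqref{5.27}. I expect the main obstacle to be purely bookkeeping: correctly tracking the powers of $z$, the factors of $2^{\nu+1}\nu$, $\Gamma(1\pm\nu)$, $\sin(\nu\pi)$, and $e^{-i\nu\pi}$ through \eqref{5.19}, \eqref{5.20}, \eqref{5.22}, and \eqref{5.27}, and invoking the reflection formula $\Gamma(\nu)\Gamma(1-\nu)=\pi/\sin(\nu\pi)$ (i.e.\ \cite[eq.~6.1.17]{AS72}) in the right place. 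No conceptual difficulty arises: everything reduces to the already-established Theorem~\ref{t3.4} plus the explicit special-function identities quoted from \cite{AS72} and \cite{Wat44}.
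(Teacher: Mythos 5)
Your proposal is correct and follows exactly the paper's route: the paper also "proves" Proposition \ref{p5.1} by verifying that Hypothesis \ref{h3.1} holds with the dictionary \eqref{5.21} and then substituting the previously computed quantities \eqref{5.23} and \eqref{5.27} into \eqref{3.11}--\eqref{3.14}, with the Lommel antiderivative and the endpoint asymptotics \eqref{5.25}--\eqref{5.26} supplying the inner product and the reflection formula handling the final simplification. Nothing is missing.
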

%%%%%%%%%%%

%%%%%%%%%%%
\begin{remark}	\lb{r4.2}
In the special case $\nu=\tfrac{1}{2}$, the solutions \eqref{5.12} and \eqref{5.16} simplify to
\begin{equation}\lb{5.31}
\begin{split}
s_{z,1/2}(x)&=-\f{1}{z^{1/2}}\sin\big(z^{1/2}x\big),\\
c_{z,1/2}(x)&=\cos\big(z^{1/2}x\big),\quad x\in(0,\infty),\, z\in \bbC\backslash\{0\},
\end{split}
\end{equation}
and
\begin{align}
w_{z,1/2}(x)=e^{iz^{1/2}x},\quad x\in(0,\infty),\, z\in\rho\big(T_{0}^{(1/2)}\big).
	\lb{5.32}
\end{align}
In this case, the trace formula \eqref{5.30} reduces to
\begin{align}
&\tr_{(0,\infty)}\left(\big(T_{\theta}^{(1/2)}-zI_{(0,\infty)}\big)^{-1}-\big(T_{0}^{(1/2)}-zI_{(0,\infty)}\big)^{-1}\right)\no\\
&\quad =\f{i}{2z^{1/2}\big(\cot(\theta)-iz^{1/2}\big)},\quad z\in \rho\big(T_0^{1/2}\big)\cap\rho\big(T_{\theta}^{(1/2)}\big).
	\lb{5.33}
\end{align}
\hfill $\diamond$
\end{remark}
%%%%%%%%%%%

Next, as an application of the trace formula \eqref{5.30}, we explicitly compute the spectral shift function $\xi\big(\,\cdot\,;T_{\theta}^{(\nu)},T_0^{(\nu)}\big)$.  To simplify the statement of our results, we begin with a hypothesis that fixes some useful notation.

%%%%%%%%%%%
\begin{hypothesis}\lb{h5.3a}
$(i)$ Define the quantities
\begin{align}\lb{5.38}
e_{\theta,\nu}=-\left[\f{\Gamma(1+\nu)2^{2\nu+1}\nu|\cot(\theta)|}{\Gamma(1-\nu)}\right]^{1/\nu},\quad \theta\in (0,\pi),\, \nu\in (0,1),
\end{align}
and
\begin{align}
\lambda_{\theta,\nu}=\left[\f{\Gamma(1+\nu)2^{2\nu+1}\nu|\cot(\theta)|}{\Gamma(1-\nu)|\cos(\nu\pi)|}\right]^{1/\nu},\quad \theta\in (0,\pi),\, \nu\in (0,1)\backslash\big\{\tfrac{1}{2}\big\}.
	\lb{5.39}
\end{align}
$(ii)$ For each $(\theta,\nu)\in [(0,\pi)\times (0,1)]\backslash \big\{\big(\tfrac{\pi}{2},\tfrac{1}{2}\big)\big\}$, let the function $\Xi_{\theta,\nu}:\bbR\to\bbR$ be defined for each $\lambda\in \bbR$ by
\begin{align}
&\Xi_{\theta,\nu}(\lambda)=\lb{5.40a}\\
&\quad
\left\{
\begin{aligned}
&0,\quad \text{if $\Gamma(1+\nu)2^{2\nu+1}\nu\cot(\theta)+\Gamma(1-\nu)\cos(\nu\pi)|\lambda|^{\nu}= 0$},\\[2mm]
&\displaystyle{-\f{1}{\pi}\arctan\left(\f{\Gamma(1-\nu)\sin(\nu\pi)|\lambda|^{\nu}}{\Gamma(1+\nu)2^{2\nu+1}\nu\cot(\theta)+\Gamma(1-\nu)\cos(\nu\pi)|\lambda|^{\nu}}\right)},\quad \text{otherwise}.
\end{aligned}\right.\no
\end{align}
\end{hypothesis}
%%%%%%%%%%%

The following theorem provides the explicit form of the spectral shift function $\xi\big(\,\cdot\,;T_{\theta}^{(\nu)},T_0^{(\nu)}\big)$, $\theta\in (0,\pi)$, $\nu\in (0,1)$, in terms of the quantities \eqref{5.38} and \eqref{5.39}, and the function in \eqref{5.40a}.

%%%%%%%%%%%
\begin{proposition}\lb{p5.3}
Assume Hypothesis \ref{h5.3a}.  The following statements $(i)$--$(v)$ hold.\\[2mm]
$(i)$ If $\theta\in\big(0,\f{\pi}{2}\big]$, $\nu\in\big(0,\f{1}{2}\big]$, and $(\theta,\nu)\neq \big(\f{\pi}{2},\f{1}{2}\big)$, then
\begin{align}
&\xi\big(\lambda;T_{\theta}^{(\nu)},T_{0}^{(\nu)}\big)=\chi_{(0,\infty)}(\lambda)\Xi_{\theta,\nu}(\lambda)\, \text{ for a.e.~$\lambda\in \bbR$}.\lb{5.40}
\end{align}
In particular, $T_{\theta}^{(\nu)}$ has no negative eigenvalues.\\[2mm]
$(ii)$ If $\theta\in\big(0,\f{\pi}{2}\big]$ and $\nu\in\big(\f{1}{2},1\big)$, then
\begin{align}
&\xi\big(\lambda;T_{\theta}^{(\nu)},T_{0}^{(\nu)}\big)=-\chi_{(\lambda_{\theta,\nu},\infty)}(\lambda)+\chi_{(0,\infty)}(\lambda)\Xi_{\theta,\nu}(\lambda)\, \text{ for a.e.~$\lambda\in \bbR$}.\lb{5.41}
\end{align}
In particular, $T_{\theta}^{(\nu)}$ has no negative eigenvalues.\\[2mm]
$(iii)$ If $\theta\in\big(\f{\pi}{2},\pi\big)$ and $\nu\in\big(0,\f{1}{2}\big)$, then
\begin{align}
&\xi\big(\lambda;T_{\theta}^{(\nu)},T_{0}^{(\nu)}\big)=-\chi_{(e_{\theta,\nu},\lambda_{\theta,\nu})}(\lambda)+\chi_{(0,\infty)}(\lambda)\Xi_{\theta,\nu}(\lambda)\, \text{ for a.e.~$\lambda\in \bbR$}.\lb{5.42}
\end{align}
In particular, $T_{\theta}^{(\nu)}$ has a single negative eigenvalue $e_{\theta,\nu}$ of multiplicity one.\\[2mm]
$(iv)$ If $\theta\in\big(\f{\pi}{2},\pi\big)$ and $\nu\in\big[\f{1}{2},1\big)$, then
\begin{align}
&\xi\big(\lambda;T_{\theta}^{(\nu)},T_{0}^{(\nu)}\big)=-\chi_{(e_{\theta,\nu},\infty)}(\lambda)+\chi_{(0,\infty)}(\lambda)\Xi_{\theta,\nu}(\lambda)\, \text{ for a.e.~$\lambda\in \bbR$}.\lb{5.43}
\end{align}
In particular, $T_{\theta}^{(\nu)}$ has a single negative eigenvalue $e_{\theta,\nu}$ of multiplicity one.\\[2mm]
$(v)$ If $\theta=\f{\pi}{2}$ and $\nu=\f{1}{2}$, then
\begin{equation}\lb{5.44}
\xi\big(\lambda; T_{\pi/2}^{(1/2)},T_0^{(1/2)}\big) = -\f{1}{2}\chi_{(0,\infty)}(\lambda)\, \text{ for a.e.~$\lambda\in \bbR$}.
\end{equation}
In particular, $T_{\pi/2}^{(1/2)}$ has no negative eigenvalues.
\end{proposition}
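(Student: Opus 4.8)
The plan is to read off the spectral shift function from the trace formula of Proposition \ref{p5.1} by recognizing the right-hand side of \eqref{5.30} as minus a logarithmic derivative and then taking boundary values on the real axis. Fix $\nu\in(0,1)$, $\theta\in(0,\pi)$, let $M_{\theta,\nu}(z)$ denote the trace on the left-hand side of \eqref{5.30}, and set
\[
h_{\theta,\nu}(z):=\Gamma(1+\nu)2^{2\nu+1}\nu\cot(\theta)+\Gamma(1-\nu)e^{-i\nu\pi}z^{\nu},\qquad z\in\bbC\backslash[0,\infty),
\]
with $z^{\nu}$ as in \eqref{5.6}. A one-line differentiation shows that the final expression in \eqref{5.30} equals $-h_{\theta,\nu}'(z)/h_{\theta,\nu}(z)$, so $M_{\theta,\nu}(z)=-\tfrac{d}{dz}\ln h_{\theta,\nu}(z)$ on the simply connected set $\bbC\backslash[0,\infty)$ (away from the at most one zero of $h_{\theta,\nu}$). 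Note that $h_{\theta,\nu}(z)=0$ forces $e^{-i\nu\pi}z^{\nu}$ to be real, which for $\arg(z)\in(0,2\pi)$ happens only at $\arg(z)=\pi$; hence $h_{\theta,\nu}$ is zero-free on the open upper half-plane and its only zero in $\bbC\backslash[0,\infty)$ is the point $e_{\theta,\nu}$ of \eqref{5.38}, occurring precisely when $\cot(\theta)<0$.

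Next I would combine this with the characterization \eqref{5.35}--\eqref{5.37} of $\xi\big(\,\cdot\,;T_{\theta}^{(\nu)},T_{0}^{(\nu)}\big)$. Writing $G(z):=\int_{\bbR}\xi\big(\lambda;T_{\theta}^{(\nu)},T_{0}^{(\nu)}\big)\big(\tfrac{1}{\lambda-z}-\tfrac{\lambda}{1+\lambda^{2}}\big)\,d\lambda$, which converges by \eqref{5.35}, the trace formula \eqref{5.37} becomes $G'(z)=-M_{\theta,\nu}(z)=\tfrac{d}{dz}\ln h_{\theta,\nu}(z)$ on the open upper half-plane, so $G(z)=\ln h_{\theta,\nu}(z)+c$ there for a constant $c$ and a fixed branch of $\ln$. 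The Stieltjes--Fatou inversion formula then gives $\xi\big(\lambda;T_{\theta}^{(\nu)},T_{0}^{(\nu)}\big)=\tfrac{1}{\pi}\lim_{\varepsilon\downarrow0}\Im G(\lambda+i\varepsilon)=\tfrac{1}{\pi}\lim_{\varepsilon\downarrow0}\arg h_{\theta,\nu}(\lambda+i\varepsilon)+\tfrac{1}{\pi}\Im(c)$ for a.e.\ $\lambda\in\bbR$; and since $h_{\theta,\nu}(\lambda+i0)=\Gamma(1+\nu)2^{2\nu+1}\nu\cot(\theta)+\Gamma(1-\nu)|\lambda|^{\nu}\to+\infty$ as $\lambda\to-\infty$ (using $(\lambda+i0)^{\nu}=|\lambda|^{\nu}e^{i\nu\pi}$ from \eqref{5.5}--\eqref{5.6}), the normalization \eqref{5.36} forces $\Im(c)=0$ once we take the branch of $\arg$ that vanishes at $-\infty$. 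This is exactly where Lemma \ref{lA.3} and the standard identification of $\xi$ with an argument of a perturbation determinant enter.

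It then remains to track $\arg h_{\theta,\nu}(\lambda+i0^{+})$, continued from $0$ at $-\infty$, as $\lambda$ sweeps $\bbR$. For $\lambda<0$ the value $h_{\theta,\nu}(\lambda+i0)$ is real, strictly positive for all $\lambda<0$ when $\cot(\theta)\ge0$, and (when $\cot(\theta)<0$) changing sign exactly at $\lambda=e_{\theta,\nu}$; since $h_{\theta,\nu}'(e_{\theta,\nu}+i0)$ is a negative real number, $h_{\theta,\nu}(\lambda+i\varepsilon)$ dips into the lower half-plane as $\lambda$ passes $e_{\theta,\nu}$, so $\arg h_{\theta,\nu}$ drops continuously from $0$ to $-\pi$ there; this yields $\xi\equiv0$ to the left of $e_{\theta,\nu}$ (respectively on all of $(-\infty,0)$ when $\cot(\theta)\ge0$) and $\xi\equiv-1$ on $(e_{\theta,\nu},0)$ when $\cot(\theta)<0$. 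For $\lambda>0$, $(\lambda+i0)^{\nu}=\lambda^{\nu}$ gives $\Re h_{\theta,\nu}(\lambda+i0)=\Gamma(1+\nu)2^{2\nu+1}\nu\cot(\theta)+\Gamma(1-\nu)\cos(\nu\pi)\lambda^{\nu}$ and $\Im h_{\theta,\nu}(\lambda+i0)=-\Gamma(1-\nu)\sin(\nu\pi)\lambda^{\nu}<0$, so $h_{\theta,\nu}(\lambda+i0)$ lies in the open lower half-plane, with argument equal to $\pi\,\Xi_{\theta,\nu}(\lambda)$ while $\Re h_{\theta,\nu}(\lambda+i0)>0$ and to $\pi\,\Xi_{\theta,\nu}(\lambda)-\pi$ while $\Re h_{\theta,\nu}(\lambda+i0)<0$, the principal-branch $\arctan$ in \eqref{5.40a} accounting for the former and the extra $-\pi$ (a passage through the third quadrant) for the latter; the sign change of $\Re h_{\theta,\nu}(\lambda+i0)$ on $(0,\infty)$ happens precisely when $\cot(\theta)$ and $\cos(\nu\pi)$ are nonzero of opposite sign, at $\lambda=\lambda_{\theta,\nu}$, and continuity of $\arg h_{\theta,\nu}(\,\cdot\,+i0)$ across $\lambda=0$ fixes the integer offset. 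Collating $\sgn(\cot\theta)$ (positive on $(0,\tfrac{\pi}{2})$, zero at $\tfrac{\pi}{2}$, negative on $(\tfrac{\pi}{2},\pi)$) against $\sgn(\cos\nu\pi)$ (positive on $(0,\tfrac12)$, zero at $\tfrac12$, negative on $(\tfrac12,1)$) reproduces the five cases \eqref{5.40}--\eqref{5.44}; the degenerate case $(\theta,\nu)=(\tfrac{\pi}{2},\tfrac12)$ has $h_{\pi/2,1/2}(z)=-i\Gamma(\tfrac12)z^{1/2}$, whence $\arg h_{\pi/2,1/2}(\lambda+i0)$ is $-\tfrac{\pi}{2}$ for $\lambda>0$ and $0$ for $\lambda<0$, giving \eqref{5.44}.

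Finally, the negative-eigenvalue assertions follow from the location of the zero of $h_{\theta,\nu}$: when $\cot(\theta)<0$, the point $z=e_{\theta,\nu}\in(-\infty,0)$ lies below $\sigma_{\text{ess}}\big(T_{\theta}^{(\nu)}\big)=[0,\infty)$ (cf.\ \eqref{5.34}) and is a simple zero of $h_{\theta,\nu}$, hence a pole of $M_{\theta,\nu}$; since $\big(T_{0}^{(\nu)}-zI_{(0,\infty)}\big)^{-1}$ is analytic near $e_{\theta,\nu}\in\rho\big(T_{0}^{(\nu)}\big)$, the resolvent $\big(T_{\theta}^{(\nu)}-zI_{(0,\infty)}\big)^{-1}$ must be singular there, so $e_{\theta,\nu}\in\sigma\big(T_{\theta}^{(\nu)}\big)$, and being isolated below the essential spectrum it is an eigenvalue---simple and the unique negative one by the deficiency-index bound recalled in the text. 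When $\cot(\theta)\ge0$ one has $\xi\equiv0$ on $(-\infty,0)$, and Lemma \ref{lA.3} (or the same pole argument) shows $T_{\theta}^{(\nu)}$ has no negative eigenvalue. The main obstacle is the branch bookkeeping in the third step: matching the principal-branch $\arctan$ of \eqref{5.40a} to the genuine continuous argument of $h_{\theta,\nu}(\,\cdot\,+i0)$ across the three potential jump points $e_{\theta,\nu}$, $0$, and $\lambda_{\theta,\nu}$, so that the correcting characteristic-function terms in \eqref{5.41}--\eqref{5.43} emerge with exactly the right supports and coefficients in each sign combination.
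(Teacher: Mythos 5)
Your proposal is correct and follows essentially the same route as the paper: identify the trace in \eqref{5.30} as $-\tfrac{d}{dz}\ln m_{\theta,\nu}(z)$, recover $\xi$ by Stieltjes inversion as the boundary value of $\tfrac{1}{\pi}\arg m_{\theta,\nu}(\lambda+i0)$, fix the additive constant via the vanishing of $\xi$ near $-\infty$, and case-split on the signs of $\cot(\theta)$ and $\cos(\nu\pi)$. Your choice of the continuous argument normalized to $0$ at $-\infty$ differs only cosmetically from the paper's $\arg\in[0,2\pi)$ convention with $\widehat{C}_{\theta,\nu}=-2$, and your pole-of-the-trace argument for the negative eigenvalue is an acceptable alternative to the paper's appeal to Lemma \ref{lA.3}.
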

%%%%%%%%%%%
\begin{proof}
Let $\theta\in(0,\pi)$ and $\nu\in(0,1)$.  Temporarily taking \eqref{5.40}--\eqref{5.44} for granted, the claims about negative eigenvalues of $T_{\theta}^{(\nu)}$ in each of the cases ($i$)--($v$) are immediate consequences of \eqref{5.10}, \eqref{5.34}, and Lemma \ref{lA.3} in conjunction with \eqref{5.40}--\eqref{5.44}.  It remains to justify \eqref{5.40}--\eqref{5.44}.

We begin with some general considerations before specializing to the individual cases $(i)$--$(v)$.  The trace formula \eqref{5.37} implies
\begin{align}
&\tr_{(0,\infty)}\left(\big(T_{\theta}^{(\nu)}-zI_{(0,\infty)}\big)^{-1}-\big(T_{0}^{(\nu)}-zI_{(0,\infty)}\big)^{-1}\right)\no\\
&\quad=-\int_{\bbR}\f{\xi\big(\lambda;T_{\theta}^{(\nu)},T_{0}^{(\nu)}\big)}{(\lambda-z)^{2}}\,d\lambda\no\\
&\quad=-\int_{\bbR}\xi\big(\lambda;T_{\theta}^{(\nu)},T_{0}^{(\nu)}\big)\f{d}{dz}\left[\f{1}{\lambda-z}-\f{\lambda}{1+\lambda^{2}}\right]\,d\lambda\no\\
&\quad=-\f{d}{dz}\int_{\bbR}\xi\big(\lambda;T_{\theta}^{(\nu)},T_{0}^{(\nu)}\big)\left[\f{1}{\lambda-z}-\f{\lambda}{1+\lambda^{2}}\right]\,d\lambda,\quad z\in\bbC\backslash\bbR.
	\lb{5.45}
\end{align}
The interchange of the integral and derivative in \eqref{5.45} is justified based on \eqref{5.35}.  By \eqref{5.30},
\begin{equation}\lb{5.46}
\begin{split}
\tr_{(0,\infty)}\left(\big(T_{\theta}^{(\nu)}-zI_{(0,\infty)}\big)^{-1}-\big(T_{0}^{(\nu)}-zI_{(0,\infty)}\big)^{-1}\right)=-\f{d}{dz}\ln\big(m_{\theta,\nu}(z)\big),&\\
z\in\bbC\backslash\bbR,&
\end{split}
\end{equation}
where
\begin{equation}
m_{\theta,\nu}(z):=\Gamma(1+\nu)2^{2\nu+1}\nu\cot(\theta)+\Gamma(1-\nu)e^{-i\nu\pi}z^{\nu},\quad z\in \bbC\backslash\bbR.
	\lb{5.47}
\end{equation}
Note that the condition $z\in \bbC\backslash \bbR$ in \eqref{5.46} implies that $m_{\theta,\nu}(z)\in \bbC\backslash [0,\infty)$, so the branch cut of the logarithm along $[0,\infty)$ (cf.~\eqref{5.7}) is avoided.  A comparison of \eqref{5.45} and \eqref{5.46} implies
\begin{equation}\lb{5.48}
\int_{\bbR}\xi\big(\lambda;T_{\theta}^{(\nu)},T_{0}^{(\nu)}\big)\left[\f{1}{\lambda-z}-\f{\lambda}{1+\lambda^{2}}\right]\,d\lambda=\ln\big(m_{\theta,\nu}(z)\big)+C_{\theta,\nu},\quad z\in\bbC\backslash\bbR,
\end{equation}
for some constant $C_{\theta,\nu}\in\bbC$.  The spectral shift function may be recovered pointwise a.e.~from \eqref{5.48} by employing the Stieltjes inversion-based technique used in the proofs of \cite[Theorem 5.5]{CGNZ14} and \cite[Lemma 7.4]{GLMZ05}.  Specifically, by the Stieltjes inversion formula \cite[Theorem 2.2 $(v)$]{GT00}, applied separately to the positive and negative parts of $\xi\big(\,\cdot\,;T_{\theta}^{(\nu)},T_{0}^{(\nu)}\big)$, one obtains
\begin{align}
\xi\big(\lambda;T_{\theta}^{(\nu)},T_{0}^{(\nu)}\big)&=\underset{\varepsilon\downarrow 0}\lim\,\f{1}{\pi}\Im\big[\ln\big(m_{\theta,\nu}(\lambda+i\varepsilon)\big)\big]+\widehat{C}_{\theta,\nu}\no\\
&=\underset{\varepsilon\downarrow 0}\lim\,\f{1}{\pi}\arg\big(m_{\theta,\nu}(\lambda+i\varepsilon)\big)+\widehat{C}_{\theta,\nu}\, \text{ for a.e.~} \lambda\in\bbR,
	\lb{5.49}
\end{align}
for some constant $\widehat{C}_{\theta,\nu}\in\bbR$ (in fact, $\widehat{C}_{\theta,\nu}=\pi^{-1}\Im(C_{\theta,\nu})$).  Note that
\begin{align}
&\Re\big(m_{\theta,\nu}(\lambda+i\varepsilon)\big)\lb{5.50}\\
&\quad = \Gamma(1+\nu)2^{2\nu+1}\nu\cot(\theta)+\Gamma(1-\nu)|\lambda+i\varepsilon|^{\nu}\cos\big(\nu[\pi-\arg(\lambda+i\varepsilon)]\big),\no\\
&\hspace*{8.1cm} \lambda\in \bbR,\, \varepsilon\in (0,\infty),\no
\end{align}
and
\begin{equation}\lb{5.51}
\begin{split}
\Im\big(m_{\theta,\nu}(\lambda+i\varepsilon)\big) = -\Gamma(1-\nu)|\lambda+i\varepsilon|^{\nu}\sin\big(\nu[\pi-\arg(\lambda+i\varepsilon)]\big)<0,&\\
\lambda\in \bbR,\, \varepsilon\in (0,\infty).&
\end{split}
\end{equation}

To compute the limit in \eqref{5.49}, one treats separately the cases $\lambda\in(-\infty,0)$ and $\lambda\in(0,\infty)$.  The case $\lambda=0$ may be dismissed as negligible since the spectral shift function is only determined almost everywhere.
\par
Note that
\begin{align}
\underset{\varepsilon\downarrow 0}\lim\,\arg(\lambda+i\varepsilon)=\pi,\quad\lambda\in(-\infty,0).
	\lb{5.52}
\end{align}
Define
\begin{align}
\Lambda_{\theta,\nu}^{-}(\lambda)&=\,\lim_{\varepsilon \downarrow 0}\Re\big(m_{\theta,\nu}(\lambda+i\varepsilon)\big)\no\\
&=\,\Gamma(1+\nu)2^{2\nu+1}\nu\cot(\theta)+\Gamma(1-\nu)|\lambda|^{\nu},\quad\lambda\in(-\infty,0),
	\lb{5.53}
\end{align}
and let
\begin{align}
\begin{split}
&\cP_{\theta,\nu}^{-}=\{\lambda\in(-\infty,0)\,|\,\Lambda_{\theta,\nu}^{-}(\lambda)>0\},\\
&\cN_{\theta,\nu}^{-}=\{\lambda\in(-\infty,0)\,|\,\Lambda_{\theta,\nu}^{-}(\lambda)<0\},\\
&\cZ_{\theta,\nu}^{-}=\{\lambda\in(-\infty,0)\,|\,\Lambda_{\theta,\nu}^{-}(\lambda)=0\}.
\end{split}
	\lb{5.54}
\end{align}
The sets in \eqref{5.54} allow one to decompose $(-\infty,0)$ into a disjoint union:
\begin{align}
(-\infty,0)=\cP_{\theta,\nu}^{-}\cup\cN_{\theta,\nu}^{-}\cup\cZ_{\theta,\nu}^{-}.
\end{align}
Note that $\cZ_{\theta,\nu}^{-}$ contains at most one element, so it has Lebesgue measure zero:
\begin{align}
\big|\cZ_{\theta,\nu}^{-}\big|=0.
	\lb{5.55}
\end{align}
If $\lambda\in\cP_{\theta,\nu}^{-}$, then
\begin{equation}
\Re\big(m_{\theta,\nu}(\lambda+i\varepsilon)\big)>0,\quad 0<\varepsilon\ll 1.
	\lb{5.56}
\end{equation}
Therefore, for $0<\varepsilon\ll 1$, $m_{\theta,\nu}(\lambda+i\varepsilon)$ has a positive real part and a negative imaginary part.  Thus, by \eqref{5.51}, \eqref{5.52}, and \eqref{5.56},
\begin{align}
\underset{\varepsilon\downarrow 0}\lim\,\f{1}{\pi}\,\arg\big(m_{\theta,\nu}(\lambda+i\varepsilon)\big)&=\f{1}{\pi}\,\underset{\varepsilon\downarrow 0}\lim\Bigg[2\pi -\arctan\left(\f{\big|\Im\big(m_{\theta,\nu}(\lambda+i\varepsilon)\big)\big|}{\Re\big(m_{\theta,\nu}(\lambda+i\varepsilon)\big)}\right)\Bigg]\no\\
&=\f{1}{\pi}[2\pi-0]=2,\quad\lambda\in\cP_{\theta,\nu}^{-}.
	\lb{5.57}
\end{align}
If $\lambda\in\cN_{\theta,\nu}^{-}$, then
\begin{equation}
\Re\big(m_{\theta,\nu}(\lambda+i\varepsilon)\big)<0,\quad 0<\varepsilon\ll 1.
	\lb{5.58}
\end{equation}
Therefore, for $0<\varepsilon\ll 1$, $m_{\theta,\nu}(\lambda+i\varepsilon)$ has a negative real part and a negative imaginary part.  Thus, by \eqref{5.51}, \eqref{5.52}, and \eqref{5.58},
\begin{align}
\underset{\varepsilon\downarrow 0}\lim\,\f{1}{\pi}\,\arg\big(m_{\theta,\nu}(\lambda+i\varepsilon)\big)&=\f{1}{\pi}\,\underset{\varepsilon\downarrow 0}\lim\,\Bigg[\pi + \arctan\left(\f{\big|\Im\big(m_{\theta,\nu}(\lambda+i\varepsilon)\big)\big|}{\big|\Re\big(m_{\theta,\nu}(\lambda+i\varepsilon)\big)\big|}\right)\Bigg]\no\\
&=\f{1}{\pi}[\pi+0]=1,\quad\lambda\in\cN_{\theta,\nu}^{-}.
	\lb{5.59}
\end{align}
By \eqref{5.49}, \eqref{5.55}, \eqref{5.57}, and \eqref{5.59},
\begin{align}
\xi\big(\lambda;T_{\theta}^{(\nu)},T_{0}^{(\nu)}\big)=2\chi_{\cP_{\theta,\nu}^{-}}(\lambda)+\chi_{\cN_{\theta,\nu}^{-}}(\lambda)+\widehat{C}_{\theta,\nu}\, \text{ for a.e.~$\lambda\in (-\infty,0)$.}
	\lb{5.60}
\end{align}
The explicit form for $\Lambda_{\theta,\nu}^{-}$ in \eqref{5.53} implies $\lim_{\lambda\to-\infty} \Lambda_{\theta,\nu}^{-}(\lambda)=\infty$, so one infers $(-\infty,-n_{\theta,\nu})\subset\cP_{\theta,\nu}^{-}$, for some $n_{\theta,\nu}\in\bbN$.  Thus, taking $\lambda$ sufficiently small in \eqref{5.60}, and applying \eqref{5.36}, one obtains
\begin{align}
\widehat{C}_{\theta,\nu}=-2.
	\lb{5.63}
\end{align}
Thus, \eqref{5.60} reduces to
\begin{align}
\xi\big(\lambda;T_{\theta}^{(\nu)},T_{0}^{(\nu)}\big)&=2\chi_{\cP_{\theta,\nu}^{-}}(\lambda)+\chi_{\cN_{\theta,\nu}^{-}}(\lambda)-2\no\\
&=2\chi_{\cP_{\theta,\nu}^{-}}(\lambda)+\chi_{\cN_{\theta,\nu}^{-}}(\lambda)-2\Big[\chi_{\cP_{\theta,\nu}^{-}}(\lambda)+\chi_{\cN_{\theta,\nu}^{-}}(\lambda)\Big]\no\\
&=-\chi_{\cN_{\theta,\nu}^{-}}(\lambda)\, \text{ for a.e.~$\lambda\in(-\infty,0)$}.
	\lb{5.64}
\end{align}
To obtain the second equality in \eqref{5.64}, one uses \eqref{5.55}, which implies
\begin{align}
\chi_{\cP_{\theta,\nu}^{-}}(\lambda)+\chi_{\cN_{\theta,\nu}^{-}}(\lambda)=1\, \text{ for a.e.~$\lambda\in(-\infty,0)$}.
	\lb{5.65}
\end{align}

Next, consider $\lambda\in(0,\infty)$.  In this case,
\begin{align}
\underset{\varepsilon\downarrow 0}\lim\,\arg(\lambda+i\varepsilon)=0,\quad \lambda\in(0,\infty).
	\lb{5.66}
\end{align}
Define
\begin{align}
\Lambda_{\theta,\nu}^{+}(\lambda)&=\underset{\varepsilon\downarrow 0}\lim\,\Re\big(m_{\theta,\nu}(\lambda+i\varepsilon) \big)\no\\
&=\Gamma(1+\nu)2^{2\nu+1}\nu\cot(\theta)+\Gamma(1-\nu)\lambda^{\nu}\cos(\nu\pi),\quad\lambda\in(0,\infty),
	\lb{5.67}
\end{align}
and let
\begin{align}
\begin{split}
\cP_{\theta,\nu}^{+}&=\{\lambda\in(0,\infty)\,|\,\Lambda_{\theta,\nu}^{+}(\lambda)>0\},\\
\cN_{\theta,\nu}^{+}&=\{\lambda\in(0,\infty)\,|\,\Lambda_{\theta,\nu}^{+}(\lambda)<0\},\\
\cZ_{\theta,\nu}^{+}&=\{\lambda\in(0,\infty)\,|\,\Lambda_{\theta,\nu}^{+}(\lambda)=0\}.
\end{split}
	\lb{5.68}
\end{align}
The sets in \eqref{5.68} allow one to express $(0,\infty)$ as a disjoint union:
\begin{equation}
(0,\infty) = \cP_{\theta,\nu}^+ \cup \cN_{\theta,\nu}^+ \cup \cZ_{\theta,\nu}^+.
	\lb{5.69}
\end{equation}
If $(\theta,\nu)\neq \big(\f{\pi}{2},\f{1}{2}\big)$, then $\cZ_{\theta,\nu}^{+}$ contains at most one element, so it has Lebesgue measure zero,
\begin{align}
\big|\cZ_{\theta,\nu}^{+}\big|=0,\quad (\theta,\nu)\in [(0,\pi)\times(0,1)]\backslash\big\{\big(\tfrac{\pi}{2},\tfrac{1}{2}\big)\big\},
	\lb{5.70}
\end{align}
while for $(\theta,\nu)=\big(\tfrac{\pi}{2},\tfrac{1}{2}\big)$ one infers that
\begin{equation}
\cZ_{\pi/2,1/2}^+ = (0,\infty)\quad \text{and}\quad \cP_{\pi/2,1/2}^+=\cN_{\pi/2,1/2}^+=\emptyset.
	\lb{5.71}
\end{equation}

If $\lambda\in\cP_{\theta,\nu}^{+}$, then
\begin{equation}
\Re\big(m_{\theta,\nu}(\lambda+i\varepsilon)\big)>0,\quad 0<\varepsilon\ll 1.
	\lb{5.72}
\end{equation}
Therefore, for $0<\varepsilon\ll 1$, $m_{\theta,\nu}(\lambda+i\varepsilon)$ has a positive real part and a negative imaginary part.  Thus, by \eqref{5.50}, \eqref{5.51}, \eqref{5.66}, and \eqref{5.72},
\begin{align}
\underset{\varepsilon\downarrow 0}\lim\,\f{1}{\pi}\arg\big(m_{\theta,\nu}(\lambda+i\varepsilon) \big)&=\f{1}{\pi}\,\underset{\varepsilon\downarrow 0}\lim\Bigg[2\pi -\arctan\left(\f{\big|\Im\big(m_{\theta,\nu}(\lambda+i\varepsilon)\big)\big|}{\Re\big(m_{\theta,\nu}(\lambda+i\varepsilon)\big)}\right)\Bigg]\lb{5.73}\\
&=2+\Xi_{\theta,\nu}(\lambda),\quad\lambda\in\cP_{\theta,\nu}^{+}.\no
\end{align}
If $\lambda\in\cN_{\theta,\nu}^{+}$, then
\begin{equation}
\Re\big(m_{\theta,\nu}(\lambda+i\varepsilon)\big)<0,\quad 0<\varepsilon\ll 1.
	\lb{5.74}
\end{equation}
Therefore, for $0<\varepsilon\ll 1$, $m_{\theta,\nu}(\lambda+i\varepsilon)$ has a negative real part and a negative imaginary part.  Thus, by \eqref{5.50}, \eqref{5.51}, \eqref{5.66}, and \eqref{5.74},
\begin{align}
\underset{\varepsilon\downarrow 0}\lim\,\f{1}{\pi}\arg\big( m_{\theta,\nu}(\lambda+i\varepsilon)\big)&=\f{1}{\pi}\underset{\varepsilon\downarrow 0}\lim\Bigg[\pi+\arctan\left(\f{\big|\Im\big(m_{\theta,\nu}(\lambda+i\varepsilon)\big)\big|}{\big|\Re\big(m_{\theta,\nu}(\lambda+i\varepsilon)\big)\big|}\right)\Bigg]\lb{5.75}\\
&=1+\Xi_{\theta,\nu}(\lambda),\quad\lambda\in\cN_{\theta,\nu}^{+}.\no
\end{align}
By \eqref{5.49}, \eqref{5.63}, \eqref{5.70}, \eqref{5.73}, and \eqref{5.75},
\begin{align}
&\xi\big(\lambda;T_{\theta}^{(\nu)},T_{0}^{(\nu)}\big)
	\lb{5.76}\\
&\quad=\big[2+\Xi_{\theta,\nu}(\lambda)\big]\chi_{\cP_{\theta,\nu}^{+}}(\lambda)+\big[1+\Xi_{\theta,\nu}(\lambda)\big]\chi_{\cN_{\theta,\nu}^{+}}(\lambda)-2\left[\chi_{\cP_{\theta,\nu}^{+}}(\lambda)+\chi_{\cN_{\theta,\nu}^{+}}(\lambda)\right]\no\\
&\quad=-\chi_{\cN_{\theta,\nu}^{+}}(\lambda)+\Xi_{\theta,\nu}(\lambda)\, \text{ for a.e.~$\lambda\in (0,\infty)$},\, (\theta,\nu)\in [(0,\pi)\times(0,1)]\backslash\{(\tfrac{\pi}{2},\tfrac{1}{2})\}.\no
\end{align}
The extreme case $(\theta,\nu)=\big(\tfrac{\pi}{2},\tfrac{1}{2}\big)$ will be addressed below in the proof of item $(v)$.

With these general considerations out of the way, we analyze the individual cases $(i)$--$(v)$.\\[1mm]

\noindent
$(i)$: If $\theta\in\big(0,\f{\pi}{2}\big]$, $\nu\in\big(0,\f{1}{2}\big]$, and $(\theta,\nu)\neq \big(\tfrac{\pi}{2},\tfrac{1}{2}\big)$, then 
\begin{equation}
(\cot(\theta),\cos(\nu\pi))\in ([0,\infty)\times [0,\infty))\backslash\{(0,0)\}.
	\lb{5.77}
\end{equation}
In particular, the explicit forms for $\Lambda_{\theta,\nu}^{\pm}$ in \eqref{5.53} and \eqref{5.67} imply
\begin{align}
\Lambda_{\theta,\nu}^{-}(\lambda)>0,\quad&\lambda\in(-\infty,0),
	\lb{5.78}\\
\Lambda_{\theta,\nu}^{+}(\lambda)>0,\quad&\lambda\in(0,\infty),
	\lb{5.79}
\end{align}
so that
\begin{align}
\cN_{\theta,\nu}^{\pm}=\emptyset.
	\lb{5.80}
\end{align}
Therefore, \eqref{5.64}, \eqref{5.76}, and \eqref{5.80} imply \eqref{5.40}.\\[1mm]

$(ii)$: If $\theta\in\big(0,\f{\pi}{2}\big]$ and $\nu\in\big(\f{1}{2},1\big)$, then $\cot(\theta)\geq 0$ and $\cos(\nu\pi)<0$.  In particular, the explicit forms for $\Lambda_{\theta,\nu}^{\pm}$ in \eqref{5.53} and \eqref{5.67} imply
\begin{align}
&\Lambda_{\theta,\nu}^{-}(\lambda)>0,\quad\lambda\in(-\infty,0),
	\lb{5.81}\\
&\text{$\Lambda_{\theta,\nu}^{+}(\lambda)<0$ if and only if $\lambda\in(\lambda_{\theta,\nu},\infty)$},
	\lb{5.82}
\end{align}
so that
\begin{align}
\cN_{\theta,\nu}^{-}=\emptyset\quad\text{and}\quad \cN_{\theta,\nu}^{+}=(\lambda_{\theta,\nu},\infty).
	\lb{5.83}
\end{align}
Therefore, \eqref{5.64}, \eqref{5.76}, and \eqref{5.83} imply \eqref{5.41}.\\[1mm]

$(iii)$: If $\theta\in\big(\f{\pi}{2},\pi\big)$ and $\nu\in\big(0,\f{1}{2}\big)$, then $\cot(\theta)<0$ and $\cos(\nu\pi)>0$.  In particular, the explicit forms for $\Lambda_{\theta,\nu}^{\pm}$ in \eqref{5.53} and \eqref{5.67} imply
\begin{align}
&\text{$\Lambda_{\theta,\nu}^{-}(\lambda)<0$ if and only if $\lambda\in(e_{\theta,\nu},0)$},
	\lb{5.84}\\
&\text{$\Lambda_{\theta,\nu}^{+}(\lambda)<0$ if and only if $\lambda\in(0,\lambda_{\theta,\nu})$},
	\lb{5.85}
\end{align}
so that
\begin{align}
\cN_{\theta,\nu}^{-}=(e_{\theta,\nu},0)\quad\text{and}\quad\cN_{\theta,\nu}^{+}=(0,\lambda_{\theta,\nu}).
	\lb{5.86}
\end{align}
Therefore, \eqref{5.64}, \eqref{5.76}, and \eqref{5.86} imply \eqref{5.42}.\\[1mm]

$(iv)$: If $\theta\in\big(\f{\pi}{2},\pi\big)$ and $\nu\in\big[\f{1}{2},1\big)$, then $\cot(\theta)<0$ and $\cos(\nu\pi)\leq 0$.  In particular, the explicit forms for $\Lambda_{\theta,\nu}^{\pm}$ in \eqref{5.53} and \eqref{5.67} imply
\begin{align}
&\text{$\Lambda_{\theta,\nu}^{-}(\lambda)<0$ if and only if $\lambda\in(e_{\theta,\nu},0)$},
	\lb{5.87}\\
&\text{$\Lambda_{\theta,\nu}^{+}(\lambda)<0$ if and only if $\lambda\in(0,\infty)$},
	\lb{5.88}
\end{align}
so that
\begin{align}
\cN_{\theta,\nu}^{-}=(e_{\theta,\nu},0),\quad\cN_{\theta,\nu}^{+}=(0,\infty).
	\lb{5.89}
\end{align}
Therefore, \eqref{5.64}, \eqref{5.76}, and \eqref{5.89} imply \eqref{5.43}.\\[1mm]

$(v)$:  If $(\theta,\nu)=\big(\tfrac{\pi}{2},\tfrac{1}{2}\big)$, then $\cot(\theta)=0$ and $\cos(\nu\pi)=0$.  The explicit form for $\Lambda_{\pi/2,1/2}^{-}$ in \eqref{5.53} implies
\begin{align}
\Lambda_{\pi/2,1/2}^-(\lambda)>0,\quad \lambda\in (-\infty,0).
	\lb{5.90}
\end{align}
Therefore,
\begin{equation}\lb{5.91}
\cN_{\pi/2,1/2}^- = \emptyset,
\end{equation}
and \eqref{5.64} implies
\begin{equation}\lb{5.92}
\xi\big(\lambda;T_{\pi/2}^{(1/2)},T_0^{(1/2)}\big)=0\, \text{ for a.e.~$\lambda\in (-\infty,0)$}.
\end{equation}

In addition, \eqref{5.49} and \eqref{5.63} imply
\begin{align}
\xi\big(\lambda;T_{\pi/2}^{(1/2)},T_0^{(1/2)}\big)=\underset{\varepsilon\downarrow 0}\lim\,\f{1}{\pi}\arg\big(m_{\pi/2,1/2}(\lambda+i\varepsilon) \big)-2\, \text{ for a.e.~} \lambda\in(0,\infty).\lb{5.93}
\end{align}
By \eqref{5.50} and \eqref{5.51} with $\theta=\tfrac{\pi}{2}$ and $\nu=\tfrac{1}{2}$, one infers that $m_{\pi/2,1/2}(\lambda+i\varepsilon)$ has a positive real part and a negative imaginary part for every $\varepsilon>0$ since $\arg(\lambda+i\varepsilon)\in (0,\tfrac{\pi}{2})$ when $\lambda\in (0,\infty)$.  Thus,
\begin{align}
\xi\big(\lambda;T_{\pi/2}^{(1/2)},T_0^{(1/2)}\big)&=\underset{\varepsilon\downarrow 0}\lim\,\f{1}{\pi}\arg\big(m_{\pi/2,1/2}(\lambda+i\varepsilon) \big)-2\no\\
&=\lim_{\varepsilon\downarrow 0} \frac{1}{\pi}\left[ 2\pi - \arctan\left(\frac{\sin\big(\tfrac{1}{2}[\pi-\arg(\lambda+i\varepsilon)]\big)}{\cos\big(\tfrac{1}{2}[\pi-\arg(\lambda+i\varepsilon)]\big)}\right)\right]-2\no\\
&=\lim_{\varepsilon\downarrow 0} \frac{1}{\pi}\left[ 2\pi - \frac{1}{2}[\pi-\arg(\lambda+i\varepsilon)]\right]-2\no\\
&=  \frac{1}{\pi}\left[ 2\pi - \frac{1}{2}[\pi-0]\right]-2\no\\
&= -\frac{1}{2}\, \text{ for a.e.~} \lambda\in(0,\infty).\lb{5.94}
\end{align}
Therefore, \eqref{5.92} and \eqref{5.94} imply \eqref{5.44}.
\end{proof}
%%%%%%%%%%%

Proposition \ref{p5.3} implies that for each fixed $\nu\in (0,1)$, $\sigma\big(T_{\theta}^{(\nu)}\big)\subseteq [0,\infty)$ if and only if $\theta\in \big[0,\tfrac{\pi}{2}\big]$.  Therefore, Proposition \ref{p5.3} recovers the following characterization of the nonnegative self-adjoint extensions of $T_{\min}^{(\nu)}$ obtained in \cite[Corollary 5.1]{AB15}:

%%%%%%%%%%%
\begin{corollary}	\lb{c5.4}
If $\nu \in (0,1)$, then $T_{\theta}^{(\nu)}$ is a nonnegative self-adjoint extension of $T_{\min}^{(\nu)}$ if and only if $\theta\in \big[0,\tfrac{\pi}{2}\big]$.
\end{corollary}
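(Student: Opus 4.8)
The plan is to read the corollary off Proposition \ref{p5.3} together with the spectral data already recorded, namely \eqref{5.10}, \eqref{5.34}, and the observation (noted just before \eqref{5.35}) that, because the deficiency indices of $T_{\min}^{(\nu)}$ are $(1,1)$, each $T_{\theta}^{(\nu)}$ is a self-adjoint operator, bounded from below, with at most one negative eigenvalue and that eigenvalue simple. First I would dispose of the endpoint $\theta=0$, which is not covered by Proposition \ref{p5.3}: here $T_{0}^{(\nu)}$ is the reference (Friedrichs) extension, and \eqref{5.10} gives $\sigma\big(T_{0}^{(\nu)}\big)=[0,\infty)$, so $T_{0}^{(\nu)}$ is nonnegative.

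For $\theta\in(0,\pi)$ the key reduction is that, by \eqref{5.34}, $\sigma_{\rm ess}\big(T_{\theta}^{(\nu)}\big)=[0,\infty)$, so the spectrum of $T_{\theta}^{(\nu)}$ lies in $[0,\infty)$ precisely when $T_{\theta}^{(\nu)}$ has no negative eigenvalue (the at-most-one-negative-eigenvalue fact guarantees there is nothing else below $0$). Thus the corollary amounts to deciding, for each $\theta\in(0,\pi)$ and $\nu\in(0,1)$, whether $T_{\theta}^{(\nu)}$ has a negative eigenvalue, and this is exactly what the concluding sentences of items $(i)$--$(v)$ of Proposition \ref{p5.3} supply. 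Items $(i)$, $(ii)$, and $(v)$ jointly exhaust the range $\theta\in\big(0,\tfrac{\pi}{2}\big]$, $\nu\in(0,1)$ (the value $\nu=\tfrac{1}{2}$ and the exceptional pair $\big(\tfrac{\pi}{2},\tfrac{1}{2}\big)$ included), and in each of these $T_{\theta}^{(\nu)}$ has no negative eigenvalue; items $(iii)$ and $(iv)$ jointly exhaust the range $\theta\in\big(\tfrac{\pi}{2},\pi\big)$, $\nu\in(0,1)$, and in each of these $T_{\theta}^{(\nu)}$ has the single simple negative eigenvalue $e_{\theta,\nu}<0$ of \eqref{5.38}.

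Putting these together yields both implications at once: for $\theta\in\big[0,\tfrac{\pi}{2}\big]$ one has $\sigma\big(T_{\theta}^{(\nu)}\big)\subseteq[0,\infty)$, so $T_{\theta}^{(\nu)}$ is nonnegative, while for $\theta\in\big(\tfrac{\pi}{2},\pi\big)$ the eigenvalue $e_{\theta,\nu}<0$ shows $T_{\theta}^{(\nu)}$ is not nonnegative. I do not anticipate a genuine obstacle here; the only points needing care are handling $\theta=0$ separately (since it falls outside the scope of Proposition \ref{p5.3}) and verifying that the case ranges in items $(i)$--$(v)$ genuinely partition $(0,\pi)\times(0,1)$, which they do.
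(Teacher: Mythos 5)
Your proposal is correct and follows essentially the same route as the paper, which derives the corollary directly from Proposition \ref{p5.3} via the observation that $\sigma\big(T_{\theta}^{(\nu)}\big)\subseteq[0,\infty)$ if and only if $\theta\in\big[0,\tfrac{\pi}{2}\big]$. Your additional care in treating $\theta=0$ separately and in checking that items $(i)$--$(v)$ partition $(0,\pi)\times(0,1)$ only makes explicit what the paper leaves implicit.
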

%%%%%%%%%%%

In the special case $(\theta,\nu)=\big(\tfrac{\pi}{2},\tfrac{1}{2}\big)$, the operators $T_0^{(1/2)}$ and $T_{\pi/2}^{(1/2)}$ are simply the Dirichlet and Neumann Laplacians, respectively, on $(0,\infty)$,
\begin{equation}
-\Delta_D:=T_0^{(1/2)}\quad \text{and}\quad -\Delta_N:=T_{\pi/2}^{(1/2)}.
\end{equation}
The trace identity \eqref{A.5} and the simple structure of the spectral shift function \eqref{5.44} in this case permit one to easily calculate the trace of $f(-\Delta_N) - f(-\Delta_D)$ for any $f\in \mathfrak{F}(\bbR)$ (cf.~Definition \ref{dA.1} and Remark \ref{rA.2}) in terms of the values $f(0)$ and $f(\infty)$ (i.e., the limiting value of $f$ at $\infty$):

%%%%%%%%%%%
\begin{corollary}\lb{c5.5}
If $f\in \mathfrak{F}(\bbR)$ and  $f(\infty):=\lim_{\lambda\to \infty}f(\lambda)$, then
\begin{equation}\lb{5.94a}
\big[f(-\Delta_N) - f(-\Delta_D)\big] \in \cB_1\big(L^2((0,\infty);dx)\big)
\end{equation}
and
\begin{equation}
\tr_{(0,\infty)}\big(f(-\Delta_N) - f(-\Delta_D) \big) = \frac{1}{2}[f(0)-f(\infty)].
	\lb{5.95}
\end{equation}
\end{corollary}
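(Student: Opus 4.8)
The plan is to deduce the statement directly from the Krein trace formula collected in Appendix~\ref{sA} together with the explicit spectral shift function obtained in Proposition~\ref{p5.3}\,$(v)$. To set things up, recall that for $\nu=\tfrac{1}{2}$ in the notation of Subsection~\ref{s5.1} one has $-\Delta_D=T_0^{(1/2)}$ and $-\Delta_N=T_{\pi/2}^{(1/2)}$. By \eqref{5.29} (with $\theta=\tfrac{\pi}{2}$ and $\nu=\tfrac{1}{2}$) the resolvent difference $\big(T_{\pi/2}^{(1/2)}-zI_{(0,\infty)}\big)^{-1}-\big(T_0^{(1/2)}-zI_{(0,\infty)}\big)^{-1}$ belongs to $\cB_1\big(L^2((0,\infty);dx)\big)$ for all $z\in\rho\big(T_{\pi/2}^{(1/2)}\big)\cap\rho\big(T_0^{(1/2)}\big)$, and both operators are nonnegative (cf.~\eqref{5.10} and Corollary~\ref{c5.4}), hence bounded from below. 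Thus the pair $(-\Delta_N,-\Delta_D)$ satisfies the hypotheses under which the results of Appendix~\ref{sA} apply.

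Next, I would invoke, for an arbitrary $f\in\mathfrak{F}(\bbR)$, the trace identity \eqref{A.5}, which simultaneously yields $\big[f(-\Delta_N)-f(-\Delta_D)\big]\in\cB_1\big(L^2((0,\infty);dx)\big)$ — thereby establishing \eqref{5.94a} — and the representation
\[
\tr_{(0,\infty)}\big(f(-\Delta_N)-f(-\Delta_D)\big)=\int_{\bbR} f'(\lambda)\,\xi\big(\lambda;T_{\pi/2}^{(1/2)},T_0^{(1/2)}\big)\,d\lambda .
\]
Substituting the explicit spectral shift function \eqref{5.44}, namely $\xi\big(\lambda;T_{\pi/2}^{(1/2)},T_0^{(1/2)}\big)=-\tfrac{1}{2}\chi_{(0,\infty)}(\lambda)$ for a.e.~$\lambda\in\bbR$, reduces the right-hand side to $-\tfrac{1}{2}\int_0^\infty f'(\lambda)\,d\lambda$, and the fundamental theorem of calculus gives $\int_0^\infty f'(\lambda)\,d\lambda=f(\infty)-f(0)$, whence $\tr_{(0,\infty)}\big(f(-\Delta_N)-f(-\Delta_D)\big)=\tfrac{1}{2}[f(0)-f(\infty)]$, which is \eqref{5.95}.

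The only point requiring care — and the step I would treat most explicitly — is justifying the evaluation $\int_0^\infty f'(\lambda)\,d\lambda=f(\infty)-f(0)$: one must use that membership in $\mathfrak{F}(\bbR)$ guarantees $f'\in L^1(\bbR;d\lambda)$ (so that the trace formula applies to the bounded, $[0,\infty)$-supported $\xi$ without further integrability worries) and that $f$ possesses the finite limit $f(\infty)=\lim_{\lambda\to\infty}f(\lambda)$, as recalled in Definition~\ref{dA.1} and Remark~\ref{rA.2} and assumed in the statement. Granting this, there is no genuine obstacle; the argument is a direct specialization of the spectral shift machinery to the elementary two-valued $\xi$ found in Proposition~\ref{p5.3}\,$(v)$.
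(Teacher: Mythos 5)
Your proposal is correct and follows essentially the same route as the paper: the containment \eqref{5.94a} is read off from \eqref{A.5a}, and the trace is computed by inserting the spectral shift function \eqref{5.44} into the trace formula \eqref{A.5} and integrating $f'$ over $(0,\infty)$. The paper's proof is just a terser version of your argument, and your added care about the existence of $f(\infty)$ (Remark \ref{rA.2}) is consistent with, though not spelled out in, the original.
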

%%%%%%%%%%%
\begin{proof}
Let $f\in \mathfrak{F}(\bbR)$.  The containment \eqref{5.94a} follows from \eqref{A.5a}.  By \eqref{A.5} and \eqref{5.44},
\begin{equation}\lb{5.96a}
\tr_{(0,\infty)}\big(f(-\Delta_N) - f(-\Delta_D) \big) = -\frac{1}{2}\int_0^{\infty}f'(\lambda)\, d\lambda = \frac{1}{2}[f(0)-f(\infty)].
\end{equation}
\end{proof}
%%%%%%%%%%%

%%%%%%%%%%%
\begin{remark}\lb{r5.7}
When $\nu = \tfrac{1}{2}$ in \eqref{5.2}, the resulting differential expression $\tau_{1/2}$ is regular at the endpoint $x=0$.  In this case, the spectral shift function for $T_{\theta}^{(1/2)}$ and $T_0^{(1/2)}$ may be recovered as a special case of \cite[Lemma 2.3]{GS96}, which actually applies to more general Schr\"odinger operators of the form $-d^2/dx^2 + V(x)$ on $(0,\infty)$ with $V\in L^1((0,\ell);dx)$ for all $\ell\in (0,\infty)$.\hfill $\diamond$
\end{remark}
%%%%%%%%%%%

%%%%%%%%%%%%%%%%%%%%%%%%%%%%%%%
\subsection{The case $\nu=0$}	\lb{s5.2}
%%%%%%%%%%%%%%%%%%%%%%%%%%%%%%%

The case $\nu=0$ is more nuanced, as it may not be analyzed by merely taking $\nu\downarrow 0$ in the formulas from Section \ref{s5.1}.  Notice that, in fact, some of the formulas from Section \ref{s5.1} become highly singular in the limit $\nu\downarrow 0$.  Instead, one must adopt a different boundary condition basis and solutions $s_{z,0}$, $c_{z,0}$.  Following \cite[Section 7]{EK07}, one fixes a boundary condition basis $\{\phi_{0,0},\psi_{0,0}\}$ at $a=0$ by choosing functions $\phi_{0,0},\psi_{0,0}\in \dom\big(T_{\max}^{(0)}\big)$ which vanish in a neighborhood of $\infty$ and satisfy
\begin{align}
\phi_{0,0}(x)=x^{1/2}\quad\text{and}\quad \psi_{0,0}(x)=-x^{1/2}\ln(x),\quad x\in (0,1).
	\lb{5.96}
\end{align}
This is possible by the Naimark patching lemma \cite[Chapter V, Section 17.3, Lemma 2]{Na68}.  The self-adjoint extensions of $T_{\min}^{(0)}$ are parametrized according to Theorem \ref{t2.19} as a one-parameter family $\big\{T_{\theta}^{(0)}\big\}_{\theta\in[0,\pi)}$, where for $\theta\in [0,\pi)$,
\begin{align}
&T_{\theta}^{(0)}f = T_{\max}^{(0)}f,
	\lb{5.97}\\
&f\in \dom\big(T_{\theta}^{(0)}\big) = \big\{g\in \dom\big(T_{\max}^{(0)}\big)\,\big|\, \cos(\theta)[g,\phi_{0,0}](0) + \sin(\theta)[g,\psi_{0,0}](0)=0\big\}.\no
\end{align}
By \cite[eq.~(7.5)]{EK07}, $T_0^{(0)}$ is the Friedrichs extension of $T_{\min}^{(0)}$.  In addition, by \cite[Proposition 5.3 $(ii)$]{AB15}, $T_0^{(0)}$ is also the Krein--von Neumann extension.  Thus, the Friedrichs and Krein--von Neumann extensions coincide in this case, and it follows that $T_0^{(0)}$ is the only nonnegative self-adjoint extension of $T_{\min}^{(0)}$ (cf.~\cite[Corollary 5.2]{AB15}).

For $z\in \bbC\backslash\{0\}$, a basis of solutions $\{s_{z,0},c_{z,0}\}$ to the equation
\begin{equation}
\tau_{0}y=zy\quad \text{on}\quad (0,\infty)
	\lb{5.99}
\end{equation}
is fixed by setting
\begin{align}
s_{z,0}(x)&=-x^{1/2}J_{0}\big(z^{1/2}x\big), \lb{5.100}\\
c_{z,0}(x)&=-\f{\pi}{2}x^{1/2}Y_{0}\big(z^{1/2}x\big)+\left[\ln\left(\f{1}{2}z^{1/2}\right)+\gamma\right]x^{1/2}J_{0}\big(z^{1/2}x\big),\quad x\in (0,\infty),\no
\end{align}
where $\gamma= 0.577215664...$~is the Euler--Mascheroni constant.  Analogous to \eqref{5.14} and \eqref{5.15}, the functions $s_{z,0}$ and $c_{z,0}$ satisfy the generalized boundary conditions (cf. \cite[eqs.~(4.9) \& (4.10)]{AB15} and \cite[Section 7.2]{EK07})
\begin{equation}\lb{5.14ab}
\begin{split}
&[s_{z,0},\phi_{0,0}](0) = 0,\quad [s_{z,0},\psi_{0,0}](0)= 1,\\
&[c_{z,0},\phi_{0,0}](0) = 1,\quad [c_{z,0},\psi_{0,0}](0)=0,\quad z\in \bbC\backslash\{0\}.
\end{split}
\end{equation}
The Weyl--Titchmarsh solution is based on the Hankel function of the first kind, as for all $z\in\rho\big(T_0^{(0)}\big)$,
\begin{align}
(\,\cdot\,)^{1/2}H_{0}^{(1)}\big(z^{1/2}\,\cdot\,\big)\in L^{2}((0,\infty);dx).
	\lb{5.101}
\end{align}
The function in \eqref{5.101} admits an expansion in terms of the basis $\{s_{z,0},c_{z,0}\}$:
\begin{equation}\lb{5.102}
\begin{split}
x^{1/2}H_0^{(1)}\big(z^{1/2}x\big) = -\frac{2i}{\pi}c_{z,0}(x) - \left( 1+ \frac{2i}{\pi}\left[\ln\left(\frac{1}{2}z^{1/2}\right)+\gamma\right]\right)s_{z,0}(x),&\\
x\in (0,\infty),\,z\in \rho\big(T_0^{(0)}\big),&
\end{split}
\end{equation}
from which one deduces
\begin{align}
\Big[(\,\cdot\,)^{1/2}H_{0}^{(1)}\big(z^{1/2}\,\cdot\,\big),\phi_{0,0}\Big](0)=-\f{2i}{\pi},\quad z\in \rho\big(T_0^{(0)}\big).
	\lb{5.103}
\end{align}
As a consequence, one obtains
\begin{align}
w_{z,0}(x)&=i\f{\pi}{2}x^{1/2}H_{0}^{(1)}\big(z^{1/2}x\big)\lb{5.104}\\
&=c_{z,0}(x) + \left[\ln\left(\frac{1}{2}z^{1/2}\right)+\gamma-\frac{i\pi}{2}\right]s_{z,0}(x),\quad x\in (0,\infty),\,z\in \rho\big(T_0^{(0)}\big),\no
\end{align}
and a calculation, omitted here, reveals
\begin{align}
[w_{z,0},\psi_{0,0}](0)= \ln\left(\frac{1}{2}z^{1/2}\right)+\gamma-\frac{i\pi}{2},\quad z\in \rho\big(T_0^{(0)}\big).
	\lb{5.105}
\end{align}
In particular, by \eqref{3.11},
\begin{equation}\lb{5.106}
\begin{split}
k_{\theta,0}(z)=\cot(\theta)+\ln\left(\f{1}{2}z^{1/2}\right)+\gamma-\f{i\pi}{2},&\\
z\in \rho\big(T_0^{(0)}\big)\cap \rho\big(T_{\theta}^{(0)}\big),\, \theta\in (0,\pi).&
\end{split}
\end{equation}
With $w_{z,0}$ given by \eqref{5.104} and $k_{\theta,0}(\,\cdot\,)$ given by \eqref{5.106}, the right-hand side in \eqref{3.12} is determined.  To compute the right-hand side in \eqref{3.14}, it is necessary to calculate the inner product $\left< w_{\overline{z},0},w_{z,0}\right>_{(0,\infty)}$ for $z\in \rho\big(T_0^{(0)}\big)$.  Applying the definition of the inner product, \eqref{3.3}, and the order reflection formula \cite[eq.~9.1.6]{AS72}, one computes for $z\in \rho\big(T_0^{(0)}\big)$,
\begin{align}
\left<w_{\ol{z},0},w_{z,0}\right>_{(0,\infty)}&=-\f{\pi^{2}}{4}\int_{0}^{\infty}x\big[H_{0}^{(1)}\big(z^{1/2}x\big)\big]^{2}\,dx\no\\
&=-\f{\pi^{2}}{4}\Bigg[\f{1}{2}x^{2}\Big(\big[H_{0}^{(1)}\big(z^{1/2}x\big)\big]^{2}+\big[H_{1}^{(1)}\big(z^{1/2}x\big)\big]^{2}\Big)\Bigg]_{0}^{\infty}.
	\lb{5.107}
\end{align}

A calculation similar to the one in \eqref{5.25}, reveals that the upper limit at $\infty$ is zero.  However, the low-argument asymptotics for $H_{0}^{(1)}$ differ from those of nonzero order, so one applies \cite[eq.~9.1.8]{AS72} to calculate
\begin{align}
&\underset{x\downarrow 0}\lim\left[\f{1}{2}x^{2}\left(\big[H_{0}^{(1)}\big(z^{1/2}x\big)\big]^{2}+\big[H_{1}^{(1)}\big(z^{1/2}x\big)\big]^{2}\right)\right]\no\\
&\quad=\underset{x\downarrow 0}\lim\left[\f{1}{2}x^{2}\left(-\f{4}{\pi^{2}}\ln^{2}\big(z^{1/2}x\big)-\f{1}{\pi^{2}\big(\f{1}{2}z^{1/2}x\big)^{2}}\right)\right]\no\\
&\quad =\underset{x\downarrow 0}\lim\left[-\f{2}{\pi^{2}}\left(x^{2}\ln^{2}\big(z^{1/2}x\big)+\f{1}{z}\right)\right]\no\\
&\quad=-\f{2}{\pi^{2}z},\quad z\in \rho\big(T_0^{(0)}\big).
	\lb{5.108}
\end{align}
Substitution of \eqref{5.108} into \eqref{5.107} yields
\begin{align}
\left<w_{\ol{z},0},w_{z,0}\right>_{(0,\infty)}=-\f{\pi^{2}}{4}\left[\f{2}{\pi^{2}z}\right]=-\f{1}{2z},\quad z\in \rho\big(T_0^{(0)}\big).
	\lb{5.109}
\end{align}
Finally, \eqref{5.106} and \eqref{5.109} permit one to explicitly compute the right-hand side in \eqref{3.14}.  The results are summarized in:

%%%%%%%%%%%
\begin{proposition}	\lb{p5.6}
If $\theta\in (0,\pi)$ and $z\in \rho\big(T_0^{(0)}\big)\cap\rho\big(T_{\theta}^{(0)}\big)$, then
\begin{align}
\begin{split}
&\big(T_{\theta}^{(0)}-zI_{(0,\infty)}\big)^{-1}-\big(T_0^{(0)}-zI_{(0,\infty)}\big)^{-1}\\
&\quad =\left[ \cot(\theta)+\ln\left(\f{1}{2}z^{1/2}\right)+\gamma-\f{i\pi}{2}\right]^{-1}\left<w_{\overline{z},0},\,\cdot\,\right>_{(0,\infty)}w_{z,0},
\end{split}
	\lb{5.110}
\end{align}
where $w_{z,0}$ is defined by \eqref{5.104}.  In particular,
\begin{align}
\Big[\big(T_{\theta}^{(0)}-zI_{(0,\infty)}\big)^{-1}-\big(T_0^{(0)}-zI_{(0,\infty)}\big)^{-1}	\Big]	\in \cB_1\big(L^2((0,\infty);dx)\big),
	\lb{5.111}
\end{align}
and the following trace formula holds:
\begin{align}
\begin{split}
&\tr_{(0,\infty)}\Big(\big(T_{\theta}^{(0)}-zI_{(0,\infty)}\big)^{-1}-\big(T_0^{(0)}-zI_{(0,\infty)}\big)^{-1}\Big)\\
&\quad=-\f{1}{2z\big(\cot(\theta)+\ln\left(\f{1}{2}z^{1/2}\right)+\gamma-\f{i\pi}{2}\big)}.
\end{split}
	\lb{5.112}
\end{align}
\end{proposition}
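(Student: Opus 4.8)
The plan is to obtain Proposition \ref{p5.6} as an immediate application of Theorem \ref{t3.4}, in exactly the same way that Proposition \ref{p5.1} follows in the case $\nu\in(0,1)$. The first task is to record that Hypothesis \ref{h3.1} holds for $\tau_0$ with the choices
\begin{equation*}
a=0,\quad b=\infty,\quad \phi_a=\phi_{0,0},\quad \psi_a=\psi_{0,0},\quad T_\theta=T_\theta^{(0)},\quad u_z=s_{z,0},\quad w_z=w_{z,0}.
\end{equation*}
Here $\tau_0$ is limit circle at $0$ and limit point at $\infty$ (as noted at the start of Section \ref{s5}); $\{\phi_{0,0},\psi_{0,0}\}$ is a boundary condition basis at $0$ since \eqref{5.96} gives $[\psi_{0,0},\phi_{0,0}](0)=1$; the function $s_{z,0}$ is the unique regular solution satisfying \eqref{3.1} by \eqref{5.14ab} together with \cite[Lemma 10.4.8]{Ze05}; and $w_{z,0}$ is the Weyl--Titchmarsh solution of Hypothesis \ref{h3.1}$(iv)$ because it lies in $L^2((0,\infty);dx)$ near $\infty$ by \eqref{5.101} and satisfies $[w_{z,0},\phi_{0,0}](0)=i\tfrac{\pi}{2}\bigl(-\tfrac{2i}{\pi}\bigr)=1$ by \eqref{5.103} and linearity of the Lagrange bracket. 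Since $\rho\big(T_0^{(0)}\big)=\bbC\backslash[0,\infty)$ by \eqref{5.10}, the fractional power $z^{1/2}$ and the logarithm occurring in \eqref{5.104}--\eqref{5.106} are unambiguously defined via the branch conventions \eqref{5.5}--\eqref{5.7}.

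Granting the verification above, Theorem \ref{t3.4} applies for every $\theta\in(0,\pi)$ and every $z\in\rho\big(T_0^{(0)}\big)\cap\rho\big(T_\theta^{(0)}\big)$: the extensions $T_0^{(0)}$ and $T_\theta^{(0)}$ are relatively prime with respect to $T_{\min}^{(0)}$, the scalar $k_{\theta,0}(z)=\cot(\theta)+[w_{z,0},\psi_{0,0}](0)$ is nonzero, and the rank one resolvent identity \eqref{3.12} holds. Substituting the bracket value $[w_{z,0},\psi_{0,0}](0)=\ln\!\left(\tfrac{1}{2}z^{1/2}\right)+\gamma-\tfrac{i\pi}{2}$ from \eqref{5.105} into \eqref{3.11} produces \eqref{5.106}, and into \eqref{3.12} produces the resolvent identity \eqref{5.110}; the trace class membership \eqref{5.111} is then \eqref{3.13}. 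For the trace formula \eqref{5.112}, one invokes \eqref{3.14}, whose numerator $\langle w_{\overline z,0},w_{z,0}\rangle_{(0,\infty)}$ has already been computed in \eqref{5.109} to equal $-1/(2z)$ and whose denominator is $k_{\theta,0}(z)$ of \eqref{5.106}; dividing gives \eqref{5.112}.

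No step here is a genuine obstacle: the substantive work --- producing the expansion \eqref{5.104} of $w_{z,0}$ in the basis $\{s_{z,0},c_{z,0}\}$, evaluating the Lagrange bracket $[w_{z,0},\psi_{0,0}](0)$ in \eqref{5.105}, and computing the $L^2$ pairing $\langle w_{\overline z,0},w_{z,0}\rangle_{(0,\infty)}$ in \eqref{5.109} via the Lommel antiderivative and the $\nu=0$ low-argument asymptotics of $H_0^{(1)}$ --- has all been carried out in the preceding discussion. The only care required is the routine bookkeeping of checking that the abstract hypotheses of Section \ref{s3} are met for $\tau_0$ (the uniqueness assertions in Hypothesis \ref{h3.1}$(iii)$,$(iv)$ following from \cite[Lemma 10.4.8]{Ze05} and Remark \ref{r2.12}) and keeping the branch of the logarithm consistent, so that all quantities in \eqref{5.104}--\eqref{5.112} are well defined for $z$ in the stated domain.
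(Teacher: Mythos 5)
Your proposal is correct and follows essentially the same route as the paper: the paper presents Proposition \ref{p5.6} as a summary of the preceding computations \eqref{5.96}--\eqref{5.109}, which amount precisely to verifying Hypothesis \ref{h3.1} for $\tau_0$ with the choices you list and then specializing \eqref{3.11}--\eqref{3.14} of Theorem \ref{t3.4}. Your explicit check that $[w_{z,0},\phi_{0,0}](0)=i\tfrac{\pi}{2}\bigl(-\tfrac{2i}{\pi}\bigr)=1$ and that $\{\phi_{0,0},\psi_{0,0}\}$ is a boundary condition basis is exactly the bookkeeping the paper leaves implicit.
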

%%%%%%%%%%%

Using the trace formula \eqref{5.112}, we explicitly compute the spectral shift function $\xi\big(\,\cdot\,;T_{\theta}^{(0)},T_{0}^{(0)}\big)$, $\theta\in (0,\pi)$.

%%%%%%%%%%%
\begin{proposition}\lb{p5.8}
If $\theta\in (0,\pi)$ and $e_{\theta,0} = -4e^{-2[\cot(\theta)+\gamma]}$, then for a.e.~$\lambda\in \bbR$,
\begin{align}
&\xi\big(\lambda;T_{\theta}^{(0)},T_{0}^{(0)}\big)\lb{5.114a}\\
&= -\chi_{(e_{\theta,0},-e_{\theta,0})}(\lambda) - \dfrac{1}{\pi}\chi_{(0,\infty)}(\lambda)\arctan\bigg(\dfrac{\pi}{2\cot(\theta)-2\,\ln(2)+2\gamma + \ln|\lambda|} \bigg).\no
\end{align}
In particular, $T_{\theta}^{(0)}$ has a single negative eigenvalue $e_{\theta,0}$ of multiplicity one.
\end{proposition}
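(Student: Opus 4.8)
The plan is to reuse the Stieltjes-inversion machinery from the proof of Proposition~\ref{p5.3}, now fed by the trace formula \eqref{5.112} in place of \eqref{5.30}. The first step is to recognize the right-hand side of \eqref{5.112} as a logarithmic derivative. Setting
\[
m_{\theta,0}(z):=\cot(\theta)+\ln\!\left(\tfrac{1}{2}z^{1/2}\right)+\gamma-\tfrac{i\pi}{2},\quad z\in\bbC\backslash\bbR,
\]
and using $\tfrac{d}{dz}\ln(z^{1/2})=(2z)^{-1}$ for the branch fixed in \eqref{5.7}, one has $m_{\theta,0}'(z)=(2z)^{-1}$, so \eqref{5.112} reads
\[
\tr_{(0,\infty)}\!\left(\big(T_{\theta}^{(0)}-zI_{(0,\infty)}\big)^{-1}-\big(T_{0}^{(0)}-zI_{(0,\infty)}\big)^{-1}\right)=-\frac{d}{dz}\ln\big(m_{\theta,0}(z)\big),\quad z\in\bbC\backslash\bbR.
\]
The point to check here is that $\ln\big(m_{\theta,0}(z)\big)$ is well defined: since $\Im\big(m_{\theta,0}(z)\big)=\tfrac{1}{2}\arg(z)-\tfrac{\pi}{2}$ is nonzero for $z\in\bbC\backslash\bbR$, the value $m_{\theta,0}(z)$ never meets the branch cut $[0,\infty)$ of \eqref{5.7}. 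Combining this identity with the trace formula \eqref{5.37} exactly as in \eqref{5.45}--\eqref{5.48} and applying the Stieltjes inversion formula \cite[Theorem 2.2 $(v)$]{GT00} separately to the positive and negative parts of $\xi\big(\,\cdot\,;T_{\theta}^{(0)},T_{0}^{(0)}\big)$ (cf.~\cite[Theorem 5.5]{CGNZ14}, \cite[Lemma 7.4]{GLMZ05}) yields, for some real constant $\widehat{C}_{\theta,0}$,
\[
\xi\big(\lambda;T_{\theta}^{(0)},T_{0}^{(0)}\big)=\lim_{\varepsilon\downarrow 0}\frac{1}{\pi}\arg\big(m_{\theta,0}(\lambda+i\varepsilon)\big)+\widehat{C}_{\theta,0}\ \text{ for a.e.~}\lambda\in\bbR.
\]

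Next I would evaluate this boundary limit on $(-\infty,0)$ and on $(0,\infty)$. For $\lambda<0$ one has $\arg(\lambda+i\varepsilon)\to\pi$, so $m_{\theta,0}(\lambda+i\varepsilon)$ approaches, from the lower half plane, the real number $\Lambda_{\theta,0}^{-}(\lambda):=\cot(\theta)+\tfrac{1}{2}\ln|\lambda|-\ln(2)+\gamma$; hence $\pi^{-1}\arg\big(m_{\theta,0}(\lambda+i\varepsilon)\big)\to 2$ where $\Lambda_{\theta,0}^{-}>0$ and $\to 1$ where $\Lambda_{\theta,0}^{-}<0$, the zero set being negligible. A direct computation gives $\Lambda_{\theta,0}^{-}(\lambda)>0$ if and only if $\lambda<e_{\theta,0}$. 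Since $\xi\big(\,\cdot\,;T_{\theta}^{(0)},T_{0}^{(0)}\big)$ vanishes below $\min\big[\sigma(T_{\theta}^{(0)})\cup\sigma(T_{0}^{(0)})\big]$ by \eqref{5.36}, letting $\lambda\to-\infty$ forces $\widehat{C}_{\theta,0}=-2$, and therefore $\xi\big(\lambda;T_{\theta}^{(0)},T_{0}^{(0)}\big)=-\chi_{(e_{\theta,0},0)}(\lambda)$ for a.e.~$\lambda<0$. For $\lambda>0$ one has $\arg(\lambda+i\varepsilon)\to 0$, so $m_{\theta,0}(\lambda+i0)$ has imaginary part $-\pi/2$ and real part $\Lambda_{\theta,0}^{+}(\lambda):=\cot(\theta)+\tfrac{1}{2}\ln\lambda-\ln(2)+\gamma$; reading off $\arg$ in the fourth quadrant when $\Lambda_{\theta,0}^{+}(\lambda)>0$ and in the third quadrant when $\Lambda_{\theta,0}^{+}(\lambda)<0$, subtracting $\widehat{C}_{\theta,0}=-2$, and using that $\arctan$ is odd, one obtains
\[
\xi\big(\lambda;T_{\theta}^{(0)},T_{0}^{(0)}\big)=-\chi_{(0,-e_{\theta,0})}(\lambda)-\frac{1}{\pi}\arctan\!\left(\frac{\pi}{2\Lambda_{\theta,0}^{+}(\lambda)}\right)\ \text{ for a.e.~}\lambda>0,
\]
where one uses that $\Lambda_{\theta,0}^{+}(\lambda)<0$ precisely for $\lambda\in(0,-e_{\theta,0})$. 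Since $2\Lambda_{\theta,0}^{+}(\lambda)=2\cot(\theta)-2\ln(2)+2\gamma+\ln|\lambda|$ for $\lambda>0$, and since $(e_{\theta,0},0)\cup(0,-e_{\theta,0})$ and $(e_{\theta,0},-e_{\theta,0})$ agree up to a null set, the two pieces assemble into \eqref{5.114a}.

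Finally, the assertion about negative eigenvalues follows from the $\lambda<0$ part of \eqref{5.114a} together with \eqref{5.10}, \eqref{5.34}, and Lemma~\ref{lA.3}, exactly as in the proof of Proposition~\ref{p5.3}: the spectral shift function is identically $0$ on $(-\infty,e_{\theta,0})$ and $-1$ on $(e_{\theta,0},0)$, and $\sigma\big(T_{0}^{(0)}\big)\cap(-\infty,0)=\emptyset$, so $T_{\theta}^{(0)}$ possesses exactly one negative eigenvalue, namely $e_{\theta,0}$, of multiplicity one. I expect the main obstacle to be bookkeeping rather than anything conceptual: correctly tracking the branch of the argument and the sign of $\Lambda_{\theta,0}^{\pm}$ through the boundary-value computation, and confirming that the resulting case distinctions collapse to the single closed form \eqref{5.114a}. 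The only genuinely new analytic inputs beyond the toolkit of Section~\ref{s5.1} are the inner-product evaluation \eqref{5.109} and the elementary logarithmic-derivative identity displayed above.
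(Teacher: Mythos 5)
Your proposal is correct and follows essentially the same route as the paper's proof: rewrite the trace formula \eqref{5.112} as $-\tfrac{d}{dz}\ln\big(m_{\theta,0}(z)\big)$, apply Stieltjes inversion to recover $\xi$ as the boundary values of $\pi^{-1}\arg\big(m_{\theta,0}(\lambda+i0)\big)$ plus a constant, fix the constant to $-2$ from the vanishing of $\xi$ near $-\infty$, and read off the sign of the limiting real part on the two half-lines. The only cosmetic difference is that your $m_{\theta,0}$ is half the paper's (eq.~\eqref{5.116}), which changes nothing since a positive constant factor affects neither the argument nor the logarithmic derivative.
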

%%%%%%%%%%%
\begin{proof}
Let $\theta\in (0,\pi)$.  Combining \eqref{5.37}, \eqref{5.112}, and rewriting the logarithm on the right-hand side in \eqref{5.112}, one obtains
\begin{equation}\lb{5.114}
\begin{split}
\int_{\bbR}\frac{\xi\big(\lambda;T_{\theta}^{(0)},T_0^{(0)}\big)}{(\lambda-z)^2}\, d\lambda = \frac{1}{z(2\cot(\theta)-2\,\ln(2)+\ln(z)+2\gamma - i\pi)},&\\
z\in \bbC\backslash\bbR.&
\end{split}
\end{equation}
The identity in \eqref{5.114} may be recast as
\begin{align}
&\frac{d}{dz}\int_{\bbR}\xi\big(\lambda;T_{\theta}^{(0)},T_0^{(0)}\big)\bigg[\frac{1}{\lambda-z}-\frac{\lambda}{1+\lambda^2}\bigg]\, d\lambda= \frac{d}{dz} \ln\big(m_{\theta,0}(z)\big),\quad z\in \bbC\backslash \bbR,\lb{5.115}
\end{align}
where
\begin{equation}\lb{5.116}
m_{\theta,0}(z):= 2\cot(\theta)-2\,\ln(2)+\ln(z)+2\gamma - i\pi,\quad z\in \bbC\backslash\bbR.
\end{equation}
Therefore,
\begin{align}
&\int_{\bbR}\xi\big(\lambda;T_{\theta}^{(0)},T_0^{(0)}\big)\bigg[\frac{1}{\lambda-z}-\frac{\lambda}{1+\lambda^2}\bigg]\, d\lambda =\ln\big(m_{\theta,0}(z)\big) + C_{\theta,0},\quad z\in \bbC\backslash \bbR,\lb{5.117}
\end{align}
for some constant $C_{\theta,0}\in \bbC$.  By the Stieltjes inversion formula \cite[Theorem 2.2 $(v)$]{GT00}, applied separately to the positive and negative parts of $\xi\big(\,\cdot\,;T_{\theta}^{(0)},T_{0}^{(0)}\big)$, one obtains
\begin{equation}\lb{5.118}
\xi\big(\lambda;T_{\theta}^{(0)},T_{0}^{(0)}\big) = \frac{1}{\pi}\lim_{\varepsilon\downarrow0}\arg\big(m_{\theta,0}(\lambda+i\varepsilon)\big) +\widehat C_{\theta,0}\, \text{ for a.e.~$\lambda\in \bbR$},
\end{equation}
for some constant $\widehat C_{\theta,0}\in \bbR$ (in fact, $\widehat C_{\theta,0} = \pi^{-1}\Im(C_{\theta,0})$).

The representation in \eqref{5.116} implies
\begin{equation}\lb{5.119}
\begin{split}
\Re\big(m_{\theta,0}(\lambda+i\varepsilon)\big) &= 2\cot(\theta)-2\,\ln(2)+2\gamma + \ln|\lambda+i\varepsilon|,\\
\Im\big(m_{\theta,0}(\lambda+i\varepsilon)\big) &= -\big[ \pi - \arg(\lambda+i\varepsilon)\big], \quad \lambda\in \bbR,\, \varepsilon\in (0,\infty).
\end{split}
\end{equation}
By \eqref{5.119}, $m_{\theta,0}(\lambda+i\varepsilon)$ has a negative imaginary part for every $\lambda\in \bbR$, $\varepsilon\in(0,\infty)$.  Define
\begin{equation}\lb{5.120}
\Lambda_{\theta,0}(\lambda) = \lim_{\varepsilon\downarrow 0}\Re\big(m_{\theta,0}(\lambda+i\varepsilon)\big) = 2\cot(\theta)-2\,\ln(2)+2\gamma + \ln|\lambda|,\quad \lambda\in \bbR\backslash\{0\}.
\end{equation}
One then infers
\begin{equation}\lb{5.121}
\begin{cases}
\Lambda_{\theta,0}(\lambda)<0,\quad \lambda\in (e_{\theta,0},-e_{\theta,0})\backslash\{0\},\\
\Lambda_{\theta,0}(\lambda)>0,\quad \lambda\in \bbR\backslash (e_{\theta,0},-e_{\theta,0}).
\end{cases}
\end{equation}
If $\lambda\in (e_{\theta,0},-e_{\theta,0})\backslash\{0\}$, then \eqref{5.120} and \eqref{5.121} imply $\Re\big(m_{\theta,0}(\lambda+i\varepsilon)\big)<0$ for $0<\varepsilon\ll 1$.  Therefore,
\begin{align}
\arg\big(m_{\theta,0}(\lambda+i\varepsilon)\big) = \pi - \arctan \bigg(\frac{\pi- \arg(\lambda+i\varepsilon)}{2\cot(\theta)-2\,\ln(2)+2\gamma + \ln|\lambda+i\varepsilon|}\bigg),&\lb{5.122}\\
0<\varepsilon\ll1,&\no
\end{align}
and as a consequence,
\begin{align}
&\dfrac{1}{\pi}\lim_{\varepsilon\downarrow 0}\arg\big(m_{\theta,0}(\lambda+i\varepsilon)\big)\lb{5.123}\\
&\quad =
\begin{cases}
1,& \lambda\in (e_{\theta,0},0),\\
1 - \dfrac{1}{\pi}\arctan\bigg(\dfrac{\pi}{2\cot(\theta)-2\,\ln(2)+2\gamma + \ln|\lambda|} \bigg),& \lambda\in (0,-e_{\theta,0}).
\end{cases}\no
\end{align}
If $\lambda\in \bbR\backslash(e_{\theta,0},-e_{\theta,0})$, then \eqref{5.120} and \eqref{5.121} imply $\Re\big(m_{\theta,0}(\lambda+i\varepsilon)\big)>0$ for $0<\varepsilon\ll 1$.  Therefore,
\begin{align}
\arg\big(m_{\theta,0}(\lambda+i\varepsilon)\big) = 2\pi - \arctan \bigg(\frac{\pi- \arg(\lambda+i\varepsilon)}{2\cot(\theta)-2\,\ln(2)+2\gamma + \ln|\lambda+i\varepsilon|}\bigg),&\lb{5.124}\\
0<\varepsilon\ll1,&\no
\end{align}
and as a consequence,
\begin{align}
&\dfrac{1}{\pi}\lim_{\varepsilon\downarrow 0}\arg\big(m_{\theta,0}(\lambda+i\varepsilon)\big)\lb{5.125}\\
&\quad =
\begin{cases}
2,& \lambda\in (-\infty,e_{\theta,0}),\\
2 - \dfrac{1}{\pi}\arctan\bigg(\dfrac{\pi}{2\cot(\theta)-2\,\ln(2)+2\gamma + \ln|\lambda|} \bigg),& \lambda\in (-e_{\theta,0},\infty).
\end{cases}\no
\end{align}
By \eqref{5.118} and \eqref{5.125},
\begin{equation}\lb{5.126}
\xi\big(\lambda;T_{\theta}^{(0)},T_{0}^{(0)}\big) = 2 + \widehat{C}_{\theta,0}\,\text{ for a.e.~$\lambda\in (-\infty,e_{\theta,0})$}.
\end{equation}
Since the spectral shift function vanishes a.e.~in a neighborhood of $-\infty$, the relation in \eqref{5.126} implies
\begin{equation}\lb{5.127}
\widehat{C}_{\theta,0} = -2.
\end{equation}
Upon combining \eqref{5.118}, \eqref{5.123}, \eqref{5.125}, and \eqref{5.127}, the identity in \eqref{5.114a} holds for a.e.~$\lambda\in \bbR$.

Finally, the claim regarding the negative eigenvalue of $T_{\theta}^{(0)}$ is a consequences of \eqref{5.10}, \eqref{5.34}, and Lemma \ref{lA.3} in conjunction with \eqref{5.114a}.
\end{proof}
%%%%%%%%%%%

Proposition \ref{p5.8} implies that $\sigma\big(T_{\theta}^{(0)}\big)\subseteq [0,\infty)$ if and only if $\theta=0$.  Therefore, Proposition \ref{p5.8} recovers the following characterization of the nonnegative self-adjoint extensions of $T_{\min}^{(0)}$ obtained in \cite[Corollary 5.2]{AB15}:

%%%%%%%%%%%
\begin{corollary}	\lb{c5.11}
$T_0^{(0)}$ is the unique nonnegative self-adjoint extension of $T_{\min}^{(0)}$.
\end{corollary}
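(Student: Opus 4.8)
The statement to prove is Corollary \ref{c5.11}: $T_0^{(0)}$ is the unique nonnegative self-adjoint extension of $T_{\min}^{(0)}$.

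The plan is to read off the claim directly from the explicit spectral shift function computed in Proposition \ref{p5.8}. First I would recall that by Theorem \ref{t2.19} (with $\nu=0$), every self-adjoint extension of $T_{\min}^{(0)}$ is of the form $T_\theta^{(0)}$ for exactly one $\theta\in[0,\pi)$, and that $T_0^{(0)}$ is nonnegative since, by \cite[eq.~(7.5)]{EK07}, it is the Friedrichs extension and, by \eqref{5.10}, $\sigma\big(T_0^{(0)}\big)=[0,\infty)$. Thus it suffices to show that for each $\theta\in(0,\pi)$ the operator $T_\theta^{(0)}$ is \emph{not} nonnegative, i.e. that it has a point of its spectrum in $(-\infty,0)$.

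The key step is then to invoke Proposition \ref{p5.8}: for every $\theta\in(0,\pi)$, the quantity $e_{\theta,0}=-4e^{-2[\cot(\theta)+\gamma]}$ is strictly negative, and the explicit formula \eqref{5.114a} for $\xi\big(\,\cdot\,;T_\theta^{(0)},T_0^{(0)}\big)$ together with the concluding sentence of that proposition shows that $T_\theta^{(0)}$ has a single simple negative eigenvalue located at $e_{\theta,0}$. (More precisely, one observes that $\xi\big(\,\cdot\,;T_\theta^{(0)},T_0^{(0)}\big)$ jumps by $-1$ across $\lambda=e_{\theta,0}$, and by Lemma \ref{lA.3}, in conjunction with the facts that $\sigma_{\mathrm{ess}}\big(T_\theta^{(0)}\big)=[0,\infty)$ from \eqref{5.34} and $T_0^{(0)}$ has no negative spectrum from \eqref{5.10}, this corresponds to a simple eigenvalue of $T_\theta^{(0)}$ below zero.) Consequently $e_{\theta,0}\in\sigma_{\mathrm p}\big(T_\theta^{(0)}\big)\cap(-\infty,0)$, so $T_\theta^{(0)}$ is not nonnegative. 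Combining this with the nonnegativity of $T_0^{(0)}$ and the exhaustive parametrization $\{T_\theta^{(0)}\}_{\theta\in[0,\pi)}$ of the self-adjoint extensions yields the corollary.

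There is essentially no obstacle here beyond bookkeeping: the genuine analytic work — the Krein resolvent identity (Theorem \ref{t3.4}), the trace computation \eqref{5.112}, the Stieltjes-inversion recovery of $\xi$, and the eigenvalue bound from deficiency-index theory — has already been carried out in Proposition \ref{p5.8} and the surrounding discussion. The only point requiring a line of care is confirming that $e_{\theta,0}<0$ for all $\theta\in(0,\pi)$, which is immediate since the exponential factor is positive; everything else is a direct citation of earlier results.
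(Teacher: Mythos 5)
Your proof is correct and follows exactly the route the paper intends: the paper's justification for Corollary \ref{c5.11} is precisely the observation that Proposition \ref{p5.8} gives each $T_{\theta}^{(0)}$, $\theta\in(0,\pi)$, a simple negative eigenvalue $e_{\theta,0}<0$, while $T_0^{(0)}$ (the Friedrichs extension, with spectrum $[0,\infty)$ by \eqref{5.10}) is nonnegative, and Theorem \ref{t2.19} guarantees the family $\{T_{\theta}^{(0)}\}_{\theta\in[0,\pi)}$ exhausts all self-adjoint extensions. No gaps; your extra remarks about the jump of $\xi$ and Lemma \ref{lA.3} merely restate what is already established in the proof of Proposition \ref{p5.8}.
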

%%%%%%%%%%%

%%%%%%%%%%%%%% appendices %%%%%%%%%%%%%%%%%
\appendix
%%%%%%%%%%%%%%% Appendix A %%%%%%%%%%%%%%%%
\section{The Spectral Shift Function} \lb{sA}
\renewcommand{\theequation}{A.\arabic{equation}}
\renewcommand{\thetheorem}{A.\arabic{theorem}}
\setcounter{theorem}{0} \setcounter{equation}{0}
%%%%%%%%%%%%%%%%%%%%%%%%%%%%%%%%%%%%%

In this appendix, we recall a few basic facts on the spectral shift function for a pair of resolvent comparable self-adjoint operators which are bounded from below in a Hilbert space $\cH$.  These facts are used extensively in Section \ref{s5.1} in connection with Proposition \ref{p5.3}.

Suppose that $S$ and $S_0$ are self-adjoint operators which are bounded from below in $\cH$ and {\it resolvent comparable} in the sense that for some (hence, for all) $z\in \bbC\backslash \bbR$,
\begin{equation}\lb{A.1}
\big[(S-zI_{\cH})^{-1} - (S_0-zI_{\cH})^{-1}\big] \in \cB_1(\cH).
\end{equation}

%%%%%%%%%%%
\begin{definition}\lb{dA.1}
The class $\mathfrak{F}(\bbR)$ is defined to be the set of all functions $f:\bbR\to \bbC$ which possess two locally bounded derivatives and satisfy
\begin{equation}\lb{A.2}
\big|\big(\lambda^2f'(\lambda)\big)'\big|\leq C\lambda^{-1-\varepsilon},\quad \lambda\in (0,\infty),
\end{equation}
for some $C=C(f)\in (0,\infty)$ and $\varepsilon=\varepsilon(f)\in (0,\infty)$.
\end{definition}
%%%%%%%%%%%

%%%%%%%%%%%
\begin{remark}\lb{rA.2}
Note that if $f\in \mathfrak{F}(\bbR)$, then $f(\infty):=\lim_{\lambda\to \infty}f(\lambda)$ exists.\hfill $\diamond$
\end{remark}
%%%%%%%%%%%

The condition \eqref{A.1} guarantees the existence of a unique real-valued function (cf., e.g., \cite[Theorem 9.28]{Sc12} and \cite[Sections 8.7 \& 8.8]{Ya92})
\begin{equation}\lb{A.3}
\xi\big(\,\cdot\,; S,S_0)\in L^1\left(\bbR;(1+\lambda^2)^{-1}\, d\lambda\right),
\end{equation}
called the {\it spectral shift function} for $S$ and $S_0$, such that
\begin{equation}\lb{A.4}
\xi\big(\lambda;S,S_0\big)=0,\quad \lambda<\min\big[\sigma(S)\cup\sigma\big(S_0\big)\big],
\end{equation}
and for which the following trace formula holds: if $f\in \mathfrak{F}(\bbR)$, then
\begin{equation}\lb{A.5a}
[f(S)-f(S_0)] \in \cB_1(\cH),
\end{equation}
and
\begin{equation}\lb{A.5}
\tr_{\cH}\big(f(S) - f(S_0)\big)= \int_{\bbR} f'(\lambda)\, \xi\big(\lambda; S,S_0)\, d\lambda.
\end{equation}
One infers that $(\,\cdot\, - z)^{-1}\in \mathfrak{F}(\bbR)$, $z\in \bbC\backslash\bbR$, so that
\begin{equation}\lb{A.6}
\tr_{\cH}\Big(\big(S-zI_{\cH}\big)^{-1} - \big(S_0-zI_{\cH}\big)^{-1}\Big)= -\int_{\bbR} \frac{\xi(\lambda; S,S_0)}{(\lambda - z)^2}\, d\lambda,\quad z\in \bbC\backslash\bbR.
\end{equation}

Moreover, the spectral shift function may also be used to detect the presence of isolated eigenvalues of $S$ or $S_0$ (cf., e.g., the discussion following \cite[Theorem 8.7.2]{Ya92}).

%%%%%%%%%%%
\begin{lemma}\lb{lA.3}
On component intervals of $\rho(S)\cap\rho(S_0)$ in $\bbR$, the spectral shift function assumes constant values.  If $\lambda_1$ is an isolated eigenvalue of multiplicity $k_0$ of the operator $S_0$ and $k$ of the operator $S$, then
\begin{equation}\lb{A.7}
\xi(\lambda_1+0;S,S_0)-\xi(\lambda_1-0;S,S_0) = k_0 - k,
\end{equation}
where
\begin{equation}
\xi(\lambda_1\pm0;S,S_0) := \lim_{\lambda\to \lambda_1^{\pm}}\xi(\lambda;S,S_0).
\end{equation}
\end{lemma}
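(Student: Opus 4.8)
The plan is to deduce both assertions from the trace formula \eqref{A.5}, feeding it carefully chosen test functions from $\mathfrak{F}(\bbR)$; beyond \eqref{A.5} the only input required is the spectral theorem. The constancy statement comes from testing against functions supported in a component interval, and the jump formula from testing against a function that equals $1$ near the isolated eigenvalue.

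First I would treat the constancy assertion. Let $(\alpha,\beta)$ be a component interval of $\rho(S)\cap\rho(S_0)$ in $\bbR$. For any $f\in C_c^{\infty}((\alpha,\beta))$ one has $f\in\mathfrak{F}(\bbR)$ trivially, since the bound \eqref{A.2} is vacuous for a function that vanishes near $\infty$; and because $(\alpha,\beta)$ meets neither $\sigma(S)$ nor $\sigma(S_0)$, the functional calculus gives $f(S)=f(S_0)=0$. Hence \eqref{A.5} forces $\int_{\bbR}f'(\lambda)\,\xi(\lambda;S,S_0)\,d\lambda=0$ for every such $f$, that is, the distributional derivative of $\xi(\,\cdot\,;S,S_0)$ vanishes on $(\alpha,\beta)$, so $\xi(\,\cdot\,;S,S_0)$ agrees a.e.\ on $(\alpha,\beta)$ with a constant. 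Passing to this representative, the one-sided limits $\xi(\lambda_1\pm0;S,S_0)$ exist at every point $\lambda_1$ isolated in $\sigma(S)\cup\sigma(S_0)$.

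Next I would compute the jump at such a $\lambda_1$, assuming it is an eigenvalue of $S_0$ of finite multiplicity $k_0$ and of $S$ of finite multiplicity $k$. Choose $\delta>0$ with $(\lambda_1-\delta,\lambda_1+\delta)\cap(\sigma(S)\cup\sigma(S_0))=\{\lambda_1\}$, and pick $f\in C_c^{\infty}((\lambda_1-\delta,\lambda_1+\delta))$ with $f\equiv1$ on a neighborhood of $\lambda_1$. On the one hand, $f(S)=E_S(\{\lambda_1\})$ and $f(S_0)=E_{S_0}(\{\lambda_1\})$ are finite-rank orthogonal projections, of ranks $k$ and $k_0$, hence trace class, so $\tr_{\cH}(f(S)-f(S_0))=k-k_0$. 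On the other hand, $f'$ vanishes near $\lambda_1$ and is supported in $(\lambda_1-\delta,\lambda_1)\cup(\lambda_1,\lambda_1+\delta)$, on each of which $\xi(\,\cdot\,;S,S_0)$ is constant with boundary value $\xi(\lambda_1-0;S,S_0)$, respectively $\xi(\lambda_1+0;S,S_0)$; using $\int_{\lambda_1-\delta}^{\lambda_1}f'\,d\lambda=1$ and $\int_{\lambda_1}^{\lambda_1+\delta}f'\,d\lambda=-1$, the right-hand side of \eqref{A.5} collapses to $\xi(\lambda_1-0;S,S_0)-\xi(\lambda_1+0;S,S_0)$. Equating the two evaluations gives \eqref{A.7}.

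I expect the routine steps to be entirely routine; the one point deserving care is the identification $f(S)=E_S(\{\lambda_1\})$, and likewise for $S_0$, which rests on $\lambda_1$ being isolated in $\sigma(S)$ together with $f\equiv1$ near $\lambda_1$ and $\supp f$ avoiding the remainder of $\sigma(S)$, and on the fact that additivity of the trace may be used here because $f(S)$ and $f(S_0)$ are individually finite rank. As a shortcut one may instead invoke the discussion surrounding \cite[Theorem~8.7.2]{Ya92}, where \eqref{A.7} is recorded.
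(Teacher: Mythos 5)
Your proof is correct. Note that the paper itself offers no proof of Lemma \ref{lA.3}; it simply records the statement with a pointer to the discussion following \cite[Theorem 8.7.2]{Ya92}, and your derivation—feeding the trace formula \eqref{A.5} first with $f\in C_c^{\infty}$ supported in a spectral gap to get local constancy of $\xi(\,\cdot\,;S,S_0)$, then with a plateau function $f\equiv 1$ near the isolated eigenvalue so that $f(S)=E_S(\{\lambda_1\})$ and $f(S_0)=E_{S_0}(\{\lambda_1\})$ are finite-rank projections of ranks $k$ and $k_0$—is essentially the standard argument found there. The only cosmetic quibble is your remark that \eqref{A.2} is ``vacuous'' for compactly supported $f$: it is not vacuous but is easily verified, since $(\lambda^2 f'(\lambda))'$ is bounded with support in some $(0,R]$ on which $\lambda^{-1-\varepsilon}\geq R^{-1-\varepsilon}$, so a sufficiently large $C$ works; this does not affect the argument.
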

%%%%%%%%%%%

\medskip

%%%%%%%%%%%%%%%%%%%%%%%%%%%%%%%%%%%%%
\noindent {\bf Acknowledgments.}
The authors would like to thank Fritz Gesztesy for helpful correspondence in connection with the trace formula \eqref{5.95} and for pointing out the reference \cite{GS96}.  The research of the authors was supported by the National Science Foundation Grant DMS-1852288.
%%%%%%%%%%%%%%%%%%%%%%%%%%%%%%%%%%%%%

%%%%%%%%%%%%%%%%%%%%%%%%%%%%%%%%
%%%%%%%%%%%%%%%%%%%%%%%%%%%%%%%%
 
\end{document}